\theoremstyle{definition}
\newtheorem{definition}{Definition}[subsection]
\newtheorem{remark}[definition]{Remark}
\theoremstyle{plain}
\newtheorem{proposition}[definition]{Proposition}
\newtheorem{propdef}[definition]{Proposition-Definition}
\newtheorem{lemma}[definition]{Lemma}
\newtheorem{theorem}[definition]{Theorem}
\newtheorem{corollary}[definition]{Corollary}
\newtheorem{thm}{Result}
\numberwithin{equation}{section}
\numberwithin{figure}{section}
\newcommand{\N}{\mathbb{N}}
\newcommand{\Q}{\mathbb{Q}}
\newcommand{\C}{\mathbb{C}}
\newcommand{\K}{\mathbf{k}} 
\newcommand{\G}{\mathcal{G}} 
\newcommand{\V}{\mathcal{V}} 
\newcommand{\W}{\mathcal{W}} 
\newcommand{\M}{\mathcal{M}} 
\newcommand{\DR}{\mathrm{DR}} 
\newcommand{\B}{\mathrm{B}} 
\newcommand{\KX}{\mathbf{k}\langle\langle X \rangle \rangle}
\newcommand{\KY}{\mathbf{k}\langle\langle Y \rangle \rangle}
\newcommand{\DMR}{\mathsf{DMR}} 
\newcommand{\Stab}{\mathsf{Stab}} 
\newcommand{\iso}{\mathrm{iso}}
\newcommand{\aut}{\mathrm{aut}}
\newcommand{\Aut}{\mathrm{Aut}}
\newcommand{\alg}{\mathrm{alg}}
\newcommand{\Mod}{\mathrm{mod}}
\newcommand{\Emb}{\mathrm{Emb}}
\newcommand{\comp}{\mathrm{comp}}
\newcommand{\Ad}{\mathrm{Ad}}
\newcommand{\q}{\mathbf{q}} 
\author{YADDADEN Khalef}
\address{Institut de Recherche Mathématique Avancée, UMR 7501, Université de Strasbourg, \, 7 rue René Descartes, 67000 Strasbourg, France}
\email{kyaddaden@math.unistra.fr}
\title[The double shuffle torsor in terms of Betti and de Rham coproducts]{The cyclotomic double shuffle torsor in terms of Betti and de Rham coproducts}
\begin{document}
    \maketitle
    \hypersetup{pdfborder={0 0 0}} 
    \begin{abstract}
        In order to describe the double shuffle and regularization relations between multiple polylogarithm values at $N$\textsuperscript{th} roots of unity, Racinet attached to each finite cyclic group $G$ of order $N$ and each group embedding $\iota : G \to \mathbb{C}^{\times}$, a $\mathbb{Q}$-scheme $\mathsf{DMR}^{\iota}$ which associates to each commutative $\Q$-algebra $\K$, a set $\DMR^{\iota}(\K)$ that can be decomposed as a disjoint union of sets $\DMR^{\iota}_{\lambda}(\K)$ with $\lambda \in \K$. He also exhibited a $\Q$-group scheme $\mathsf{DMR}_0^G$ and showed, for any commutative $\Q$-algebra $\K$ and any $\lambda \in \K^{\times}$, that $\mathsf{DMR}^{\iota}_{\lambda}(\K)$ is a torsor for the action of $\mathsf{DMR}_0^G(\K)$.
        Then, Enriquez and Furusho showed for $N=1$ that a subscheme $\mathsf{DMR}^{\iota}_{\times}$ of $\mathsf{DMR}^{\iota}$ is a torsor of isomorphisms relating ``de Rham'' and ``Betti'' objects. In previous work, we reformulated Racinet's construction in terms of crossed products and identified Racinet's coproduct with a coproduct $\widehat{\Delta}^{\mathcal{M}, \mathrm{DR}}_G$ defined on a module $\widehat{\mathcal{M}}_G^{\mathrm{DR}}$ over an algebra $\widehat{\mathcal{W}}_G^{\mathrm{DR}}$, which is equipped with its own coproduct $\widehat{\Delta}^{\mathcal{W}, \mathrm{DR}}_G$.
        In this paper, we define the main ingredients for a generalization of Enriquez and Furusho's result to any $N \geq 1$: we exhibit a module $\widehat{\mathcal{M}}_N^{\mathrm{B}}$ over an algebra $\widehat{\mathcal{W}}_N^{\mathrm{B}}$ and we prove the existence of two compatible coproducts $\widehat{\Delta}^{\mathcal{W}, \mathrm{B}}_N$ and $\widehat{\Delta}^{\mathcal{M}, \mathrm{B}}_N$ on $\widehat{\mathcal{W}}_N^{\mathrm{B}}$ and $\widehat{\mathcal{M}}_N^{\mathrm{B}}$ respectively such that $\mathsf{DMR}^{\iota}_{\times}$ is contained in the torsor of isomorphisms relating $\widehat{\Delta}^{\mathcal{W}, \mathrm{B}}_N$ (resp. $\widehat{\Delta}^{\mathcal{M}, \mathrm{B}}_N$) to $\widehat{\Delta}^{\mathcal{W}, \mathrm{DR}}_G$ (resp. $\widehat{\Delta}^{\mathcal{M}, \mathrm{DR}}_G$).
    \end{abstract}
	
    {\footnotesize \tableofcontents}
	
    \section*{Introduction}
A \emph{multiple L-value} (MLV in short) is a complex number defined by the following series
\begin{equation*}
    L_{(k_1, \dots, k_r)}(z_1, \dots, z_r) := \sum_{0 < m_1 < \dots < m_r} \frac{z_1^{k_1} \cdots z_r^{k_r}}{m_1^{k_1} \cdots m_r^{k_r}}
\end{equation*}
where $r, k_1, \dots, k_r \in \N^{\ast}$ and $z_1, \dots, z_r$ in $\mu_N$ the group of $N$\textsuperscript{th} roots of unity in $\C$, where $N$ is an integer $\geq 1$. This series converges if and only if $(k_r, z_r) \neq (1, 1)$.
These values satisfy a set of algebraic relations; our main interest here are the \emph{double shuffle and regularisation} ones.

Understanding the double shuffle and regularisation relations has been greatly improved thanks to Racinet's work \cite{Rac}. He generalises the group $\mu_N$ to a finite cyclic group $G$ and attaches to each pair $(G, \iota)$ of a finite cyclic group $G$ and a group injection $\iota : G \to \C^{\times}$, a $\Q$-scheme $\DMR^{\iota}$ which associates to each commutative $\Q$-algebra $\K$, a set $\DMR^{\iota}(\K)$\index{$\DMR^{\iota}(\K)$} that can be decomposed as a disjoint union of sets $\DMR^{\iota}_{\lambda}(\K)$ for $\lambda \in \K$ (see \cite[Definition 3.2.1]{Rac}).
The double shuffle and regularisation relations on MLVs are then encoded in the statement that a suitable generating series of these values belongs to the set $\DMR^{\iota_{can}}_{i 2\pi}(\C)$ where $\iota_{can} : G=\mu_N \to \C^{\star}$ is the canonical embedding.
Racinet also proved that for any pair $(G, \iota)$, the set $\DMR^{\iota}_0(\K)$ equipped with the ``twisted Magnus'' product (see \eqref{circledast}) is a group that is independent of the choice of $\iota$. It is therefore denoted $\DMR_0^G(\K)$.

The main result of Racinet in \cite[Theorem I]{Rac} is that, for each pair $(\lambda, \iota)$ where $\lambda \in \K^{\times}$ and $\iota : G \hookrightarrow \C^{\times}$, the set $\mathsf{DMR}^{\iota}_{\lambda}(\K)$ is equipped with a torsor structure for the action of the group $(\DMR_0^G(\K), \circledast)$. For any $\iota : G \hookrightarrow \C^{\times}$, this yields a torsor structure on the set $\displaystyle \mathsf{DMR}^{\iota}_{\times}(\K) := \bigsqcup_{\lambda \in \K^{\times}} \mathsf{DMR}^{\iota}_{\lambda}(\K)$ for the action of a semidirect product group $\K^{\times} \ltimes \DMR_0^G(\K)$ (see Proposition \ref{DMR_torsor}). This gives rise to a torsor structure on $\displaystyle \DMR_{\times}(\K) := \bigsqcup_{\iota} \mathsf{DMR}^{\iota}_{\times}(\K)$ (where $\iota$ runs over all group embeddings from $G$ to $\C^{\times}$) for the action of the semidirect product group $(\Aut(G) \times \K^{\times}) \ltimes \DMR_0^G(\K)$ (see Proposition \ref{DMR_x_torsor}).

On the other hand, we introduced in \cite{Yad} a \emph{crossed product} formalism and showed that Racinet's objects can be expressed within it. This constitutes the ``de Rham'' side of the double shuffle theory. In this framework, the crossed product algebra is identified to a topological $\K$-algebra $\widehat{\V}_G^{\DR}$ defined by a presentation with generators and relations (see Proposition \ref{KXGtoVG}). Next, Racinet's objects are given in the form of a subalgebra $\widehat{\W}_G^{\DR}$ of $\widehat{\V}_G^{\DR}$ and a quotient module $\widehat{\M}_G^{\DR}$ of the left regular $\widehat{\V}_G^{\DR}$-module. The algebra $\widehat{\W}_G^{\DR}$ is equipped with a Hopf algebra coproduct $\widehat{\Delta}^{\W, \DR}_G$ and the module $\widehat{\M}_G^{\DR}$ is equipped with a compatible coalgebra coproduct $\widehat{\Delta}^{\M, \DR}_G$.

Following the stabilizer interpretation of $\DMR_0^G(\K)$ given by Enriquez and Furusho in \cite{EF0}, we defined two stabilizers $\Stab(\widehat{\Delta}^{\M, \DR}_{G})(\K)$ and $\Stab(\widehat{\Delta}^{\W, \DR}_{G})(\K)$ for the action of grouplike elements equipped with the twisted Magnus product and therefore obtained the following chain of inclusions
\[
    \DMR_0^G(\K) \subset \Stab(\widehat{\Delta}^{\M, \DR}_{G})(\K) \subset \Stab(\widehat{\Delta}^{\W, \DR}_{G})(\K),
\]
which is a generalisation of the $G=\{1\}$ result of {\cite[Theorem 3.1]{EF2}}. This enables us to obtain the following semidirect product group chain of inclusions 
\begin{thm}[Corollary \ref{DMR_sub_StabM_sub_StabW}]
    \begin{eqnarray*}
        (\Aut(G) \times \K^{\times}) \ltimes \DMR_0^G(\K) \subset & (\Aut(G) \times \K^{\times}) \ltimes \Stab(\widehat{\Delta}^{\M, \DR}_{G})(\K) \\
        & \cap \\
        & (\Aut(G) \times \K^{\times}) \ltimes \Stab(\widehat{\Delta}^{\W, \DR}_{G})(\K)
    \end{eqnarray*}
\end{thm}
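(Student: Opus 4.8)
The plan is to promote the group-level chain
\[
    \DMR_0^G(\K) \subset \Stab(\widehat{\Delta}^{\M, \DR}_{G})(\K) \subset \Stab(\widehat{\Delta}^{\W, \DR}_{G})(\K)
\]
recalled just above to a chain of semidirect products with the common left factor $A := \Aut(G) \times \K^{\times}$. The first thing I would record is an elementary functoriality statement: if $A$ acts by group automorphisms on a group $H$ and $H' \subset H$ is an $A$-stable subgroup whose induced action agrees with a prescribed one, then the identity-on-$A$, inclusion-on-$H'$ map is an injective homomorphism $A \ltimes H' \hookrightarrow A \ltimes H$, since both products are evaluated by the same formula involving only the shared $A$-action and the multiplications of $H'$ and $H$. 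Granting this, it suffices to equip all three groups with compatible $A$-actions and to check that the two smaller groups are $A$-stable subgroups of the largest, after which applying the statement twice yields the corollary.

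The action of $A$ on $\DMR_0^G(\K)$ is already available from Proposition \ref{DMR_x_torsor}, where $\Aut(G)$ acts by relabelling the $G$-indexed generators and $\K^{\times}$ acts by dilation; these together form the $A$-group acting on the torsor $\DMR_{\times}(\K)$. The step I would carry out next is to realise the very same two operations as automorphisms of the ambient group of grouplike elements of $\widehat{\W}_G^{\DR}$ equipped with $\circledast$, and to show that they preserve both stabilizers. Concretely, for $a = (\sigma, \lambda) \in A$ I would write down the induced continuous automorphism $\phi_a$ of the algebra $\widehat{\W}_G^{\DR}$ together with its companion automorphism of the module $\widehat{\M}_G^{\DR}$, and establish their equivariance for the two coproducts, namely $\widehat{\Delta}^{\W, \DR}_G \circ \phi_a = (\phi_a \otimes \phi_a) \circ \widehat{\Delta}^{\W, \DR}_G$ and the analogous identity for $\widehat{\Delta}^{\M, \DR}_G$. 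Because the product $\circledast$ and the two stabilizer conditions are built entirely out of these coproducts and the module structure, such equivariance immediately makes $\phi_a$ a $\circledast$-automorphism that carries each stabilizer onto itself; hence $A$ acts on $\Stab(\widehat{\Delta}^{\M, \DR}_{G})(\K)$ and $\Stab(\widehat{\Delta}^{\W, \DR}_{G})(\K)$, compatibly with its action on $\DMR_0^G(\K)$ and with the inclusions linking them.

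I expect the genuine obstacle to be this equivariance verification, and within it the behaviour of the dilation by $\lambda \in \K^{\times}$ rather than the relabelling by $\sigma$: the latter is a symmetry of the defining presentation and should intertwine with both coproducts almost formally, whereas the dilation rescales generators according to weight, so one must confirm that $\widehat{\Delta}^{\W, \DR}_G$ and $\widehat{\Delta}^{\M, \DR}_G$ are homogeneous for the relevant grading in order that $\phi_{(\mathrm{id}, \lambda)}$ remain a coalgebra morphism. Once the $A$-equivariance of both coproducts is in place, the two stabilizers are $A$-stable subgroups of the common ambient group, the three actions coincide by restriction, and the functoriality statement of the first paragraph delivers the two inclusions of semidirect products, which is exactly the asserted chain.
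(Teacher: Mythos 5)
Your overall skeleton is sound and is in fact the paper's own argument in different packaging: the paper forms the stabilizers inside the full group $(\Aut(G)\times\K^{\times})\ltimes\G(\KX)$ (Definition \ref{Stab_DeltaWM}) and splits them as semidirect products via the general fact that a subgroup of $H\ltimes R$ containing $H$ equals $H\ltimes(K\cap R)$ (Lemma \ref{semi_direct_universal_property}, Proposition \ref{semidirect_Stabs}), which is equivalent to your combination of ``$A$-stability of the $\G(\KX)$-stabilizers plus functoriality of $\ltimes$''; the group-level inputs are likewise the same, namely \eqref{DMRsubsetStab} and \eqref{Stab_inclusion}. The problem is the step you declare immediate.

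Coproduct-equivariance of the companion automorphisms $\phi_a$ (the paper's $\eta^{\W}_{\phi}$, $\lambda\bullet_{\W}-$, $\eta^{\M}_{\phi}$, $\lambda\bullet_{\M}-$ of Lemmas \ref{from_lambdaV} and \ref{from_etaV}) does \emph{not} by itself imply that the $A$-action on $\G(\KX)$ preserves the stabilizers. Note first that the stabilizers are subsets of $\G(\KX)$, the grouplike elements of $(\KX,\widehat{\Delta})$ — not of $\widehat{\W}^{\DR}_G$, as your second paragraph suggests — so an element $\Psi$ of a stabilizer is not something $\phi_a$ acts on; rather $\Psi$ acts on $(\widehat{\W}^{\DR}_G,\widehat{\M}^{\DR}_G)$ through the $\Gamma$-twisted automorphisms ${}^{\Gamma}\aut^{\W,(1)}_{\Psi}$ and ${}^{\Gamma}\aut^{\M,(10)}_{\Psi}$. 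To conclude that $a\cdot\Psi=\eta_{\phi}(\lambda\bullet\Psi)$ again satisfies the stabilizer condition you must relate ${}^{\Gamma}\aut^{\W,(1)}_{a\cdot\Psi}$ to ${}^{\Gamma}\aut^{\W,(1)}_{\Psi}$, i.e. prove the conjugation formula
\[
  {}^{\Gamma}\aut^{\W,(1)}_{\eta_{\phi}(\lambda\bullet\Psi)}
  = \bigl(\eta^{\W}_{\phi}\circ(\lambda\bullet_{\W}-)\bigr)\circ
  {}^{\Gamma}\aut^{\W,(1)}_{\Psi}\circ
  \bigl(\eta^{\W}_{\phi}\circ(\lambda\bullet_{\W}-)\bigr)^{-1},
\]
together with its $\M$-analogue; only then does equivariance of the coproduct under $\phi_a$ transport the stabilizer condition from $\Psi$ to $a\cdot\Psi$. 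This intertwining is exactly the content of the paper's Lemmas \ref{Gammaaut_lambdabullet} and \ref{Gammaaut_eta_etaV_Gammaaut} and Corollaries \ref{GammaautWM_lambdabullet} and \ref{Gammaaut_eta_etaWM_Gammaaut} (equivalently, of Corollary \ref{Gammaaut_act2}, asserting that $(\phi,\lambda,\Psi)\mapsto{}^{\Gamma}\aut^{\W,(1)}_{(\phi,\lambda,\Psi)}$ is a group action), and it is not formal: ${}^{\Gamma}\aut^{\W,(1)}_{\Psi}$ is built from $\beta(\Psi\otimes 1)$ \emph{and} the correction term $\Gamma_{\Psi}(-e_1)$, so one must compute how both transform — via $\Gamma_{\lambda\bullet\Psi}(-e_1)=\lambda\bullet_{\V}\Gamma_{\Psi}(-e_1)$ (Identity \eqref{bullet_Gamma}), $\Gamma_{\eta_{\phi}(\Psi)}(-e_1)=\Gamma_{\Psi}(-e_1)$, and the compatibility of $\beta$ with the two actions (Diagrams \eqref{Diag_bullets} and \eqref{Diag_etaV_eta}). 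Relatedly, your diagnosis that the main obstacle is homogeneity of the coproducts under dilation picks out the easy verification (Lemma \ref{from_lambdaV}); the heavier, and in your write-up missing, work is the intertwining above. With it, your argument closes exactly as you describe; without it, the claim that equivariance ``immediately'' carries each stabilizer onto itself is unsupported.
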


For $G=\{1\}$ Enriquez and Furusho introduced a ``Betti'' formalism of the double shuffle theory in \cite{EF1}. It is based on the filtered algebra $\V^{\B}$, which denotes the group algebra over $\K$ of the free group of rank $2$ denoted $F_2$ with generators $X_0$ and $X_1$ and equipped with the filtration induced by the augmentation ideal. The completion $\widehat{\V}^{\B}$ is a topological $\K$-algebra. Next, we have a Hopf algebra $(\widehat{\W}^{\B}, \widehat{\Delta}^{\W, \B})$ which consists of a subalgebra $\widehat{\W}^{\B}$ of $\widehat{\V}^{\B}$ linearly generated by $1 \in \widehat{\V}^{\B}$ and the left ideal generated by $X_1 - 1$. It is presented as an algebra with generators $X_1$, $X_1^{-1}$, $Y_n^+ = - (X_0 - 1)^{n-1} X_0 (X_1 - 1)$ and $Y_n^- = - (X_0^{-1} - 1)^{n-1} X_0^{-1} (X_1^{-1} - 1)$ for $n \in \N^{\ast}$, with relation $X_1 X_1^{-1} = X_{-1} X_1 = 1$. In addition, we have a Hopf algebra coproduct $\widehat{\Delta}^{\W, \B} : \widehat{\W}^{\B} \to (\widehat{\W}^{\B})^{\hat{\otimes} 2}$ given by
\[
    \mbox{\small$\widehat{\Delta}^{\W, \B}(X_1^{\pm 1}) = X_1^{\pm 1} \otimes X_1^{\pm 1}$} \text{ and for } n \in \N^{\ast}, \mbox{\small$\displaystyle\widehat{\Delta}^{\W, \B}(Y_n^{\pm}) = Y_n^{\pm} \otimes 1 + 1 \otimes Y_n^{\pm} \hspace{-0.1cm} + \hspace{-0.2cm}\sum_{\substack{k,l \in \N^{\ast} \\ k+l=n}} \hspace{-0.2cm} Y_k^{\pm} \otimes Y_l^{\pm}.$}
\]
Finally, we have a coalgebra $(\widehat{\M}^{\B}, \widehat{\Delta}^{\M, \B})$ which consists of a quotient module $\widehat{\M}^{\B}=\widehat{\V}^{\B}/\widehat{\V}^{\B}(X_0 - 1)$ isomorphic to $\widehat{\W}^{\B}$, as a $\K$-module (see \cite[(2.1.1)]{EF1}) together with a coalgebra coproduct $\widehat{\Delta}^{\M, \B}$ compatible with the coproduct $\widehat{\Delta}^{\W, \B}$.

In §\ref{Betti_side}, we construct analogues of the Hopf algebra $(\widehat{\W}^{\B}, \widehat{\Delta}^{\W, \B})$ and of the module-coalgebra $(\widehat{\M}^{\B}, \widehat{\Delta}^{\M, \B})$, for a finite cyclic group $G$ of order $N$. It is based on the filtered algebra $\V^{\B}_N$, which denotes the group algebra $\K F_2$ equipped with the filtration induced by the ideal $\ker(\K F_2 \to \K\mu_N)$; where $\K F_2 \to \K\mu_N$ is the algebra morphism induced by the group morphism $F_2 \to \mu_N$ given by $X_0 \mapsto e^{\frac{i 2\pi}{N}}$ and $X_1 \mapsto 1$. Its completion is the inverse limit of the projective system induced by the filtration and is denoted $\widehat{\V}^{\B}_N$. It is a topological algebra isomorphic to $\widehat{\V}^{\DR}_G$ (see Proposition-Definition \ref{isoViota}).
Next, we define a filtered algebra $\W^{\B}_N$ which is linearly generated by $1 \in \V^{\B}_N$ and the left ideal generated by $X_1 - 1$ and whose filtration is induced by that of $\V^{\B}_N$. Its completion $\widehat{\W}^{\B}_N$ is isomorphic to $\widehat{\W}^{\DR}_G$ (see Proposition-Definition \ref{isoW}). We also define a filtered module $\M^{\B}_N$ which consists of the quotient module $\K F_2 / \K F_2 (X_0 - 1)$ and whose filtration is induced by that of $\V^{\B}_N$. Its completion $\widehat{\M}^{\B}_N$ is isomorphic to $\widehat{\M}^{\DR}_G$ (see Proposition-Definition \ref{isoMiota}). We then have compatible Hopf algebra and coalgebra structures on $\widehat{\W}^{\B}_N$ and $\widehat{\M}^{\B}_N$ respectively thanks to the following result:
\begin{thm}[Theorem \ref{Delta_B_N} and Corollary \ref{WM_categories}]
    There exists a topological $\K$-algebra morphism $\widehat{\Delta}^{\W, \B}_N : \widehat{\W}^{\B}_N \to (\widehat{\W}^{\B}_N)^{\hat{\otimes} 2}$ and a topological $\K$-module morphism $\widehat{\Delta}^{\M, \B}_N : \widehat{\M}^{\B}_N \to (\widehat{\M}^{\B}_N)^{\hat{\otimes} 2}$ that endows $\widehat{\W}^{\B}_N$ and $\widehat{\M}^{\B}_N$ respectively with compatible Hopf algebra and coalgebra structures.
\end{thm}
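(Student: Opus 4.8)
The plan is to define both coproducts by explicit formulas on topological generators, generalising the $N=1$ formulas of Enriquez--Furusho recalled above, and to reduce each Hopf/coalgebra axiom to a finite check on generators using continuity together with the fact that these generators topologically generate $\widehat{\W}^{\B}_N$. For $\widehat{\Delta}^{\W, \B}_N$ I would declare $X_1^{\pm 1}$ grouplike and impose on the cyclotomic analogues of the $Y_n^{\pm}$ the same shape of formula as in the $N=1$ case, namely $\widehat{\Delta}^{\W, \B}_N(Y_n^{\pm}) = Y_n^{\pm} \otimes 1 + 1 \otimes Y_n^{\pm} + \sum_{\substack{k,l \geq 1\\ k+l=n}} Y_k^{\pm} \otimes Y_l^{\pm}$ (now carrying the extra indexing attached to $G$ that comes from the powers of $X_0$ modulo $X_0^N - 1$), equivalently that the corresponding generating series are grouplike. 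The presentation of $\widehat{\W}^{\B}_N$ forces this to be the only continuous candidate.

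The first and principal task is to show that these assignments are compatible with the defining relations of $\widehat{\W}^{\B}_N$ and take values in $(\widehat{\W}^{\B}_N)^{\hat{\otimes} 2}$, so that $\widehat{\Delta}^{\W, \B}_N$ extends to a well-defined continuous $\K$-algebra morphism. I expect this to be the main obstacle: the filtration by $\ker(\K F_2 \to \K \mu_N)$ yields a cyclotomic presentation with more generators and relations than in the $N=1$ case, so the relation-check is where the real work lies. I would organise it along the algebra isomorphism $\widehat{\W}^{\B}_N \cong \widehat{\W}^{\DR}_G$ of Proposition-Definition \ref{isoW}, which matches the Betti generators and relations with their de Rham counterparts and lets me transport the combinatorics already under control on the de Rham side of \cite{Yad}.

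Once $\widehat{\Delta}^{\W, \B}_N$ is well defined, coassociativity is automatic on each grouplike generator and grouplike series, hence on all of $\widehat{\W}^{\B}_N$, since both $(\widehat{\Delta}^{\W, \B}_N \otimes \mathrm{id}) \circ \widehat{\Delta}^{\W, \B}_N$ and $(\mathrm{id} \otimes \widehat{\Delta}^{\W, \B}_N) \circ \widehat{\Delta}^{\W, \B}_N$ are continuous algebra morphisms agreeing on a topological generating set. The counit is the continuous algebra morphism with $X_1^{\pm 1} \mapsto 1$ and $Y_n^{\pm} \mapsto 0$, and the antipode is obtained by inverting the grouplike data (or by the standard recursion in a complete filtered Hopf algebra). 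This yields the Hopf algebra $(\widehat{\W}^{\B}_N, \widehat{\Delta}^{\W, \B}_N)$ of Theorem \ref{Delta_B_N}.

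Finally, on $\widehat{\M}^{\B}_N = \widehat{\V}^{\B}_N / \widehat{\V}^{\B}_N (X_0 - 1)$ I would set the class of $1$ to be grouplike and define $\widehat{\Delta}^{\M, \B}_N(w \cdot \overline{1}) := \widehat{\Delta}^{\W, \B}_N(w) \cdot (\overline{1} \otimes \overline{1})$, using the $\K$-module isomorphism $\widehat{\M}^{\B}_N \cong \widehat{\W}^{\B}_N$ which exhibits $\overline{1}$ as a cyclic generator over $\widehat{\W}^{\B}_N$. Well-definedness reduces to checking that this rule descends modulo the right ideal $\widehat{\V}^{\B}_N (X_0 - 1)$, which I would again verify through the module isomorphism $\widehat{\M}^{\B}_N \cong \widehat{\M}^{\DR}_G$ of Proposition-Definition \ref{isoMiota}. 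Coassociativity and the counit for $\widehat{\Delta}^{\M, \B}_N$ are then inherited from the $\widehat{\W}^{\B}_N$ case, and the compatibility of Corollary \ref{WM_categories} --- that $\widehat{\Delta}^{\M, \B}_N$ is a morphism of $\widehat{\W}^{\B}_N$-modules, the target $(\widehat{\M}^{\B}_N)^{\hat{\otimes} 2}$ carrying the module structure induced by $\widehat{\Delta}^{\W, \B}_N$ --- is precisely the defining rule above, extended by continuity. The decisive point throughout remains the first step: reconciling the explicit cyclotomic formulas with the relations of $\widehat{\W}^{\B}_N$ and with the descent defining $\widehat{\M}^{\B}_N$.
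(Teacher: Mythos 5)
Your proposal runs in the opposite direction from the paper, and the step you yourself single out as the crux --- validating explicit EF-type formulas by ``transporting the combinatorics'' through the isomorphism $\iso^{\W,\iota}$ of Proposition-Definition \ref{isoW} (and $\iso^{\M,\iota}$ of Proposition-Definition \ref{isoMiota}) --- is precisely the step that cannot work. These comparison isomorphisms are \emph{not} coalgebra maps for the coproducts in question; the only coproduct on $\widehat{\W}^{\B}_N$ that an $\iso$-based check can ever validate is the naive transport $\left(\left(\iso^{\W,\iota}\right)^{\otimes 2}\right)^{-1}\circ\widehat{\Delta}^{\W,\DR}_G\circ\iso^{\W,\iota}$, and this is provably \emph{not} the coproduct of Theorem \ref{Delta_B_N}. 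Indeed, suppose $\iso^{\M,\iota}$ intertwined $\widehat{\Delta}^{\M,\B}_N$ with $\widehat{\Delta}^{\M,\DR}_G$. Choose any $(\iota,\nu,\Phi)\in\DMR_{\times}(\K)$ (nonempty by Racinet). Since ${}^{\Gamma}\comp^{\M,(10)}_{(\iota,\nu,\Phi)}$ intertwines the same pair by the very definition of $\widehat{\Delta}^{\M,\B}_N$, the composite ${}^{\Gamma}\comp^{\M,(10)}_{(\iota,\nu,\Phi)}\circ\left(\iso^{\M,\iota}\right)^{-1}={}^{\Gamma}\aut^{\M,(10)}_{(\nu,\Phi)}$ would stabilize $\widehat{\Delta}^{\M,\DR}_G$; since $\nu\bullet_{\M}-$ lies in that stabilizer (Proposition \ref{semidirect_Stabs}), so would ${}^{\Gamma}\aut^{\M,(10)}_{\Phi}$, i.e.\ $\Phi\in\Stab_{\G(\KX)}(\widehat{\Delta}^{\M,\DR}_{G})(\K)$. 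As $(\Phi|x_0)=(\Phi|x_1)=0$, equality \eqref{DMRsubsetStab} would give $\Phi\in\DMR_0^G(\K)$, contradicting $\Phi\in\DMR^{\iota}_{\nu}(\K)$ with $\nu\in\K^{\times}$: conditions \ref{DMR_lambda_condition_iii} and \ref{DMR_lambda_condition_iv} of Definition \ref{DMR_lambda} force a nonzero coefficient exactly where $\DMR_0^G(\K)$ forces zero. So the $\Gamma$-twist by a point of $\DMR_{\times}(\K)$ is not a computational convenience but the essential ingredient, and the same objection defeats your $\iso^{\M,\iota}$-based descent argument for $\widehat{\Delta}^{\M,\B}_N$.

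There is a second, related gap: your coproduct is never actually specified. The paper establishes no presentation of $\widehat{\W}^{\B}_N$ by generators and relations (it is defined as $\K\oplus\widehat{\V}^{\B}_N(X_1-1)$ and understood only through $\iso^{\W,\iota}$), you do not supply one, and producing one together with the correct cyclotomic generators is serious work already for $N=1$ (it is part of Enriquez--Furusho's construction, and proving that $\DMR_{\times}$ intertwines \emph{that} explicit coproduct with the de Rham one is the main theorem of their series, not a finite generator check). Note also that ``same shape of formula'' does not determine a coproduct: the naive transport above satisfies the harmonic-shape formula on the generators $\left(\iso^{\V,\iota}\right)^{-1}(z_{n,g})$, which involve $\log X_0$ and $\log X_1$ rather than group-algebra elements, yet by the argument above it differs from the paper's $\widehat{\Delta}^{\W,\B}_N$. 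The paper avoids all of this by construction: it \emph{defines} $(\widehat{\Delta}^{\W,\B}_N,\widehat{\Delta}^{\M,\B}_N)$ as the conjugate of the de Rham pair by ${}^{\Gamma}\comp^{\W,(1)}_{(\iota,\nu,\Phi)}$ and ${}^{\Gamma}\comp^{\M,(10)}_{(\iota,\nu,\Phi)}$, proves that the result is independent of $(\iota,\nu,\Phi)\in\DMR_{\times}(\K)$ using the torsor structure of Proposition \ref{DMR_x_torsor} together with the stabilizer inclusion of Corollary \ref{DMR_sub_StabM_sub_StabW}, and then inherits every Hopf, coalgebra and compatibility axiom through these isomorphisms; this well-definedness is the actual mathematical content that your plan leaves unaddressed.
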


Finally, in §\ref{torsorBDR}, we deduce the following result:
\begin{thm}[Theorem \ref{inclusion_torsor_stabs}]
    $\mathsf{DMR}_{\times}$ is contained in the torsor of isomorphisms relating $\widehat{\Delta}^{\mathcal{W}, \mathrm{B}}_N$ (resp. $\widehat{\Delta}^{\mathcal{M}, \mathrm{B}}_N$) to $\widehat{\Delta}^{\mathcal{W}, \mathrm{DR}}_G$ (resp. $\widehat{\Delta}^{\mathcal{M}, \mathrm{DR}}_G$).
\end{thm}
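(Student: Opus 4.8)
The plan is to exhibit the inclusion as an equivariant morphism of torsors. By construction the target is the set of pairs $(F^{\W}, F^{\M})$ consisting of a topological $\K$-algebra isomorphism $F^{\W} : \widehat{\W}_G^{\DR} \to \widehat{\W}_N^{\B}$ with $\widehat{\Delta}^{\W, \B}_N \circ F^{\W} = (F^{\W} \hat{\otimes} F^{\W}) \circ \widehat{\Delta}^{\W, \DR}_G$ together with a compatible topological $\K$-module isomorphism $F^{\M} : \widehat{\M}_G^{\DR} \to \widehat{\M}_N^{\B}$ satisfying $\widehat{\Delta}^{\M, \B}_N \circ F^{\M} = (F^{\W} \hat{\otimes} F^{\M}) \circ \widehat{\Delta}^{\M, \DR}_G$; whenever it is nonempty this set is a bitorsor, acted on the right by $\Aut(\widehat{\Delta}^{\W, \DR}_G) \times \Aut(\widehat{\Delta}^{\M, \DR}_G)$ and on the left by the corresponding Betti automorphism groups. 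I will attach to each $g \in \DMR_{\times}(\K)$ such a pair $(\comp_g^{\W}, \comp_g^{\M})$ and check that $g \mapsto (\comp_g^{\W}, \comp_g^{\M})$ intertwines the action of $(\Aut(G) \times \K^{\times}) \ltimes \DMR_0^G(\K)$ from Proposition \ref{DMR_x_torsor} with the structural right action just described.

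As a starting point I would use the abstract identifications of Proposition-Definition \ref{isoW} and Proposition-Definition \ref{isoMiota}, which furnish topological isomorphisms $\iso_{\W} : \widehat{\W}_G^{\DR} \xrightarrow{\sim} \widehat{\W}_N^{\B}$ and $\iso_{\M} : \widehat{\M}_G^{\DR} \xrightarrow{\sim} \widehat{\M}_N^{\B}$ of algebras and modules, but which need not carry one coproduct to the other. A point $g \in \DMR_{\times}^{\iota}(\K)$ is a grouplike series of parameter $\lambda \in \K^{\times}$; from it I would form the twisted Magnus automorphism $\aut(g)$ of $\widehat{\V}_G^{\DR}$, check that it preserves the subalgebra $\widehat{\W}_G^{\DR}$ and descends to the quotient $\widehat{\M}_G^{\DR}$, and set $\comp_g^{\W} := \iso_{\W} \circ \aut(g)$ and $\comp_g^{\M} := \iso_{\M} \circ \aut(g)$. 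Each is a composite of topological isomorphisms, so the only real content is coproduct compatibility. Here the essential novelty over the already-established stabiliser chain of Corollary \ref{DMR_sub_StabM_sub_StabW} is that, since $\lambda \neq 0$, the element $g$ does \emph{not} stabilise the de Rham coproducts but must instead transport them to the Betti ones.

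The main step, and the principal obstacle, is therefore to prove that $\aut(g)$ turns $\widehat{\Delta}^{\W, \DR}_G$ into $\widehat{\Delta}^{\W, \B}_N$ through $\iso_{\W}$. I would reduce the identity $\widehat{\Delta}^{\W, \B}_N \circ \comp_g^{\W} = (\comp_g^{\W} \hat{\otimes} \comp_g^{\W}) \circ \widehat{\Delta}^{\W, \DR}_G$ to a verification on the algebra generators $X_1^{\pm 1}$ and $Y_n^{\pm}$ ($n \in \N^{\ast}$) of $\widehat{\W}_N^{\B}$, using the explicit formulas recalled in the introduction: grouplikeness of $X_1^{\pm 1}$ and the binomial-convolution coproduct of the $Y_n^{\pm}$. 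The point is that transporting the primitive-type de Rham coproduct by $\aut(g)$ produces a new coproduct on the same algebra whose value on generators is governed by the coefficients of $g$; the membership $g \in \DMR_{\lambda}^{\iota}(\K)$, that is, the double shuffle and regularisation relations, is exactly the family of identities forcing these transported values to agree with the grouplike and convolution formulas defining $\widehat{\Delta}^{\W, \B}_N$. Intuitively the grouplike behaviour of $X_1$ records the ``shuffle'' half of the relations and the convolution coproduct of the $Y_n^{\pm}$ records the ``harmonic'' half, so that this single generator-level comparison is where all the arithmetic input is consumed.

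Finally I would run the same argument on the module side, checking $\widehat{\Delta}^{\M, \B}_N \circ \comp_g^{\M} = (\comp_g^{\W} \hat{\otimes} \comp_g^{\M}) \circ \widehat{\Delta}^{\M, \DR}_G$ on the distinguished generator of $\widehat{\M}_N^{\B}$, so that $(\comp_g^{\W}, \comp_g^{\M})$ is a genuine point of the target torsor. To finish the inclusion I would establish equivariance: by Corollary \ref{DMR_sub_StabM_sub_StabW} the normal subgroup $\DMR_0^G(\K)$ acts through automorphisms stabilising both de Rham coproducts, hence through the right action on the torsor of isomorphisms, while the $\Aut(G) \times \K^{\times}$ factor acts by the evident Galois-type and rescaling automorphisms of the de Rham data; since composition of $\aut(g)$ with such an automorphism realises precisely the torsor action on $\DMR_{\times}(\K)$, the assignment $g \mapsto (\comp_g^{\W}, \comp_g^{\M})$ is equivariant. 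As $\DMR_{\times}(\K)$ is a single torsor under this group, its image is a sub-torsor of the torsor of isomorphisms, which is the asserted containment.
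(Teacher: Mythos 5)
There is a genuine gap, and it sits exactly where you locate ``the main step.'' Your plan hinges on checking, on the generators $X_1^{\pm 1}$ and $Y_n^{\pm}$, that transport of $\widehat{\Delta}^{\W, \DR}_G$ by $\comp_g^{\W}$ reproduces ``the explicit formulas recalled in the introduction.'' But those formulas define a Betti coproduct only for $N=1$ (they are Enriquez--Furusho's coproduct from \cite{EF1}); for general $N$ the paper neither gives a presentation of $\widehat{\W}^{\B}_N$ by such generators nor any explicit formula for $\widehat{\Delta}^{\W, \B}_N$. In this paper the pair $(\widehat{\Delta}^{\W, \B}_N, \widehat{\Delta}^{\M, \B}_N)$ is \emph{defined} (Theorem \ref{Delta_B_N}) as the transport of $(\widehat{\Delta}^{\W, \DR}_G, \widehat{\Delta}^{\M, \DR}_G)$ by the comparison isomorphisms ${}^{\Gamma}\comp^{\W, (1)}_{(\iota, \nu, \Phi)}$, ${}^{\Gamma}\comp^{\M, (10)}_{(\iota, \nu, \Phi)}$ attached to an element $(\iota, \nu, \Phi) \in \DMR_{\times}(\K)$. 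Read inside that framework, your main step is circular: for the chosen base point the identity holds by definition, and for an arbitrary $g \in \DMR_{\times}(\K)$ it is precisely the independence-of-choice statement, which no generator computation can deliver and which you never prove. Read instead as positing EF1-style formulas for general $N$, your step is a different and much harder theorem whose crucial claim --- that membership in $\DMR^{\iota}_{\lambda}(\K)$ ``is exactly the family of identities'' forcing the agreement --- is asserted, not proved. In the paper the double shuffle relations enter through an entirely different channel: the equality \eqref{DMRsubsetStab} of \cite{EF0} placing $\DMR_0^G(\K)$ inside $\Stab_{\G(\KX)}(\widehat{\Delta}^{\M, \DR}_{G})(\K)$, combined with Racinet's torsor theorem (Proposition \ref{Racinet_main_result}).

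The skeleton that actually closes the argument is the following. First one proves Theorem \ref{Delta_B_N}: writing any two elements of $\DMR_{\times}(\K)$ as $(\iota^{\prime}, \nu^{\prime}, \Phi^{\prime}) = (\phi, \lambda, \Psi) \cdot (\iota, \nu, \Phi)$ with $(\phi, \lambda, \Psi) \in (\Aut(G) \times \K^{\times}) \ltimes \DMR_0^G(\K)$ (Proposition \ref{DMR_x_torsor}), the cocycle identity ${}^{\Gamma}\comp^{\M, (10)}_{(\phi, \lambda, \Psi) \cdot (\iota, \nu, \Phi)} = {}^{\Gamma}\aut^{\M, (10)}_{(\phi, \lambda, \Psi)} \circ {}^{\Gamma}\comp^{\M, (10)}_{(\iota, \nu, \Phi)}$ (Proposition \ref{Gamma_comp_aut1}) together with the stabilizer chain of Corollary \ref{DMR_sub_StabM_sub_StabW} makes the conjugation by ${}^{\Gamma}\aut$ disappear, proving the transported coproducts are well defined. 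Once this is in place, the containment of $\DMR_{\times}(\K)$ in the setwise stabilizers is immediate (Remark \ref{nonempty_stabs}), the torsor structure on those stabilizers comes from the general stabilizer lemma of \cite{EF2} (Lemma \ref{Genera_stab_subtor}, via Proposition \ref{Stabs_subtor}), and the chain of subtorsors is assembled with Lemmas \ref{inter_subtor} and \ref{incl_subtor}; your closing equivariance remark corresponds to this last, easy part. Two further technical corrections: the identifications of Proposition-Definitions \ref{isoW} and \ref{isoMiota} depend on $\iota \in \Emb(G)$ (your $\iso_{\W}, \iso_{\M}$ suppress this, yet the theorem concerns the union over all $\iota$), and on the module side the map descending to $\widehat{\M}^{\DR}_G$ is not the algebra automorphism itself but its composite with $r_{\Gamma_{\Psi}^{-1}(-e_1)\beta(\Psi \otimes 1)}$ (the ``$(10)$'' version), so $\comp_g^{\M}$ cannot be ``$\iso_{\M} \circ \aut(g)$'' with the same $\aut(g)$ used for $\comp_g^{\W}$.
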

    \subsubsection*{\textbf{Acknowledgements}} The author is grateful to Benjamin Enriquez for the helpful discussions, ideas and careful reading.
    \subsubsection*{Notation} Throughout this paper, \index{$G$}$G$ is a (multiplicative) finite cyclic group of order $N$ and \index{$\K$}$\K$ is a commutative $\Q$-algebra. For a $\K$-algebra $A$, an element $x \in A$ and a left $A$-module $M$ we consider:
    \begin{itemize}[leftmargin=*]
        \item $\ell_{x} : M \to M$ to be the $\K$-module endomorphism defined by $m \mapsto xm$ and if $x$ is invertible, then $\ell_x$ is an automorphism.
        \item $r_{x} : A \to A$ to be the $\K$-module endomorphism defined by $a \mapsto ax$ and if $x$ is invertible, then $r_x$ is an automorphism.
        \item $\Ad_x : A \to A$ to be the $\K$-algebra automorphism defined by $a \mapsto x a x^{-1}$ with $x \in A^{\times}$.
    \end{itemize}
    \setcounter{section}{-1}
\section{Some categories of algebra-modules}
First, let us recall various categories introduced in \cite{EF4} that will be used throughout this paper:
\begin{itemize}[leftmargin=*]
    \item \index{$\K\text{-}\Mod$}$\K\text{-}\Mod$ is the category of $\K$-modules.
    \item \index{$\K\text{-}\alg$}$\K\text{-}\alg$ is the category of $\K$-algebras.
    \item \index{$\K\text{-}\alg\text{-}\Mod$} $\K\text{-}\alg\text{-}\Mod$ is the category of pairs $(A, M)$ where $A$ is a $\K$-algebra and $M$ is a left $A$-module.
    \item \index{$\K\text{-}\mathrm{coalg}$}$\K\text{-}\mathrm{coalg}$ is the category of coassociative cocommutative coalgebras over $\K$.
    \item \index{$\K\text{-}\mathrm{Hopf}$}$\K\text{-}\mathrm{Hopf}$ is the category of Hopf algebras over $\K$.
    \item \index{$\K\text{-}\mathrm{HAMC}$} $\K\text{-}\mathrm{HAMC}$ is the category of Hopf-Algebra-Module-Coalgebras where objects are pairs $\left((A, \Delta^A), (M, \Delta^M)\right)$ where $(A, \Delta^A)$ is a Hopf algebra and $(M, \Delta^M)$ is a coalgebra such that
    \begin{itemize}[label=$\blacktriangleright$, leftmargin=*]
        \item The pair $(A, M)$ is an algebra-module.
        \item For $(a, m) \in A \times M$, we have $\Delta^M(am) = \Delta^A(a)\Delta^M(m)$.
    \end{itemize}
    \item $\K{\text -}\Mod_{\mathrm{top}}$ is the category of topological $\K$-modules with objects pairs $(M, (\mathcal{F}^iM)_{i \in \N})$, where $M$ is a $\K$-module and $(\mathcal{F}^iM)_{i \in \N}$ is a decreasing filtration of $M$ such that the map $\displaystyle M \to \lim_{\longleftarrow} M/F^iM$ is a $\K$-module isomorphism, i.e. $M$ is complete and separated for the topology defined by the filtration $(\mathcal{F}^iM)_{i \in \N}$. It is equipped with a tensor product $\hat{\otimes}$, with respect to which it is a symmetric tensor category.
    \item $\K{\text -}\alg_{\mathrm{top}}$ is the category of topological $\K$-algebras. i. e. algebras in the category  $\K{\text -}\Mod_{\mathrm{top}}$ in the sense of \cite{McL}.
    \item $\K\text{-}\alg\text{-}\Mod_{\mathrm{top}}$ is the category of topological $\K$-algebra-modules. i. e. $\K$-algebra-modules in the category $\K{\text -}\Mod_{\mathrm{top}}$ in the sens of \cite{McL}.
    \item $\K{\text -}\mathrm{coalg}_{\mathrm{top}}$ is the category of topological $\K$-coalgebras. i. e. coalgebras in the category $\K{\text -}\mathrm{mod}_{\mathrm{top}}$ in the sens of \cite{McL}.
    \item $\K{\text -}\mathrm{Hopf}_{\mathrm{top}}$ is the category of topological $\K$-Hopf algebras. i. e. Hopf algebras in the category $\K{\text -}\mathrm{mod}_{\mathrm{top}}$ in the sens of \cite{McL}.
    \item $\K{\text -}\mathrm{HAMC}_{\mathrm{top}}$ is the category of topological Hopf-Algebra-Module-Coalgebras. i. e. Hopf-Algebra-Module-Coalgebras in the category $\K{\text -}\mathrm{mod}_{\mathrm{top}}$ in the sens of \cite{McL}.
\end{itemize}

Finally, let $\mathcal{C}$ be a symmetric tensor category and $O$ an object of $\mathcal{C}$. We define $\mathrm{Cop}_{\mathcal{C}}(O)$ to be the set of morphisms $D : O \to O^{\hat{\otimes} 2}$.
One checks that the group $\Aut_{\mathcal{C}}(O)$ acts on $\mathrm{Cop}_{\mathcal{C}}(O)$ by
\begin{equation}
    \label{from_Aut_to_Cop}
    \alpha \cdot D := \alpha^{\otimes 2} \circ D \circ \alpha^{-1},
\end{equation}
with $\alpha \in \Aut_{\mathcal{C}}(O)$ and $D \in \mathrm{Cop}_{\mathcal{C}}(O)$.
    \section{The double shuffle torsors}
In this section, we recall the various double shuffle torsors arising from Racinet's work in \cite{Rac}. In §\ref{preliminaries}, we recall the basic framework of Racinet's formalism. Namely, two Hopf algebras $(\KX, \widehat{\Delta})$ and $(\KY, \widehat{\Delta}_{\star}^{\alg})$, a coalgebra $(\KX/\KX x_0, \widehat{\Delta}_{\star}^{\Mod})$ and a group $(\G(\KX), \circledast)$. Additionally, we also recall the basic material of the crossed product formalism introduced in \cite{Yad} which consists of an algebra $\widehat{\V}^{\DR}_G$ and its relation with a Hopf algebra $(\widehat{\W}^{\DR}_G, \widehat{\Delta}^{\W, \DR}_G)$ isomorphic to the Hopf algebra $(\KY, \widehat{\Delta}_{\star}^{\alg})$ and a coalgebra $(\widehat{\M}^{\DR}_G, \widehat{\Delta}^{\M, \DR}_G)$ isomorphic to the coalgebra $(\KX/\KX x_0, \widehat{\Delta}_{\star}^{\Mod})$. In §\ref{DMRiotalambda}, we introduce the double shuffle set $\DMR_{\lambda}^{\iota}(\K)$ for $\lambda \in \K$ and $\iota : G \to \C^{\times}$ an injective group morphism, which is a torsor over the double shuffle group $\DMR_0^G(\K)$, a subgroup of $\G(\KX), \circledast)$ (\cite{Rac}). In §\ref{DMRxiota}, we define a set $\displaystyle \DMR_{\times}^{\iota}(\K) = \bigsqcup_{\lambda \in \K^{\times}} \DMR_{\lambda}^{\iota}(\K)$ and show that it is a torsor for a group given by a semidirect product arising from an action of $\K^{\times}$. Finally, in §\ref{DMRx}, we define a set $\displaystyle \DMR_{\times}(\K) = \bigsqcup_{\iota} \DMR_{\times}^{\iota}(\K)$ where $\iota$ runs over all injections $G \to \C^{\times}$ and show that it is a torsor for a group given by semidirect product arising from an action of $\Aut(G) \times \K^{\times}$. 

\subsection{Preliminaries} \label{preliminaries}

\subsubsection{Basic objects of Racinet's formalism}
Let \index{$\KX$}$\KX$ be the free noncommutative associative series algebra with unit over the alphabet \index{$X$}$X := \{x_0\} \sqcup \{x_g | g \in G\}$\index{$x_0$}\index{$x_g$}. It is complete graded with $\deg(x_0) = \deg(x_g) = 1$ for $g \in G$.
This algebra provides an object in $\K{\text-}\mathrm{Hopf}_{\mathrm{top}}$ when equipped with the coproduct \index{$\widehat{\Delta}$}$\widehat{\Delta} : \KX \to {\KX}^{\hat{\otimes} 2}$, which is the unique topological $\K$-algebra morphism given by $\widehat{\Delta}(x_g) = x_g \otimes 1 + 1 \otimes x_g$, for any $g \in G \sqcup \{0\}$ (\cite[§2.2.3]{Rac}).
Let then \index{$\G(\KX)$}$\G(\KX)$ be the set of grouplike elements of $\KX$ for the coproduct $\widehat{\Delta}$, i.e. the set
\begin{equation}
    \label{GKX}
    \G(\KX) = \{ \Psi \in \KX^{\times} \, | \, \widehat{\Delta}(\Psi) = \Psi \otimes \Psi \},
\end{equation}
where $\KX^{\times}$ denotes the set of invertible elements of $\KX$. Since $(\KX, \widehat{\Delta})$ is a Hopf algebra, $\G(\KX)$ is a group for the product of $\KX$.

The group $G$ acts on $\KX$ by topological $\K$-algebra automorphisms by $g \mapsto t_g$, where for any $g \in G$, the topological $\K$-algebra automorphism \index{$t_g$}$t_g$ is given by $t_g(x_0) = x_0, \, t_g(x_h) = x_{gh}$ for $h \in G$ (\cite[§3.1.1]{Rac}).
For any $g \in G$, we have
\begin{equation}
    \widehat{\Delta} \circ t_g = t_g^{\otimes 2} \circ \widehat{\Delta},
\end{equation}
this can be verified by checking this identity on generators since both sides are given as a composition of $\K$-algebra morphisms.
As a consequence, for any $g \in G$, the $\K$-algebra automorphism $t_g : \KX \to \KX$ restricts to a group automorphism of $\G(\KX)$.

Let \index{$\q$}$\q$ be the $\K$-module automorphism of $\KX$ given on the topological $\K$-module basis $\left(x_0^{n_1}x_{g_1}x_0^{n_2}x_{g_2} \cdots x_0^{n_r}x_{g_r}x_0^{n_{r+1}}\right)_{\substack{r, n_1, \dots, n_{r+1} \in \N \\ g_1, \dots, g_r \in G}}$ of $\KX$ (\cite[§2.2.7]{Rac}) by
\begin{equation*}
    \q(x_0^{n_1}x_{g_1}x_0^{n_2}x_{g_2} \cdots x_0^{n_r}x_{g_r} x_0^{n_{r+1}}) = x_0^{n_1}x_{g_1}x_0^{n_2}x_{g_2g_1^{-1}} \cdots x_0^{n_r}x_{g_rg_{r-1}^{-1}}x_0^{n_{r+1}}.
\end{equation*}

For $(n, g) \in \N_{> 0} \times G$, set \index{$y_{n, g}$}$y_{n, g} := x_0^{n-1}x_g$. Let \index{$Y$}$Y := \{ y_{n, g} | (n, g) \in \N_{> 0} \times G \}$. We define \index{$\KY$}$\KY$ to be the topologically free $\K$-algebra over $Y$, where for every $(n, g) \in \N_{> 0} \times G$, the element $y_{n, g}$ is of degree $n$. One shows that $\KY$ is equal to the $\K$-subalgebra $\displaystyle \K \oplus \bigoplus_{g \in G} \KX x_g$ of $\KX$ (\cite[§2.2.5]{Rac} and \cite[§2.2]{EF0}).

Let \index{$\widehat{\Delta}^{\alg}_{\star}$}$\widehat{\Delta}^{\alg}_{\star} : \KY \to \left(\KY \right)^{\hat{\otimes} 2}$ be the unique topological $\K$-algebra morphism such that for any $(n, g) \in \N_{> 0} \times G$
\begin{equation}
    \widehat{\Delta}^{\alg}_{\star}(y_{n,g}) = y_{n,g} \otimes 1 + 1 \otimes y_{n,g} + \sum_{\substack{k=1 \\ h \in G}}^{n-1} y_{k,h} \otimes y_{n-k,gh^{-1}}.
\end{equation}
The element $\widehat{\Delta}^{\alg}_{\star} \in \mathrm{Cop}_{\K{\text -}\alg_{\mathrm{top}}}(\KY)$ is called the \emph{harmonic coproduct} (\cite[§2.3.1]{Rac}) and the pair $(\KY, \widehat{\Delta}^{\alg}_{\star})$ is an object of $\K{\text-}\mathrm{Hopf}_{\mathrm{top}}$.

Let us consider the topological $\K$-module quotient \index{$\KX / \KX x_0$}$\KX / \KX x_0$. The pair $(\KY, \KX/\KX x_0)$ is an object of the category $\K{\text -}\alg{\text -}\Mod_{\mathrm{top}}$.
The restriction to $\KY$ of the projection morphism \index{$\pi_Y$}$\pi_Y : \KX \to \KX / \KX x_0$ is an isomorphism, therefore $\KX / \KX x_0$ is free of rank $1$ over $\KY$.
It follows that there is a topological $\K$-module morphism \index{$\widehat{\Delta}_{\star}^{\Mod}$}
$\widehat{\Delta}_{\star}^{\Mod} \in \mathrm{Cop}_{\K{\text -}\Mod_{\mathrm{top}}}(\KX/\KX x_0)$ uniquely defined by the condition that the diagram
\begin{equation}
    \label{harmonic_coproduct_M}
    \begin{tikzcd}
        \KY \ar["\widehat{\Delta}_{\star}^{\alg}"]{rr} \ar["\pi_Y"']{d} && \KY^{\hat{\otimes} 2} \ar["\pi_Y^{\otimes 2}"]{d} \\
        \KX / \KX x_0 \ar["\widehat{\Delta}_{\star}^{\Mod}"]{rr} && \left(\KX / \KX x_0\right)^{\hat{\otimes} 2}
    \end{tikzcd}
\end{equation}
commutes.
The pair $(\widehat{\Delta}^{\alg}_{\star}, \widehat{\Delta}^{\Mod}_{\star})$ is an element of $\mathrm{Cop}_{\K{\text -}\alg{\text -}\Mod_{\mathrm{top}}}(\KY, \KX/\KX x_0)$. The pair $(\KX / \KX x_0, \widehat{\Delta}^{\Mod}_{\star})$ is an object of $\K{\text -}\mathrm{coalg}_{\mathrm{top}}$ and the pair $\big((\KY, \widehat{\Delta}_{\star}^{\alg})$, $(\KX / \KX x_0, \widehat{\Delta}_{\star}^{\Mod})\big)$ is an object of $\K{\text -}\mathrm{HAMC}_{\mathrm{top}}$.

\subsubsection{Basic objects of the crossed product formalism}
Let \index{$\widehat{\V}^{\DR}_G$}$\widehat{\V}^{\DR}_G$ be the complete graded algebra generated by \index{$e_0$}\index{$e_1$}\index{$g$}\(\{e_0, e_1\} \sqcup G\) where $e_0$ and $e_1$ are of degree $1$ and elements $g \in G$ are of degree $0$ satisfying the relations:
\setlength\multicolsep{5pt}
\begin{multicols}{3}
\begin{enumerate}[leftmargin=*, label=(\roman*)]
    \item $g \times h = g h$;
    \item $1 = 1_G$;
    \item $g \times e_0 = e_0 \times g$;
\end{enumerate}
\end{multicols}
\setlength\multicolsep{5pt}
\noindent for any $g, h \in G$; where “$\times$” is the algebra multiplication\footnote{which we will no longer denote if there is no risk of ambiguity.}(\cite[§2.1.1]{Yad}).

Recall that the map $g \mapsto t_g$ defines an action of $G$ on $\KX$ by $\K$-algebra automorphisms. One then considers the crossed product algebra \index{$\KX \rtimes G$}$\KX \rtimes G$ for this action, which is the $\K$-module $\KX \otimes \K G$ equipped with the product given for any $a, b \in \KX$ and any $g, h \in G$ by (\cite[Chapter 3, Page 180, Exercise 11]{Bou07})
\begin{equation}
    (a \otimes g) \ast (b \otimes h) = a \, t_g(b) \otimes gh.
\end{equation}

\begin{proposition}[{\cite[Proposition 2.1.3]{Yad}}]
    \label{KXGtoVG}
    There is a unique $\K$-algebra isomorphism \index{$\beta$}$\beta : \KX \rtimes G \to \widehat{\V}^{\DR}_G$ such that $x_0 \otimes 1 \mapsto e_0$ and for $g \in G$, $x_g \otimes 1 \mapsto -g e_1 g^{-1}$ and $1 \otimes g  \mapsto g$.
\end{proposition}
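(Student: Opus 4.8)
The plan is to construct $\beta$ and its two‑sided inverse directly on generators, and then to check that the two maps compose to the identities. Since $G$ is finite, $\KX \rtimes G = \KX \otimes \K G$ is a finite direct sum of copies of $\KX$, so it inherits the complete grading of $\KX$ via $\deg(a \otimes g) = \deg(a)$ and is an object of $\K{\text -}\alg_{\mathrm{top}}$. I expect $\beta$ to be a graded, hence topological, isomorphism, which makes all convergence issues automatic.

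First I would build $\beta$ using the universal property of the crossed product: a continuous $\K$-algebra morphism $\KX \rtimes G \to B$ into a complete algebra $B$ is the same datum as a pair $(f, \rho)$, where $f : \KX \to B$ is a continuous $\K$-algebra morphism and $\rho : G \to B^{\times}$ is a group morphism, subject to $\rho(g)\, f(a) = f(t_g(a))\, \rho(g)$ for all $g \in G$ and $a \in \KX$. Taking $B = \widehat{\V}^{\DR}_G$, I set $f(x_0) = e_0$, $f(x_g) = -g e_1 g^{-1}$ and $\rho(g) = g$. The morphism $f$ extends continuously because its values lie in degree $1$, i.e. in the augmentation ideal; the map $\rho$ lands in $(\widehat{\V}^{\DR}_G)^{\times}$ with $\rho(g)^{-1} = g^{-1}$ by relations (i) and (ii). The one substantive verification is the compatibility condition: on $x_0$ it is exactly relation (iii), $g e_0 g^{-1} = e_0$, while on $x_h$ it reads
\[
    \rho(g)\, f(x_h)\, \rho(g)^{-1} = g(-h e_1 h^{-1})g^{-1} = -(gh) e_1 (gh)^{-1} = f(x_{gh}) = f(t_g(x_h)),
\]
using $t_g(x_h) = x_{gh}$. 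This produces $\beta$, determined on the topological generators $x_0 \otimes 1$, $x_g \otimes 1$ and $1 \otimes g$ by the stated formulas, whence uniqueness is immediate. The three families are mutually consistent, since $\beta(x_1 \otimes 1) = -e_1$ and $x_g \otimes 1 = (1 \otimes g) \ast (x_1 \otimes 1) \ast (1 \otimes g^{-1})$ in $\KX \rtimes G$.

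To prove bijectivity I would exhibit the inverse $\gamma : \widehat{\V}^{\DR}_G \to \KX \rtimes G$ from the presentation of $\widehat{\V}^{\DR}_G$: a continuous morphism out of it is specified by images of $e_0$, $e_1$ and the elements of $G$ (mapped to the augmentation ideal in degree $1$, resp. to degree $0$) that satisfy relations (i)--(iii). I set $\gamma(e_0) = x_0 \otimes 1$, $\gamma(e_1) = -x_1 \otimes 1$ and $\gamma(g) = 1 \otimes g$. Relations (i) and (ii) hold because $(1 \otimes g) \ast (1 \otimes h) = 1 \otimes gh$ and $1 \otimes 1_G$ is the unit, while relation (iii) follows from $(1 \otimes g)\ast(x_0 \otimes 1) = x_0 \otimes g = (x_0 \otimes 1)\ast(1 \otimes g)$ since $t_g(x_0) = x_0$; note that no relation constrains the image of $e_1$, so these three suffice. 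It then remains to check $\gamma \circ \beta = \mathrm{id}$ and $\beta \circ \gamma = \mathrm{id}$ on generators, the only nonimmediate point being $\gamma(-g e_1 g^{-1}) = x_g \otimes 1$, which unwinds from $(1 \otimes g)\ast(-x_1 \otimes 1)\ast(1 \otimes g^{-1}) = -x_g \otimes 1$.

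The main difficulty is bookkeeping rather than conceptual: one must invoke both the universal property of the crossed product and the presentation of $\widehat{\V}^{\DR}_G$ in the complete, topological setting, ensuring that $f$, $\rho$ and $\gamma$ extend continuously and that every identity survives completion. Because each generator is sent into degree $\leq 1$ and all the maps in sight are graded, convergence is guaranteed and no genuine analytic obstacle remains; the entire argument reduces to the short generator computations above.
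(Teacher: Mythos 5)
Your proof is correct: the crossed-product compatibility check $\rho(g)\,f(x_h)\,\rho(g)^{-1} = f(t_g(x_h))$, the verification that $\gamma$ respects relations (i)--(iii) (with the accurate observation that no relation constrains the image of $e_1$), and the inversion computations $(1 \otimes g)\ast(x_1 \otimes 1)\ast(1 \otimes g^{-1}) = x_g \otimes 1$ and $\beta(-x_1\otimes 1)=e_1$ are all sound, and the grading argument disposes of the topological issues since all generators are sent to homogeneous elements of degree $\leq 1$. Note that the paper itself only recalls this statement from \cite[Proposition 2.1.3]{Yad} without reproducing a proof; your route --- the universal property of $\KX \rtimes G$ to build $\beta$, the presentation of $\widehat{\V}^{\DR}_G$ to build the inverse $\gamma$, then agreement of the compositions with the identity on dense sets of generators --- is the natural argument for this result and essentially the one the cited reference carries out.
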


Let \index{$\widehat{\W}^{\DR}_G$}$\widehat{\W}^{\DR}_G$ be the complete graded $\K$-subalgebra of $\widehat{\V}^{\DR}_G$ given by
\begin{equation}
    \widehat{\W}^{\DR}_G := \K \oplus \widehat{\V}^{\DR}_G e_1.
\end{equation}
It is freely generated by the family \index{$Z$}\index{$z_{n, g}$}
\[
    Z=\{z_{n,g}:= -e_0^{n-1}ge_1 \, | \, (n,g) \in \N_{>0} \times G\},
\]
with $\deg(z_{n,g}) = n$ (\cite[Proposition 2.1.5.(b)]{Yad}). As a consequence, there is a unique $\K$-algebra isomorphism \index{$\varpi$}$\varpi : \KY \to \widehat{\W}^{\DR}_G$ given for $(n, g) \in \N_{>0} \times G$ by $y_{n, g} \mapsto z_{n, g}$.

One then has a unique topological $\K$-algebra morphism \index{$\widehat{\Delta}^{\W, \DR}_{G}$} $\widehat{\Delta}^{\W, \DR}_{G} : \widehat{\W}^{\DR}_G \to (\widehat{\W}^{\DR}_G)^{\hat{\otimes} 2}$ such that for any $(n, g) \in \N_{> 0} \times G$
\begin{equation}
    \label{DeltaW}
    \widehat{\Delta}^{\W, \DR}_{G}(z_{n, g}) = z_{n, g} \otimes 1 + 1 \otimes z_{n, g} + \sum_{\substack{k = 1 \\ h \in G}}^{n-1} z_{k, h} \otimes z_{n-k, gh^{-1}}. 
\end{equation}
The coproduct $\widehat{\Delta}^{\W, \DR}_{G}$ is an element of $\mathrm{Cop}_{\K{\text -}\alg_{\mathrm{top}}}(\widehat{\W}^{\DR}_G)$. The pair $(\widehat{\W}^{\DR}_G, \widehat{\Delta}^{\W, \DR}_G)$ is an object in the category $\K{\text -}\mathrm{Hopf}_{\mathrm{top}}$ and the $\K$-algebra isomorphism $\varpi : \KY \to \widehat{\W}^{\DR}_G$ is an isomorphism between the Hopf algebras $(\KY, \widehat{\Delta}_{\star}^{\alg})$ and $(\widehat{\W}^{\DR}_G, \widehat{\Delta}^{\W, \DR}_G)$.

Let \index{$\widehat{\M}^{\DR}_G$}$\widehat{\M}^{\DR}_G$ be the complete graded $\K$-module given by
\[
    \widehat{\M}^{\DR}_G := \widehat{\V}^{\DR}_G \Big/ \Big(\widehat{\V}^{\DR}_G e_0 + \sum_{g \in G} \widehat{\V}^{\DR}_G (g-1)\Big).
\]
Let \index{$1_{\DR}$}$1_{\DR}$ be the class of $1 \in \widehat{\V}^{\DR}_G$ in $\widehat{\M}^{\DR}_G$. The map \index{$- \cdot 1_{\DR}$}$- \cdot 1_{\DR} : \widehat{\V}^{\DR}_G \to \widehat{\M}^{\DR}_G$ is a surjective $\K$-module morphism with kernel $\displaystyle \widehat{\V}^{\DR}_G e_0 + \sum_{g \in G} \widehat{\V}^{\DR}_G (g-1)$. In addition, the pair $(\widehat{\V}^{\DR}_G, \widehat{\M}^{\DR}_G)$ is an object in the category $\K{\text -}\alg{\text -}\Mod_{\mathrm{top}}$.
Moreover, one deduces from \cite[Proposition 2.1.6]{Yad} that there is a unique $\K$-module isomorphism \index{$\kappa$}$\kappa : \KX / \KX x_0 \to \widehat{\M}^{\DR}_G$ determined by the commutativity of the diagram
\begin{equation}
    \label{diag_iso_MG}
    \begin{tikzcd}
        \KX \ar["\beta \circ (- \otimes 1)"]{rr} \ar["\pi_Y \circ \, \q"']{d} & & \widehat{\V}^{\DR}_G \ar["- \cdot 1_{\DR}"]{d} \\
        \KX /\KX x_0 \ar["\kappa"]{rr} & & \widehat{\M}^{\DR}_G
    \end{tikzcd}
\end{equation}

On the other hand, the following diagram
\begin{equation}
    \label{diag_projections}
    \begin{tikzcd}
        \KY \ar["\varpi"]{rr} \ar["\pi_Y"']{d} && \widehat{\W}^{\DR}_G \ar["- \cdot 1_{\DR}"]{d} \\
        \KX / \KX x_0 \ar["\kappa"]{rr} & & \widehat{\M}^{\DR}_G
    \end{tikzcd}
\end{equation}
commutes (\cite[Corollary 2.1.8]{Yad}). As a consequence, the map $- \cdot 1_{\DR} : \widehat{\W}^{\DR}_G \to \widehat{\M}^{\DR}_G$ is a $\K$-module isomorphism since all other arrows of Diagram \eqref{diag_projections} are isomorphisms.
In addition, we obtain the following result
\begin{lemma}
    \label{WM_in_algmod}
    The pair $(\widehat{\W}^{\DR}_G, \widehat{\M}^{\DR}_G)$ is an object of the category $\K{\text -}\alg{\text -}\Mod_{\mathrm{top}}$. Moreover, $\widehat{\M}^{\DR}_G$ is free $\widehat{\W}^{\DR}_G$-module of rank $1$.
\end{lemma}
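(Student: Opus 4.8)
The plan is to derive both assertions from the already-established fact that $- \cdot 1_{\DR} : \widehat{\W}^{\DR}_G \to \widehat{\M}^{\DR}_G$ is a $\K$-module isomorphism (obtained from Diagram \eqref{diag_projections}, all of whose other arrows are isomorphisms), by upgrading it from a statement about $\K$-modules to one about the $\widehat{\W}^{\DR}_G$-module structure.

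First, for the algebra-module assertion, I would equip $\widehat{\M}^{\DR}_G$ with its $\widehat{\W}^{\DR}_G$-module structure by \emph{restriction of scalars} along the inclusion $\widehat{\W}^{\DR}_G \hookrightarrow \widehat{\V}^{\DR}_G$. Since $(\widehat{\V}^{\DR}_G, \widehat{\M}^{\DR}_G)$ is already an object of $\K\text{-}\alg\text{-}\Mod_{\mathrm{top}}$ and the inclusion is a morphism of topological $\K$-algebras, the restricted action $\widehat{\W}^{\DR}_G \times \widehat{\M}^{\DR}_G \to \widehat{\M}^{\DR}_G$ is again associative and continuous, so $(\widehat{\W}^{\DR}_G, \widehat{\M}^{\DR}_G)$ is an object of $\K\text{-}\alg\text{-}\Mod_{\mathrm{top}}$.

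Next, for freeness of rank $1$, I would take $1_{\DR}$ as candidate basis element and observe that the map $- \cdot 1_{\DR} : \widehat{\W}^{\DR}_G \to \widehat{\M}^{\DR}_G$, $w \mapsto w \cdot 1_{\DR}$, is precisely the evaluation-at-$1_{\DR}$ of the action just described, hence a morphism of left $\widehat{\W}^{\DR}_G$-modules from the left regular module $\widehat{\W}^{\DR}_G$ to $\widehat{\M}^{\DR}_G$: for $w, w' \in \widehat{\W}^{\DR}_G$ one has $(w'w) \cdot 1_{\DR} = w' \cdot (w \cdot 1_{\DR})$ by associativity of the action. Since this map is already known to be a $\K$-module isomorphism, and a bijective morphism out of the left regular module automatically has a $\widehat{\W}^{\DR}_G$-linear inverse — writing any element of $\widehat{\M}^{\DR}_G$ as $w \cdot 1_{\DR}$ and applying linearity of the action shows $(- \cdot 1_{\DR})^{-1}$ intertwines the two actions — it is in fact an isomorphism of left $\widehat{\W}^{\DR}_G$-modules. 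Therefore $\widehat{\M}^{\DR}_G$ is free of rank $1$ with basis $\{1_{\DR}\}$.

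The computations here are entirely formal, so I do not anticipate a genuine obstacle; the only points requiring care are the \emph{topological} ones. I would check that the restricted action respects the filtrations so that the pair indeed lies in $\K\text{-}\alg\text{-}\Mod_{\mathrm{top}}$, and that the inverse of $- \cdot 1_{\DR}$ is continuous — both of which follow from the fact that all the maps in Diagram \eqref{diag_projections} are morphisms of complete graded objects, so the $\K$-module isomorphism is automatically a homeomorphism compatible with the gradings. The conceptual content is thus simply the observation that the $\K$-module isomorphism $- \cdot 1_{\DR}$ is forced to be $\widehat{\W}^{\DR}_G$-linear once $\widehat{\M}^{\DR}_G$ is viewed as a $\widehat{\W}^{\DR}_G$-module by restriction of scalars.
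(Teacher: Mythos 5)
Your proof is correct and follows essentially the same route as the paper: the algebra-module structure is obtained by pull-back (restriction of scalars) along the inclusion $\widehat{\W}^{\DR}_G \hookrightarrow \widehat{\V}^{\DR}_G$, and the free rank-$1$ property rests on the $\K$-module isomorphism $- \cdot 1_{\DR}$ coming from Diagram \eqref{diag_projections}. The only cosmetic difference is that the paper phrases the freeness step as a transfer of the free rank-$1$ property of $\KX/\KX x_0$ over $\KY$ through that diagram, whereas you verify directly that $- \cdot 1_{\DR}$ is $\widehat{\W}^{\DR}_G$-linear and hence a module isomorphism out of the left regular module; the two arguments are the same in substance.
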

\begin{proof}
    The first statement follows from the fact that $(\widehat{\W}^{\DR}_G, \widehat{\M}^{\DR}_G)$ is the pull-back of the $\K$-algebra-module $(\widehat{\V}^{\DR}_G, \widehat{\M}^{\DR}_G)$ by the $\K$-algebra morphism $\widehat{\W}^{\DR}_G \hookrightarrow \widehat{\V}^{\DR}_G$. \newline
    The second statement comes from the fact that $\KX / \KX x_0$ is a free $\KY$-module of rank $1$ thanks to the commutativity of Diagram \eqref{diag_projections}.
\end{proof}

\noindent This enables us to construct a topological $\K$-module morphism $\widehat{\Delta}^{\M, \DR}_G \in \mathrm{Cop}_{\K{\text -}\Mod_{\mathrm{top}}}(\widehat{\M}^{\DR}_G)$ uniquely defined such that the following diagram
\begin{equation}
    \label{diag_DeltaW_DeltaM}
    \begin{tikzcd}
        \widehat{\W}^{\DR}_G \ar["\widehat{\Delta}^{\W, \DR}_G"]{rrr} \ar["- \cdot 1_{\DR}"']{d} & & & (\widehat{\W}^{\DR}_G)^{\hat{\otimes} 2} \ar["- \cdot 1_{\DR}^{\otimes 2}"]{d} \\
        \widehat{\M}^{\DR}_G \ar["\widehat{\Delta}^{\M, \DR}_G"]{rrr} & & & (\widehat{\M}^{\DR}_G)^{\hat{\otimes} 2}
    \end{tikzcd}
\end{equation}
commutes, thanks to Lemma \ref{WM_in_algmod} and the free rank $1$ property of the $\widehat{\W}^{\DR}_G$-module $\widehat{\M}^{\DR}_G$.
The pair $(\widehat{\M}^{\DR}_G, \widehat{\Delta}^{\M, \DR}_G)$ is an object in the category $\K{\text -}\mathrm{coalg}_{\mathrm{top}}$ and the $\K$-module isomorphism $\kappa : \KX / \KX x_0 \to \widehat{\M}^{\DR}_G$ is an isomorphism of coalgebras $(\KX / \KX x_0, \widehat{\Delta}^{\Mod}_{\star})$ and $(\widehat{\M}^{\DR}_G, \widehat{\Delta}^{\M, \DR}_G)$.
\begin{lemma}
    \label{DeltaW_DelaM_DR_Cop}
    The pair $\left(\widehat{\Delta}_{G}^{\W, \DR}, \widehat{\Delta}_{G}^{\M, \DR}\right)$ is an element of $\mathrm{Cop}_{\K{\text -}\alg{\text -}\Mod_{\mathrm{top}}}\left(\widehat{\W}^{\DR}_G, \widehat{\M}^{\DR}_G\right)$.    
\end{lemma}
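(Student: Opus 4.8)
The plan is to unwind what it means for the pair $(\widehat{\Delta}^{\W,\DR}_G, \widehat{\Delta}^{\M,\DR}_G)$ to define a morphism $(\widehat{\W}^{\DR}_G, \widehat{\M}^{\DR}_G) \to (\widehat{\W}^{\DR}_G, \widehat{\M}^{\DR}_G)^{\hat{\otimes} 2}$ in the category $\K{\text-}\alg{\text-}\Mod_{\mathrm{top}}$, and then to check each resulting condition. By definition of a morphism of algebra-modules, three things are required: that $\widehat{\Delta}^{\W,\DR}_G$ be a morphism of topological $\K$-algebras; that $\widehat{\Delta}^{\M,\DR}_G$ be a morphism of topological $\K$-modules; and the compatibility that for all $(w, m) \in \widehat{\W}^{\DR}_G \times \widehat{\M}^{\DR}_G$,
\[
    \widehat{\Delta}^{\M,\DR}_G(w \cdot m) = \widehat{\Delta}^{\W,\DR}_G(w) \cdot \widehat{\Delta}^{\M,\DR}_G(m),
\]
where the right-hand side uses the $(\widehat{\W}^{\DR}_G)^{\hat{\otimes} 2}$-module structure on $(\widehat{\M}^{\DR}_G)^{\hat{\otimes} 2}$ coming from the algebra-module structure of the tensor square. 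The first two conditions hold by construction, since $\widehat{\Delta}^{\W,\DR}_G \in \mathrm{Cop}_{\K{\text-}\alg_{\mathrm{top}}}(\widehat{\W}^{\DR}_G)$ and $\widehat{\Delta}^{\M,\DR}_G \in \mathrm{Cop}_{\K{\text-}\Mod_{\mathrm{top}}}(\widehat{\M}^{\DR}_G)$ were already established. Hence the entire content of the lemma is the displayed compatibility identity.

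To prove it I would exploit the rank $1$ freeness from Lemma \ref{WM_in_algmod}: the map $- \cdot 1_{\DR} : \widehat{\W}^{\DR}_G \to \widehat{\M}^{\DR}_G$ is an isomorphism, so every $m \in \widehat{\M}^{\DR}_G$ can be written uniquely as $m = u \cdot 1_{\DR}$ with $u \in \widehat{\W}^{\DR}_G$, and moreover $- \cdot 1_{\DR}$ is left $\widehat{\W}^{\DR}_G$-linear because $v \cdot (u \cdot 1_{\DR}) = (vu) \cdot 1_{\DR}$; tensoring, $(- \cdot 1_{\DR})^{\otimes 2}$ is left $(\widehat{\W}^{\DR}_G)^{\hat{\otimes} 2}$-linear. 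Writing $m = u \cdot 1_{\DR}$, so that $w \cdot m = (wu) \cdot 1_{\DR}$, and using the defining commutative diagram \eqref{diag_DeltaW_DeltaM} together with the fact that $\widehat{\Delta}^{\W,\DR}_G$ is an algebra morphism, I compute
\[
    \widehat{\Delta}^{\M,\DR}_G(w \cdot m) = (- \cdot 1_{\DR})^{\otimes 2}\big(\widehat{\Delta}^{\W,\DR}_G(wu)\big) = (- \cdot 1_{\DR})^{\otimes 2}\big(\widehat{\Delta}^{\W,\DR}_G(w)\,\widehat{\Delta}^{\W,\DR}_G(u)\big).
\]
I then pull $\widehat{\Delta}^{\W,\DR}_G(w)$ out of $(- \cdot 1_{\DR})^{\otimes 2}$ using its $(\widehat{\W}^{\DR}_G)^{\hat{\otimes} 2}$-linearity and recognise the remaining factor $(- \cdot 1_{\DR})^{\otimes 2}\big(\widehat{\Delta}^{\W,\DR}_G(u)\big)$ as $\widehat{\Delta}^{\M,\DR}_G(u \cdot 1_{\DR}) = \widehat{\Delta}^{\M,\DR}_G(m)$ by the same diagram, yielding exactly the desired identity.

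I expect the only delicate point to be bookkeeping of the module structure on the tensor square and confirming that $- \cdot 1_{\DR}$, hence its tensor square, is genuinely $\widehat{\W}^{\DR}_G$-linear (resp. $(\widehat{\W}^{\DR}_G)^{\hat{\otimes} 2}$-linear); this is immediate from associativity of the action, $v \cdot (u \cdot 1_{\DR}) = (vu) \cdot 1_{\DR}$, and from the definition of the module structure on the tensor square as the tensor product of the module structures. Everything else is a diagram chase through \eqref{diag_DeltaW_DeltaM}. Since all maps involved are continuous and the identity need only be checked on a topologically generating set, the passage to completions raises no additional difficulty.
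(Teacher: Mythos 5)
Your proposal is correct and takes essentially the same route as the paper's own proof: both reduce the lemma to the compatibility identity, write $m = w'\cdot 1_{\DR}$ via the rank-one freeness of Lemma~\ref{WM_in_algmod}, and then chase the identity through Diagram~\eqref{diag_DeltaW_DeltaM} combined with the multiplicativity of $\widehat{\Delta}^{\W,\DR}_G$. The only difference is cosmetic: you spell out the $(\widehat{\W}^{\DR}_G)^{\hat{\otimes} 2}$-linearity of $(-\cdot 1_{\DR})^{\otimes 2}$, which the paper's two invocations of the commutative diagram use implicitly.
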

\begin{proof}
    Let $w \in \widehat{\W}^{\DR}_G$ and $m \in \widehat{\M}^{\DR}_G$. Thanks to Lemma \ref{WM_in_algmod} there is a unique $w^{\prime} \in \widehat{\W}^{\DR}_G$ such that $m = w^{\prime} \cdot 1_{\DR}$. We have
    \begin{align*}
        \widehat{\Delta}_{G}^{\M, \DR}(w \cdot m) & = \widehat{\Delta}_{G}^{\M, \DR}(w w^{\prime} \cdot 1_{\DR}) = \widehat{\Delta}_{G}^{\W, \DR}(w w^{\prime}) \cdot 1_{\DR}^{\otimes 2} = \widehat{\Delta}_{G}^{\W, \DR}(w) \widehat{\Delta}_{G}^{\W, \DR}(w^{\prime}) \cdot 1_{\DR}^{\otimes 2} \\
        & = \widehat{\Delta}_{G}^{\W, \DR}(w) \widehat{\Delta}_{G}^{\M, \DR}(w^{\prime} \cdot 1_{\DR}) = \widehat{\Delta}_{G}^{\W, \DR}(w) \widehat{\Delta}_{G}^{\M, \DR}(m),
    \end{align*}
    where the second and fourth equalities come from the commutativity of Diagram \eqref{diag_DeltaW_DeltaM}.
\end{proof}

\noindent As a consequence, the pair $\left((\widehat{\W}^{\DR}_G, \widehat{\Delta}_{G}^{\W, \DR}), (\widehat{\M}^{\DR}_G, \widehat{\Delta}_{G}^{\M, \DR})\right)$ is an object of $\K{\text -}\mathrm{HAMC}_{\mathrm{top}}$. In addition, the pair
\[
    (\varpi, \kappa) : \left((\KY, \widehat{\Delta}_{\star}^{\alg}), (\KX / \KX x_0, \widehat{\Delta}_{\star}^{\Mod})\right) \to \left((\widehat{\W}^{\DR}_G, \widehat{\Delta}_{G}^{\W, \DR}), (\widehat{\M}^{\DR}_G, \widehat{\Delta}_{G}^{\M, \DR})\right)
\]
is a morphism of objects of $\K{\text -}\mathrm{HAMC}_{\mathrm{top}}$.

\subsection{The torsor \texorpdfstring{$\DMR_{\lambda}^{\iota}(\K)$}{DMRiotalambda}} \label{DMRiotalambda}
Let us denote $\mbox{\small$\KX \to \K^{\{\text{words in } x_0, (x_g)_{g \in G}\}}, v \mapsto \big((v | w)\big)_{w}$}$ the map such that $v = \sum_{w} (v | w) w$ (the empty word is equal to $1$).

Let \index{$\Gamma$}$\Gamma : \KX \to \K[[x]]^{\times}, \Psi \mapsto \Gamma_{\Psi}$ the function\footnote{This function is related to the classical gamma function as established in \cite{Fu11}, page 344 thanks to \cite{Dri90}.} given by (\cite[(3.2.1.2)]{Rac})
\begin{equation}
    \Gamma_{\Psi}(x) := \exp\left( \sum_{n \geq 2} \frac{(-1)^{n-1}}{n} (\Psi | x_0^{n-1} x_1) x^n \right).
\end{equation}

\begin{definition}[{\cite[Definition 3.2.1]{Rac}}]
    \label{DMR_lambda}
    Let $\lambda \in \K$ and \index{$\iota$}$\iota : G \to \C^{\times}$ be a group embedding. We define \index{$\DMR_{\lambda}^{\iota}(\K)$}$\DMR_{\lambda}^{\iota}(\K)$ to be the set of $\Psi \in \G(\KX)$ such that
    \begin{enumerate}[label=(\roman*), leftmargin=*]
        \setlength\multicolsep{5pt}
        \setlength{\columnsep}{0pt}
        \begin{multicols}{2}
        \item \label{DMR_lambda_condition_i} $(\Psi | x_0) = (\Psi | x_1) = 0$;
        \item \label{DMR_lambda_condition_ii} $\widehat{\Delta}_{\star}^{\Mod}(\Psi_{\star}) = \Psi_{\star} \otimes \Psi_{\star}$;
        \item \label{DMR_lambda_condition_iii} If $|G| \in \{1, 2\}, (\Psi | x_0 x_1) = -\frac{\lambda^2}{24}$;
        \item \label{DMR_lambda_condition_iv} If $|G| \geq 3, \, \left(\Psi | x_{g_{\iota}} - x_{g_{\iota}^{-1}}\right) = \frac{|G| - 2}{2} \lambda$;
        \end{multicols}
        \item \label{DMR_lambda_condition_v} For $k \in \{1, \dots, |G|/2\}$, $\left(\Psi | x_{g_{\iota}^k} - x_{g_{\iota}^{-k}}\right) = \frac{|G| - 2k}{|G| - 2} \left(\Psi | x_{g_{\iota}} - x_{g_{\iota}^{-1}}\right)$,
    \end{enumerate}
    \setlength\multicolsep{0pt}
    where \index{$g_{\iota}$}$g_{\iota} := \iota^{-1}(e^{\frac{i2\pi}{|G|}})$ and $\Psi_{\star} := \pi_Y \circ \q \left(\Gamma^{-1}_{\Psi}(x_1) \Psi\right) \in \KX/\KX x_0$.
\end{definition}

\begin{remark} \ 
    \begin{enumerate}[label=(\roman*), leftmargin=*]
        \item Thanks to \cite[§3.2.3]{Rac}, $\DMR_{\lambda}^{\iota}(\K)$ is a non-empty set.
        \item If $|G| \in \{1, 2\}$, the embedding $\iota$ is unique.
    \end{enumerate}
\end{remark}

\begin{propdef}[{\cite[Remark 3.2.2]{Rac}}]
    For $\lambda=0$, the condition\footnote{This also holds for condition \ref{DMR_lambda_condition_v}.} \ref{DMR_lambda_condition_iv} of Definition \ref{DMR_lambda} does not depend of the choice of $\iota$. The set $\DMR_{0}^{\iota}(\K)$ is then denoted $\DMR_{0}^{G}(\K)$ instead.
\end{propdef}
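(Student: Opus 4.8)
The plan is to substitute $\lambda = 0$ into conditions \ref{DMR_lambda_condition_iv} and \ref{DMR_lambda_condition_v} and to show that the resulting constraints can be rewritten in a form that makes no reference to $\iota$. Since for $|G| \in \{1, 2\}$ the embedding $\iota$ is unique, there is nothing to prove in that case, so I would assume $|G| \geq 3$ throughout, which is precisely the regime in which condition \ref{DMR_lambda_condition_iv} is active.

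First I would set $\lambda = 0$. Condition \ref{DMR_lambda_condition_iv} then reads $(\Psi | x_{g_{\iota}}) = (\Psi | x_{g_{\iota}^{-1}})$, and since the right-hand side of \ref{DMR_lambda_condition_v} is a scalar multiple of $(\Psi | x_{g_{\iota}} - x_{g_{\iota}^{-1}})$, which now vanishes, condition \ref{DMR_lambda_condition_v} reduces to $(\Psi | x_{g_{\iota}^k}) = (\Psi | x_{g_{\iota}^{-k}})$ for every $k \in \{1, \dots, |G|/2\}$. Thus, at $\lambda = 0$, conditions \ref{DMR_lambda_condition_iv} and \ref{DMR_lambda_condition_v} together are equivalent to the single family of equalities $(\Psi | x_{g_{\iota}^k}) = (\Psi | x_{g_{\iota}^{-k}})$ with $k$ ranging over $\{1, \dots, \lfloor |G|/2 \rfloor\}$.

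The key step is to recognise this family as an intrinsic symmetry condition on $\Psi$. Writing $f(g) := (\Psi | x_g)$ for $g \in G$, I would observe that because $g_{\iota}$ is a generator of the cyclic group $G$, the pairs $\{g_{\iota}^k, g_{\iota}^{-k}\}$ for $k \in \{1, \dots, \lfloor |G|/2 \rfloor\}$ enumerate exactly once each inversion-pair $\{g, g^{-1}\}$ of nontrivial elements of $G$ (when $|G|$ is even, the value $k = |G|/2$ yields the unique element of order two, for which the corresponding equality is automatic). Consequently the family is equivalent to the single statement that $f(g) = f(g^{-1})$ for all $g \in G$, i.e. that $(\Psi | x_g) = (\Psi | x_{g^{-1}})$ for every $g \in G$.

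This reformulated condition involves no distinguished generator, and hence no choice of embedding, which establishes the independence claim and justifies the notation $\DMR_0^G(\K)$. I expect the only point requiring care to be the verification that the indexing set $\{1, \dots, \lfloor |G|/2 \rfloor\}$ produces each inversion-pair exactly once, including the even-order edge case; this is an elementary combinatorial fact about cyclic groups, but it is the crux of the argument, since it is precisely what collapses the $\iota$-dependent family of conditions into an intrinsic symmetry of $\Psi$.
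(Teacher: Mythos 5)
Your proof is correct, and it is essentially the argument the paper leaves implicit: the paper gives no proof of this Proposition-Definition, citing Racinet's Remark 3.2.2, and the content of that remark is precisely your reduction --- at $\lambda = 0$, conditions (iv) and (v) jointly collapse to the intrinsic condition $(\Psi \,|\, x_g - x_{g^{-1}}) = 0$ for all $g \in G$, which makes no reference to $\iota$. One point your write-up handles correctly but is worth stressing: condition (iv) in isolation is \emph{not} $\iota$-independent at $\lambda = 0$ (for distinct embeddings $\iota, \iota'$ it constrains different pairs $\{g_{\iota}, g_{\iota}^{-1}\}$ and $\{g_{\iota'}, g_{\iota'}^{-1}\}$); it is only the conjunction with (v), via your enumeration of inversion pairs $\{g_{\iota}^k, g_{\iota}^{-k}\}$ for $1 \le k \le \lfloor |G|/2 \rfloor$ (including the order-two element when $|G|$ is even), that yields the $\iota$-free formulation and hence the well-definedness of $\DMR_0^G(\K)$.
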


\begin{proposition}
    \label{DMR_lambda_condition_ii_new}
    Condition \ref{DMR_lambda_condition_ii} of Definition \ref{DMR_lambda} is equivalent to
    \begin{equation}
        \label{DeltaM_PsiDR}
        \widehat{\Delta}^{\M, \DR}_G(\Psi^{\star}) = \Psi^{\star} \otimes \Psi^{\star},
    \end{equation}
    where
    $\Psi^{\star} := \left(\Gamma^{-1}_{\Psi}(-e_1) \beta(\Psi \otimes 1)\right) \cdot 1_{\DR} \in \widehat{\M}^{\DR}_G$.
\end{proposition}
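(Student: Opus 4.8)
The plan is to reduce the whole statement to the single identification $\kappa(\Psi_{\star}) = \Psi^{\star}$, after which the claimed equivalence follows formally from the fact, recalled just before the statement, that $\kappa : \KX/\KX x_0 \to \widehat{\M}^{\DR}_G$ is an isomorphism of coalgebras between $(\KX/\KX x_0, \widehat{\Delta}_{\star}^{\Mod})$ and $(\widehat{\M}^{\DR}_G, \widehat{\Delta}^{\M, \DR}_G)$. Concretely, I would invoke the coalgebra-morphism identity $\widehat{\Delta}^{\M, \DR}_G \circ \kappa = \kappa^{\otimes 2} \circ \widehat{\Delta}_{\star}^{\Mod}$ together with the injectivity of the isomorphism $\kappa^{\otimes 2}$: this shows that $\widehat{\Delta}^{\M, \DR}_G(\kappa(\Psi_{\star})) = \kappa^{\otimes 2}\big(\widehat{\Delta}_{\star}^{\Mod}(\Psi_{\star})\big)$ and $\kappa(\Psi_{\star}) \otimes \kappa(\Psi_{\star}) = \kappa^{\otimes 2}(\Psi_{\star} \otimes \Psi_{\star})$, so that $\widehat{\Delta}_{\star}^{\Mod}(\Psi_{\star}) = \Psi_{\star} \otimes \Psi_{\star}$ holds if and only if $\widehat{\Delta}^{\M, \DR}_G(\kappa(\Psi_{\star})) = \kappa(\Psi_{\star}) \otimes \kappa(\Psi_{\star})$. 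Once $\kappa(\Psi_{\star}) = \Psi^{\star}$ is established, this is precisely condition \ref{DMR_lambda_condition_ii} $\Leftrightarrow$ \eqref{DeltaM_PsiDR}.

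To establish $\kappa(\Psi_{\star}) = \Psi^{\star}$ I would use the commutative diagram \eqref{diag_iso_MG}, which reads $\kappa \circ \pi_Y \circ \q = (- \cdot 1_{\DR}) \circ \beta \circ (- \otimes 1)$ as maps $\KX \to \widehat{\M}^{\DR}_G$. Evaluating both sides at $v := \Gamma^{-1}_{\Psi}(x_1)\Psi \in \KX$, the left-hand side is $\kappa(\Psi_{\star})$ by the very definition of $\Psi_{\star}$, while the right-hand side is $\beta\big((\Gamma^{-1}_{\Psi}(x_1)\Psi) \otimes 1\big) \cdot 1_{\DR}$. The task then reduces to computing the element $\beta\big((\Gamma^{-1}_{\Psi}(x_1)\Psi) \otimes 1\big)$ of $\widehat{\V}^{\DR}_G$.

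For this last computation I would first observe that $- \otimes 1 : \KX \to \KX \rtimes G$ is a $\K$-algebra morphism, since $(a \otimes 1) \ast (b \otimes 1) = a\, t_1(b) \otimes 1 = ab \otimes 1$ because $1 \in G$ acts trivially ($t_1 = \mathrm{id}$); composing with the algebra isomorphism $\beta$ of Proposition \ref{KXGtoVG}, the map $\beta \circ (- \otimes 1)$ is a topological $\K$-algebra morphism and hence commutes with substitution into the one-variable power series $\Gamma^{-1}_{\Psi}$. Since $\beta(x_1 \otimes 1) = -1_G\, e_1\, 1_G^{-1} = -e_1$, this gives $\beta\big(\Gamma^{-1}_{\Psi}(x_1) \otimes 1\big) = \Gamma^{-1}_{\Psi}(-e_1)$, and multiplicativity yields $\beta\big((\Gamma^{-1}_{\Psi}(x_1)\Psi) \otimes 1\big) = \Gamma^{-1}_{\Psi}(-e_1)\, \beta(\Psi \otimes 1)$; feeding this back produces $\kappa(\Psi_{\star}) = \Gamma^{-1}_{\Psi}(-e_1)\, \beta(\Psi \otimes 1) \cdot 1_{\DR} = \Psi^{\star}$, as required. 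I do not expect a serious obstacle: the argument is essentially a transport of the condition across $\kappa$, and the only step demanding genuine care is the power-series manipulation — justifying that the topological algebra morphism $\beta \circ (- \otimes 1)$ may be applied term-by-term to $\Gamma^{-1}_{\Psi}(x_1)$, which rests on the completeness of the algebras involved so that the exponential defining $\Gamma_{\Psi}$ and its substitution at $x_1$ (equivalently at $-e_1$) both converge.
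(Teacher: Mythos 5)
Your proposal is correct and follows essentially the same route as the paper: identify $\kappa(\Psi_{\star}) = \Psi^{\star}$ via the commutative diagram \eqref{diag_iso_MG}, then transport condition \ref{DMR_lambda_condition_ii} through the coalgebra isomorphism $\kappa : (\KX/\KX x_0, \widehat{\Delta}^{\Mod}_{\star}) \to (\widehat{\M}^{\DR}_G, \widehat{\Delta}^{\M,\DR}_G)$. The only difference is that you spell out the details the paper leaves implicit — in particular that $\beta \circ (- \otimes 1)$ is a topological algebra morphism sending $x_1 \mapsto -e_1$, hence $\Gamma^{-1}_{\Psi}(x_1) \mapsto \Gamma^{-1}_{\Psi}(-e_1)$ — which is a correct and welcome elaboration, not a different proof.
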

\begin{proof}
    Thanks to Diagram \eqref{diag_iso_MG}, it follows that $\kappa(\Psi_{\star}) = \Psi^{\star}$. Equality \eqref{DeltaM_PsiDR} then follows from the fact that $\kappa : (\KX / \KX x_0, \widehat{\Delta}^{\Mod}_{\star}) \to (\widehat{\M}^{\DR}_G, \widehat{\Delta}^{\M, \DR}_G)$ is a coalgebra isomorphism. 
\end{proof}

Recall the set $ \G(\KX)$ of grouplike elements of $(\KX, \widehat{\Delta})$ given in \eqref{GKX}. In addition to its usual group structure, it is also a group for the ``twisted Magnus'' product denoted $\circledast$ and given for any $\Psi, \Phi \in \G(\KX)$ by
\begin{equation}
    \Psi \circledast \Phi := \Psi \aut_{\Psi}(\Phi),
    \label{circledast}
\end{equation}
where \index{$\aut_{\Psi}$}$\aut_{\Psi}$ is the topological $\K$-algebra automorphism of $\KX$ given by (\cite{EF0}, §4.1.3 based on \cite{Rac}, §3.1.2)
\begin{equation}
    x_0 \mapsto x_0 \qquad \text{and for } g \in G, x_g \mapsto \Ad_{t_g(\Psi^{-1})}(x_g).
\end{equation}

\begin{proposition}[{\cite[Theorem I]{Rac}}]
    \label{Racinet_main_result}
    Let $\lambda \in \K$ and $\iota : G \to \C^{\times}$ be a group embedding.
    \begin{enumerate}[label=(\roman*), leftmargin=*]
        \item \label{DMR0G} The pair $(\DMR_0^G(\K), \circledast)$ is a subgroup of $(\G(\KX), \circledast)$.
        \item \label{Racinet_torsor} The group $(\DMR_{0}^{G}(\K), \circledast)$ acts freely and transitively on $\DMR_{\lambda}^{\iota}(\K)$ by left multiplication $\circledast$.
    \end{enumerate}
\end{proposition}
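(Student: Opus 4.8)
The plan is to exploit that $(\G(\KX), \circledast)$ is already a group, so that left $\circledast$-multiplication acts freely on all of $\G(\KX)$; freeness in \ref{Racinet_torsor} is then immediate, and everything else reduces to tracking how the five defining conditions of Definition \ref{DMR_lambda} behave under $\circledast$. Writing $\Psi^{-1_\circledast}$ for the $\circledast$-inverse, I would reduce the whole statement to three assertions: (a) $1 \in \DMR_0^G(\K)$ and $\DMR_0^G(\K)$ is stable under $\circledast$ and under $\circledast$-inversion; (b) $\DMR_0^G(\K) \circledast \DMR_\lambda^\iota(\K) \subseteq \DMR_\lambda^\iota(\K)$; and (c) for $\Psi, \Phi \in \DMR_\lambda^\iota(\K)$ the unique $g \in \G(\KX)$ with $g \circledast \Psi = \Phi$, namely $g = \Phi \circledast \Psi^{-1_\circledast}$, lies in $\DMR_0^G(\K)$. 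Non-emptiness of $\DMR_\lambda^\iota(\K)$ is already recorded, so (a)--(c) yield both \ref{DMR0G} and \ref{Racinet_torsor}.

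Next I would split the defining conditions into two families of very different nature. On one side sit the coproduct conditions: membership in $\G(\KX)$ (grouplikeness for $\widehat{\Delta}$, built into the ambient set) and condition \ref{DMR_lambda_condition_ii}. By Proposition \ref{DMR_lambda_condition_ii_new} the latter is equivalent to grouplikeness of $\Psi^\star \in \widehat{\M}^{\DR}_G$ for $\widehat{\Delta}^{\M, \DR}_G$, which---following the stabilizer interpretation of Enriquez--Furusho---I would recast as a stabilizer condition: the automorphism built from $\aut_\Psi$ (transported to $\widehat{\V}^{\DR}_G$ via $\beta$ and acting on $\widehat{\M}^{\DR}_G$), together with the regularizing factor $\Gamma_\Psi(-e_1)$, stabilizes $\widehat{\Delta}^{\M, \DR}_G$ in the sense of \eqref{from_Aut_to_Cop}. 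Because $\Psi \mapsto \aut_\Psi$ intertwines $\circledast$ with composition---indeed $\aut_{\Psi \circledast \Phi} = \aut_\Psi \circ \aut_\Phi$, which is exactly what makes \eqref{circledast} associative---the elements meeting the coproduct conditions form the preimage of a stabilizer of a fixed coproduct under a group action, hence a $\circledast$-subgroup. Stability under $\circledast$ and $\circledast$-inversion, as well as the ``the connecting element lies in the stabilizer'' property needed for (c), are then formal consequences of the subgroup property.

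On the other side sit the coefficient conditions \ref{DMR_lambda_condition_i}, \ref{DMR_lambda_condition_iii}, \ref{DMR_lambda_condition_iv}, \ref{DMR_lambda_condition_v}. The key computational input is that the relevant coefficients are additive under $\circledast$ on grouplike elements: since $\aut_\Psi$ fixes $x_0$ and satisfies $\aut_\Psi(x_g) = x_g + (\deg \geq 2)$, one gets $(\Psi \circledast \Phi \mid \ell) = (\Psi \mid \ell) + (\Phi \mid \ell)$ for every length-one word $\ell$, which handles \ref{DMR_lambda_condition_i}, \ref{DMR_lambda_condition_iv} and \ref{DMR_lambda_condition_v}; for the degree-two coefficient $(\,\cdot \mid x_0 x_1)$ appearing in \ref{DMR_lambda_condition_iii} the possible cross terms involve $(\Psi \mid x_0)$ and $(\Psi \mid x_1)$ and vanish once condition \ref{DMR_lambda_condition_i} is imposed, so additivity persists. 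Granting additivity, the $\lambda = 0$ conditions are homogeneous-linear (cutting out $\circledast$-subgroups), while for general $\lambda$ they are affine: composing a $\lambda = 0$ element with a $\lambda$-element leaves the prescribed values unchanged, giving (b), and the difference of two $\lambda$-elements has all these values equal to $0$, giving the coefficient part of (c).

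The main obstacle is the coproduct condition \ref{DMR_lambda_condition_ii}, specifically verifying that the assignment $\Psi \mapsto \aut_\Psi$ genuinely produces an automorphism of the module-coalgebra and intertwines $\circledast$ with composition, so that \ref{DMR_lambda_condition_ii} becomes a bona fide stabilizer condition. The delicate bookkeeping is the interplay between $\aut_\Psi$, the regularizing factor $\Gamma_\Psi$ and the map $\q$ entering $\Psi_\star$ (resp. the factor $\Gamma_\Psi(-e_1)$ entering $\Psi^\star$): one must check that $\Psi \mapsto \Psi_\star$ is compatible with $\circledast$ up to precisely the twist absorbed by $\aut_\Psi$ acting on $\KX / \KX x_0$, i.e. that the regularization behaves like a cocycle for the $\circledast$-action. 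Note in particular that $\aut_\Psi$ need not preserve the subalgebra $\KY$, which is exactly why this correction is unavoidable. Once this compatibility is established the coproduct condition falls into the stabilizer pattern and the proof closes; by comparison the coefficient computations of the previous paragraph are routine.
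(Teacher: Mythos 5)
Your proposal is not being compared against a proof in the paper, because the paper gives none: Proposition \ref{Racinet_main_result} is recalled as a black box from \cite[Theorem I]{Rac}. So you are attempting from scratch a theorem whose original proof occupies a substantial part of Racinet's work, and the attempt has a genuine gap. The formal skeleton is fine --- freeness is immediate, the reduction to your (a)--(c) is correct, the identity $\aut_{\Psi \circledast \Phi} = \aut_{\Psi} \circ \aut_{\Phi}$ holds, and the additivity of the degree-one coefficients (and of $(\,\cdot \mid x_0x_1)$ once condition \ref{DMR_lambda_condition_i} is imposed) under $\circledast$ is a routine computation, so conditions \ref{DMR_lambda_condition_i}, \ref{DMR_lambda_condition_iii}, \ref{DMR_lambda_condition_iv}, \ref{DMR_lambda_condition_v} are handled. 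The gap is exactly where you locate ``the main obstacle'': recasting condition \ref{DMR_lambda_condition_ii} as a stabilizer condition. The implication you need is that grouplikeness of the \emph{single element} $\Psi^{\star} = {}^{\Gamma}\aut^{\M,(10)}_{\Psi}(1_{\DR})$ forces the \emph{operator identity} that ${}^{\Gamma}\aut^{\M,(10)}_{\Psi}$ stabilizes $\widehat{\Delta}^{\M,\DR}_G$. The converse direction is trivial (apply the stabilizer identity to $1_{\DR}$, whose coproduct is $1_{\DR}^{\otimes 2}$), but the direction you need is precisely \cite[Theorem 1.2]{EF0}, quoted in this paper as equality \eqref{DMRsubsetStab} --- the main theorem of a separate research paper, proved there by a delicate analysis of the harmonic coproduct, not by the ``cocycle bookkeeping'' of $\aut_{\Psi}$, $\Gamma_{\Psi}$ and $\q$ that you sketch. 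Asserting that ``once this compatibility is established the proof closes'' assumes exactly what has to be proven; without it the subgroup property (a) and the stability (b) do not follow.

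There is a second, hidden manifestation of the same gap in your step (c). Even granting the stabilizer equivalence for $\lambda = 0$, membership of $g = \Phi \circledast \Psi^{-1_{\circledast}}$ in the stabilizer is \emph{not} a formal consequence of the subgroup property: since $A_g = A_{\Phi} \circ A_{\Psi}^{-1}$ (writing $A$ for the attached automorphisms), $A_g$ stabilizes $\widehat{\Delta}^{\M,\DR}_G$ if and only if $A_{\Psi}$ and $A_{\Phi}$ transport $\widehat{\Delta}^{\M,\DR}_G$ to one and the same coproduct. That independence statement, for all elements of $\DMR^{\iota}_{\lambda}(\K)$, is essentially Theorem \ref{Delta_B_N} of this paper --- which is \emph{deduced from} the torsor property you are trying to prove (via Proposition \ref{DMR_x_torsor}), so invoking it here would be circular. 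Racinet's actual proof of transitivity is a separate inductive, degree-by-degree construction of the connecting element, and some such argument has to replace your appeal to formality.
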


\subsection{The torsor \texorpdfstring{$\DMR_{\times}^{\iota}(\K)$}{DMRxiota}} \label{DMRxiota}

\subsubsection{Action of the group \texorpdfstring{$\K^{\times}$}{kx} on \texorpdfstring{$\KX$}{KX}}
The group $\K^{\times}$ acts on $\KX$ by $\K$-algebra automorphisms by \index{$\lambda \bullet -$}
\begin{equation}
    \K^{\times} \longrightarrow \Aut_{\K{\text -}\alg}(\KX); \quad \lambda \longmapsto \lambda \bullet - : x_g \mapsto \lambda x_g, \text{ for } g \in G \sqcup \{0\}.
\end{equation}
One checks that, for any $\lambda \in \K^{\times}$, the automorphism $\lambda \bullet -$ is a Hopf algebra automorphism of $(\KX, \widehat{\Delta})$.
In addition, for any $\lambda \in \K^{\times}$ and any $g \in G$, we have
\begin{equation}
    \label{lambda_circ_t}
    (\lambda \bullet -) \circ t_g = t_g \circ (\lambda \bullet -),
\end{equation}
this can be verified by checking this identity on generators since both sides are given as a composition of $\K$-algebra morphisms.

\begin{proposition}
    \label{bullet_group_aut}
    For any $\lambda \in \K^{\times}$, the map $\lambda \bullet - : \KX \to \KX$ restricts to a group automorphism of $(\G(\KX), \circledast)$.
\end{proposition}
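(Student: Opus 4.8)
The plan is to verify the two defining properties of a group automorphism separately: that $\lambda \bullet -$ preserves the underlying set $\G(\KX)$, and that it intertwines the twisted Magnus product $\circledast$. For the first point I would invoke the fact, recorded just before the statement, that $\lambda \bullet -$ is a Hopf algebra automorphism of $(\KX, \widehat{\Delta})$. Indeed, if $\Psi \in \G(\KX)$ then $\Psi$ is invertible with $\widehat{\Delta}(\Psi) = \Psi \otimes \Psi$; since $\lambda \bullet -$ is an algebra automorphism it preserves invertibility, and since it commutes with $\widehat{\Delta}$ we get $\widehat{\Delta}(\lambda \bullet \Psi) = (\lambda \bullet -)^{\otimes 2}(\Psi \otimes \Psi) = (\lambda \bullet \Psi) \otimes (\lambda \bullet \Psi)$, so $\lambda \bullet \Psi \in \G(\KX)$. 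The inverse automorphism is $\lambda^{-1} \bullet -$, which is again Hopf and hence also preserves $\G(\KX)$; this already gives bijectivity of the restriction.

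The heart of the argument is the homomorphism property: for $\Psi, \Phi \in \G(\KX)$ I want $\lambda \bullet (\Psi \circledast \Phi) = (\lambda \bullet \Psi) \circledast (\lambda \bullet \Phi)$. Unfolding the definition \eqref{circledast} and using that $\lambda \bullet -$ is an algebra morphism, the left-hand side becomes $(\lambda \bullet \Psi)\,\big((\lambda \bullet -) \circ \aut_{\Psi}\big)(\Phi)$, while the right-hand side is $(\lambda \bullet \Psi)\,\big(\aut_{\lambda \bullet \Psi} \circ (\lambda \bullet -)\big)(\Phi)$. Hence the claim reduces to the operator identity $(\lambda \bullet -) \circ \aut_{\Psi} = \aut_{\lambda \bullet \Psi} \circ (\lambda \bullet -)$ on $\KX$. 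Both sides are compositions of topological $\K$-algebra morphisms, so it suffices to check the identity on the generators $x_0$ and $x_g$, $g \in G$.

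On $x_0$ both sides send $x_0$ to $\lambda x_0$, so the identity is immediate. The generator $x_g$ is where the real work lies, and this is the step I expect to be the main obstacle. There one computes $(\lambda \bullet -)(\aut_{\Psi}(x_g)) = (\lambda \bullet -)\big(\Ad_{t_g(\Psi^{-1})}(x_g)\big)$, and the point is to push $\lambda \bullet -$ past the conjugation $\Ad_{t_g(\Psi^{-1})}$ and the twist $t_g$. This uses three facts: that $\lambda \bullet -$, being an algebra automorphism, commutes with inversion and with conjugation; that it commutes with $t_g$ by \eqref{lambda_circ_t}; and that $(\lambda \bullet -)(\Psi^{-1}) = (\lambda \bullet \Psi)^{-1}$. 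Carrying these through yields $\lambda\,\Ad_{t_g((\lambda \bullet \Psi)^{-1})}(x_g)$, which is exactly $\aut_{\lambda \bullet \Psi}(\lambda x_g) = \big(\aut_{\lambda \bullet \Psi} \circ (\lambda \bullet -)\big)(x_g)$. This establishes the operator identity, hence the homomorphism property, and together with the first paragraph completes the proof that $\lambda \bullet -$ restricts to a group automorphism of $(\G(\KX), \circledast)$.
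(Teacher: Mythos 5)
Your proposal is correct and follows essentially the same route as the paper: the paper also reduces the homomorphism property to the operator identity $(\lambda \bullet -) \circ \aut_{\Psi} = \aut_{\lambda \bullet \Psi} \circ (\lambda \bullet -)$ (stated there as Lemma \ref{lambda_circ_aut}), verifies it on the generators $x_0$ and $x_g$ using \eqref{lambda_circ_t}, and handles bijectivity via $(\lambda \bullet -)^{-1} = \lambda^{-1} \bullet -$. The only cosmetic difference is that you unfold the reduction inside the main argument rather than isolating it as a separate lemma.
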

\noindent In order the prove this, we will need the following Lemma:
\begin{lemma}
    \label{lambda_circ_aut}
    For any $(\lambda, \Psi) \in \K^{\times} \times \G(\KX)$, we have
    \begin{equation}
        (\lambda \bullet -) \circ \aut_{\Psi} =  \aut_{\lambda \bullet \Psi} \circ (\lambda \bullet -).
    \end{equation}
\end{lemma}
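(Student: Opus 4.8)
The plan is to prove the identity $(\lambda \bullet -) \circ \aut_{\Psi} = \aut_{\lambda \bullet \Psi} \circ (\lambda \bullet -)$ by checking it on the topological algebra generators $x_0$ and $x_g$ for $g \in G$, which suffices since both sides are compositions of topological $\K$-algebra automorphisms of $\KX$ and such morphisms are determined by their values on generators. First I would dispose of the $x_0$ case: since $\aut_{\Psi}(x_0) = x_0$ and $\aut_{\lambda \bullet \Psi}(x_0) = x_0$, while $(\lambda \bullet -)(x_0) = \lambda x_0$, both sides send $x_0$ to $\lambda x_0$, so they agree trivially.

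The substantive case is $x_g$. Here I would compute the left-hand side $(\lambda \bullet -)(\aut_{\Psi}(x_g)) = (\lambda \bullet -)\big(\Ad_{t_g(\Psi^{-1})}(x_g)\big) = (\lambda \bullet -)\big(t_g(\Psi^{-1})\, x_g \, t_g(\Psi)\big)$, and the right-hand side $\aut_{\lambda \bullet \Psi}\big((\lambda \bullet -)(x_g)\big) = \aut_{\lambda \bullet \Psi}(\lambda x_g) = \lambda\, \Ad_{t_g((\lambda \bullet \Psi)^{-1})}(x_g)$. Since $\lambda \bullet -$ is a $\K$-algebra morphism, it distributes over the product and the scalar $\lambda$ pulls out, giving $(\lambda \bullet -)(x_g) = \lambda x_g$ in the middle; the two sides will then match provided $(\lambda \bullet -)\big(t_g(\Psi^{-1})\big) = t_g\big((\lambda \bullet \Psi)^{-1}\big)$ and correspondingly for $\Psi$ in place of $\Psi^{-1}$.

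The key lemma-within-the-proof is therefore the commutation relation $(\lambda \bullet -) \circ t_g = t_g \circ (\lambda \bullet -)$, which is exactly Equation \eqref{lambda_circ_t} stated in the excerpt, together with the fact that $\lambda \bullet -$ being a $\K$-algebra automorphism commutes with taking inverses, i.e. $(\lambda \bullet -)(\Psi^{-1}) = \big((\lambda \bullet -)(\Psi)\big)^{-1} = (\lambda \bullet \Psi)^{-1}$. Combining these, $(\lambda \bullet -)\big(t_g(\Psi^{-1})\big) = t_g\big((\lambda \bullet -)(\Psi^{-1})\big) = t_g\big((\lambda \bullet \Psi)^{-1}\big)$, and likewise $(\lambda \bullet -)(t_g(\Psi)) = t_g(\lambda \bullet \Psi)$. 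Substituting these into the expanded left-hand side yields $t_g\big((\lambda \bullet \Psi)^{-1}\big)\,(\lambda x_g)\, t_g(\lambda \bullet \Psi) = \lambda\, \Ad_{t_g((\lambda \bullet \Psi)^{-1})}(x_g)$, which is precisely the right-hand side.

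The only mild obstacle I anticipate is bookkeeping: one must use that $\aut_{\Psi}$ is defined via $\Ad$ of $t_g(\Psi^{-1})$ and that the scalar $\lambda$ from $\lambda \bullet x_g$ commutes past everything, so care is needed to track where the single factor of $\lambda$ lands. But there is no genuine difficulty, since all ingredients — the $\K$-algebra-morphism property of $\lambda \bullet -$, its commutation with $t_g$ from \eqref{lambda_circ_t}, and compatibility with inversion — are already available. Thus I expect the proof to be a short, direct generator-wise verification.
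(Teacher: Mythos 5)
Your proposal is correct and follows essentially the same route as the paper: a generator-wise verification using the fact that both sides are $\K$-algebra automorphisms, the commutation relation \eqref{lambda_circ_t}, and the compatibility of $\lambda \bullet -$ with products and inverses. The only difference is cosmetic: you unwind $\Ad_{t_g(\Psi^{-1})}$ explicitly and spell out $(\lambda \bullet -)(\Psi^{-1}) = (\lambda \bullet \Psi)^{-1}$, whereas the paper compresses these steps into the identity $(\lambda \bullet -) \circ \Ad_{t_g(\Psi^{-1})} = \Ad_{\lambda \bullet t_g(\Psi^{-1})} \circ (\lambda \bullet -)$.
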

\begin{proof}
    Let $(\lambda, \Psi) \in \K^{\times} \times \G(\KX)$. Since all the morphisms are algebra automorphisms of $\KX$, it is enough to check this identity on generators. We have
    \[
        (\lambda \bullet -) \circ \aut_{\Psi} (x_0) = \lambda \bullet x_0 = \lambda x_0 = \lambda \aut_{\lambda \bullet \Psi}(x_0) = \aut_{\lambda \bullet \Psi} (\lambda x_0) = \aut_{\lambda \bullet \Psi} (\lambda \bullet x_0)
    \]
    and for $g \in G$,
    \begin{align*}
        (\lambda \bullet -) \circ \aut_{\Psi} (x_g) & = (\lambda \bullet -) \circ \Ad_{t_g(\Psi^{-1})}(x_g) = \Ad_{\lambda \bullet t_g(\Psi^{-1})}(\lambda \bullet x_g) \\
        & = \Ad_{t_g(\lambda \bullet \Psi^{-1})}(\lambda \bullet x_g) = \aut_{\lambda \bullet \Psi} (\lambda \bullet x_g),
    \end{align*}
    where the third equality comes from Identity \eqref{lambda_circ_t}.
\end{proof}

\begin{proof}[Proof of Proposition \ref{bullet_group_aut}]
    Let $\lambda \in \K^{\times}$. Since $\lambda \bullet -$ is a Hopf algebra automorphism of $(\KX, \widehat{\Delta})$, it restricts to a map $\G(\KX) \to \G(\KX)$. Let $\Psi, \Phi \in \G(\KX)$. We have
    \begin{align*}
        \lambda \bullet (\Psi \circledast \Phi) & = \lambda \bullet \left(\Psi \aut_{\Psi}(\Phi)\right) = \left(\lambda \bullet \Psi\right) \left(\lambda \bullet \aut_{\Psi}(\Phi)\right) \\
        & = \left(\lambda \bullet \Psi\right) \aut_{\lambda \bullet \Psi}(\lambda \bullet \Phi) = (\lambda \bullet \Psi) \circledast (\lambda \bullet \Phi),
    \end{align*}
    where the third equality comes from Lemma \ref{lambda_circ_aut}. This proves that $\lambda \bullet -$ is a group endomorphism of $(\G(\KX), \circledast)$. Finally, one has that $(\lambda \bullet -)^{-1} = \lambda^{-1} \bullet -$ and the above computations shows that $(\lambda \bullet -)^{-1}$ is an endomorphism of $(\G(\KX), \circledast)$ thus proving the statement.
\end{proof}

\noindent Proposition \ref{bullet_group_aut} enables us to define the following: 
\begin{definition}
    We denote \index{$\K^{\times} \ltimes \G(\KX)$}$\K^{\times} \ltimes \G(\KX)$ the semi-direct product of $\K^{\times}$ and $\G(\KX)$ with respect to the action given in Proposition \ref{bullet_group_aut}. It consists of the set $\K^{\times} \times \G(\KX)$ endowed with a group law which will also be denoted $\circledast$ and we have for $(\lambda, \Psi), (\nu, \Phi) \in \K^{\times} \times \G(\KX)$, \index{$(\lambda, \Psi) \circledast (\nu, \Phi)$}
    \begin{equation}
        (\lambda, \Psi) \circledast (\nu, \Phi) := (\lambda \nu, \Psi \circledast (\lambda \bullet \Phi)).
    \end{equation}
\end{definition}

\subsubsection{Action of the group \texorpdfstring{$\K^{\times}$}{kx} on crossed product algebras and module}
The group $\K^{\times}$ acts on $\widehat{\V}^{\DR}_G$ by $\K$-algebra automorphisms by \index{$\lambda \bullet_{\V} -$}
\begin{equation}
    \label{bulletV_def}
    \mbox{\small$\K^{\times} \longrightarrow \Aut_{\K{\text -}\alg}(\widehat{\V}^{\DR}_G); \, \lambda \longmapsto \lambda \bullet_{\V} - : e_i \mapsto \lambda e_i; \, g \mapsto g$}, \text{ for } i \in \{0, 1\} \text{ and } g \in G.
\end{equation}

\begin{lemma}
    Let $\lambda \in \K^{\times}$.
    \begin{enumerate}[label=(\roman*), leftmargin=*]
        \item The following diagram
        \begin{equation}
            \label{Diag_bullets}
            \begin{tikzcd}
                \KX \ar["\lambda \bullet -"]{rr} \ar["\beta \circ (- \otimes 1)"']{d} && \KX \ar["\beta \circ (- \otimes 1)"]{d} \\
                \widehat{\V}^{\DR}_G \ar["\lambda \bullet_{\V} -"]{rr} && \widehat{\V}^{\DR}_G
            \end{tikzcd}
        \end{equation}
        commutes.
        \item For $\Psi \in \KX$, we have
        \begin{equation}
            \label{bullet_Gamma}
            \Gamma_{\lambda \bullet \Psi}(-e_1) = \lambda \bullet_{\V} \Gamma_{\Psi}(-e_1).   
        \end{equation}
    \end{enumerate}
\end{lemma}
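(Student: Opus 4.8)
The plan is to treat the two assertions separately; each reduces to a verification on generators followed by a short coefficient computation.

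For part (i), I would first note that both composites around the square \eqref{Diag_bullets} are topological $\K$-algebra morphisms $\KX \to \widehat{\V}^{\DR}_G$: the map $-\otimes 1 : \KX \to \KX \rtimes G$ is an algebra morphism since $t_1 = \mathrm{id}$ (so $(a\otimes 1)\ast(b\otimes 1) = ab\otimes 1$), $\beta$ is an algebra isomorphism by Proposition \ref{KXGtoVG}, and both $\lambda\bullet-$ and $\lambda\bullet_{\V}-$ are algebra automorphisms by construction. It therefore suffices to check the identity $(\beta\circ(-\otimes 1))\circ(\lambda\bullet-) = (\lambda\bullet_{\V}-)\circ(\beta\circ(-\otimes 1))$ on the generators $x_g$, $g\in G\sqcup\{0\}$. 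On $x_0$ both sides give $\lambda e_0$. On $x_g$ the left-hand side gives $\lambda\cdot\beta(x_g\otimes 1) = -\lambda\, g e_1 g^{-1}$, while the right-hand side gives $(\lambda\bullet_{\V}-)(-g e_1 g^{-1})$; since $\lambda\bullet_{\V}-$ fixes the elements of $G$ and scales $e_1$ by $\lambda$, this equals $-\lambda\, g e_1 g^{-1}$ as well. This settles commutativity.

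For part (ii), the \emph{key} elementary observation is that $\lambda\bullet-$ scales a word of length $\ell$ (equivalently, of degree $\ell$) by $\lambda^{\ell}$, so that $(\lambda\bullet\Psi \mid w) = \lambda^{|w|}(\Psi \mid w)$ for every word $w$. Applying this to $w = x_0^{n-1}x_1$, which has length $n$, and substituting into the defining series of $\Gamma$, I obtain the identity $\Gamma_{\lambda\bullet\Psi}(x) = \Gamma_{\Psi}(\lambda x)$ in $\K[[x]]^{\times}$, because absorbing each factor $\lambda^n$ into $x^n$ turns $x$ into $\lambda x$. Specializing the formal variable to $x = -e_1$ then yields $\Gamma_{\lambda\bullet\Psi}(-e_1) = \Gamma_{\Psi}(-\lambda e_1)$. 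On the other hand, $\lambda\bullet_{\V}-$ is a $\K$-algebra automorphism fixing scalars and sending $e_1\mapsto \lambda e_1$, so it distributes over the defining (completed) exponential series and over power-series substitution in $e_1$, giving $\lambda\bullet_{\V}\Gamma_{\Psi}(-e_1) = \Gamma_{\Psi}(-\lambda e_1)$ as well. Comparing the two expressions produces \eqref{bullet_Gamma}.

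The only genuinely non-formal point is this last step: justifying that $\lambda\bullet_{\V}-$ passes through the infinite exponential series defining $\Gamma_{\Psi}(-e_1)$. Here I would invoke that $\lambda\bullet_{\V}-$ is a morphism in $\K\text{-}\alg_{\mathrm{top}}$, hence continuous (indeed degree-preserving) for the grading topology on $\widehat{\V}^{\DR}_G$, so it acts on the convergent series term by term; everything else is the coefficient bookkeeping already exemplified in part (i). I expect no real obstacle beyond keeping the substitution $x\mapsto -e_1$ compatible with the automorphism, which continuity guarantees.
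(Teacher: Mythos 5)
Your proof is correct and takes essentially the same approach as the paper: part (i) is the same check on generators, and part (ii) rests on exactly the coefficient identity $(\lambda \bullet \Psi \,|\, x_0^{n-1}x_1) = \lambda^n (\Psi \,|\, x_0^{n-1}x_1)$ that the paper cites as its entire argument. You simply make explicit the remaining bookkeeping — $\Gamma_{\lambda\bullet\Psi}(-e_1) = \Gamma_{\Psi}(-\lambda e_1) = \lambda\bullet_{\V}\Gamma_{\Psi}(-e_1)$, together with the continuity of $\lambda\bullet_{\V}-$ needed to pass it through the exponential series — which the paper leaves implicit.
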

\begin{proof} \ 
    \begin{enumerate}[label=(\roman*), leftmargin=*]
        \item Since all arrows are $\K$-algebra morphisms, one easily checks the commutativity of generators.
        \item It follows from the fact that, for $n \in \N_{>0}$, we have $(\lambda \bullet \Psi | x_0^{n-1} x_1) = \lambda^n (\Psi | x_0^{n-1} x_1)$.
    \end{enumerate}
\end{proof}

\begin{lemma}
    \label{from_lambdaV}
    Let $\lambda \in \K^{\times}$.
    \begin{enumerate}[label=(\roman*), leftmargin=*]
        \item \label{lambdaW} The $\K$-algebra automorphism $\lambda \bullet_{\V} -$ of $\widehat{\V}^{\DR}_G$ restricts to a Hopf algebra automorphism $\lambda \bullet_{\W} -$ of $(\widehat{\W}^{\DR}_G, \widehat{\Delta}^{\W, \DR}_G)$.
        \item \label{lambdaM} The $\K$-algebra automorphism $\lambda \bullet_{\V} -$ of $\widehat{\V}^{\DR}_G$ induces a coalgebra automorphism $\lambda \bullet_{\M} -$ of $(\widehat{\M}^{\DR}_G, \widehat{\Delta}^{\M, \DR}_G)$.
    \end{enumerate}
\end{lemma}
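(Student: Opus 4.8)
The plan is to reduce everything to the algebra generators $z_{n,g}$ of $\widehat{\W}^{\DR}_G$ and then transport the module statement across the isomorphism $- \cdot 1_{\DR}$. For part \ref{lambdaW}, I would first check that $\lambda \bullet_{\V} -$ preserves $\widehat{\W}^{\DR}_G = \K \oplus \widehat{\V}^{\DR}_G e_1$: being a $\K$-algebra automorphism it fixes $\K$, and since $\lambda \in \K^{\times}$ one has $(\lambda \bullet_{\V} -)(\widehat{\V}^{\DR}_G e_1) = \widehat{\V}^{\DR}_G (\lambda e_1) = \widehat{\V}^{\DR}_G e_1$; as $\lambda^{-1} \bullet_{\V} -$ does the same, the restriction $\lambda \bullet_{\W} -$ is a $\K$-algebra automorphism. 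On generators, $\lambda \bullet_{\W}(z_{n,g}) = -(\lambda e_0)^{n-1} g (\lambda e_1) = \lambda^n z_{n,g}$. It then remains to verify $\widehat{\Delta}^{\W, \DR}_G \circ (\lambda \bullet_{\W} -) = (\lambda \bullet_{\W} -)^{\otimes 2} \circ \widehat{\Delta}^{\W, \DR}_G$; since both sides are $\K$-algebra morphisms it suffices to test on the $z_{n,g}$, where by \eqref{DeltaW} each side equals $\lambda^n \widehat{\Delta}^{\W, \DR}_G(z_{n,g})$ — the cross term $z_{k,h} \otimes z_{n-k, gh^{-1}}$ being scaled by $\lambda^{k} \lambda^{n-k} = \lambda^{n}$. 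Compatibility with the counit is clear on generators and compatibility with the antipode is then automatic, so $\lambda \bullet_{\W} -$ is a Hopf algebra automorphism.

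For part \ref{lambdaM}, I would next show $\lambda \bullet_{\V} -$ preserves the defining left ideal $\widehat{\V}^{\DR}_G e_0 + \sum_{g \in G} \widehat{\V}^{\DR}_G(g-1)$: the unit $\lambda$ keeps $\widehat{\V}^{\DR}_G e_0$ stable while $\lambda \bullet_{\V}(g-1) = g-1$ keeps each $\widehat{\V}^{\DR}_G(g-1)$ stable. Hence $\lambda \bullet_{\V} -$ descends to a $\K$-module automorphism $\lambda \bullet_{\M} -$ of $\widehat{\M}^{\DR}_G$, determined by $(\lambda \bullet_{\M})(v \cdot 1_{\DR}) = (\lambda \bullet_{\V} v) \cdot 1_{\DR}$ and with inverse $\lambda^{-1} \bullet_{\M} -$. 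Restricting this identity to $v = w \in \widehat{\W}^{\DR}_G$ and invoking part \ref{lambdaW} yields the key compatibility $(\lambda \bullet_{\M})(w \cdot 1_{\DR}) = (\lambda \bullet_{\W} w) \cdot 1_{\DR}$, i.e. the square formed by $- \cdot 1_{\DR}$, $\lambda \bullet_{\W} -$ and $\lambda \bullet_{\M} -$ commutes.

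The coalgebra compatibility of $\lambda \bullet_{\M} -$ is the only step needing care, since $\widehat{\M}^{\DR}_G$ offers no convenient generators on which to test $\widehat{\Delta}^{\M, \DR}_G$ directly. I would instead use that $- \cdot 1_{\DR} : \widehat{\W}^{\DR}_G \to \widehat{\M}^{\DR}_G$ is a $\K$-module isomorphism, so that it suffices to prove $\widehat{\Delta}^{\M, \DR}_G \circ (\lambda \bullet_{\M} -) = (\lambda \bullet_{\M} -)^{\otimes 2} \circ \widehat{\Delta}^{\M, \DR}_G$ after precomposing with $- \cdot 1_{\DR}$. Evaluating at $w \cdot 1_{\DR}$ and chaining the defining diagram \eqref{diag_DeltaW_DeltaM}, the commutative square above, and the Hopf compatibility from part \ref{lambdaW}, both sides reduce to $(\lambda \bullet_{\W} -)^{\otimes 2}(\widehat{\Delta}^{\W, \DR}_G(w)) \cdot 1_{\DR}^{\otimes 2}$, giving the identity. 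The main, and only mild, obstacle is organising this diagram chase so that the module statement is deduced from the algebra statement, rather than re-proved on the quotient where $\widehat{\Delta}^{\M, \DR}_G$ is itself only defined through \eqref{diag_DeltaW_DeltaM}.
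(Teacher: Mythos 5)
Your proposal is correct and follows essentially the same route as the paper: restriction of $\lambda \bullet_{\V} -$ to $\widehat{\W}^{\DR}_G$ with the generator computation $\lambda \bullet z_{n,g} = \lambda^n z_{n,g}$, coproduct compatibility checked on the $z_{n,g}$ using \eqref{DeltaW}, preservation of the submodule $\widehat{\V}^{\DR}_G e_0 + \sum_{g \in G}\widehat{\V}^{\DR}_G(g-1)$ to descend to $\lambda \bullet_{\M} -$, and then the transport of the coalgebra compatibility through $- \cdot 1_{\DR}$ via \eqref{diag_DeltaW_DeltaM}. Your final diagram chase is exactly the paper's commuting-cube argument written out linearly, so the two proofs coincide in substance.
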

\begin{proof} \ 
    \begin{enumerate}[label=(\roman*), leftmargin=*]
        \item For $(n, g) \in \N_{>0} \times G$ we have
        \[
            \lambda \bullet_{\V} z_{n, g} = \lambda \bullet_{\V} (- e_0^{n-1} g e_1) = - \lambda^n e_0^{n-1} g e_1 = \lambda^n z_{n, g}.
        \]
        Since the algebra $\widehat{\W}^{\DR}_G$ is freely generated by $(z_{n, g})_{(n, g) \in \N_{>0} \times G}$, it follows that $\lambda \bullet_{\V}(\widehat{\W}^{\DR}_G) \subset \widehat{\W}^{\DR}_G$. Similarly, $(\lambda \bullet_{\V} -)^{-1}(\widehat{\W}^{\DR}_G) \subset \widehat{\W}^{\DR}_G$.
        Hence, $\lambda \bullet_{\V}(\widehat{\W}^{\DR}_G) = \widehat{\W}^{\DR}_G$.
        This implies that $\lambda \bullet_{\V} -$ restricts to a $\K$-algebra automorphism $\lambda \bullet_{\W} -$ of $\widehat{\W}^{\DR}_G$ and that the following diagram
        \begin{equation}
            \label{diag_lambdaW_lambdaV}
            \begin{tikzcd}
                \widehat{\W}^{\DR}_G \ar["\lambda \bullet_{\W} -"]{rrr} \ar[hook]{d} &&& \widehat{\W}^{\DR}_G \ar[hook]{d} \\
                \widehat{\V}^{\DR}_G \ar["\lambda \bullet_{\V} -"]{rrr} &&& \widehat{\V}^{\DR}_G
            \end{tikzcd}
        \end{equation}
        commutes. Let us show that the following diagram
        \begin{equation}
            \begin{tikzcd}
                \widehat{\W}^{\DR}_G \ar["\lambda \bullet_{\W} -"]{rr} \ar["\widehat{\Delta}^{\W, \DR}_G"']{d} && \widehat{\W}^{\DR}_G \ar["\widehat{\Delta}^{\W, \DR}_G"]{d} \\
                (\widehat{\W}^{\DR}_G)^{\hat{\otimes} 2} \ar["(\lambda \bullet_{\W} -)^{\otimes 2}"]{rr} && (\widehat{\W}^{\DR}_G)^{\hat{\otimes} 2}
            \end{tikzcd}
        \end{equation}
        commutes. Indeed, for $(n, g) \in \N_{>0} \times G$ we have
        \begin{align*}
            \mbox{\small$\widehat{\Delta}^{\W, \DR}_G \circ (\lambda \bullet_{\W} -) (z_{n, g})$} & = \widehat{\Delta}^{\W, \DR}_G(\lambda^n z_{n, g}) = \lambda^n \widehat{\Delta}^{\W, \DR}_G(z_{n, g}) \\
            & = \lambda^n z_{n, g} \otimes 1 + 1 \otimes \lambda^n z_{n, g} + \lambda^n \sum_{\substack{k=1 \\ h \in G}}^{n-1} z_{k, \phi(h)} \otimes z_{n-k, gh^{-1})} \\
            & = \mbox{\small$\displaystyle\left(\lambda \bullet_{\W} -\right)^{\otimes 2}\Big(z_{n, g} \otimes 1 + 1 \otimes z_{n, g} + \sum_{\substack{k = 1 \\ h \in G}}^{n-1} z_{k, h} \otimes z_{n-k, g h^{-1}}\Big)$} \\
            & = \left(\lambda \bullet_{\W} -\right)^{\otimes 2} \circ \widehat{\Delta}^{\W, \DR}_G (z_{n, g}). 
        \end{align*}
        \item One checks that $\lambda \bullet_{\V} -$ preserves the submodule $\mbox{\small$\displaystyle \widehat{\V}^{\DR}_G e_0 + \sum_{g \in G} \widehat{\V}^{\DR}_G (g - 1)$}$. It follows that there is a unique $\K$-module automorphism $\lambda \bullet_{\M} -$ of $\widehat{\M}^{\DR}_G$ such that the following diagram
        \begin{equation}
            \label{diag_lambdaV_lambdaM}
            \begin{tikzcd}
                \widehat{\V}^{\DR}_G \ar["\lambda \bullet_{\V} -"]{rrr} \ar["- \cdot 1_{\DR}"']{d} &&& \widehat{\V}^{\DR}_G \ar["- \cdot 1_{\DR}"]{d} \\
                \widehat{\M}^{\DR}_G \ar["\lambda \bullet_{\M} -"]{rrr} &&& \widehat{\M}^{\DR}_G
            \end{tikzcd}
        \end{equation}
        commutes. Since $\lambda \bullet_{\V} -$ restricts to the automorphism $\lambda \bullet_{\W} -$ of $\widehat{\W}^{\DR}_G$, we obtain the following commutative diagram
        \begin{equation}
            \label{diag_lambdaW_lambdaM}
            \begin{tikzcd}
                \widehat{\W}^{\DR}_G \ar["\lambda \bullet_{\W} -"]{rrr} \ar["- \cdot 1_{\DR}"']{d} &&& \widehat{\W}^{\DR}_G \ar["- \cdot 1_{\DR}"]{d} \\
                \widehat{\M}^{\DR}_G \ar["\lambda \bullet_{\M} -"]{rrr} &&& \widehat{\M}^{\DR}_G
            \end{tikzcd}
        \end{equation}
        We then have the following cube
        \[\begin{tikzcd}[row sep={40,between origins}, column sep={40,between origins}]
            &  & \widehat{\M}^{\DR}_G \arrow[ddd, "\widehat{\Delta}^{\M, \DR}_G"] \arrow[rrrr, "\lambda \bullet_{\M} -"] & & & & \widehat{\M}^{\DR}_G \arrow[ddd, "\widehat{\Delta}^{\M, \DR}_G"] \\
            \widehat{\W}^{\DR}_G \arrow[rrrr, "\hspace{1.5cm} \lambda \bullet_{\W} -"] \arrow[ddd, "\widehat{\Delta}^{\W, \DR}_G"'] \arrow[rru, "- \cdot 1_{\DR}"] & & & & \widehat{\W}^{\DR}_G \arrow[ddd, "\widehat{\Delta}^{\W, \DR}_G"] \arrow[rru, "- \cdot 1_{\DR}"'] & & \\
            & & & & & & \\
            & & (\widehat{\M}^{\DR}_G)^{\hat{\otimes} 2} \arrow[rrrr, "\hspace{-1.8cm}(\lambda \bullet_{\M} -)^{\otimes 2}"'] & & & & (\widehat{\M}^{\DR}_G)^{\hat{\otimes} 2} \\
            (\widehat{\W}^{\DR}_G)^{\hat{\otimes} 2} \arrow[rrrr, "(\lambda \bullet_{\W} -)^{\otimes 2}"] \arrow[rru, "- \cdot 1_{\DR}^{\otimes 2}"] & & & & (\widehat{\W}^{\DR}_G)^{\hat{\otimes} 2} \arrow[rru, "- \cdot 1_{\DR}^{\otimes 2}"'] & &
        \end{tikzcd}\]
        The left and the right faces are exactly the same square, which is commutative since it corresponds to Diagram \ref{diag_DeltaW_DeltaM}. The upper side commutes thanks to Diagram \eqref{diag_lambdaW_lambdaM} and the lower side is the tensor square of the upper side so is commutative. Finally, \ref{lambdaW} gives us the commutativity of the front side. This collection of commutativities together with the surjectivity of $- \cdot 1_{\DR}$ implies that the back side of the cube commutes, which proves that $\lambda \bullet_{\M} -$ is a coalgebra automorphism of $(\widehat{\M}^{\DR}_G, \widehat{\Delta}^{\M, \DR}_G)$.
    \end{enumerate}
\end{proof}

\begin{proposition}
    \label{bullet_pairs}
    Let $\lambda \in \K^{\times}$.
    \begin{enumerate}[label=(\roman*), leftmargin=*]
        \item \label{lambdaV_lambdaM_alg_mod} The pair $(\lambda \bullet_{\V} -, \lambda \bullet_{\M} -)$ is an automorphism of $(\widehat{\V}^{\DR}_G, \widehat{\M}^{\DR}_G)$ in the category $\K{\text -}\alg{\text -}\Mod_{\mathrm{top}}$.
        \item \label{lambdaW_lambdaM_HAMC} The pair $(\lambda \bullet_{\W} -, \lambda \bullet_{\M} -)$ is an automorphism of $((\widehat{\W}^{\DR}_G, \widehat{\Delta}^{\W, \DR}_G), (\widehat{\M}^{\DR}_G, \widehat{\Delta}^{\M, \DR}_G))$ in the category $\K{\text -}\mathrm{HAMC}_{\mathrm{top}}$.
    \end{enumerate}
\end{proposition}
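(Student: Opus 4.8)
The plan is to treat this as a packaging result: in each pair, both components are already known to be automorphisms of the relevant single-object structure, so the only genuine content is the compatibility axiom linking the algebra (resp. Hopf) component to the module (resp. coalgebra) component.

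For part \ref{lambdaV_lambdaM_alg_mod}, I would first recall that $\lambda \bullet_{\V} -$ is a $\K$-algebra automorphism of $\widehat{\V}^{\DR}_G$ directly from its definition \eqref{bulletV_def}, with inverse $\lambda^{-1} \bullet_{\V} -$, and that $\lambda \bullet_{\M} -$ is a $\K$-module automorphism of $\widehat{\M}^{\DR}_G$ by Lemma \ref{from_lambdaV}\ref{lambdaM}. It then remains to verify the algebra-module compatibility, namely that $(\lambda \bullet_{\M} -)(a \cdot m) = (\lambda \bullet_{\V} a) \cdot (\lambda \bullet_{\M} m)$ for every $(a, m) \in \widehat{\V}^{\DR}_G \times \widehat{\M}^{\DR}_G$. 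Using that $- \cdot 1_{\DR}$ is surjective, I would write $m = v \cdot 1_{\DR}$ for some $v \in \widehat{\V}^{\DR}_G$, whence $a \cdot m = (av) \cdot 1_{\DR}$. Applying the commutativity of Diagram \eqref{diag_lambdaV_lambdaM} and the multiplicativity of $\lambda \bullet_{\V} -$ yields
\[
    (\lambda \bullet_{\M} -)\big((av) \cdot 1_{\DR}\big) = \big(\lambda \bullet_{\V}(av)\big) \cdot 1_{\DR} = (\lambda \bullet_{\V} a)(\lambda \bullet_{\V} v) \cdot 1_{\DR} = (\lambda \bullet_{\V} a) \cdot (\lambda \bullet_{\M} m),
\]
which is exactly the required identity. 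Invertibility of the pair is immediate, its inverse being $(\lambda^{-1} \bullet_{\V} -, \lambda^{-1} \bullet_{\M} -)$, which is a morphism by the same argument.

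For part \ref{lambdaW_lambdaM_HAMC}, the Hopf-algebra automorphism property of $\lambda \bullet_{\W} -$ and the coalgebra automorphism property of $\lambda \bullet_{\M} -$ are precisely Lemma \ref{from_lambdaV}\ref{lambdaW} and Lemma \ref{from_lambdaV}\ref{lambdaM}; moreover, the object $\big((\widehat{\W}^{\DR}_G, \widehat{\Delta}^{\W, \DR}_G), (\widehat{\M}^{\DR}_G, \widehat{\Delta}^{\M, \DR}_G)\big)$ already lies in $\K{\text -}\mathrm{HAMC}_{\mathrm{top}}$. Hence the only remaining point for $(\lambda \bullet_{\W} -, \lambda \bullet_{\M} -)$ to be a morphism in $\K{\text -}\mathrm{HAMC}_{\mathrm{top}}$ is the algebra-module compatibility on the pair $(\widehat{\W}^{\DR}_G, \widehat{\M}^{\DR}_G)$. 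I would deduce this by restricting part \ref{lambdaV_lambdaM_alg_mod}: by Diagram \eqref{diag_lambdaW_lambdaV}, $\lambda \bullet_{\W} -$ is the restriction of $\lambda \bullet_{\V} -$ to $\widehat{\W}^{\DR}_G$, and by Lemma \ref{WM_in_algmod} the $\widehat{\W}^{\DR}_G$-module structure on $\widehat{\M}^{\DR}_G$ is the pullback of the $\widehat{\V}^{\DR}_G$-module structure along $\widehat{\W}^{\DR}_G \hookrightarrow \widehat{\V}^{\DR}_G$. Thus for $(w, m) \in \widehat{\W}^{\DR}_G \times \widehat{\M}^{\DR}_G$ the identity $(\lambda \bullet_{\M} -)(w \cdot m) = (\lambda \bullet_{\W} w) \cdot (\lambda \bullet_{\M} m)$ is the specialization of the compatibility from \ref{lambdaV_lambdaM_alg_mod} to $a = w$.

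I do not expect any real obstacle: the statement is a formal consequence of the single-object automorphism properties already proved in Lemma \ref{from_lambdaV} together with the surjectivity of $- \cdot 1_{\DR}$. The only step demanding a moment's care is ensuring that the two module structures match under restriction from $\widehat{\V}^{\DR}_G$ to $\widehat{\W}^{\DR}_G$, which is exactly guaranteed by the pullback description in Lemma \ref{WM_in_algmod}.
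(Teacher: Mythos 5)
Your proof is correct and follows essentially the same route as the paper's: part \ref{lambdaV_lambdaM_alg_mod} is verified by writing $m = v' \cdot 1_{\DR}$ via surjectivity of $- \cdot 1_{\DR}$ and combining the commutativity of Diagram \eqref{diag_lambdaV_lambdaM} with the multiplicativity of $\lambda \bullet_{\V} -$, and part \ref{lambdaW_lambdaM_HAMC} is deduced from part \ref{lambdaV_lambdaM_alg_mod} together with Lemma \ref{from_lambdaV}. Your explicit appeal to the pullback description of the module structure in Lemma \ref{WM_in_algmod} and to Diagram \eqref{diag_lambdaW_lambdaV} simply spells out the restriction step that the paper leaves implicit.
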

\begin{proof} \ 
    \begin{enumerate}[label=(\roman*), leftmargin=*]
        \item Let $(v, m) \in \widehat{\V}^{\DR}_G \times \widehat{\M}^{\DR}_G$. Since $- \cdot 1_{\DR} : \widehat{\V}^{\DR}_G \to \widehat{\M}^{\DR}_G$ is surjective, there exist $v' \in \widehat{\V}^{\DR}_G$ such that $m = v' \cdot 1_{\DR}$. We have
        \begin{align*}
            \lambda \bullet_{\M} (vm) & = \lambda \bullet_{\M} (v v' \cdot 1_{\DR}) = (\lambda \bullet_{\V} vv') \cdot 1_{\DR} = (\lambda \bullet_{\V}v) \, (\lambda \bullet_{\V}v') \cdot 1_{\DR} \\
            & = (\lambda \bullet_{\V}v) \, (\lambda \bullet_{\M} m),
        \end{align*}
        where the second and fourth equalities come from the commutativity of Diagram \eqref{diag_lambdaV_lambdaM}.
        \item It follows from \ref{lambdaV_lambdaM_alg_mod} and from Lemma \ref{from_lambdaV}.
    \end{enumerate}
\end{proof}

\subsubsection{The torsor \texorpdfstring{$\DMR_{\times}^{\iota}(\K)$}{DMRxiota}}
\begin{proposition}
    \label{bullet_DMR}
    For any $\lambda \in \K^{\times}$, the map $\lambda \bullet - : \KX \to \KX$ restricts to a group automorphism of $(\DMR_{0}^{G}(\K), \circledast)$.
\end{proposition}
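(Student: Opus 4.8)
The plan is to leverage Proposition \ref{bullet_group_aut}, which already gives that $\lambda \bullet -$ is a group automorphism of $(\G(\KX), \circledast)$, together with Proposition \ref{Racinet_main_result}\ref{DMR0G}, which says that $\DMR_0^G(\K)$ is a subgroup of $(\G(\KX), \circledast)$. It therefore suffices to prove that $\lambda \bullet -$ maps $\DMR_0^G(\K)$ bijectively onto itself, i.e. that it preserves the conditions \ref{DMR_lambda_condition_i}--\ref{DMR_lambda_condition_v} of Definition \ref{DMR_lambda} specialised at the parameter $0$; the reverse inclusion (equivalently, stability under the inverse) is then obtained by running the same argument for $\lambda^{-1} \bullet - = (\lambda \bullet -)^{-1}$.

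First I would record the elementary effect of the action on coefficients: since $\lambda \bullet x_g = \lambda x_g$ for every $g \in G \sqcup \{0\}$, we have $(\lambda \bullet \Psi | w) = \lambda^{|w|} (\Psi | w)$ for each word $w$, where $|w|$ denotes its length. Conditions \ref{DMR_lambda_condition_i}, \ref{DMR_lambda_condition_iii}, \ref{DMR_lambda_condition_iv} and \ref{DMR_lambda_condition_v} then follow at once: each is an identity among coefficients of words of one fixed length (length $1$ for \ref{DMR_lambda_condition_i}, \ref{DMR_lambda_condition_iv}, \ref{DMR_lambda_condition_v}, length $2$ for \ref{DMR_lambda_condition_iii}), so multiplying all coefficients by a common power of $\lambda$ preserves the homogeneous vanishing statements (they are homogeneous precisely because the parameter is $0$) as well as the scalar relation \ref{DMR_lambda_condition_v}.

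The main obstacle is condition \ref{DMR_lambda_condition_ii}, which is not phrased directly on coefficients. Here I would first invoke Proposition \ref{DMR_lambda_condition_ii_new} to rewrite it as the grouplike identity $\widehat{\Delta}^{\M, \DR}_G(\Psi^{\star}) = \Psi^{\star} \otimes \Psi^{\star}$ in $\widehat{\M}^{\DR}_G$, and then establish the compatibility
\[
    (\lambda \bullet \Psi)^{\star} = \lambda \bullet_{\M}(\Psi^{\star}).
\]
This is proved by unwinding $(\lambda \bullet \Psi)^{\star} = \big(\Gamma^{-1}_{\lambda \bullet \Psi}(-e_1)\, \beta((\lambda \bullet \Psi) \otimes 1)\big) \cdot 1_{\DR}$: Identity \eqref{bullet_Gamma} (together with $\lambda \bullet_{\V} -$ being an algebra automorphism) gives $\Gamma^{-1}_{\lambda \bullet \Psi}(-e_1) = \lambda \bullet_{\V} \Gamma^{-1}_{\Psi}(-e_1)$, the commutativity of Diagram \eqref{Diag_bullets} gives $\beta((\lambda \bullet \Psi) \otimes 1) = \lambda \bullet_{\V} \beta(\Psi \otimes 1)$, and the commutativity of Diagram \eqref{diag_lambdaV_lambdaM} transports $\lambda \bullet_{\V} -$ through $- \cdot 1_{\DR}$ into $\lambda \bullet_{\M} -$.

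With this identity in hand, condition \ref{DMR_lambda_condition_ii} for $\lambda \bullet \Psi$ becomes formal: by Lemma \ref{from_lambdaV}\ref{lambdaM} the map $\lambda \bullet_{\M} -$ is a coalgebra automorphism of $(\widehat{\M}^{\DR}_G, \widehat{\Delta}^{\M, \DR}_G)$, so $\widehat{\Delta}^{\M, \DR}_G \circ (\lambda \bullet_{\M} -) = (\lambda \bullet_{\M} -)^{\otimes 2} \circ \widehat{\Delta}^{\M, \DR}_G$; applying this to the grouplike identity for $\Psi \in \DMR_0^G(\K)$ yields $\widehat{\Delta}^{\M, \DR}_G((\lambda \bullet \Psi)^{\star}) = (\lambda \bullet \Psi)^{\star} \otimes (\lambda \bullet \Psi)^{\star}$, which is condition \ref{DMR_lambda_condition_ii} for $\lambda \bullet \Psi$. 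Assembling all cases shows $\lambda \bullet \Psi \in \DMR_0^G(\K)$, and the same reasoning for $\lambda^{-1}$ gives the reverse inclusion, so $\lambda \bullet -$ restricts to a group automorphism of $(\DMR_0^G(\K), \circledast)$.
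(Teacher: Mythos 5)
Your proposal is correct and follows essentially the same route as the paper's proof: verifying conditions \ref{DMR_lambda_condition_i}, \ref{DMR_lambda_condition_iii}--\ref{DMR_lambda_condition_v} by the coefficient scaling, then handling condition \ref{DMR_lambda_condition_ii} via Proposition \ref{DMR_lambda_condition_ii_new}, the identity $(\lambda \bullet \Psi)^{\star} = \lambda \bullet_{\M} \Psi^{\star}$ (from Diagram \eqref{Diag_bullets}, Identity \eqref{bullet_Gamma} and Diagram \eqref{diag_lambdaV_lambdaM}), and the coalgebra automorphism property of $\lambda \bullet_{\M} -$, concluding with the same argument applied to $\lambda^{-1} \bullet -$. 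Your explicit homogeneity remark $(\lambda \bullet \Psi \,|\, w) = \lambda^{|w|} (\Psi \,|\, w)$ is a slightly more detailed justification of what the paper treats as immediate, but it is not a different method.
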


\begin{proof}
    It follows from Proposition \ref{bullet_group_aut} that $(\lambda \bullet - )_{| \, \G(\KX)}$ is a group automorphism of $(\G(\KX), \circledast)$.
    It remains to prove that this permutation of $\G(\KX)$ induces a permutation of its subset $\DMR_{0}^{G}(\K)$. Let $\lambda \in \K^{\times}$ and $\Psi \in \DMR_0^G(\K)$. Since $\lambda \bullet x_0 = \lambda x_0$ and $\lambda \bullet x_g = \lambda x_g$ for $g \in G$, Conditions \ref{DMR_lambda_condition_i}, \ref{DMR_lambda_condition_iii}, \ref{DMR_lambda_condition_iv} and \ref{DMR_lambda_condition_v} of Definition \ref{DMR_lambda} are immediately satisfied by $\lambda \bullet \Psi$. In order to prove that Condition \ref{DMR_lambda_condition_ii} is satisfied, let us use Proposition \ref{DMR_lambda_condition_ii_new}. We have
    \begin{align}
        (\lambda \bullet \Psi)^{\star} & = \left(\Gamma^{-1}_{\lambda \bullet \Psi}(-e_1) \beta(\lambda \bullet \Psi \otimes 1)\right) \cdot 1_{\DR} = \left((\lambda \bullet_{\V} \Gamma^{-1}_{\Psi}(-e_1)) \, (\lambda \bullet_{\V} \beta(\Psi \otimes 1))\right) \cdot 1_{\DR} \notag \\
        & = \left(\lambda \bullet_{\V} (\Gamma^{-1}_{\Psi}(-e_1) \beta(\Psi \otimes 1))\right) \cdot 1_{\DR} = \lambda \bullet_{\M} \left(\Gamma^{-1}_{\Psi}(-e_1) \beta(\Psi \otimes 1) \cdot 1_{\DR}\right) \notag \\
        & = \lambda \bullet_{\M} \Psi^{\star}, \label{lambda_bullet_DR}
    \end{align}
    where the second equality comes from the commutativity of Diagram \eqref{Diag_bullets} and Identity \eqref{bullet_Gamma}, the third one from the fact that $\lambda \bullet_{\V} -$ is an algebra morphism and the fourth one from Lemma \ref{from_lambdaV}. \ref{lambdaM}.
    Therefore, we obtain that
    \begin{align*}
        \widehat{\Delta}^{\M, \DR}_{G}((\lambda \bullet \Psi)^{\star}) & = \widehat{\Delta}^{\M, \DR}_{G}(\lambda \bullet_{\M} \Psi^{\star}) = (\lambda \bullet_{\M} -)^{\otimes 2} \circ \widehat{\Delta}^{\M, \DR}_{G}(\Psi^{\star}) \\
        & = (\lambda \bullet_{\M} -)^{\otimes 2} ((\Psi^{\star})^{\otimes 2}) = (\lambda \bullet_{\M} \Psi^{\star})^{\otimes 2} \\
        & = (\lambda \bullet \Psi)^{\star} \otimes (\lambda \bullet \Psi)^{\star},   
    \end{align*}
    where the first and last equalities come from \eqref{lambda_bullet_DR}, the second one from Lemma \ref{from_lambdaV}. \ref{lambdaM} and the third one from Proposition \ref{DMR_lambda_condition_ii_new} and the fact that $\Psi \in \DMR_0^G(\K)$. This proves that $\lambda \bullet -$ restricts to a self-map of $\DMR_0^G(\K)$.
    Following the same steps, one shows that $(\lambda \bullet -)^{-1} = \lambda^{-1} \bullet -$ restricts to a self-map of $\DMR_0^G(\K)$ thus proving the statement.
\end{proof}

\noindent Proposition \ref{bullet_DMR} enables us to state the following definition:
\begin{definition}
    We denote \index{$\K^{\times} \ltimes \DMR_{0}^{G}(\K)$}$\K^{\times} \ltimes \DMR_{0}^{G}(\K)$ the semi-direct product of $\K^{\times}$ and $\DMR_{0}^{G}(\K)$ with respect to the action given in Proposition \ref{bullet_DMR}. It is a subgroup of $\K^{\times} \ltimes \G(\KX)$.
\end{definition}

\begin{definition}
    Let $\iota : G \to \C^{\times}$ be a group embedding. We define \index{$\DMR^{\iota}_{\times}(\K)$}
    \begin{equation}
        \DMR^{\iota}_{\times}(\K) := \{ (\lambda, \Psi) \in \K^{\times} \times \G(\KX) \, | \, \Psi \in \DMR_{\lambda}^{\iota}(\K) \}.
    \end{equation}
\end{definition}

\begin{proposition}
    \label{DMR_torsor}
    Let $\iota : G \to \C^{\times}$ be a group embedding. The group $\K^{\times} \ltimes \DMR_{0}^{G}(\K)$ acts freely and transitively on $\DMR^{\iota}_{\times}(\K)$ by left multiplication $\circledast$. 
\end{proposition}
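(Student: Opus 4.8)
The plan is to deduce the statement from Racinet's fiberwise torsor result (Proposition \ref{Racinet_main_result}\ref{Racinet_torsor}) together with a homogeneity property of the scaling action. The crucial step, which generalizes Proposition \ref{bullet_DMR} from $\lambda=0$ to arbitrary $\lambda$, is to show that for every $\nu\in\K^{\times}$ the automorphism $\nu\bullet-$ of $\KX$ restricts to a bijection
\[
    \nu\bullet- : \DMR^{\iota}_{\lambda}(\K) \longrightarrow \DMR^{\iota}_{\nu\lambda}(\K),
\]
with inverse $\nu^{-1}\bullet-$. I would establish this by testing each condition of Definition \ref{DMR_lambda} on $\nu\bullet\Psi$, using that $\nu\bullet-$ multiplies the coefficient of a word of length $\ell$ by $\nu^{\ell}$.

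Concretely: the degree-one conditions \ref{DMR_lambda_condition_i} and \ref{DMR_lambda_condition_v} are homogeneous, hence preserved; condition \ref{DMR_lambda_condition_ii} is preserved by the $\lambda$-independent computation already performed in the proof of Proposition \ref{bullet_DMR} (via Proposition \ref{DMR_lambda_condition_ii_new} and the identity \eqref{lambda_bullet_DR}, with the scalar there playing the role of $\nu$); and the $\lambda$-dependent conditions transform compatibly, since a length-two coefficient scales by $\nu^2$ so that \ref{DMR_lambda_condition_iii} sends $-\lambda^2/24$ to $-(\nu\lambda)^2/24$, while a length-one coefficient scales by $\nu$ so that \ref{DMR_lambda_condition_iv} sends $\tfrac{|G|-2}{2}\lambda$ to $\tfrac{|G|-2}{2}(\nu\lambda)$. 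This last bookkeeping is the only genuinely new computation beyond Proposition \ref{bullet_DMR}, and I expect it to be the main (though elementary) point of the argument.

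Granting this, well-definedness of the action is immediate: for $(\nu,\Phi)\in\K^{\times}\ltimes\DMR_0^G(\K)$ and $(\lambda,\Psi)\in\DMR^{\iota}_{\times}(\K)$ one has $\nu\bullet\Psi\in\DMR^{\iota}_{\nu\lambda}(\K)$, and since $\DMR_0^G(\K)$ stabilizes the fiber $\DMR^{\iota}_{\nu\lambda}(\K)$ by Proposition \ref{Racinet_main_result}\ref{Racinet_torsor}, the product $\Phi\circledast(\nu\bullet\Psi)$ remains in that fiber; hence $(\nu,\Phi)\circledast(\lambda,\Psi)=(\nu\lambda,\Phi\circledast(\nu\bullet\Psi))\in\DMR^{\iota}_{\times}(\K)$. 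The action axioms require no further work, as this is just the restriction of left multiplication in the ambient group $\K^{\times}\ltimes\G(\KX)$ to the subgroup $\K^{\times}\ltimes\DMR_0^G(\K)$ and the now-stable subset $\DMR^{\iota}_{\times}(\K)$.

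Freeness and transitivity then decouple across the two coordinates. For transitivity, given $(\lambda,\Psi),(\lambda',\Psi')\in\DMR^{\iota}_{\times}(\K)$ the first coordinate forces $\nu:=\lambda'\lambda^{-1}\in\K^{\times}$; then $\nu\bullet\Psi$ and $\Psi'$ both lie in $\DMR^{\iota}_{\lambda'}(\K)$, so Racinet's transitivity yields a unique $\Phi\in\DMR_0^G(\K)$ with $\Phi\circledast(\nu\bullet\Psi)=\Psi'$, i.e. $(\nu,\Phi)\circledast(\lambda,\Psi)=(\lambda',\Psi')$. For freeness, if $(\nu,\Phi)$ fixes $(\lambda,\Psi)$ then $\nu\lambda=\lambda$ forces $\nu=1$ since $\lambda\in\K^{\times}$, whence $\Phi\circledast\Psi=\Psi$ within $\DMR^{\iota}_{\lambda}(\K)$ and Racinet's freeness gives $\Phi=1$; thus $(\nu,\Phi)=(1,1)$.
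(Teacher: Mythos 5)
Your proposal is correct and follows essentially the same route as the paper: your scaling bijection $\nu\bullet- : \DMR^{\iota}_{\lambda}(\K) \to \DMR^{\iota}_{\nu\lambda}(\K)$ is exactly the paper's Lemma \ref{DMR_transfert} (verified condition by condition in the same way, including the reuse of the Proposition \ref{bullet_DMR} computation for condition \ref{DMR_lambda_condition_ii}), and transitivity is then deduced from Racinet's fiberwise result (Proposition \ref{Racinet_main_result}.\ref{Racinet_torsor}) just as in the paper. The only cosmetic difference is freeness, which the paper gets by restricting the free left-multiplication action of $\K^{\times}\ltimes\G(\KX)$ on itself, while you verify it coordinatewise; both are valid.
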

\noindent In order to prove this, we will need the following Lemma:
\begin{lemma}
    \label{DMR_transfert}
    Let $\iota : G \to \C^{\times}$ be a group embedding and $\lambda, \nu \in \K^{\times}$. If $\Phi \in \DMR_{\nu}^{\iota}(\K)$ then $\lambda \nu^{-1} \bullet \Phi \in \DMR_{\lambda}^{\iota}(\K)$.
\end{lemma}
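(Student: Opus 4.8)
The plan is to set $\mu := \lambda\nu^{-1} \in \K^{\times}$ and to verify that $\mu \bullet \Phi$ satisfies each of the five conditions of Definition \ref{DMR_lambda} for the parameter $\lambda$, knowing that $\Phi$ satisfies them for $\nu$. First I would record that $\mu \bullet \Phi$ is grouplike: since $\mu \bullet -$ is a Hopf algebra automorphism of $(\KX, \widehat{\Delta})$, it restricts to a self-map of $\G(\KX)$. The essential elementary input is that $\mu \bullet -$ acts by multiplication by $\mu^n$ on the degree-$n$ homogeneous component, so that $(\mu \bullet \Phi | w) = \mu^{|w|}(\Phi | w)$ for every word $w$ in the letters $x_0, (x_g)_{g \in G}$, where $|w|$ denotes its length.

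With this in hand, the inhomogeneous conditions follow from a degree count together with the identity $\mu\nu = \lambda$. For Condition \ref{DMR_lambda_condition_i}, the letters $x_0, x_1$ have length one, so $(\mu\bullet\Phi | x_0) = \mu(\Phi | x_0) = 0$, and likewise for $x_1$. For Condition \ref{DMR_lambda_condition_iii}, the word $x_0 x_1$ has length two, whence $(\mu\bullet\Phi | x_0 x_1) = \mu^2(\Phi | x_0 x_1) = -\mu^2\nu^2/24 = -\lambda^2/24$. For Condition \ref{DMR_lambda_condition_iv}, the letters $x_{g_\iota}, x_{g_\iota^{-1}}$ have length one, so $(\mu\bullet\Phi | x_{g_\iota} - x_{g_\iota^{-1}}) = \mu \cdot \tfrac{|G|-2}{2}\nu = \tfrac{|G|-2}{2}\lambda$. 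Finally, Condition \ref{DMR_lambda_condition_v} is a linear relation between length-one coefficients on both sides, so the common factor $\mu$ cancels and the relation for $\Phi$ transfers verbatim to $\mu\bullet\Phi$. The only place where the hypothesis $\Phi \in \DMR_{\nu}^{\iota}(\K)$ (as opposed to $\DMR_0^G(\K)$) is genuinely used is in matching the scaling to the target values via $\mu\nu = \lambda$.

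The remaining Condition \ref{DMR_lambda_condition_ii} is the single step that is not pure degree bookkeeping, and here I would reuse the computation from the proof of Proposition \ref{bullet_DMR}. That argument establishes, via the commutativity of Diagram \eqref{Diag_bullets} and Identity \eqref{bullet_Gamma}, the formula $(\mu\bullet\Phi)^{\star} = \mu\bullet_{\M}\Phi^{\star}$, together with the fact that $\mu\bullet_{\M}-$ is a coalgebra automorphism of $(\widehat{\M}^{\DR}_G, \widehat{\Delta}^{\M, \DR}_G)$ by part \ref{lambdaM} of Lemma \ref{from_lambdaV}. Since Proposition \ref{DMR_lambda_condition_ii_new} rephrases Condition \ref{DMR_lambda_condition_ii} for $\Phi$ as $\widehat{\Delta}^{\M, \DR}_G(\Phi^{\star}) = \Phi^{\star}\otimes\Phi^{\star}$, applying the coalgebra automorphism $\mu\bullet_{\M}-$ yields $\widehat{\Delta}^{\M, \DR}_G((\mu\bullet\Phi)^{\star}) = (\mu\bullet\Phi)^{\star}\otimes(\mu\bullet\Phi)^{\star}$, which is exactly Condition \ref{DMR_lambda_condition_ii} for $\mu\bullet\Phi$. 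I do not anticipate a genuine obstacle: the lemma is a mild parametric strengthening of Proposition \ref{bullet_DMR}, and the main point to keep straight is the bookkeeping of the scaling parameter against the inhomogeneous right-hand sides of Conditions \ref{DMR_lambda_condition_iii} and \ref{DMR_lambda_condition_iv}.
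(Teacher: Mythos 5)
Your proposal is correct and follows essentially the same route as the paper's proof: direct verification of Conditions \ref{DMR_lambda_condition_i}, \ref{DMR_lambda_condition_iii}, \ref{DMR_lambda_condition_iv} and \ref{DMR_lambda_condition_v} via the degree-scaling property of $\lambda\nu^{-1}\bullet -$, and Condition \ref{DMR_lambda_condition_ii} by reusing the argument of Proposition \ref{bullet_DMR} (the identity $(\mu\bullet\Phi)^{\star} = \mu\bullet_{\M}\Phi^{\star}$ together with Lemma \ref{from_lambdaV}.\ref{lambdaM} and Proposition \ref{DMR_lambda_condition_ii_new}). The only cosmetic difference is that the paper dispatches Condition \ref{DMR_lambda_condition_v} as a consequence of Condition \ref{DMR_lambda_condition_iv}, whereas you note directly that the common scalar factor cancels in the homogeneous linear relation; both observations are equally valid.
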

\begin{proof}
    Let $\Phi \in \DMR_{\nu}^{\iota}(\K)$. Since $\lambda \nu^{-1} \bullet x_0 = \lambda \nu^{-1} x_0$ and $\lambda \nu^{-1} \bullet x_g = \lambda \nu^{-1} x_g$ for $g \in G$, we have
    \begin{itemize}[leftmargin=*]
        \item $(\lambda \nu^{-1} \bullet \Phi | x_0) = \lambda \nu^{-1} (\Phi | x_0) = 0$ and $(\lambda \nu^{-1} \bullet \Phi | x_1) = \lambda \nu^{-1} (\Phi | x_1) = 0$.
        \item $(\lambda \nu^{-1} \bullet \Phi | x_0x_1) = \lambda^2 {\nu^{-1}}^2 (\Phi | x_0x_1) = - \lambda^2 {\nu^{-1}}^2 \frac{\nu^2}{24} = - \frac{\lambda^2}{24}$.
        \item $(\lambda \nu^{-1} \bullet \Phi | x_{g_{\iota}} - x_{g_{\iota}^{-1}}) = \lambda \nu^{-1} (\Phi | x_{g_{\iota}} - x_{g_{\iota}^{-1}}) = \lambda \nu^{-1} \frac{|G| - 2}{2} \nu = \frac{|G| - 2}{2} \lambda$.
    \end{itemize}
    This proves respectively conditions \ref{DMR_lambda_condition_i}, \ref{DMR_lambda_condition_iii} and \ref{DMR_lambda_condition_iv} of Definition \ref{DMR_lambda}. Condition \ref{DMR_lambda_condition_v} follows from condition \ref{DMR_lambda_condition_iv}.
    Finally, one shows that Condition \ref{DMR_lambda_condition_ii} is satisfied by using the same arguments as in the proof of Proposition \ref{bullet_DMR}.
\end{proof}

\begin{proof}[Proof of Proposition \ref{DMR_torsor}]
    Since the action of the group $\K^{\times} \ltimes \G(\KX)$ on the set $\K^{\times} \times \G(\KX)$ by left multiplication $\circledast$ is free, so is its restriction to the action of $\K^{\times} \ltimes \DMR_{0}^{G}(\K)$ on $\DMR^{\iota}_{\times}(\K)$. Let us show that this action is transitive. Let $(\lambda, \Psi)$ and $(\nu, \Phi) \in \DMR^{\iota}_{\times}(\K)$. Set $\mu = \lambda \nu^{-1}$. It follows from Lemma \ref{DMR_transfert} that $\mu \bullet \Phi \in \DMR_{\lambda}^{\iota}(\K)$. Thanks to Proposition \ref{Racinet_main_result}.\ref{Racinet_torsor}, the action of the group $(\DMR_{0}^{G}(\K), \circledast)$ on $\DMR_{\lambda}^{\iota}(\K)$ is transitive, therefore, there exists $\Lambda \in \DMR_{0}^{G}(\K)$ such that $\Lambda \circledast (\mu \bullet \Phi) = \Psi$. Thus $(\mu, \Lambda) \in \K^{\times} \ltimes \DMR_{0}^{G}(\K)$ is such that
    \[
        (\mu, \Lambda) \circledast (\nu, \Phi) = (\lambda, \Psi),
    \]
    which proves the transitivity.
\end{proof}

\subsection{The torsor \texorpdfstring{$\DMR_{\times}(\K)$}{DMRx}} \label{DMRx}

\subsubsection{Action of the group \texorpdfstring{$\Aut(G)$}{Aut(G)} on \texorpdfstring{$\KX$}{KX}}
The group $\Aut(G)$ acts on $\KX$ by $\K$-algebra automorphisms, the element $\phi \in \Aut(G)$ acting by the automorphism \index{$\eta_{\phi}$}$\eta_{\phi}$ given by
\begin{equation}
    x_0 \mapsto x_0, \quad x_g \mapsto x_{\phi(g)} \text{ for } g \in G.
\end{equation}
One checks that, for any $\phi \in \Aut(G)$, the automorphism $\eta_{\phi}$ is a Hopf algebra automorphism of $(\KX, \widehat{\Delta})$.

In addition, for any $\phi \in \Aut(G)$ and any $g \in G$, we have
\begin{equation}
    \label{eta_phi_circ_t}
    \eta_{\phi} \circ t_g = t_g \circ \eta_{\phi},
\end{equation}
which can be verified by checking this identity on generators since both sides are given as a composition of $\K$-algebra morphisms. Let us show that

\begin{proposition}
    \label{eta_phi_GKX}
    Let $\phi \in \Aut(G)$. The map $\eta_{\phi}$ restricts to a group automorphism of $(\G(\KX), \circledast)$.
\end{proposition}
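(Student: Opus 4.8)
The plan is to mirror the argument used for Proposition \ref{bullet_group_aut}, replacing the scaling automorphism $\lambda \bullet -$ by $\eta_{\phi}$. Since it was already observed that $\eta_{\phi}$ is a Hopf algebra automorphism of $(\KX, \widehat{\Delta})$, it restricts to a bijection $\G(\KX) \to \G(\KX)$; the remaining task is to show that this bijection respects the twisted Magnus product $\circledast$ of \eqref{circledast}.

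The key step, playing the role of Lemma \ref{lambda_circ_aut}, is the conjugation identity
\[
    \eta_{\phi} \circ \aut_{\Psi} = \aut_{\eta_{\phi}(\Psi)} \circ \eta_{\phi}, \qquad \Psi \in \G(\KX).
\]
I would prove this by evaluating both sides on the generators $x_0$ and $x_g$ (for $g \in G$), since all four maps are topological $\K$-algebra automorphisms of $\KX$. On $x_0$ both sides act as the identity. On $x_g$ one has $\eta_{\phi}(\aut_{\Psi}(x_g)) = \eta_{\phi}(\Ad_{t_g(\Psi^{-1})}(x_g)) = \Ad_{\eta_{\phi}(t_g(\Psi^{-1}))}(x_{\phi(g)})$, while the right-hand side gives $\aut_{\eta_{\phi}(\Psi)}(\eta_{\phi}(x_g)) = \aut_{\eta_{\phi}(\Psi)}(x_{\phi(g)}) = \Ad_{t_{\phi(g)}(\eta_{\phi}(\Psi)^{-1})}(x_{\phi(g)})$; the two agree precisely because of the compatibility \eqref{eta_phi_circ_t} of $\eta_{\phi}$ with the translation automorphisms $t_g$, once one tracks how $\phi$ permutes the index, i.e. that $\eta_{\phi} \circ t_g = t_{\phi(g)} \circ \eta_{\phi}$.

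Granting this identity, for $\Psi, \Phi \in \G(\KX)$ I would compute
\[
    \eta_{\phi}(\Psi \circledast \Phi) = \eta_{\phi}\big(\Psi \, \aut_{\Psi}(\Phi)\big) = \eta_{\phi}(\Psi)\, \eta_{\phi}\big(\aut_{\Psi}(\Phi)\big) = \eta_{\phi}(\Psi)\, \aut_{\eta_{\phi}(\Psi)}(\eta_{\phi}(\Phi)) = \eta_{\phi}(\Psi) \circledast \eta_{\phi}(\Phi),
\]
using that $\eta_{\phi}$ is an algebra morphism for the second equality and the conjugation identity for the third. This shows that $\eta_{\phi}$ is a group endomorphism of $(\G(\KX), \circledast)$. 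Finally, since $\eta_{\phi}^{-1} = \eta_{\phi^{-1}}$, applying the same computation to $\phi^{-1}$ shows that the inverse is also an endomorphism, so $\eta_{\phi}$ restricts to a group automorphism of $(\G(\KX), \circledast)$.

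The main obstacle is the conjugation identity: everything reduces to the interaction between $\eta_{\phi}$ and the colour-translation automorphisms $t_g$ on the letters $x_g$, where one must carefully track how $\phi$ acts on the subscript $g$. This is the only place where the group-automorphism structure of $\phi$ genuinely enters, and it is exactly the content of \eqref{eta_phi_circ_t}.
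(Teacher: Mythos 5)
Your proof is correct and follows essentially the same route as the paper: the paper isolates your conjugation identity as Lemma \ref{eta_phi_aut_Psi}, proves it on the generators $x_0$ and $x_g$ exactly as you do, and concludes with the same computation together with the observation $\eta_{\phi}^{-1} = \eta_{\phi^{-1}}$. One remark in your favour: the commutation relation in the form you state it, $\eta_{\phi} \circ t_g = t_{\phi(g)} \circ \eta_{\phi}$, is the correct one, whereas the paper's displayed identity \eqref{eta_phi_circ_t} reads $\eta_{\phi} \circ t_g = t_g \circ \eta_{\phi}$, which fails on $x_h$ unless $\phi(g) = g$; the paper's proof of Lemma \ref{eta_phi_aut_Psi} implicitly uses your corrected version.
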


\noindent In order to prove this, we will need the following Lemma:
\begin{lemma}
    \label{eta_phi_aut_Psi}
    Let $\phi \in \Aut(G)$, $g \in G$ and $\Psi \in \G(\KX)$. We have
    \begin{equation}
        \eta_{\phi} \circ \aut_{\Psi} = \aut_{\eta_{\phi}(\Psi)} \circ \eta_{\phi}.
    \end{equation}
\end{lemma}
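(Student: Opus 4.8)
The plan is to exploit the fact that all four maps appearing in the identity are topological $\K$-algebra automorphisms of $\KX$, so it suffices to verify the equality on the algebra generators $x_0$ and $x_g$ for $g \in G$. This reduces the statement to two short computations, entirely parallel to the proof of Lemma \ref{lambda_circ_aut}, with the dilation $\lambda \bullet -$ replaced by the relabelling automorphism $\eta_\phi$.

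First I would treat the generator $x_0$. Since $\aut_\Psi(x_0) = x_0$ by definition, and likewise $\aut_{\eta_\phi(\Psi)}(x_0) = x_0$ while $\eta_\phi(x_0) = x_0$, both sides send $x_0$ to $x_0$, so the identity holds there trivially. The substance is in the generators $x_g$. On the left, $\aut_\Psi(x_g) = \Ad_{t_g(\Psi^{-1})}(x_g)$, and applying $\eta_\phi$ gives $\Ad_{\eta_\phi(t_g(\Psi^{-1}))}(\eta_\phi(x_g))$, using that $\eta_\phi$ is an algebra automorphism and therefore commutes with $\Ad$. On the right, $\eta_\phi(x_g) = x_{\phi(g)}$ and $\aut_{\eta_\phi(\Psi)}(x_{\phi(g)}) = \Ad_{t_{\phi(g)}(\eta_\phi(\Psi)^{-1})}(x_{\phi(g)})$. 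Comparing the two expressions, matching the arguments of the $\Ad$ reduces to the conjugating-element identity
\begin{equation*}
    \eta_\phi(t_g(\Psi^{-1})) = t_{\phi(g)}(\eta_\phi(\Psi^{-1})),
\end{equation*}
after which both sides agree.

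The key step, and the only place where any real content enters, is establishing this conjugator identity, i.e. the commutation relation $\eta_\phi \circ t_g = t_{\phi(g)} \circ \eta_\phi$ on $\KX$. I expect this to be the main (and genuinely only) obstacle, though it is a mild one: one checks it on generators, where $\eta_\phi(t_g(x_h)) = \eta_\phi(x_{gh}) = x_{\phi(gh)} = x_{\phi(g)\phi(h)}$ and $t_{\phi(g)}(\eta_\phi(x_h)) = t_{\phi(g)}(x_{\phi(h)}) = x_{\phi(g)\phi(h)}$, using that $\phi$ is a group homomorphism, while both fix $x_0$. Note that the relation $t_g \circ \eta_\phi = \eta_\phi \circ t_g$ recorded in \eqref{eta_phi_circ_t} is the special case, but the version I actually need carries the index through $\phi$; it is this $\phi$-twisted commutation that makes the outer automorphisms match. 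Once the conjugator identity is in hand, applying it to $\Psi^{-1}$ and substituting into the generator computation completes the verification for $x_g$, and hence the Lemma.
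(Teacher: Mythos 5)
Your proof is correct and follows essentially the same route as the paper's: reduce the identity to the generators $x_0$ and $x_g$, then match the conjugating elements of the two $\Ad$'s via the $\phi$-twisted commutation $\eta_{\phi} \circ t_g = t_{\phi(g)} \circ \eta_{\phi}$. You are in fact slightly more careful than the paper, whose proof cites its displayed identity \eqref{eta_phi_circ_t} (stated there as $\eta_{\phi} \circ t_g = t_g \circ \eta_{\phi}$, which as written fails unless $\phi(g) = g$) while actually using the twisted version that you state and verify explicitly; your observation that the needed relation must carry the index $g$ through $\phi$ is exactly the right correction.
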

\begin{proof}
    Since all morphisms are algebra automorphisms of $\KX$, it is enough to check this identity on generators. We have
    \[
        \eta_{\phi} \circ \aut_{\Psi}(x_0) = \eta_{\phi}(x_0) = x_0 = \aut_{\eta_{\phi}(\Psi)}(x_0) = \aut_{\eta_{\phi}(\Psi)} \circ \eta_{\phi} (x_0)
    \]
    and for $g \in G$,
    \begin{align*}
        \eta_{\phi} \circ \aut_{\Psi}(x_g) & = \eta_{\phi}(\Ad_{t_g(\Psi^{-1})}(x_g)) = \Ad_{t_{\phi(g)}(\eta_{\phi}(\Psi)^{-1})}(\eta_{\phi}(x_g)) \\
        & = \Ad_{t_{\phi(g)}(\eta_{\phi}(\Psi)^{-1})}(x_{\phi(g)}) = \aut_{\eta_{\phi}(\Psi)}(x_{\phi(g)}) = \aut_{\eta_{\phi}(\Psi)} \circ \eta_{\phi}(x_g),
    \end{align*}
    where the second equality comes from identity \eqref{eta_phi_circ_t}.
\end{proof}

\begin{proof}[Proof of Proposition \ref{eta_phi_GKX}]
    Let $\phi \in \Aut(G)$. Since $\eta_{\phi}$ is a Hopf algebra automorphism of $(\KX, \widehat{\Delta})$, it restricts to a map $\G(\KX) \to \G(\KX)$. Let $\Psi, \Phi \in \G(\KX)$. We have
    \begin{align*}
        \eta_{\phi}(\Psi \circledast \Phi) & = \eta_{\phi}(\Psi \aut_{\Psi}(\Phi)) = \eta_{\phi}(\Psi) \eta_{\phi}(\aut_{\Psi}(\Phi)) \\
        & = \eta_{\phi}(\Psi) \aut_{\eta_{\phi}(\Psi)}(\eta_{\phi}(\Phi)) = \eta_{\phi}(\Psi) \circledast \eta_{\phi}(\Phi),
    \end{align*}
    where the third equality comes from Lemma \ref{eta_phi_aut_Psi}. This proves that $\eta_{\phi}$ restricts to a group endomorphism of $(\G(\KX), \circledast)$. Finally, one has that $\eta_{\phi}^{-1} = \eta_{\phi^{-1}}$ and the above computations shows that $\eta_{\phi}^{-1}$ is an endomorphism of $(\G(\KX), \circledast)$, thus proving the statement.   
\end{proof}

\begin{lemma}
    \label{eta_bullet}
    For $(\phi, \lambda) \in \Aut(G) \times \K^{\times}$, we have
    \[
        \eta_{\phi} \circ (\lambda \bullet -) = (\lambda \bullet -) \circ \eta_{\phi}. 
    \]
\end{lemma}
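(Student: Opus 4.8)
The plan is to reduce the identity to a check on the algebra generators, exactly as in the proofs of the commutation relations \eqref{lambda_circ_t} and \eqref{eta_phi_circ_t} recorded just above. Since $\eta_{\phi}$ and $\lambda \bullet -$ are both $\K$-algebra automorphisms of $\KX$, each of the two composites $\eta_{\phi} \circ (\lambda \bullet -)$ and $(\lambda \bullet -) \circ \eta_{\phi}$ is again a $\K$-algebra endomorphism of $\KX$. A (continuous) $\K$-algebra morphism of $\KX$ is uniquely determined by its values on the topological generators $x_0$ and $(x_g)_{g \in G}$, so it suffices to verify that the two composites agree on these generators.

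First I would treat $x_0$: the composite $\eta_{\phi} \circ (\lambda \bullet -)$ sends $x_0 \mapsto \lambda x_0 \mapsto \lambda x_0$, using $\eta_{\phi}(x_0) = x_0$ and $\K$-linearity, while $(\lambda \bullet -) \circ \eta_{\phi}$ sends $x_0 \mapsto x_0 \mapsto \lambda x_0$, so the two outputs coincide. Next I would treat $x_g$ for $g \in G$: the first composite gives $x_g \mapsto \lambda x_g \mapsto \lambda x_{\phi(g)}$, whereas the second gives $x_g \mapsto x_{\phi(g)} \mapsto \lambda x_{\phi(g)}$, and again these agree. This exhausts the generators, whence the two endomorphisms are equal.

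The underlying reason there is nothing harder to do is that the two actions operate on independent data: $\lambda \bullet -$ rescales every generator by the same scalar $\lambda$ and is insensitive to the $G$-index, whereas $\eta_{\phi}$ permutes the $G$-index through $\phi$ and fixes scalars; hence they commute generator by generator essentially by inspection. I do not anticipate any real obstacle here. The only point that warrants (minimal) care is the preliminary observation that both sides are genuine $\K$-algebra morphisms of $\KX$, since this is precisely what licenses the reduction to a verification on generators and thereby avoids any computation on arbitrary elements.
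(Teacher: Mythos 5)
Your proof is correct and follows exactly the paper's own argument: reduce to the topological generators $x_0$ and $(x_g)_{g\in G}$ on the grounds that both composites are $\K$-algebra morphisms, then verify $x_0 \mapsto \lambda x_0$ and $x_g \mapsto \lambda x_{\phi(g)}$ under each composite. Nothing is missing.
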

\begin{proof}
    Let $(\phi, \lambda) \in \Aut(G) \times \K^{\times}$. Since all the morphisms are algebra automorphisms of $\KX$, it is enough to check this identity on generators. We have
    \[
        \eta_{\phi} \circ (\lambda \bullet x_0) = \eta_{\phi}(\lambda x_0) = \lambda x_0 = \lambda \bullet x_0 = \lambda \bullet \eta_{\phi}(x_0) = (\lambda \bullet -) \circ \eta_{\phi} (x_0)
    \]
    and for $g \in G$,
    \[
        \eta_{\phi} \circ (\lambda \bullet x_g) = \eta_{\phi} (\lambda x_g) = \lambda x_{\phi(g)} = \lambda \bullet x_{\phi(g)} = \lambda \bullet \eta_{\phi}(x_g) = (\lambda \bullet -) \circ \eta_{\phi} (x_g).
    \]
\end{proof}

\noindent Propositions \ref{bullet_group_aut} and \ref{eta_phi_GKX} and Lemma \ref{eta_bullet} enable us to define the following:
\begin{definition}
    We denote \index{$(\Aut(G) \times \K^{\times}) \ltimes \G(\KX)$}$(\Aut(G) \times \K^{\times}) \ltimes \G(\KX)$ the semi-direct product of $\Aut(G) \times \K^{\times}$ and $\G(\KX)$ with respect to the action given in Propositions \ref{bullet_group_aut} and \ref{eta_phi_GKX}. It consists of the set $\Aut(G) \times \K^{\times} \times \G(\KX)$ endowed with a group law which will also be denoted $\circledast$ and we have for $(\phi, \lambda, \Psi), (\phi^{\prime}, \nu, \Phi) \in \Aut(G) \times \K^{\times} \times \G(\KX)$, \index{$(\phi, \lambda, \Psi) \circledast (\phi^{\prime}, \nu, \Phi)$}
    \begin{equation}
        (\phi, \lambda, \Psi) \circledast (\phi^{\prime}, \nu, \Phi) := (\phi \circ \phi^{\prime}, \lambda \nu, \Psi \circledast \eta_{\phi}(\lambda \bullet \Phi)).
    \end{equation}
\end{definition}

\subsubsection{Action of the group \texorpdfstring{$\Aut(G)$}{Aut(G)} on \texorpdfstring{$\Emb(G)$}{Emb(G)}}
Let us denote \index{$\Emb(G)$}
\begin{equation}
    \Emb(G) := \{ \iota : G \to \C^{\times} \, | \, \iota \text{ is a group embedding}\}.
\end{equation}
\begin{lemma}
    \label{Emb_torsor}
    The group \index{$\Aut(G)$}$\Aut(G)$ acts freely and transitively on $\Emb(G)$ by
    \begin{equation}
        (\phi, \iota) \longmapsto \iota \circ \phi^{-1},
    \end{equation}
    for $(\phi, \iota) \in \Aut(G) \times \Emb(G)$.
\end{lemma}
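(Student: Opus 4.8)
The plan is to verify the three requirements in turn: that the assignment $(\phi, \iota) \mapsto \iota \circ \phi^{-1}$ is a well-defined left action, that it is free, and that it is transitive. Well-definedness is immediate, since a composition $\iota \circ \phi^{-1}$ of a group embedding with a group automorphism is again a group embedding, hence lands in $\Emb(G)$. The action axioms then reduce to the identity $\mathrm{id}_G \cdot \iota = \iota \circ \mathrm{id}_G = \iota$ together with the compatibility
$\phi \cdot (\psi \cdot \iota) = (\iota \circ \psi^{-1}) \circ \phi^{-1} = \iota \circ (\psi^{-1} \circ \phi^{-1}) = \iota \circ (\phi \circ \psi)^{-1} = (\phi \circ \psi) \cdot \iota$,
a one-line computation resting only on the fact that inversion reverses composition. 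Since the group operation on $\Aut(G)$ is composition, this confirms we have a genuine left action.

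For freeness, I would suppose $\phi \cdot \iota = \iota$, that is $\iota \circ \phi^{-1} = \iota$, and cancel $\iota$ on the left using that it is \emph{injective}: evaluating at an arbitrary $g \in G$ gives $\iota(\phi^{-1}(g)) = \iota(g)$, whence $\phi^{-1}(g) = g$ for every $g$, so $\phi = \mathrm{id}_G$. Thus every stabilizer is trivial and the action is free.

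The substantive point is transitivity, and the key is a structural observation I would isolate first: every $\iota \in \Emb(G)$ has the \emph{same} image, namely $\mu_N$. Indeed, since $G$ is cyclic of order $N$, a fixed generator $g_0$ is sent by any embedding to an element of $\C^{\times}$ of order exactly $N$, i.e.\ a primitive $N$\textsuperscript{th} root of unity, so $\iota(G) = \langle \iota(g_0) \rangle = \mu_N$. Given two embeddings $\iota, \iota' \in \Emb(G)$, each is therefore an isomorphism onto $\mu_N$, so $\phi := (\iota')^{-1} \circ \iota$ is a well-defined automorphism of $G$, and $\iota \circ \phi^{-1} = \iota \circ \iota^{-1} \circ \iota' = \iota'$, i.e.\ $\phi \cdot \iota = \iota'$; this is exactly transitivity.

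I expect the crux to be precisely this structural observation that all embeddings share the image $\mu_N$: it is what makes $(\iota')^{-1} \circ \iota$ a legitimate self-map of $G$ (rather than a partially-defined map between different subgroups of $\C^{\times}$) and hence what drives transitivity. Everything else is routine, but it is a standard consequence of the fact that a finite subgroup of $\C^{\times}$ of order $N$ is cyclic and equal to $\mu_N$, so no difficulty is anticipated.
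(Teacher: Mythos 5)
Your proof is correct and follows essentially the same route as the paper: the crux in both is the observation that every embedding $\iota \in \Emb(G)$ has image exactly $\mu_N$, so that $(\iota')^{-1} \circ \iota$ is a well-defined automorphism of $G$ realizing transitivity, with freeness coming from injectivity. The paper merely packages freeness and transitivity into a single existence-and-uniqueness statement and leaves the action axioms implicit, whereas you spell these routine verifications out.
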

\begin{proof}
    One knows that for any $\iota \in \Emb(G)$, $\iota(G) = \mu_N$. That gives rise to a group isomorphism $\widetilde{\iota} : G \to \mu_N(\C)$.
    Therefore, for any $\iota, \iota^{\prime} \in \Emb(G)$ there is a unique group automorphism $\phi = \widetilde{\iota^{\prime}}^{-1} \circ \widetilde{\iota}$ of $G$ such that $\iota \circ \phi^{-1} = \iota^{\prime}$.
\end{proof}

\begin{corollary}
    \label{EmbGxkxxGkX_torsor}
    The group $(\Aut(G) \times \K^{\times}) \ltimes \G(\KX)$ acts freely and transitively on $\Emb(G) \times \K^{\times} \times \G(\KX)$ by
    \begin{equation}
        (\phi, \lambda, \Psi) \cdot (\iota, \nu, \Phi) = (\iota \circ \phi^{-1}, \lambda \nu, \Psi \circledast \eta_{\phi}(\lambda \bullet \Phi)),
    \end{equation}
    for $(\phi, \lambda, \Psi) \in (\Aut(G) \times \K^{\times}) \ltimes \G(\KX)$ and $(\iota, \nu, \Phi) \in \Emb(G) \times \K^{\times} \times \G(\KX)$.
\end{corollary}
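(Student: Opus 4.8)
The plan is to recognise that the proposed action is nothing but the left-regular action of the group $(\Aut(G) \times \K^{\times}) \ltimes \G(\KX)$ on itself, transported along a bijection supplied by Lemma \ref{Emb_torsor}. The key observation is that the last two coordinates of the action formula
\[
    (\phi, \lambda, \Psi) \cdot (\iota, \nu, \Phi) = (\iota \circ \phi^{-1}, \lambda \nu, \Psi \circledast \eta_{\phi}(\lambda \bullet \Phi))
\]
coincide verbatim with the last two coordinates of the group law $\circledast$ on $(\Aut(G) \times \K^{\times}) \ltimes \G(\KX)$, while the first coordinate is governed precisely by the $\Aut(G)$-action on $\Emb(G)$ of Lemma \ref{Emb_torsor}. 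Since the left-regular action of any group on itself is free and transitive, it then suffices to produce an equivariant bijection between the group and the set $\Emb(G) \times \K^{\times} \times \G(\KX)$.

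First I would fix a base embedding $\iota_0 \in \Emb(G)$. By Lemma \ref{Emb_torsor} the map $\Aut(G) \to \Emb(G)$, $\phi \mapsto \iota_0 \circ \phi^{-1}$, is a bijection, so that
\[
    B : (\Aut(G) \times \K^{\times}) \ltimes \G(\KX) \longrightarrow \Emb(G) \times \K^{\times} \times \G(\KX), \quad (\phi, \lambda, \Psi) \longmapsto (\iota_0 \circ \phi^{-1}, \lambda, \Psi),
\]
is a bijection (the $\K^{\times}$- and $\G(\KX)$-coordinates being left unchanged).

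Next I would check the equivariance relation $B(h \circledast g) = h \cdot B(g)$ for $h = (\psi, \mu, \Xi)$ and $g = (\phi, \lambda, \Psi)$. On the first coordinate this amounts to the identity $\iota_0 \circ (\psi \circ \phi)^{-1} = (\iota_0 \circ \phi^{-1}) \circ \psi^{-1}$, which is just the reversal of order under inversion; on the remaining two coordinates the two sides are identical by the observation above. Once equivariance is established, the action axioms, freeness and transitivity are all inherited from the left-regular action: writing any $s \in \Emb(G) \times \K^{\times} \times \G(\KX)$ as $s = B(g)$, one has $h_1 \cdot (h_2 \cdot s) = h_1 \cdot B(h_2 \circledast g) = B\big(h_1 \circledast (h_2 \circledast g)\big) = B\big((h_1 \circledast h_2) \circledast g\big) = (h_1 \circledast h_2) \cdot s$ and $(\mathrm{id}, 1, 1) \cdot s = s$, while the simple transitivity of left multiplication transports across the bijection $B$.

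There is no serious obstacle here; the only point requiring care is the bookkeeping in the equivariance check on the $\Emb(G)$-coordinate, where the contravariance of $\phi \mapsto \phi^{-1}$ must be matched against the way $\Aut(G)$ composes in the semidirect product. Alternatively, one could bypass the bijection and argue directly: given $(\iota, \nu, \Phi)$ and $(\iota', \nu', \Phi')$, the equations $\iota \circ \phi^{-1} = \iota'$ (solved uniquely by Lemma \ref{Emb_torsor}), $\lambda \nu = \nu'$, and $\Psi \circledast \eta_{\phi}(\lambda \bullet \Phi) = \Phi'$ determine $(\phi, \lambda, \Psi)$ uniquely, yielding both transitivity and freeness at once.
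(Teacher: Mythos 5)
Your proposal is correct, but your primary argument is genuinely different from the paper's. The paper proves the statement in one stroke of simple transitivity: given $(\iota, \nu, \Phi)$ and $(\iota^{\prime}, \nu^{\prime}, \Phi^{\prime})$, it invokes Lemma \ref{Emb_torsor} to get the unique $\phi$ with $\iota^{\prime} = \iota \circ \phi^{-1}$, then solves the remaining two equations explicitly by $\lambda = \nu^{-1}\nu^{\prime}$ and $\Psi = \Phi^{\prime} \circledast \eta_{\phi}(\lambda \bullet \Phi)^{\circledast(-1)}$ — i.e. exactly the ``alternative'' you sketch in your final sentences. Your main route instead fixes a base point $\iota_0$, uses the free transitive $\Aut(G)$-action of Lemma \ref{Emb_torsor} to build the bijection $B : (\phi, \lambda, \Psi) \mapsto (\iota_0 \circ \phi^{-1}, \lambda, \Psi)$, checks $B(h \circledast g) = h \cdot B(g)$ (the only nontrivial coordinate being $\iota_0 \circ (\psi \circ \phi)^{-1} = (\iota_0 \circ \phi^{-1}) \circ \psi^{-1}$, which is correct), and transports the left-regular action. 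What your route buys: it verifies the action axioms $h_1 \cdot (h_2 \cdot s) = (h_1 \circledast h_2) \cdot s$ explicitly, a point the paper leaves implicit, and it makes conceptually transparent that the statement is just ``a torsor is a group made to forget its identity element.'' What the paper's route buys: it needs no choice of base point, and the explicit coordinate-wise solution is precisely the computation that gets reused in Proposition \ref{DMR_x_torsor}, where one must additionally check that $\eta_{\phi}(\lambda \bullet \Phi)$ lands in the right subset $\DMR^{\iota^{\prime}}_{\nu^{\prime}}(\K)$ — an argument that does not transport along your bijection $B$, since $B$ has no reason to respect the $\DMR$ subsets.
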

\begin{proof}
    Let $(\iota, \nu, \Phi), (\iota^{\prime}, \nu^{\prime}, \Phi^{\prime}) \in \Emb(G) \times \K^{\times} \times \G(\KX)$. Thanks to Lemma \ref{Emb_torsor}, there is a unique $\phi \in \Aut(G)$ such that $\iota^{\prime} = \iota \circ \phi^{-1}$. Set
    \[
        \lambda = \nu^{-1} \nu^{\prime} \text{ and } \Psi = \Phi^{\prime} \circledast \eta_{\phi}(\lambda \bullet \Phi)^{\circledast (-1)}.
    \]
    In conclusion, there is a unique $(\phi, \lambda, \Psi) \in (\Aut(G) \times \K^{\times}) \ltimes \G(\KX)$ such that
    \[
        (\phi, \lambda, \Psi) \cdot (\iota, \nu, \Phi) = (\iota^{\prime}, \nu^{\prime}, \Phi^{\prime}),
    \]
    which proves the statement.
\end{proof}

\subsubsection{Action of the group \texorpdfstring{$\Aut(G)$}{Aut(G)} on crossed product algebras and module}
The group $\Aut(G)$ acts on $\widehat{\V}^{\DR}_G$ by $\K$-algebra automorphisms the element $\phi \in \Aut(G)$ acting by the automorphism \index{$\eta_{\phi}^{\V}$}$\eta_{\phi}^{\V}$ given by
\begin{equation}
    \label{etaV_def}
    e_0 \mapsto e_0, \quad e_1 \mapsto e_1 \,\, \text{ and } g \mapsto \phi(g) \,\, \text{ for } g \in G.
\end{equation}

\begin{lemma}
    Let $\phi \in \Aut(G)$. The following diagram
    \begin{equation}
        \label{Diag_etaV_eta}
        \begin{tikzcd}
            \KX \ar["\eta_{\phi}"]{rr} \ar["\beta \circ (- \otimes 1)"']{d} && \KX \ar["\beta \circ (- \otimes 1)"]{d} \\
            \widehat{\V}^{\DR}_G \ar["\eta^{\V}_{\phi}"]{rr} && \widehat{\V}^{\DR}_G
        \end{tikzcd}
    \end{equation}
    commutes.
\end{lemma}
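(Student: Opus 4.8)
The plan is to follow the same strategy as in the proof of the commutativity of Diagram \eqref{Diag_bullets}: since all four arrows are $\K$-algebra morphisms, the two composites $\beta \circ (- \otimes 1) \circ \eta_{\phi}$ and $\eta^{\V}_{\phi} \circ \beta \circ (- \otimes 1)$ are themselves $\K$-algebra morphisms $\KX \to \widehat{\V}^{\DR}_G$, so it suffices to check that they agree on the algebra generators $x_0$ and $x_g$ (for $g \in G$) of $\KX$. Here $- \otimes 1 : \KX \to \KX \rtimes G$ is a unital $\K$-algebra morphism because $t_1 = \mathrm{id}$ forces $(a \otimes 1) \ast (b \otimes 1) = a\, b \otimes 1$, while $\eta_{\phi}$ and $\eta^{\V}_{\phi}$ are algebra morphisms by definition; thus the reduction to generators is legitimate. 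First I would record that, by Proposition \ref{KXGtoVG}, the morphism $\beta \circ (- \otimes 1)$ sends $x_0 \mapsto e_0$ and $x_g \mapsto -g e_1 g^{-1}$.

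On the generator $x_0$ both composites give $e_0$: indeed $\beta \circ (- \otimes 1) \circ \eta_{\phi}(x_0) = \beta(x_0 \otimes 1) = e_0$ since $\eta_{\phi}(x_0) = x_0$, whereas $\eta^{\V}_{\phi} \circ \beta \circ (- \otimes 1)(x_0) = \eta^{\V}_{\phi}(e_0) = e_0$ by the defining formula \eqref{etaV_def}. On a generator $x_g$ I would compute the two paths and observe that they coincide: going across the top edge first gives $\beta \circ (- \otimes 1)(x_{\phi(g)}) = -\phi(g)\, e_1\, \phi(g)^{-1}$, using $\eta_{\phi}(x_g) = x_{\phi(g)}$, while going down the left edge first gives $\eta^{\V}_{\phi}(-g e_1 g^{-1}) = -\phi(g)\, e_1\, \phi(g)^{-1}$, where I use that $\eta^{\V}_{\phi}$ is an algebra morphism with $g \mapsto \phi(g)$, $e_1 \mapsto e_1$ and $g^{-1} \mapsto \phi(g)^{-1}$. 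Since the two expressions are identical, the check closes.

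I do not expect any genuine obstacle: the only point requiring (minimal) attention is the evaluation $\eta^{\V}_{\phi}(g^{-1}) = \phi(g)^{-1}$, which is immediate from $\phi \in \Aut(G)$ together with the fact that $\eta^{\V}_{\phi}$ is an algebra morphism respecting the group relations of $\widehat{\V}^{\DR}_G$. Consequently the entire argument reduces to a short bookkeeping verification on the generators $x_0$ and $x_g$, and the commutativity of Diagram \eqref{Diag_etaV_eta} follows.
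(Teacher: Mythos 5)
Your proof is correct and follows exactly the paper's approach: the paper's own proof simply states that since all arrows are $\K$-algebra morphisms, commutativity can be checked on generators. Your write-up just spells out this verification on $x_0$ and $x_g$ (including the minor point that $- \otimes 1$ is an algebra morphism because $t_{1_G} = \mathrm{id}$), which the paper leaves implicit.
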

\begin{proof}
    Since all arrows are $\K$-algebra morphisms, one easily checks the commutativity of generators.
\end{proof}

\begin{lemma}
    \label{from_etaV}
    Let $\phi \in \Aut(G)$.
    \begin{enumerate}[label=(\roman*), leftmargin=*]
        \item \label{etaW} The $\K$-algebra automorphism $\eta_{\phi}^{\V}$ of $\widehat{\V}^{\DR}_G$ restricts to a Hopf algebra automorphism \index{$\eta_{\phi}^{\W}$}$\eta_{\phi}^{\W}$ of $(\widehat{\W}^{\DR}_G, \widehat{\Delta}^{\W, \DR}_G)$.
        \item \label{etaM} The $\K$-algebra automorphism $\eta_{\phi}^{\V}$ of $\widehat{\V}^{\DR}_G$ induces a coalgebra automorphism \index{$\eta_{\phi}^{\M}$}$\eta_{\phi}^{\M}$ of $(\widehat{\M}^{\DR}_G, \widehat{\Delta}^{\M, \DR}_G)$.
    \end{enumerate}
\end{lemma}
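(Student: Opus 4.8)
The plan is to follow the same two-step strategy used for the scaling automorphism in Lemma \ref{from_lambdaV}, replacing $\lambda \bullet_{\V} -$ throughout by $\eta_{\phi}^{\V}$. The one genuinely new feature is that $\eta_{\phi}^{\V}$ acts nontrivially on the group index $g$ rather than merely rescaling, so the coproduct compatibility will rest on $\phi$ being a \emph{group} automorphism.

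For \ref{etaW}, I would first evaluate $\eta_{\phi}^{\V}$ on the free generators of $\widehat{\W}^{\DR}_G$: since $\eta_{\phi}^{\V}$ fixes $e_0, e_1$ and sends $g \mapsto \phi(g)$, I get
\[
    \eta_{\phi}^{\V}(z_{n,g}) = \eta_{\phi}^{\V}(-e_0^{n-1} g e_1) = -e_0^{n-1}\phi(g) e_1 = z_{n, \phi(g)}.
\]
As $\phi$ is a bijection of $G$ and $\widehat{\W}^{\DR}_G$ is freely generated by $(z_{n,g})_{(n,g) \in \N_{>0} \times G}$, this gives $\eta_{\phi}^{\V}(\widehat{\W}^{\DR}_G) \subset \widehat{\W}^{\DR}_G$, and applying the same to $(\eta_{\phi}^{\V})^{-1} = \eta_{\phi^{-1}}^{\V}$ yields equality; hence $\eta_{\phi}^{\V}$ restricts to an algebra automorphism $\eta_{\phi}^{\W}$ of $\widehat{\W}^{\DR}_G$ fitting into the evident analogue of Diagram \eqref{diag_lambdaW_lambdaV}. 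To see it is a Hopf automorphism, I would check $\widehat{\Delta}^{\W, \DR}_G \circ \eta_{\phi}^{\W} = (\eta_{\phi}^{\W})^{\otimes 2} \circ \widehat{\Delta}^{\W, \DR}_G$ on $z_{n,g}$ using \eqref{DeltaW}: the left side produces a sum indexed by $h$ with terms $z_{k,h} \otimes z_{n-k, \phi(g) h^{-1}}$, while the right side gives $z_{k, \phi(h)} \otimes z_{n-k, \phi(gh^{-1})}$, and the two agree after the substitution $h \mapsto \phi(h)$ together with $\phi(gh^{-1}) = \phi(g)\phi(h)^{-1}$.

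For \ref{etaM}, I would first note that $\eta_{\phi}^{\V}$ preserves the submodule $\widehat{\V}^{\DR}_G e_0 + \sum_{g \in G} \widehat{\V}^{\DR}_G(g-1)$: it fixes $e_0$ and sends each $v(g-1)$ to $\eta_{\phi}^{\V}(v)(\phi(g)-1)$, which again lies in the submodule because $\phi$ permutes $G$. This descends to a unique $\K$-module automorphism $\eta_{\phi}^{\M}$ of $\widehat{\M}^{\DR}_G$ making the analogue of Diagram \eqref{diag_lambdaV_lambdaM} commute, and, by restriction to $\widehat{\W}^{\DR}_G$, the analogue of Diagram \eqref{diag_lambdaW_lambdaM}. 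I would then reproduce the cube argument of Lemma \ref{from_lambdaV}.\ref{lambdaM}: the two vertical faces are Diagram \eqref{diag_DeltaW_DeltaM}, the top face is the just-obtained commuting square, the bottom is its tensor square, the front is \ref{etaW}, and surjectivity of $-\cdot 1_{\DR}$ forces the back face to commute, i.e. $\eta_{\phi}^{\M}$ is a coalgebra automorphism of $(\widehat{\M}^{\DR}_G, \widehat{\Delta}^{\M, \DR}_G)$.

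The only real obstacle is the index bookkeeping in the coproduct check of \ref{etaW}: unlike the scaling case, where each generator was simply multiplied by $\lambda^n$, here one must confirm that the reindexing $h \mapsto \phi(h)$ is compatible with the convolution-type term $z_{n-k, gh^{-1}}$. This is precisely where the homomorphism property $\phi(gh^{-1}) = \phi(g)\phi(h)^{-1}$ and the bijectivity of $\phi$ enter, and both are immediate, so I expect no substantive difficulty beyond this verification.
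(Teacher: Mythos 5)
Your proposal is correct and takes essentially the same route as the paper's proof: the same computation $\eta_{\phi}^{\V}(z_{n,g}) = z_{n,\phi(g)}$ on the free generators together with the inverse $\eta_{\phi^{-1}}^{\V}$ to get the restriction $\eta_{\phi}^{\W}$, the same reindexing $h \mapsto \phi(h)$ combined with $\phi(gh^{-1}) = \phi(g)\phi(h)^{-1}$ in the coproduct check, and the same descent-to-the-quotient plus cube argument (with surjectivity of $- \cdot 1_{\DR}$) for $\eta_{\phi}^{\M}$. No gaps.
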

\begin{proof} \ 
    \begin{enumerate}[label=(\roman*), leftmargin=17pt]
        \item For $(n, g) \in \N_{>0} \times G$ we have
        \[
            \eta^{\V}_{\phi}(z_{n, g}) = \eta^{\V}_{\phi}(- e_0^{n-1} g e_1) = - e_0^{n-1} \phi(g) e_1 = z_{n, \phi(g)}.
        \]
        Since the algebra $\widehat{\W}^{\DR}_G$ is freely generated by the family $(z_{n, g})_{(n, g) \in \N_{>0} \times G}$, it follows that $\eta^{\V}_{\phi}(\widehat{\W}^{\DR}_G) \subset \widehat{\W}^{\DR}_G$. Similarly, $(\eta^{\V}_{\phi})^{-1}(\widehat{\W}^{\DR}_G) \subset \widehat{\W}^{\DR}_G$. Hence, $\eta^{\V}_{\phi}(\widehat{\W}^{\DR}_G) = \widehat{\W}^{\DR}_G$.
        This implies that $\eta^{\V}_{\phi}$ restricts to a $\K$-algebra automorphism of $\widehat{\W}^{\DR}_G$ which we denote $\eta^{\W}_{\phi}$ and that we have the following commutative diagram
        \begin{equation}
            \label{diag_etaW_etaV}
            \begin{tikzcd}
                \widehat{\W}^{\DR}_G \ar["\eta^{\W}_{\phi}"]{rrr} \ar[hook]{d} &&& \widehat{\W}^{\DR}_G \ar[hook]{d} \\
                \widehat{\V}^{\DR}_G \ar["\eta^{\V}_{\phi}"]{rrr} &&& \widehat{\V}^{\DR}_G
            \end{tikzcd}
        \end{equation}
        Let us show that the following diagram
        \begin{equation}
            \begin{tikzcd}
                \widehat{\W}^{\DR}_G \ar["\eta^{\W}_{\phi}"]{rr} \ar["\widehat{\Delta}^{\W, \DR}_G"']{d} && \widehat{\W}^{\DR}_G \ar["\widehat{\Delta}^{\W, \DR}_G"]{d} \\
                (\widehat{\W}^{\DR}_G)^{\otimes 2} \ar["(\eta^{\W}_{\phi})^{\otimes 2}"]{rr} && (\widehat{\W}^{\DR}_G)^{\otimes 2}
            \end{tikzcd}
        \end{equation}
        commutes. Indeed, for $(n, g) \in \N_{>0} \times G$ we have
        \begin{align*}
            \widehat{\Delta}^{\W, \DR}_G \circ \eta^{\W}_{\phi}(z_{n, g}) & = \widehat{\Delta}^{\W, \DR}_G(z_{n, \phi(g)}) \\
            & = z_{n, \phi(g)} \otimes 1 + 1 \otimes z_{n, \phi(g)} + \sum_{\substack{k=1 \\ h \in G}}^{n-1} z_{k, h} \otimes z_{n-k, \phi(g)h^{-1}} \\
            & = z_{n, \phi(g)} \otimes 1 + 1 \otimes z_{n, \phi(g)} + \sum_{\substack{k=1 \\ h \in G}}^{n-1} z_{k, \phi(h)} \otimes z_{n-k, \phi(g)\phi(h^{-1})} \\
            & = (\eta^{\W}_{\phi})^{\otimes 2}\Big(z_{n, g} \otimes 1 + 1 \otimes z_{n, g} + \sum_{\substack{k = 1 \\ h \in G}}^{n-1} z_{k, h} \otimes z_{n-k, g h^{-1}}\Big) \\
            & = (\eta^{\W}_{\phi})^{\otimes 2} \circ \widehat{\Delta}^{\W, \DR}_G (z_{n, g}). 
        \end{align*}
        \item Let $\phi \in \Aut(G)$. One checks that $\eta^{\V}_{\phi}$ preserves the submodule $\mbox{\scriptsize$\displaystyle \widehat{\V}^{\DR}_G e_0 + \sum_{g \in G} \widehat{\V}^{\DR}_G (g - 1)$}$. It follows that there is a unique $\K$-module automorphism $\eta^{\M}_{\phi}$ of $\widehat{\M}^{\DR}_G$ such that the following diagram
        \begin{equation}
            \label{diag_etaV_etaM}
            \begin{tikzcd}
                \widehat{\V}^{\DR}_G \ar["\eta^{\V}_{\phi}"]{rrr} \ar["- \cdot 1_{\DR}"']{d} &&& \widehat{\V}^{\DR}_G \ar["- \cdot 1_{\DR}"]{d} \\
                \widehat{\M}^{\DR}_G \ar["\eta^{\M}_{\phi}"]{rrr} &&& \widehat{\M}^{\DR}_G
            \end{tikzcd}
        \end{equation}
        commutes. Combined with \ref{etaW}, it gives the following commutative diagram \begin{equation}
            \label{diag_etaW_etaM}
            \begin{tikzcd}
                \widehat{\W}^{\DR}_G \ar["\eta^{\W}_{\phi}"]{rrr} \ar["- \cdot 1_{\DR}"']{d} &&& \widehat{\W}^{\DR}_G \ar["- \cdot 1_{\DR}"]{d} \\
                \widehat{\M}^{\DR}_G \ar["\eta^{\M}_{\phi}"]{rrr} &&& \widehat{\M}^{\DR}_G
            \end{tikzcd}
        \end{equation}
        We then have the following cube
        \[\begin{tikzcd}[row sep={40,between origins}, column sep={40,between origins}]
            &  & \widehat{\M}^{\DR}_G \arrow[ddd, "\widehat{\Delta}^{\M, \DR}_G"] \arrow[rrrr, "\eta_{\phi}^{\M}"] & & & & \widehat{\M}^{\DR}_G \arrow[ddd, "\widehat{\Delta}^{\M, \DR}_G"] \\
            \widehat{\W}^{\DR}_G \arrow[rrrr, "\hspace{1cm} \eta_{\phi}^{\W}"] \arrow[ddd, "\widehat{\Delta}^{\W, \DR}_G"'] \arrow[rru, "- \cdot 1_{\DR}"] & & & & \widehat{\W}^{\DR}_G \arrow[ddd, "\widehat{\Delta}^{\W, \DR}_G"] \arrow[rru, "- \cdot 1_{\DR}"'] & & \\
            & & & & & & \\
            & & (\widehat{\M}^{\DR}_G)^{\hat{\otimes} 2} \arrow[rrrr, "\hspace{-1.2cm}(\eta_{\phi}^{\M})^{\otimes 2}"'] & & & & (\widehat{\M}^{\DR}_G)^{\hat{\otimes} 2} \\
            (\widehat{\W}^{\DR}_G)^{\hat{\otimes} 2} \arrow[rrrr, "(\eta_{\phi}^{\W})^{\otimes 2}"] \arrow[rru, "- \cdot 1_{\DR}^{\otimes 2}"] & & & & (\widehat{\W}^{\DR}_G)^{\hat{\otimes} 2} \arrow[rru, "- \cdot 1_{\DR}^{\otimes 2}"'] & &
        \end{tikzcd}\]
        The left and the right faces are exactly the same square, which is commutative since it corresponds to Diagram \ref{diag_DeltaW_DeltaM}. The upper side commutes thanks to Diagram \eqref{diag_etaW_etaM} and the lower side is the tensor square of the upper side so is commutative. Finally, \ref{etaW} gives us the commutativity of the front side. This collection of commutativities together with the surjectivity of $- \cdot 1_{\DR}$ implies that the back side of the cube commutes, which proves that $\eta_{\phi}^{\M}$ is a coalgebra automorphism of $(\widehat{\M}^{\DR}_G, \widehat{\Delta}^{\M, \DR}_G)$.
    \end{enumerate}
\end{proof}

\begin{proposition}
    \label{eta_V_etaWM}
    Let $\phi \in \Aut(G)$.
    \begin{enumerate}[label=(\roman*), leftmargin=17pt]
        \item \label{etaV_etaM_alg_mod} The pair $(\eta_{\phi}^{\V}, \eta_{\phi}^{\M})$ is an automorphism of $(\widehat{\V}^{\DR}_G, \widehat{\M}^{\DR}_G)$ in the category $\K{\text -}\alg{\text -}\Mod_{\mathrm{top}}$.
        \item \label{etaW_etaM_HAMC} The pair $(\eta_{\phi}^{\W}, \eta_{\phi}^{\M})$ is an automorphism of $((\widehat{\W}^{\DR}_G, \widehat{\Delta}^{\W, \DR}_G), (\widehat{\M}^{\DR}_G, \widehat{\Delta}^{\M, \DR}_G))$ in the category $\K{\text -}\mathrm{HAMC}_{\mathrm{top}}$.
    \end{enumerate}
\end{proposition}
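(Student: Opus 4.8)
The plan is to mirror exactly the proof of Proposition \ref{bullet_pairs}, since the automorphisms $\eta_{\phi}^{\V}, \eta_{\phi}^{\W}, \eta_{\phi}^{\M}$ play for $\Aut(G)$ the role that $\lambda \bullet_{\V} -, \lambda \bullet_{\W} -, \lambda \bullet_{\M} -$ play for $\K^{\times}$. For part \ref{etaV_etaM_alg_mod}, the three structural facts are already in hand: $\eta_{\phi}^{\V}$ is a $\K$-algebra automorphism by definition \eqref{etaV_def}, and $\eta_{\phi}^{\M}$ is a $\K$-module automorphism by Lemma \ref{from_etaV}.\ref{etaM}. Thus the only thing left to verify is that the pair respects the module action, i.e. that $\eta_{\phi}^{\M}(v m) = \eta_{\phi}^{\V}(v)\, \eta_{\phi}^{\M}(m)$ for all $(v, m) \in \widehat{\V}^{\DR}_G \times \widehat{\M}^{\DR}_G$.

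First I would use the surjectivity of $- \cdot 1_{\DR} : \widehat{\V}^{\DR}_G \to \widehat{\M}^{\DR}_G$ to choose $v^{\prime} \in \widehat{\V}^{\DR}_G$ with $m = v^{\prime} \cdot 1_{\DR}$. Then the desired identity follows from the chain
\[
    \eta_{\phi}^{\M}(v m) = \eta_{\phi}^{\M}(v v^{\prime} \cdot 1_{\DR}) = \eta_{\phi}^{\V}(v v^{\prime}) \cdot 1_{\DR} = \eta_{\phi}^{\V}(v)\, \eta_{\phi}^{\V}(v^{\prime}) \cdot 1_{\DR} = \eta_{\phi}^{\V}(v)\, \eta_{\phi}^{\M}(m),
\]
where the second and fourth equalities are the commutativity of Diagram \eqref{diag_etaV_etaM} and the third is multiplicativity of the algebra morphism $\eta_{\phi}^{\V}$. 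Since $\eta_{\phi}^{\V}$ and $\eta_{\phi}^{\M}$ are invertible (with inverses $\eta_{\phi^{-1}}^{\V}$ and $\eta_{\phi^{-1}}^{\M}$), the pair is an automorphism in $\K{\text -}\alg{\text -}\Mod_{\mathrm{top}}$.

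For part \ref{etaW_etaM_HAMC} I would simply assemble the pieces. By Lemma \ref{from_etaV}.\ref{etaW}, $\eta_{\phi}^{\W}$ is a Hopf algebra automorphism of $(\widehat{\W}^{\DR}_G, \widehat{\Delta}^{\W, \DR}_G)$, and by Lemma \ref{from_etaV}.\ref{etaM}, $\eta_{\phi}^{\M}$ is a coalgebra automorphism of $(\widehat{\M}^{\DR}_G, \widehat{\Delta}^{\M, \DR}_G)$; the module-action compatibility of the pair $(\eta_{\phi}^{\W}, \eta_{\phi}^{\M})$ is inherited from part \ref{etaV_etaM_alg_mod} upon restricting $\eta_{\phi}^{\V}$ to the subalgebra $\widehat{\W}^{\DR}_G$ via Diagram \eqref{diag_etaW_etaV}. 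Together with Lemma \ref{WM_in_algmod}, these facts say precisely that $(\eta_{\phi}^{\W}, \eta_{\phi}^{\M})$ is an automorphism in $\K{\text -}\mathrm{HAMC}_{\mathrm{top}}$. I do not anticipate a genuine obstacle here: the entire argument is structural, and the only computational input is the one-line verification of the action compatibility above, whose shape is identical to that already carried out in Proposition \ref{bullet_pairs}.
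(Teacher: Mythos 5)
Your proof is correct and takes essentially the same approach as the paper's: part \ref{etaV_etaM_alg_mod} is verified by the identical chain of equalities using surjectivity of $- \cdot 1_{\DR}$ and the commutativity of Diagram \eqref{diag_etaV_etaM} (which is what the paper's citation of Lemma \ref{from_etaV}.\ref{etaM} amounts to), and part \ref{etaW_etaM_HAMC} is assembled from part \ref{etaV_etaM_alg_mod} and Lemma \ref{from_etaV} exactly as in the paper. Your write-up is in fact slightly more explicit than the paper's (naming the inverses $\eta_{\phi^{-1}}^{\V}$, $\eta_{\phi^{-1}}^{\M}$ and the restriction via Diagram \eqref{diag_etaW_etaV}), but the argument is the same.
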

\begin{proof} \ 
    \begin{enumerate}[label=(\roman*), leftmargin=17pt]
        \item Let $(v, m) \in \widehat{\V}^{\DR}_G \times \widehat{\M}^{\DR}_G$. Since $- \cdot 1_{\DR} : \widehat{\V}^{\DR}_G \to \widehat{\M}^{\DR}_G$ is surjective, there exist $v' \in \widehat{\V}^{\DR}_G$ such that $m = v' \cdot 1_{\DR}$. We have
        \begin{align*}
            \eta_{\phi}^{\M}(vm) & = \eta_{\phi}^{\M}(v v' \cdot 1_{\DR}) = \eta_{\phi}^{\V}(vv') \cdot 1_{\DR} \\
            & = \eta_{\phi}^{\V}(v) \, \eta_{\phi}^{\V}(v') \cdot 1_{\DR} = \eta_{\phi}^{\V}(v) \, \eta_{\phi}^{\M}(m),
        \end{align*}
        where the second and fourth equalities come from Lemma \ref{from_etaV}. \ref{etaM}.
        \item It follows from \ref{etaV_etaM_alg_mod} and from Lemma \ref{from_etaV}.
    \end{enumerate}
\end{proof}

\subsubsection{The torsor \texorpdfstring{$\DMR_{\times}(\K)$}{DMRx}}
Lemma \ref{Emb_torsor} sets up the following result:
\begin{proposition}
    \label{eta_phi_DMR}
    Let $\lambda \in \K$. For $\iota, \iota^{\prime} \in \Emb(G)$, the element $\phi \in \Aut(G)$ such that $\iota^{\prime} = \iota \circ \phi$ is such that $\eta_{\phi}$ is a bijection between $\DMR_{\lambda}^{\iota}(\K)$ and $\DMR_{\lambda}^{\iota^{\prime}}(\K)$.
\end{proposition}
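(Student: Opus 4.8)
The plan is to prove that $\eta_{\phi}$ restricts to a bijection between $\DMR_{\lambda}^{\iota}(\K)$ and $\DMR_{\lambda}^{\iota'}(\K)$. First, Lemma \ref{Emb_torsor} guarantees that the automorphism $\phi$ with $\iota' = \iota \circ \phi$ exists and is unique, and Proposition \ref{eta_phi_GKX} tells us that $\eta_{\phi}$ already restricts to a group automorphism of $(\G(\KX), \circledast)$, hence is a bijection of $\G(\KX)$ with inverse $\eta_{\phi^{-1}}$. It therefore suffices to show that $\eta_{\phi}$ carries $\DMR_{\lambda}^{\iota'}(\K)$ into $\DMR_{\lambda}^{\iota}(\K)$: applying the same argument to $\phi^{-1}$ (which satisfies $\iota = \iota' \circ \phi^{-1}$) then yields that $\eta_{\phi^{-1}} = \eta_{\phi}^{-1}$ carries $\DMR_{\lambda}^{\iota}(\K)$ into $\DMR_{\lambda}^{\iota'}(\K)$, giving the desired bijection. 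Fixing $\Phi \in \DMR_{\lambda}^{\iota'}(\K)$, the task reduces to verifying conditions \ref{DMR_lambda_condition_i}--\ref{DMR_lambda_condition_v} of Definition \ref{DMR_lambda} for $\eta_{\phi}(\Phi)$ relative to $\iota$.

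The combinatorial input is the behaviour of the distinguished generator and of the coefficient pairing. Since $\iota' = \iota \circ \phi$, one computes $g_{\iota'} = (\iota')^{-1}(e^{\frac{i2\pi}{|G|}}) = \phi^{-1}(g_{\iota})$, i.e. $g_{\iota} = \phi(g_{\iota'})$. Because $\eta_{\phi}$ permutes the basis words by $x_g \mapsto x_{\phi(g)}$ and fixes $x_0$, one has $(\eta_{\phi}(\Phi) \mid x_h) = (\Phi \mid x_{\phi^{-1}(h)})$ for any $h \in G$, together with $(\eta_{\phi}(\Phi) \mid x_0) = (\Phi \mid x_0)$. Conditions \ref{DMR_lambda_condition_i} and \ref{DMR_lambda_condition_iii} are then immediate: $\phi$ fixes the neutral element of $G$, so $(\eta_{\phi}(\Phi)\mid x_1) = (\Phi \mid x_1)$, and when $|G| \in \{1,2\}$ the group $\Aut(G)$ is trivial, forcing $\phi = \mathrm{id}$ and $\iota = \iota'$. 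For conditions \ref{DMR_lambda_condition_iv} and \ref{DMR_lambda_condition_v}, I would use the relation $\phi^{-1}(g_{\iota}) = g_{\iota'}$ to obtain, for every admissible $k$,
\begin{equation*}
    \big(\eta_{\phi}(\Phi) \mid x_{g_{\iota}^{k}} - x_{g_{\iota}^{-k}}\big) = \big(\Phi \mid x_{\phi^{-1}(g_{\iota})^{k}} - x_{\phi^{-1}(g_{\iota})^{-k}}\big) = \big(\Phi \mid x_{g_{\iota'}^{k}} - x_{g_{\iota'}^{-k}}\big),
\end{equation*}
so that the full system of conditions \ref{DMR_lambda_condition_iv}--\ref{DMR_lambda_condition_v} for $\eta_{\phi}(\Phi)$ with respect to $\iota$ is literally the system for $\Phi$ with respect to $\iota'$, which holds by hypothesis.

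It remains to handle condition \ref{DMR_lambda_condition_ii}, which I would treat exactly as in the proof of Proposition \ref{bullet_DMR}, replacing $\lambda \bullet -$ by $\eta_{\phi}$. The first step is to show $(\eta_{\phi}(\Phi))^{\star} = \eta_{\phi}^{\M}(\Phi^{\star})$. Here the key simplification is that $\Gamma_{\Psi}$ depends only on the coefficients $(\Psi \mid x_0^{n-1} x_1)$, and since $\phi$ fixes $1_G$ these coefficients are unchanged by $\eta_{\phi}$; hence $\Gamma_{\eta_{\phi}(\Phi)}(-e_1) = \Gamma_{\Phi}(-e_1)$, which is moreover fixed by $\eta_{\phi}^{\V}$ as $\eta_{\phi}^{\V}(e_1) = e_1$. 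Combining this with the commutativity of Diagram \eqref{Diag_etaV_eta} (to pass $\eta_{\phi}$ through $\beta(\Phi \otimes 1)$) and of Diagram \eqref{diag_etaV_etaM} (to pass from $\widehat{\V}^{\DR}_G$ to $\widehat{\M}^{\DR}_G$), and using that $\eta_{\phi}^{\V}$ is an algebra morphism, yields the claimed identity $(\eta_{\phi}(\Phi))^{\star} = \eta_{\phi}^{\M}(\Phi^{\star})$. Since $\eta_{\phi}^{\M}$ is a coalgebra automorphism of $(\widehat{\M}^{\DR}_G, \widehat{\Delta}^{\M, \DR}_G)$ by Lemma \ref{from_etaV}.\ref{etaM}, applying $\widehat{\Delta}^{\M, \DR}_G$ and using Proposition \ref{DMR_lambda_condition_ii_new} for $\Phi$ gives $\widehat{\Delta}^{\M, \DR}_G((\eta_{\phi}(\Phi))^{\star}) = (\eta_{\phi}(\Phi))^{\star} \otimes (\eta_{\phi}(\Phi))^{\star}$, which is condition \ref{DMR_lambda_condition_ii} for $\eta_{\phi}(\Phi)$ by the same proposition.

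The only real subtlety, and the point to get right, is the bookkeeping of the distinguished generators: one must track that $\iota' = \iota \circ \phi$ forces $g_{\iota'} = \phi^{-1}(g_{\iota})$ and hence that $\eta_{\phi}$ transports the $\iota'$-conditions to the $\iota$-conditions (and not the reverse), so that the map indeed goes from $\DMR_{\lambda}^{\iota'}(\K)$ to $\DMR_{\lambda}^{\iota}(\K)$. Everything else is either formal (conditions \ref{DMR_lambda_condition_i}, \ref{DMR_lambda_condition_iii}) or a verbatim transcription of the $\lambda \bullet -$ argument (condition \ref{DMR_lambda_condition_ii}).
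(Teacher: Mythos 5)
Your proof is correct and follows essentially the same route as the paper's: conditions \ref{DMR_lambda_condition_i} and \ref{DMR_lambda_condition_iii}--\ref{DMR_lambda_condition_v} are checked by coefficient bookkeeping under the permutation $x_g \mapsto x_{\phi(g)}$ together with the relation between $g_{\iota}$ and $g_{\iota'}$, condition \ref{DMR_lambda_condition_ii} is handled by proving $(\eta_{\phi}(\Phi))^{\star} = \eta_{\phi}^{\M}(\Phi^{\star})$ exactly as in Proposition \ref{bullet_DMR} (using $\Gamma_{\eta_{\phi}(\Phi)} = \Gamma_{\Phi}$, Diagrams \eqref{Diag_etaV_eta} and \eqref{diag_etaV_etaM}, and Lemma \ref{from_etaV}), and bijectivity follows by running the same argument for $\phi^{-1}$. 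The only difference is labeling: the paper proves $\eta_{\phi} : \DMR_{\lambda}^{\iota}(\K) \to \DMR_{\lambda}^{\iota \circ \phi^{-1}}(\K)$, which after renaming is precisely your map $\DMR_{\lambda}^{\iota'}(\K) \to \DMR_{\lambda}^{\iota}(\K)$, so your directional bookkeeping (source $=$ target $\circ\, \phi$) matches the paper's.
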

\begin{proof}
    Since $\eta_{\phi}$ is a Hopf algebra automorphism of $(\KX, \widehat{\Delta})$, it restricts to group automorphism of $\G(\KX)$. It remains to show that $\eta_{\phi} : \DMR_{\lambda}^{\iota}(\K) \to \DMR_{\lambda}^{\iota \circ \phi^{-1}}(\K)$ is a bijection.
    Let $\Psi \in \DMR_{\lambda}^{\iota}(\K)$.  
    Since $\phi(x_0)=x_0$ and $\phi(x_1)=x_1$, Conditions \ref{DMR_lambda_condition_i} and \ref{DMR_lambda_condition_iii} of Definition \ref{DMR_lambda} are immediately satisfied by $\eta_{\phi}(\Psi)$. Moreover, since $g_{\iota \circ \phi^{-1}} = \phi(g_{\iota})$, we have
    \begin{align*}
        \left(\eta_{\phi}(\Psi) | x_{g_{\iota \circ \phi^{-1}}} - x_{g_{\iota \circ \phi^{-1}}^{-1}}\right) & = \left(\eta_{\phi}(\Psi) | x_{\phi(g_{\iota})} - x_{\phi(g_{\iota}^{-1})}\right) = \left(\eta_{\phi}(\Psi) | \eta_{\phi}(x_{g_{\iota}} - x_{g_{\iota}^{-1}})\right) \\
        & = \left(\Psi | x_{g_{\iota}} - x_{g_{\iota}^{-1}}\right) = \frac{|G| - 2}{2} \lambda.
    \end{align*}
    Then Identity \ref{DMR_lambda_condition_iv} of Definition \ref{DMR_lambda} follows. One checks Identity \ref{DMR_lambda_condition_v} in a similar way.
    Let us prove that Condition \ref{DMR_lambda_condition_ii} is satisfied by $\eta_{\phi}(\Psi)$. We have
    \begin{align*}
        (\eta_{\phi}(\Psi))^{\star} & = \left(\Gamma_{\eta_{\phi}(\Psi)}^{-1}(-e_1) \beta(\eta_{\phi}(\Psi) \otimes 1)\right) \cdot 1_{\DR} = \left(\Gamma_{\Psi}^{-1}(-e_1) \beta(\eta_{\phi}(\Psi) \otimes 1)\right) \cdot 1_{\DR} \\
        & = \left(\Gamma_{\Psi}^{-1}(-e_1) \eta_{\phi}^{\V}(\beta(\Psi \otimes 1))\right) \cdot 1_{\DR} = \left(\eta_{\phi}^{\V}\left(\Gamma_{\Psi}^{-1}(-e_1) \beta(\Psi \otimes 1)\right)\right) \cdot 1_{\DR} \\
        & = \eta_{\phi}^{\M}\left(\Gamma_{\Psi}^{-1}(-e_1) \beta(\Psi \otimes 1) \cdot 1_{\DR}\right) = \eta_{\phi}^{\M}(\Psi^{\star}),
    \end{align*}
    where the second equality comes from the fact that $\eta_{\phi}(x_1)=x_1$, the third one from the commutativity of Diagram \eqref{Diag_etaV_eta}, the fourth one from the fact that $\eta_{\phi}^{\V}(e_1)=e_1$ and the fifth one from the commutativity of Diagram \eqref{diag_etaV_etaM}. Therefore, thanks to Lemma \ref{from_etaV}.\ref{etaM} and the fact that $\Psi \in \DMR^{\iota}_{\lambda}(\K)$, we obtain that
    \begin{align*}
        \widehat{\Delta}^{\M, \DR}_{G}\left((\eta_{\phi}(\Psi))^{\star}\right) & = \widehat{\Delta}^{\M, \DR}_{G}\left(\eta_{\phi}^{\M}(\Psi^{\star})\right) = (\eta_{\phi}^{\M})^{\otimes 2}\left(\widehat{\Delta}^{\M, \DR}_{G}(\Psi^{\star})\right) \\
        & = (\eta_{\phi}^{\M})^{\otimes 2}(\Psi^{\star} \otimes \Psi^{\star}) = \eta_{\phi}^{\M}(\Psi^{\star})^{\otimes 2} = (\eta_{\phi}(\Psi))^{\star} \otimes (\eta_{\phi}(\Psi))^{\star},
    \end{align*}
    which implies, by Proposition \ref{DMR_lambda_condition_ii_new}, that condition \ref{DMR_lambda_condition_ii} of Definition \ref{DMR_lambda} is verified by $(\eta_{\phi}(\Psi))^{\star}$. This proves that $\eta_{\phi}$ restricts to a map $\DMR_{\lambda}^{\iota}(\K) \to \DMR_{\lambda}^{\iota \circ \phi^{-1}}(\K)$. Finally, following the same steps, one shows that $\eta_{\phi}^{-1} = \eta_{\phi^{-1}}$ restricts to a map $\DMR_{\lambda}^{\iota \circ \phi^{-1}}(\K) \to \DMR_{\lambda}^{\iota}(\K)$ thus proving the statement.
\end{proof}

\begin{corollary}
    \label{eta_phi_DMR0}
    For any $\phi \in \Aut(G)$, the map $\eta_{\phi}$ restricts to a group automorphism of $(\DMR_{0}^{G}(\K), \circledast)$.
\end{corollary}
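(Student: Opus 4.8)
The plan is to obtain the statement as an immediate specialisation of Proposition \ref{eta_phi_DMR} to the value $\lambda = 0$, combined with the group-automorphism property of $\eta_{\phi}$ on all of $\G(\KX)$ already recorded in Proposition \ref{eta_phi_GKX}.

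First I would fix $\phi \in \Aut(G)$ and recall from Proposition \ref{eta_phi_GKX} that $\eta_{\phi}$ restricts to a group automorphism of $(\G(\KX), \circledast)$. Since $\DMR_{0}^{G}(\K)$ is, by Proposition \ref{Racinet_main_result}.\ref{DMR0G}, a subgroup of $(\G(\KX), \circledast)$, it is enough to check that the permutation $\eta_{\phi}$ of $\G(\KX)$ sends the subset $\DMR_{0}^{G}(\K)$ into itself: the restriction of a group automorphism to a stable subgroup is automatically a group endomorphism of that subgroup, and applying the same argument to the inverse $\eta_{\phi}^{-1} = \eta_{\phi^{-1}}$ shows the restriction is bijective, hence an automorphism.

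Next I would invoke Proposition \ref{eta_phi_DMR} with $\lambda = 0$. Choosing any $\iota \in \Emb(G)$, that proposition provides a bijection $\eta_{\phi} : \DMR_{0}^{\iota}(\K) \to \DMR_{0}^{\iota \circ \phi^{-1}}(\K)$. The only point requiring attention is that the target set should coincide with the source: this follows from the Proposition-Definition immediately after Definition \ref{DMR_lambda}, which states that for $\lambda = 0$ conditions \ref{DMR_lambda_condition_iv} and \ref{DMR_lambda_condition_v} are independent of the choice of embedding, so that $\DMR_{0}^{\iota}(\K) = \DMR_{0}^{\iota \circ \phi^{-1}}(\K) = \DMR_{0}^{G}(\K)$. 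Hence $\eta_{\phi}$ restricts to a self-bijection of $\DMR_{0}^{G}(\K)$, which is exactly the stability needed in the previous paragraph.

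There is essentially no obstacle here: all the genuine content has already been carried out in Propositions \ref{eta_phi_GKX} and \ref{eta_phi_DMR}, and the corollary amounts to the harmless bookkeeping observation that the $\iota$-dependence of the target torsor disappears at $\lambda = 0$, collapsing the bijection of Proposition \ref{eta_phi_DMR} into a group automorphism of the fixed subgroup $\DMR_{0}^{G}(\K)$.
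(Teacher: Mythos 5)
Your proposal is correct and follows essentially the same route as the paper's own proof: specialise Proposition \ref{eta_phi_DMR} to $\lambda = 0$ (where the $\iota$-dependence disappears, so the bijection becomes a self-map of $\DMR_{0}^{G}(\K)$) and combine this with Proposition \ref{eta_phi_GKX} together with the fact that $\DMR_{0}^{G}(\K)$ is a subgroup of $(\G(\KX), \circledast)$. Your version merely spells out the collapse $\DMR_{0}^{\iota}(\K) = \DMR_{0}^{\iota \circ \phi^{-1}}(\K) = \DMR_{0}^{G}(\K)$ more explicitly than the paper does, which is a harmless elaboration rather than a different argument.
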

\begin{proof}
    From Proposition \ref{eta_phi_DMR} it follows that for $\lambda = 0$ and any $\phi \in \Aut(G)$ the map $\eta_{\phi}$ restricts to a bijection from $\DMR_{0}^{G}(\K)$ to itself. In addition, since $(\DMR_{0}^{G}(\K), \circledast)$ is a subgroup of $(\G(\KX), \circledast)$, Proposition \ref{eta_phi_GKX} states that this map is a group morphism.
\end{proof}

\noindent Proposition \ref{bullet_DMR}, Corollary \ref{eta_phi_DMR0} and Lemma \ref{eta_bullet} enable us to state the following definition:
\begin{definition}
    We denote \index{$(\Aut(G) \times \K^{\times}) \ltimes \DMR_{0}^{G}(\K)$}$(\Aut(G) \times \K^{\times}) \ltimes \DMR_{0}^{G}(\K)$ the semi-direct product of $\Aut(G) \times \K^{\times}$ and $\DMR_{0}^{G}(\K)$ with respect to the group action of $\Aut(G) \times \K^{\times}$ induced by Corollary \ref{eta_phi_DMR0} and Proposition \ref{bullet_DMR}. It is a subgroup of $(\Aut(G) \times \K^{\times}) \ltimes \G(\KX)$.
\end{definition}

\begin{definition}
    We define \index{$\DMR_{\times}(\K)$}
    \begin{equation}
        \DMR_{\times}(\K) := \{ (\iota, \lambda, \Psi) \in \Emb(G) \times \K^{\times} \times \G(\KX) \, | \, (\lambda, \Psi) \in \DMR_{\times}^{\iota}(\K) \}.
    \end{equation}
\end{definition}

\begin{proposition}
    \label{DMR_x_torsor}
    The group $(\Aut(G) \times \K^{\times}) \ltimes \DMR_{0}^{G}(\K)$ acts freely and transitively on $\DMR_{\times}(\K)$ by
    \begin{equation}
        (\phi, \lambda, \Psi) \cdot (\iota, \nu, \Phi) = (\iota \circ \phi^{-1}, \lambda \nu, \Psi \circledast \eta_{\phi}(\lambda \bullet \Phi)),
    \end{equation}
    for $(\phi, \lambda, \Psi) \in \Aut(G) \times \K^{\times} \times \DMR_{0}^{G}(\K)$ and $(\iota, \nu, \Phi) \in \DMR_{\times}(\K)$.
\end{proposition}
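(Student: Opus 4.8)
The plan is to recognize this statement as the restriction of the free transitive action already established in Corollary \ref{EmbGxkxxGkX_torsor}. Indeed, $(\Aut(G) \times \K^{\times}) \ltimes \DMR_{0}^{G}(\K)$ is a subgroup of $(\Aut(G) \times \K^{\times}) \ltimes \G(\KX)$, and $\DMR_{\times}(\K)$ is a subset of $\Emb(G) \times \K^{\times} \times \G(\KX)$, on which the larger group acts freely and transitively \emph{by the very same formula}. So I would reduce the proof to two points: (a) verifying that the formula sends $\DMR_{\times}(\K)$ into itself, i.e.\ that the ambient action restricts to a genuine action of the subgroup on the subset; and (b) proving transitivity of this restricted action. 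Freeness would then be inherited for free from Corollary \ref{EmbGxkxxGkX_torsor}, since a free action stays free on any invariant subset.

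For step (a) I would take $(\phi, \lambda, \Psi) \in \Aut(G) \times \K^{\times} \times \DMR_{0}^{G}(\K)$ and $(\iota, \nu, \Phi) \in \DMR_{\times}(\K)$, so that $\Phi \in \DMR_{\nu}^{\iota}(\K)$, and chain together three results to control the last component $\Psi \circledast \eta_{\phi}(\lambda \bullet \Phi)$. First, Lemma \ref{DMR_transfert}, applied with the pair of scalars $(\lambda\nu, \nu)$ so that the transfer factor $(\lambda\nu)\nu^{-1}$ equals $\lambda$, gives $\lambda \bullet \Phi \in \DMR_{\lambda\nu}^{\iota}(\K)$. Next, Proposition \ref{eta_phi_DMR} moves this to $\eta_{\phi}(\lambda \bullet \Phi) \in \DMR_{\lambda\nu}^{\iota \circ \phi^{-1}}(\K)$, matching the first component $\iota \circ \phi^{-1}$ produced by the formula. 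Finally, since $\Psi \in \DMR_{0}^{G}(\K)$, Proposition \ref{Racinet_main_result}.\ref{Racinet_torsor} shows that left $\circledast$-multiplication by $\Psi$ preserves $\DMR_{\lambda\nu}^{\iota \circ \phi^{-1}}(\K)$, whence $\Psi \circledast \eta_{\phi}(\lambda \bullet \Phi)$ lies there and the image triple belongs to $\DMR_{\times}(\K)$.

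For step (b), given two points $(\iota, \nu, \Phi)$ and $(\iota', \nu', \Phi')$ of $\DMR_{\times}(\K)$, I would reconstruct the group element exactly as in Corollary \ref{EmbGxkxxGkX_torsor}: Lemma \ref{Emb_torsor} yields the unique $\phi \in \Aut(G)$ with $\iota' = \iota \circ \phi^{-1}$, and I would set $\lambda := \nu'\nu^{-1} \in \K^{\times}$, so $\lambda\nu = \nu'$. The new ingredient — and the crux of the argument — is to check that the resulting last coordinate $\Psi$ actually lands in $\DMR_{0}^{G}(\K)$ rather than merely in $\G(\KX)$. For this I would rerun the chain from (a) to place both $\eta_{\phi}(\lambda \bullet \Phi)$ and $\Phi'$ in the \emph{same} set $\DMR_{\nu'}^{\iota'}(\K)$, and then invoke the transitivity half of Proposition \ref{Racinet_main_result}.\ref{Racinet_torsor} to produce $\Psi \in \DMR_{0}^{G}(\K)$ with $\Psi \circledast \eta_{\phi}(\lambda \bullet \Phi) = \Phi'$. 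The triple $(\phi, \lambda, \Psi)$ then sends $(\iota, \nu, \Phi)$ to $(\iota', \nu', \Phi')$.

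The routine parts (freeness, and that the restricted formula is a group action) follow formally from Corollary \ref{EmbGxkxxGkX_torsor} once invariance is known, so I expect the only real obstacle to be confirming the $\DMR_0^G(\K)$-membership of the constructed $\Psi$ in (b); this is precisely where the compatibility of Lemma \ref{DMR_transfert} and Proposition \ref{eta_phi_DMR} with the weight index $\lambda\nu = \nu'$ matters, ensuring the two elements to be compared sit in a common $\DMR_0^G(\K)$-torsor.
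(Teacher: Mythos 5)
Your proposal is correct and follows essentially the same route as the paper: the same chain of Lemma \ref{Emb_torsor}, Lemma \ref{DMR_transfert}, Proposition \ref{eta_phi_DMR} and Proposition \ref{Racinet_main_result}.\ref{Racinet_torsor} to place $\eta_{\phi}(\lambda \bullet \Phi)$ in $\DMR_{\nu'}^{\iota'}(\K)$ and then produce the transporting $\Psi \in \DMR_0^G(\K)$. The only difference is organizational — you separate invariance, transitivity, and freeness (the latter inherited from Corollary \ref{EmbGxkxxGkX_torsor}), whereas the paper compresses freeness and transitivity into a single unique-transporter argument — which is not a genuinely different method.
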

\begin{proof}
    Let $(\iota, \nu, \Phi), (\iota^{\prime}, \nu^{\prime}, \Phi^{\prime}) \in \DMR_{\times}(\K)$. Thanks to Lemma \ref{Emb_torsor}, there is a unique $\phi \in \Aut(G)$ such that $\iota^{\prime} = \iota \circ \phi^{-1}$. Set $\lambda = \nu^{-1} \nu^{\prime}$. Since $\Phi \in \DMR^{\iota}_{\nu}(\K)$, thanks to Lemma \ref{DMR_transfert} and Proposition \ref{eta_phi_DMR}, it follows that $\eta_{\phi}(\lambda \bullet \Phi) \in \DMR^{\iota^{\prime}}_{\nu^{\prime}}(\K)$. Thanks to Proposition \ref{Racinet_main_result}.\ref{Racinet_torsor}, the set $\DMR_{\nu^{\prime}}^{\iota^{\prime}}(\K)$ is a torsor for the action of the group $(\DMR_{0}^{G}(\K), \circledast)$. Therefore, there is a unique $\Psi \in \DMR_{0}^{G}(\K)$ such that $\Psi \circledast \eta_{\phi}(\lambda \bullet \Phi) = \Phi^{\prime}$. In conclusion, there is a unique $(\phi, \lambda, \Psi) \in (\Aut(G) \times \K^{\times}) \ltimes \DMR_{0}^{G}(\K)$ such that
    \[
        (\phi, \lambda, \Psi) \cdot (\iota, \nu, \Phi) = (\iota^{\prime}, \nu^{\prime}, \Phi^{\prime}),
    \]
    which proves the statement.
\end{proof}

\begin{corollary}
    The pair $\Big((\Aut(G) \times \K^{\times}) \ltimes \DMR_0^G(\K), \DMR_{\times}(\K)\Big)$ is a subtorsor of $\Big((\Aut(G) \times \K^{\times}) \ltimes \G(\KX), \Emb(G) \times \K^{\times} \times \G(\KX)\Big)$.
\end{corollary}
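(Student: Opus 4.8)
The plan is to unwind the definition of a subtorsor and to observe that each required ingredient has already been established earlier. Recall that a pair $(H, Y)$ is a subtorsor of a torsor consisting of a group $\mathcal{A}$ acting freely and transitively on a set $X$ precisely when $H$ is a subgroup of $\mathcal{A}$, $Y$ is a subset of $X$ that is stable under the induced action of $H$, and the resulting $H$-action on $Y$ is again free and transitive; equivalently, the inclusions $H \hookrightarrow \mathcal{A}$ and $Y \hookrightarrow X$ assemble into an equivariant morphism of torsors.

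First I would record the two inclusions. On the group side, $(\Aut(G) \times \K^{\times}) \ltimes \DMR_0^G(\K)$ is a subgroup of $(\Aut(G) \times \K^{\times}) \ltimes \G(\KX)$ by its very definition: it is the semidirect product built from the subgroup $\DMR_0^G(\K) \subset \G(\KX)$ together with the same $\Aut(G) \times \K^{\times}$-action, restricted via Proposition \ref{bullet_DMR} and Corollary \ref{eta_phi_DMR0}. On the set side, $\DMR_{\times}(\K) \subseteq \Emb(G) \times \K^{\times} \times \G(\KX)$ is immediate from the definition of $\DMR_{\times}(\K)$, since it consists of those triples $(\iota, \lambda, \Psi)$ with $(\lambda, \Psi) \in \DMR^{\iota}_{\times}(\K)$.

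Next I would check that the action of the subgroup on $\DMR_{\times}(\K)$ appearing in Proposition \ref{DMR_x_torsor} is literally the restriction of the ambient action of $(\Aut(G) \times \K^{\times}) \ltimes \G(\KX)$ on $\Emb(G) \times \K^{\times} \times \G(\KX)$ recorded in Corollary \ref{EmbGxkxxGkX_torsor}. This is transparent: both are given by the identical formula $(\phi, \lambda, \Psi) \cdot (\iota, \nu, \Phi) = (\iota \circ \phi^{-1}, \lambda \nu, \Psi \circledast \eta_{\phi}(\lambda \bullet \Phi))$. The only substantive point is therefore that $\DMR_{\times}(\K)$ is stable under the subgroup action, and this stability is exactly what the well-definedness of the action in Proposition \ref{DMR_x_torsor} guarantees. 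Freeness and transitivity of the restricted action are then precisely the content of Proposition \ref{DMR_x_torsor}; freeness may alternatively be deduced directly from the freeness of the ambient action established in Corollary \ref{EmbGxkxxGkX_torsor}. Assembling these observations shows that the pair meets every clause of the definition of a subtorsor.

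Since all the analytic work has already been carried out in Proposition \ref{DMR_x_torsor} and Corollary \ref{EmbGxkxxGkX_torsor}, there is no genuine computational obstacle here. The single point deserving care is purely formal, namely confirming that the restriction of the ambient action is well defined — that applying an element of the subgroup to an element of $\DMR_{\times}(\K)$ again lands in $\DMR_{\times}(\K)$ — but this stability is already subsumed in Proposition \ref{DMR_x_torsor}, so the corollary follows by direct assembly of the stated results.
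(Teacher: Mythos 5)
Your proposal is correct and takes essentially the same route as the paper: the paper's proof simply cites Corollary \ref{EmbGxkxxGkX_torsor} (the ambient torsor structure) and Proposition \ref{DMR_x_torsor} (the free and transitive action on $\DMR_{\times}(\K)$ by the same formula), which is exactly the content you spell out. Your version merely makes explicit the formal checks — subgroup and subset inclusions, coincidence of the restricted action with the ambient one, and stability of $\DMR_{\times}(\K)$ — that the paper leaves implicit.
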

\begin{proof}
    It follows from Propositions \ref{EmbGxkxxGkX_torsor} and \ref{DMR_x_torsor}. 
\end{proof}
    \section{The double shuffle group as a stabilizer of a ``de Rham'' coproduct}

In this section, we recall the action of the group $(\G(\KX), \circledast)$ on the algebra-module $\big(\widehat{\W}^{\DR}_G, \widehat{\M}^{\DR}_G\big)$ given in \cite{Yad}. This action enables us in §\ref{actions_on_crossed_product}, to construct an action of the group $(\Aut(G) \times \K^{\times}) \ltimes \G(\KX)$ on the algebra-module $\big(\widehat{\W}^{\DR}_G, \widehat{\M}^{\DR}_G\big)$. This leads us in §\ref{Stabs} to define the stabilizer groups of the coproducts $\widehat{\Delta}^{\W, \DR}_G$ and $\widehat{\Delta}^{\M, \DR}_G$. These stabilizers are related to stabilizers arising from the action of $(\G(\KX), \circledast)$, which contain $\DMR_0^G(\K)$ thanks to \cite{EF0}. Thanks to the main result of \cite{Yad}, we conclude in Corollary \ref{DMR_sub_StabM_sub_StabW} a chain of inclusions involving the former stabilizers and $(\Aut(G) \times \K^{\times}) \ltimes \DMR_0^G(\K)$.

\subsection{Group actions on the algebra-module \texorpdfstring{$\big(\widehat{\W}^{\DR}_G, \widehat{\M}^{\DR}_G\big)$}{(WDR, MDR)}} \label{actions_on_crossed_product}

\subsubsection{Actions of the group \texorpdfstring{$(\G(\KX), \circledast)$}{GKX*}}
For $\Psi \in \G(\KX)$, there is a unique topological $\K$-algebra automorphism $^{\Gamma}\aut^{\V, (1)}_{\Psi}$ of $\widehat{\V}^{\DR}_G$ such that (\cite[Definition 2.3.1]{Yad}
\begin{align}
    e_0 & \mapsto \Gamma_{\Psi}^{-1}(-e_1) \beta(\Psi \otimes 1) \, e_0 \, \beta(\Psi^{-1} \otimes 1) \Gamma_{\Psi}(-e_1) \notag \\
    e_1 & \mapsto \Gamma_{\Psi}^{-1}(-e_1) \, e_1 \, \Gamma_{\Psi}(-e_1) \\
    g & \mapsto \Gamma_{\Psi}^{-1}(-e_1) \beta(\Psi \otimes 1) \, g \, \beta(\Psi^{-1} \otimes 1) \Gamma_{\Psi}(-e_1). \notag
\end{align}
This automorphism restricts to a topological $\K$-algebra automorphism $^{\Gamma}\aut^{\W, (1)}_{\Psi}$ of $\widehat{\W}^{\DR}_G$ (\cite[Proposition-Definition 2.3.2]{Yad}). It is such that the following diagram
\begin{equation}
    \label{diag_GammaautW_GammaautV}
    \begin{tikzcd}
        \widehat{\W}^{\DR}_G \ar["^{\Gamma}\aut^{\W, (1)}_{\Psi}"]{rrr} \ar[hook]{d} &&& \widehat{\W}^{\DR}_G \ar[hook]{d} \\
        \widehat{\V}^{\DR}_G \ar["^{\Gamma}\aut^{\V, (1)}_{\Psi}"]{rrr} &&& \widehat{\V}^{\DR}_G
    \end{tikzcd}
\end{equation}
commutes.
Next, one defines the topological $\K$-module automorphism $^{\Gamma}\aut^{\V, (10)}_{\Psi}$ of $\widehat{\V}^{\DR}_G$ by
\begin{equation}
    ^{\Gamma}\aut^{\V, (10)}_{\Psi} := \,^{\Gamma}\aut^{\V, (1)}_{\Psi} \, \circ \, r_{\Gamma_{\Psi}^{-1}(-e_1)\beta(\Psi \otimes 1)}. 
\end{equation}
This automorphism induces a topological $\K$-module automorphism $^{\Gamma}\aut^{\M, (10)}_{\Psi}$ of $\widehat{\M}^{\DR}_G$ such that the following diagram (\cite[Definition 2.3.4]{Yad})
\begin{equation}
    \label{diag_GammaautV_GammaautM}
    \begin{tikzcd}
        \widehat{\V}^{\DR}_G \ar["^{\Gamma}\aut^{\V, (10)}_{\Psi}"]{rrrr} \ar["- \cdot 1_{\DR}"']{d} &&&& \widehat{\V}^{\DR}_G \ar["- \cdot 1_{\DR}"]{d} \\
        \widehat{\M}^{\DR}_G \ar["^{\Gamma}\aut^{\M, (10)}_{\Psi}"]{rrrr} &&&& \widehat{\M}^{\DR}_G
    \end{tikzcd}
\end{equation}
commutes.

\begin{lemma}[{\cite[Lemma 2.3.5]{Yad}}]
    \label{compat_Gamma_MWV}
    For any $\Psi \in \G(\KX)$, the following pairs are automorphisms in the category $\K\text{-}\alg\text{-}\Mod_{\mathrm{top}}$: 
    \begin{enumerate}[label=(\roman*), leftmargin=*]
        \item \label{rel_Gamma_aut_V10V1} $\left(^{\Gamma}\aut^{\V, (1)}_{\Psi}, \,^{\Gamma}\aut^{\V, (10)}_{\Psi}\right)$ is an automorphism of $(\widehat{\V}^{\DR}_G, \widehat{\V}^{\DR}_G)$.
        \item \label{rel_Gamma_aut_M10V1} $\left(^{\Gamma}\aut^{\V, (1)}_{\Psi}, \,^{\Gamma}\aut^{\M, (10)}_{\Psi}\right)$ is an automorphism of $(\widehat{\V}^{\DR}_G, \widehat{\M}^{\DR}_G)$.
        \item \label{rel_Gamma_aut_M10W1} $\left(^{\Gamma}\aut^{\W, (1)}_{\Psi}, \,^{\Gamma}\aut^{\M, (10)}_{\Psi}\right)$ is an automorphism of $(\widehat{\W}^{\DR}_G, \widehat{\M}^{\DR}_G)$.
    \end{enumerate}
\end{lemma}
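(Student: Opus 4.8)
The plan is to unwind the definition of an automorphism in the category $\K\text{-}\alg\text{-}\Mod_{\mathrm{top}}$: a pair $(f, g)$ is such an automorphism of $(A, M)$ precisely when $f$ is a topological $\K$-algebra automorphism of $A$, $g$ is a topological $\K$-module automorphism of $M$, and the two are compatible in the sense that $g(a \cdot m) = f(a) \cdot g(m)$ for all $(a, m) \in A \times M$. Each of the four maps occurring in \ref{rel_Gamma_aut_V10V1}, \ref{rel_Gamma_aut_M10V1}, \ref{rel_Gamma_aut_M10W1} has already been recorded above as an automorphism of the relevant type, so in every case the only remaining task is to verify this compatibility identity.

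First I would prove \ref{rel_Gamma_aut_V10V1}, which is the source of the other two. Writing $\sigma := {}^{\Gamma}\aut^{\V, (1)}_{\Psi}$ and $u := \Gamma_{\Psi}^{-1}(-e_1)\beta(\Psi \otimes 1) \in (\widehat{\V}^{\DR}_G)^{\times}$, the definition reads ${}^{\Gamma}\aut^{\V, (10)}_{\Psi} = \sigma \circ r_u$. On the left regular module $M = \widehat{\V}^{\DR}_G$ the action is left multiplication, so for $a, m \in \widehat{\V}^{\DR}_G$ I would compute
\begin{equation*}
    {}^{\Gamma}\aut^{\V, (10)}_{\Psi}(a m) = \sigma(a m u) = \sigma(a)\,\sigma(m u) = {}^{\Gamma}\aut^{\V, (1)}_{\Psi}(a) \cdot {}^{\Gamma}\aut^{\V, (10)}_{\Psi}(m).
\end{equation*}
The middle equality is nothing but the statement that $\sigma$ is an algebra morphism, and this is exactly the required compatibility.

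Then \ref{rel_Gamma_aut_M10V1} follows by descending along the surjection $- \cdot 1_{\DR} : \widehat{\V}^{\DR}_G \to \widehat{\M}^{\DR}_G$, in the same spirit as the proof of Lemma \ref{DeltaW_DelaM_DR_Cop}. Given $(a, m) \in \widehat{\V}^{\DR}_G \times \widehat{\M}^{\DR}_G$, I would choose $v \in \widehat{\V}^{\DR}_G$ with $m = v \cdot 1_{\DR}$, so that $a \cdot m = (av) \cdot 1_{\DR}$, and then chain the commutativity of Diagram \eqref{diag_GammaautV_GammaautM} with \ref{rel_Gamma_aut_V10V1}:
\begin{align*}
    {}^{\Gamma}\aut^{\M, (10)}_{\Psi}(a \cdot m) & = {}^{\Gamma}\aut^{\V, (10)}_{\Psi}(a v) \cdot 1_{\DR} = \left({}^{\Gamma}\aut^{\V, (1)}_{\Psi}(a) \cdot {}^{\Gamma}\aut^{\V, (10)}_{\Psi}(v)\right) \cdot 1_{\DR} \\
    & = {}^{\Gamma}\aut^{\V, (1)}_{\Psi}(a) \cdot {}^{\Gamma}\aut^{\M, (10)}_{\Psi}(m),
\end{align*}
where the last step applies Diagram \eqref{diag_GammaautV_GammaautM} once more together with associativity of the module action. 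Lastly, \ref{rel_Gamma_aut_M10W1} is obtained by restricting \ref{rel_Gamma_aut_M10V1} along the inclusion $\widehat{\W}^{\DR}_G \hookrightarrow \widehat{\V}^{\DR}_G$: for $(w, m) \in \widehat{\W}^{\DR}_G \times \widehat{\M}^{\DR}_G$ the previous case gives ${}^{\Gamma}\aut^{\M, (10)}_{\Psi}(w \cdot m) = {}^{\Gamma}\aut^{\V, (1)}_{\Psi}(w) \cdot {}^{\Gamma}\aut^{\M, (10)}_{\Psi}(m)$, and the commutativity of Diagram \eqref{diag_GammaautW_GammaautV} rewrites ${}^{\Gamma}\aut^{\V, (1)}_{\Psi}(w)$ as ${}^{\Gamma}\aut^{\W, (1)}_{\Psi}(w)$.

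The one genuinely non-formal point sits in \ref{rel_Gamma_aut_V10V1}: the module map ${}^{\Gamma}\aut^{\V, (10)}_{\Psi}$ is built by twisting with the \emph{right} multiplication $r_u$, whereas compatibility is tested against the \emph{left} module structure, and the two reconcile only because $\sigma$ is an algebra homomorphism, giving $\sigma(amu) = \sigma(a)\,\sigma(mu)$. Once this is in hand the remaining two statements are pure diagram-chasing through \eqref{diag_GammaautV_GammaautM} and \eqref{diag_GammaautW_GammaautV}, so I expect no further obstacle.
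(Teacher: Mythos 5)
Your proof is correct. One contextual remark: this paper does not prove Lemma \ref{compat_Gamma_MWV} at all — it imports it from \cite[Lemma 2.3.5]{Yad} — so there is no in-paper argument to compare against; your proposal is a sound reconstruction. Moreover, your three steps (reduce to the compatibility identity $g(am)=f(a)g(m)$; verify it for $\left({}^{\Gamma}\aut^{\V, (1)}_{\Psi}, {}^{\Gamma}\aut^{\V, (1)}_{\Psi} \circ r_u\right)$ using only that ${}^{\Gamma}\aut^{\V, (1)}_{\Psi}$ is an algebra morphism; then descend along the surjection $- \cdot 1_{\DR}$ and restrict along $\widehat{\W}^{\DR}_G \hookrightarrow \widehat{\V}^{\DR}_G$ via Diagrams \eqref{diag_GammaautV_GammaautM} and \eqref{diag_GammaautW_GammaautV}) are precisely the technique this paper itself uses for the analogous statements, namely Lemma \ref{DeltaW_DelaM_DR_Cop}, Proposition \ref{bullet_pairs}, and Proposition \ref{eta_V_etaWM}, so your argument fits the paper's methods exactly.
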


The group $(\G(\KX), \circledast)$ acts on $\widehat{\V}^{\DR}_G$ by (\cite[Proposition 2.3.3]{Yad})
\begin{equation}
    \label{GKX_act_VG}
    (\G(\KX), \circledast) \longrightarrow \Aut_{\K{\text -}\alg}^{\mathrm{top}}(\widehat{\V}^{\DR}_G); \quad \Psi \longmapsto \,^{\Gamma}\aut^{\V, (1)}_{\Psi}.
\end{equation}
Thanks to this and the commutativity of Diagram \eqref{diag_GammaautW_GammaautV}, the group $(\G(\KX), \circledast)$ acts on $\widehat{\W}^{\DR}_G$ by (\cite[Proposition 2.3.3]{Yad})
\begin{equation}
    \label{GKX_act_WG}
    (\G(\KX), \circledast) \longrightarrow \Aut_{\K{\text -}\alg}^{\mathrm{top}}(\widehat{\W}^{\DR}_G); \quad \Psi \longmapsto \,^{\Gamma}\aut^{\W, (1)}_{\Psi}.
\end{equation}
On the other hand, the action \eqref{GKX_act_VG} induces an action of $(\G(\KX), \circledast)$ on $\widehat{\V}^{\DR}_G$ by
\begin{equation}
    \label{GKX_act_VG10}
    (\G(\KX), \circledast) \longrightarrow \Aut_{\K{\text -}\Mod}^{\mathrm{top}}(\widehat{\V}^{\DR}_G); \quad \Psi \longmapsto \,^{\Gamma}\aut^{\V, (10)}_{\Psi}.
\end{equation}
Thanks to the commutativity of Diagram \eqref{diag_GammaautV_GammaautM}, the action action \eqref{GKX_act_VG10} induces an action of the group $(\G(\KX), \circledast)$ on $\widehat{\M}^{\DR}_G$ by (\cite[Proposition 2.3.6]{Yad})
\begin{equation}
    \label{GKX_act_MG}
    (\G(\KX), \circledast) \longrightarrow \Aut_{\K{\text-}\Mod}^{\mathrm{top}}(\widehat{\M}^{\DR}_G); \quad \Psi \longmapsto \,^{\Gamma}\aut^{\M, (10)}_{\Psi}.
\end{equation}

\begin{proposition} \ 
    \begin{enumerate}[label=(\roman*), leftmargin=*]
        \item The group $(\G(\KX), \circledast)$ acts on $(\widehat{\V}^{\DR}_G, \widehat{\V}^{\DR}_G)$ by
        \[
            (\G(\KX), \circledast) \longrightarrow \Aut_{\K{\text-}\alg{\text-}\Mod}^{\mathrm{top}}(\widehat{\V}^{\DR}_G, \widehat{\V}^{\DR}_G); \quad \Psi \longmapsto (^{\Gamma}\aut^{\V, (1)}_{\Psi}, \,^{\Gamma}\aut^{\V, (10)}_{\Psi}).
        \]
        \item The group $(\G(\KX), \circledast)$ acts on $(\widehat{\V}^{\DR}_G, \widehat{\M}^{\DR}_G)$ by
        \[
            (\G(\KX), \circledast) \longrightarrow \Aut_{\K{\text-}\alg{\text-}\Mod}^{\mathrm{top}}(\widehat{\V}^{\DR}_G, \widehat{\M}^{\DR}_G); \quad \Psi \longmapsto (^{\Gamma}\aut^{\V, (1)}_{\Psi}, \,^{\Gamma}\aut^{\M, (10)}_{\Psi}).
        \]
        \item \label{GKX_act_WM} The group $(\G(\KX), \circledast)$ acts on $(\widehat{\W}^{\DR}_G,\widehat{\M}^{\DR}_G)$ by
        \[
            (\G(\KX), \circledast) \longrightarrow \Aut_{\K{\text-}\alg{\text-}\Mod}^{\mathrm{top}}(\widehat{\W}^{\DR}_G, \widehat{\M}^{\DR}_G); \quad \Psi \longmapsto (^{\Gamma}\aut^{\W, (1)}_{\Psi}, \,^{\Gamma}\aut^{\M, (10)}_{\Psi}).
        \]
    \end{enumerate}
    \label{GKX_acts}
\end{proposition}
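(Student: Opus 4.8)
The plan is to reduce all three assertions to the combination of two facts that are already available: on one hand, that each component map $\Psi \mapsto {}^{\Gamma}\aut^{\bullet, (1)}_{\Psi}$ and $\Psi \mapsto {}^{\Gamma}\aut^{\bullet, (10)}_{\Psi}$ is a group action, established by \eqref{GKX_act_VG}, \eqref{GKX_act_WG}, \eqref{GKX_act_VG10} and \eqref{GKX_act_MG}; on the other hand, that for each fixed $\Psi$ the relevant pair is an automorphism in the category $\K{\text-}\alg{\text-}\Mod_{\mathrm{top}}$, which is exactly Lemma \ref{compat_Gamma_MWV}. The three items of the proposition then follow uniformly by the same argument, differing only in which pair of component actions one feeds in.

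First I would make explicit the structure of the automorphism group of an object of $\K{\text-}\alg{\text-}\Mod_{\mathrm{top}}$. A morphism of a pair $(A, M)$ consists of an algebra endomorphism of $A$ together with a compatible module endomorphism of $M$, and composition is carried out componentwise. Hence $\Aut_{\K{\text-}\alg{\text-}\Mod}^{\mathrm{top}}(A, M)$ embeds into the product group $\Aut_{\K{\text-}\alg}^{\mathrm{top}}(A) \times \Aut_{\K{\text-}\Mod}^{\mathrm{top}}(M)$ as the subgroup of compatible pairs, equipped with the product group law. The upshot of this observation is that a map $\Psi \mapsto (\alpha_{\Psi}, \mu_{\Psi})$ into this group is a group homomorphism precisely when each of its two coordinate maps $\Psi \mapsto \alpha_{\Psi}$ and $\Psi \mapsto \mu_{\Psi}$ is a group homomorphism, provided every value already lands among the compatible pairs.

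With this in hand, the verification splits into the two expected checks. For well-definedness, Lemma \ref{compat_Gamma_MWV} guarantees that for every $\Psi \in \G(\KX)$ the pairs $\big({}^{\Gamma}\aut^{\V, (1)}_{\Psi}, {}^{\Gamma}\aut^{\V, (10)}_{\Psi}\big)$, $\big({}^{\Gamma}\aut^{\V, (1)}_{\Psi}, {}^{\Gamma}\aut^{\M, (10)}_{\Psi}\big)$ and $\big({}^{\Gamma}\aut^{\W, (1)}_{\Psi}, {}^{\Gamma}\aut^{\M, (10)}_{\Psi}\big)$ lie in the respective automorphism groups, so each candidate map takes values where it should. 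For the homomorphism property, I would invoke that the first coordinates are governed by the group actions \eqref{GKX_act_VG} and \eqref{GKX_act_WG}, and the second coordinates by \eqref{GKX_act_VG10} and \eqref{GKX_act_MG}; these yield $\alpha_{\Psi \circledast \Phi} = \alpha_{\Psi} \circ \alpha_{\Phi}$ and $\mu_{\Psi \circledast \Phi} = \mu_{\Psi} \circ \mu_{\Phi}$ for the two coordinates. Since composition in the category is componentwise, this is exactly the identity $(\alpha_{\Psi \circledast \Phi}, \mu_{\Psi \circledast \Phi}) = (\alpha_{\Psi}, \mu_{\Psi}) \circ (\alpha_{\Phi}, \mu_{\Phi})$, which is the group-action axiom for the pair.

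I do not expect a serious obstacle, since the analytic content has already been absorbed into the constructions of \cite{Yad} and into Lemma \ref{compat_Gamma_MWV}; the proposition is essentially an assembly statement. The only point deserving care is spelling out the componentwise nature of the group law on $\Aut_{\K{\text-}\alg{\text-}\Mod}^{\mathrm{top}}$, so that the two separately known coordinate actions combine into a single action on the pair. Once this is recorded, items \ref{GKX_act_WM} and its two companions are immediate, and it is \ref{GKX_act_WM} — the action on $\big(\widehat{\W}^{\DR}_G, \widehat{\M}^{\DR}_G\big)$ — that will be used to define the stabilizer groups in the sequel.
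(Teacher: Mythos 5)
Your proposal is correct and follows exactly the paper's own argument: the paper's proof is the one-line statement that the result ``follows from Lemma \ref{compat_Gamma_MWV} and the fact that \eqref{GKX_act_VG} -- \eqref{GKX_act_MG} define actions,'' which is precisely your two-step reduction (well-definedness of the pairs via Lemma \ref{compat_Gamma_MWV}, homomorphism property via the componentwise actions). Your only addition is to make explicit the componentwise group law on $\Aut_{\K{\text-}\alg{\text-}\Mod}^{\mathrm{top}}$, which the paper leaves implicit.
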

\begin{proof}
    This follows from Lemma \ref{compat_Gamma_MWV} and the fact that \eqref{GKX_act_VG} -- \eqref{GKX_act_MG} define actions.
\end{proof}
\subsubsection{Actions of the group \texorpdfstring{$\K^{\times} \ltimes \G(\KX)$}{kxGKX}}
\begin{definition}
    For $(\lambda, \Psi) \in \K^{\times} \times \G(\KX)$, we define the topological $\K$-algebra-module automorphism $\left(\,^{\Gamma}\aut^{\V, (1)}_{(\lambda, \Psi)}, \,^{\Gamma}\aut^{\V, (10)}_{(\lambda, \Psi)}\right)$ of $\left(\widehat{\V}^{\DR}_G, \widehat{\V}^{\DR}_G\right)$ given by
    \begin{equation*}
        \left(\,^{\Gamma}\aut^{\V, (1)}_{(\lambda, \Psi)}, \,^{\Gamma}\aut^{\V, (10)}_{(\lambda, \Psi)}\right) := \left(\,^{\Gamma}\aut^{\V, (1)}_{\Psi}, \,^{\Gamma}\aut^{\V, (10)}_{\Psi}\right) \circ \big((\lambda \bullet_{\V} -), (\lambda \bullet_{\V} -)\big),
    \end{equation*}
    with $(\lambda \bullet_{\V} -) \in \Aut_{\K{\text -}\alg_{\mathrm{top}}}(\widehat{\V}^{\DR}_G)$ given in \eqref{bulletV_def}.
\end{definition}

\begin{propdef}
    \label{Gamma_aut_alg_mod_2}
    For $(\lambda, \Psi) \in \K^{\times} \times \G(\KX)$, we define the topological $\K$-algebra-module automorphism $\left(\,^{\Gamma}\aut^{\W, (1)}_{(\lambda, \Psi)}, \,^{\Gamma}\aut^{\M, (10)}_{(\lambda, \Psi)}\right)$ of $\left(\widehat{\W}^{\DR}_G, \widehat{\M}^{\DR}_G\right)$ given by
    \begin{equation*}
        \left(\,^{\Gamma}\aut^{\W, (1)}_{(\lambda, \Psi)}, \,^{\Gamma}\aut^{\M, (10)}_{(\lambda, \Psi)}\right) := \left(\,^{\Gamma}\aut^{\W, (1)}_{\Psi}, \,^{\Gamma}\aut^{\M, (10)}_{\Psi}\right) \circ \big((\lambda \bullet_{\W} -), (\lambda \bullet_{\M} -)\big).
    \end{equation*}
    It is such that the following diagrams
    \begin{equation}
        \label{diag_GammaautW2_GammaautV2}
        \begin{tikzcd}
            \widehat{\W}^{\DR}_G \ar["^{\Gamma}\aut^{\W, (1)}_{(\lambda, \Psi)}"]{rrrr} \ar[hook]{d} &&&& \widehat{\W}^{\DR}_G \ar[hook]{d} \\
            \widehat{\V}^{\DR}_G \ar["^{\Gamma}\aut^{\V, (1)}_{(\lambda, \Psi)}"]{rrrr} &&&& \widehat{\V}^{\DR}_G
        \end{tikzcd}
    \end{equation}
    and
    \begin{equation}
        \label{diag_GammaautV2_GammaautM2}
        \begin{tikzcd}
            \widehat{\V}^{\DR}_G \ar["^{\Gamma}\aut^{\V, (10)}_{(\lambda, \Psi)}"]{rrrr} \ar["- \cdot 1_{\DR}"']{d} &&&& \widehat{\V}^{\DR}_G \ar["- \cdot 1_{\DR}"]{d} \\
            \widehat{\M}^{\DR}_G \ar["^{\Gamma}\aut^{\M, (10)}_{(\lambda, \Psi)}"]{rrrr} &&&& \widehat{\M}^{\DR}_G
        \end{tikzcd}
    \end{equation}
    commute.
\end{propdef}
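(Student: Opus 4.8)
The plan is to read the defining formula as an identity of composites \emph{in the category} $\K{\text -}\alg{\text -}\Mod_{\mathrm{top}}$, so that the automorphism assertion becomes ``a composite of automorphisms is an automorphism'', and then to obtain the two diagrams by horizontally pasting commutative squares already supplied in the text. No step presents a genuine obstacle; the content is entirely formal, and the only real task is to cite the four squares in the correct configuration.

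First I would verify that the pair $\left(\,^{\Gamma}\aut^{\W, (1)}_{(\lambda, \Psi)}, \,^{\Gamma}\aut^{\M, (10)}_{(\lambda, \Psi)}\right)$ is a well-defined automorphism of $(\widehat{\W}^{\DR}_G, \widehat{\M}^{\DR}_G)$ in $\K{\text -}\alg{\text -}\Mod_{\mathrm{top}}$. The right-hand factor $\big((\lambda \bullet_{\W} -), (\lambda \bullet_{\M} -)\big)$ is an automorphism of $(\widehat{\W}^{\DR}_G, \widehat{\M}^{\DR}_G)$ in $\K{\text -}\mathrm{HAMC}_{\mathrm{top}}$ by Proposition \ref{bullet_pairs}.\ref{lambdaW_lambdaM_HAMC}, hence in particular in $\K{\text -}\alg{\text -}\Mod_{\mathrm{top}}$ after forgetting the coproducts; the left-hand factor $\left(\,^{\Gamma}\aut^{\W, (1)}_{\Psi}, \,^{\Gamma}\aut^{\M, (10)}_{\Psi}\right)$ is an automorphism of the same object in $\K{\text -}\alg{\text -}\Mod_{\mathrm{top}}$ by Lemma \ref{compat_Gamma_MWV}.\ref{rel_Gamma_aut_M10W1}. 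Since composition in $\K{\text -}\alg{\text -}\Mod_{\mathrm{top}}$ is carried out componentwise, the composite is an automorphism whose components are exactly $\,^{\Gamma}\aut^{\W, (1)}_{(\lambda, \Psi)} = \,^{\Gamma}\aut^{\W, (1)}_{\Psi} \circ (\lambda \bullet_{\W} -)$ and $\,^{\Gamma}\aut^{\M, (10)}_{(\lambda, \Psi)} = \,^{\Gamma}\aut^{\M, (10)}_{\Psi} \circ (\lambda \bullet_{\M} -)$.

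For Diagram \eqref{diag_GammaautW2_GammaautV2}, I would unfold $\,^{\Gamma}\aut^{\V, (1)}_{(\lambda, \Psi)} = \,^{\Gamma}\aut^{\V, (1)}_{\Psi} \circ (\lambda \bullet_{\V} -)$ and the component formula above, and then paste, along the inclusion $\widehat{\W}^{\DR}_G \hookrightarrow \widehat{\V}^{\DR}_G$, the square \eqref{diag_lambdaW_lambdaV} (which intertwines $\lambda \bullet_{\W} -$ and $\lambda \bullet_{\V} -$ through the inclusion) with the square \eqref{diag_GammaautW_GammaautV} (which intertwines $\,^{\Gamma}\aut^{\W, (1)}_{\Psi}$ and $\,^{\Gamma}\aut^{\V, (1)}_{\Psi}$); the outer rectangle of this horizontal pasting is precisely \eqref{diag_GammaautW2_GammaautV2}. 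Symmetrically, Diagram \eqref{diag_GammaautV2_GammaautM2} is obtained by pasting, along the projection $- \cdot 1_{\DR} : \widehat{\V}^{\DR}_G \to \widehat{\M}^{\DR}_G$, the square \eqref{diag_lambdaV_lambdaM} (intertwining $\lambda \bullet_{\V} -$ and $\lambda \bullet_{\M} -$) with the square \eqref{diag_GammaautV_GammaautM} (intertwining $\,^{\Gamma}\aut^{\V, (10)}_{\Psi}$ and $\,^{\Gamma}\aut^{\M, (10)}_{\Psi}$), after unfolding $\,^{\Gamma}\aut^{\V, (10)}_{(\lambda, \Psi)} = \,^{\Gamma}\aut^{\V, (10)}_{\Psi} \circ (\lambda \bullet_{\V} -)$. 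The only point requiring care is bookkeeping: one must invoke the $\lambda\bullet$-squares \eqref{diag_lambdaW_lambdaV} and \eqref{diag_lambdaV_lambdaM} rather than their $\eta_\phi$-variants, and compose the maps in the order dictated by the definitions. Beyond that, the result is a direct consequence of the functoriality of composition together with the four previously established commutative squares.
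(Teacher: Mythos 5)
Your proposal is correct and takes essentially the same approach as the paper: the automorphism claim is obtained by composing the two previously established algebra-module automorphisms (you cite Lemma \ref{compat_Gamma_MWV}.\ref{rel_Gamma_aut_M10W1} where the paper cites Proposition \ref{GKX_acts}.\ref{GKX_act_WM}, which rests on that same lemma), and each diagram is obtained by pasting exactly the squares the paper invokes, namely \eqref{diag_lambdaW_lambdaV} with \eqref{diag_GammaautW_GammaautV} and \eqref{diag_lambdaV_lambdaM} with \eqref{diag_GammaautV_GammaautM}.
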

\begin{proof}
    From Propositions \ref{bullet_pairs}.\ref{lambdaW_lambdaM_HAMC} and \ref{GKX_acts}.\ref{GKX_act_WM} we have that the pairs $(\lambda \bullet_{\W} -, \lambda \bullet_{\W} -)$ and $\left(\,^{\Gamma}\aut^{\W, (1)}_{\Psi}, \,^{\Gamma}\aut^{\M, (10)}_{\Psi}\right)$ are morphisms in $\K{\text -}\alg{\text -}\Mod_{\mathrm{top}}$; the composition is then a morphism in $\K{\text -}\alg{\text -}\Mod_{\mathrm{top}}$. Next, the commutativity of the diagrams follows from the commutativity of Diagrams \eqref{diag_lambdaW_lambdaV} and \eqref{diag_GammaautW_GammaautV} and Diagrams \eqref{diag_lambdaV_lambdaM} and \eqref{diag_GammaautV_GammaautM}.       
\end{proof}

\begin{lemma}
    \label{Gammaaut_lambdabullet}
    For $(\lambda, \Psi) \in \K^{\times} \times \G(\KX)$, we have
    \begin{equation}
        ^{\Gamma}\aut^{\V, (1)}_{\lambda \bullet \Psi} \circ (\lambda \bullet_{\V} -) = (\lambda \bullet_{\V} -) \circ \,^{\Gamma}\aut^{\V, (1)}_{\Psi}.   
    \end{equation}
\end{lemma}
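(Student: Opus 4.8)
The plan is to note that both sides of the claimed identity are compositions of topological $\K$-algebra automorphisms of $\widehat{\V}^{\DR}_G$, hence are themselves topological $\K$-algebra endomorphisms of $\widehat{\V}^{\DR}_G$. It therefore suffices to verify the equality on the algebra generators $e_0$, $e_1$ and $g$ for $g \in G$. The two ingredients I would feed into this check are already available: the commutativity of Diagram \eqref{Diag_bullets}, which yields $\lambda \bullet_{\V} \beta(\Psi \otimes 1) = \beta((\lambda \bullet \Psi) \otimes 1)$ (and the analogous identity with $\Psi^{-1}$, using that $\lambda \bullet -$ is an algebra automorphism of $\KX$, so $(\lambda \bullet \Psi)^{-1} = \lambda \bullet \Psi^{-1}$), together with Identity \eqref{bullet_Gamma}, namely $\Gamma_{\lambda \bullet \Psi}(-e_1) = \lambda \bullet_{\V} \Gamma_{\Psi}(-e_1)$, and its inverse $\Gamma^{-1}_{\lambda \bullet \Psi}(-e_1) = \lambda \bullet_{\V} \Gamma^{-1}_{\Psi}(-e_1)$.

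For the generator $e_1$, I would compute the left-hand side as $^{\Gamma}\aut^{\V, (1)}_{\lambda \bullet \Psi}(\lambda e_1) = \lambda\, \Gamma^{-1}_{\lambda \bullet \Psi}(-e_1)\, e_1\, \Gamma_{\lambda \bullet \Psi}(-e_1)$, then rewrite the two $\Gamma$-factors via \eqref{bullet_Gamma}, which turns this into $\lambda\,(\lambda \bullet_{\V} \Gamma^{-1}_{\Psi}(-e_1))\, e_1\, (\lambda \bullet_{\V} \Gamma_{\Psi}(-e_1))$. On the right-hand side, $(\lambda \bullet_{\V} -) \circ {}^{\Gamma}\aut^{\V, (1)}_{\Psi}(e_1) = \lambda \bullet_{\V}\big(\Gamma^{-1}_{\Psi}(-e_1)\, e_1\, \Gamma_{\Psi}(-e_1)\big)$; since $\lambda \bullet_{\V} -$ is an algebra morphism sending $e_1 \mapsto \lambda e_1$, this equals $\lambda\,(\lambda \bullet_{\V} \Gamma^{-1}_{\Psi}(-e_1))\, e_1\, (\lambda \bullet_{\V} \Gamma_{\Psi}(-e_1))$, matching the left-hand side.

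For the generators $e_0$ and $g$, the argument is the same in spirit but now also involves the conjugating factors $\beta(\Psi^{\pm 1} \otimes 1)$. On the left-hand side I would push $^{\Gamma}\aut^{\V, (1)}_{\lambda \bullet \Psi}$ through and replace each $\Gamma_{\lambda \bullet \Psi}(-e_1)$ by $\lambda \bullet_{\V} \Gamma_{\Psi}(-e_1)$ and each $\beta((\lambda \bullet \Psi)^{\pm 1} \otimes 1)$ by $\lambda \bullet_{\V} \beta(\Psi^{\pm 1} \otimes 1)$ using Diagram \eqref{Diag_bullets}; observing that $\lambda e_0 = \lambda \bullet_{\V} e_0$ and $g = \lambda \bullet_{\V} g$, the whole expression collapses, by multiplicativity of $\lambda \bullet_{\V} -$, into $\lambda \bullet_{\V}\big({}^{\Gamma}\aut^{\V, (1)}_{\Psi}(e_0)\big)$ (resp. $\lambda \bullet_{\V}\big({}^{\Gamma}\aut^{\V, (1)}_{\Psi}(g)\big)$), which is exactly the right-hand side evaluated on that generator. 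The only real care needed is bookkeeping of the scalar $\lambda$ coming from $e_0 \mapsto \lambda e_0$ and of the order of the factors under conjugation; there is no genuine obstacle, since the two transformation rules \eqref{Diag_bullets} and \eqref{bullet_Gamma} are precisely what is required to commute $\lambda \bullet_{\V} -$ past the definition of $^{\Gamma}\aut^{\V, (1)}_{\Psi}$.
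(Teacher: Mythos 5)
Your proposal is correct and follows essentially the same route as the paper's own proof: both arguments reduce the identity to a check on the generators $e_0$, $e_1$ and $g \in G$ (legitimate since both sides are compositions of topological $\K$-algebra morphisms of $\widehat{\V}^{\DR}_G$), and both rely on exactly the two ingredients you cite — the commutativity of Diagram \eqref{Diag_bullets} together with Identity \eqref{bullet_Gamma} — combined with the multiplicativity of $\lambda \bullet_{\V} -$ to collapse the conjugating factors. Your bookkeeping of the scalar $\lambda$ and of the identity $(\lambda \bullet \Psi)^{-1} = \lambda \bullet \Psi^{-1}$ matches the paper's computation.
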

\begin{proof}
    Since both sides are given as a composition of $\K$-algebra morphisms of $\widehat{\V}^{\DR}_G$, it is enough to verify this identity on generators. We have
    \begin{align*}
        ^{\Gamma}\aut^{\V, (1)}_{\lambda \bullet \Psi}(\lambda \bullet_{\V} e_0) & = \,^{\Gamma}\aut^{\V, (1)}_{\lambda \bullet \Psi}(\lambda e_0) = \lambda \, ^{\Gamma}\aut^{\V, (1)}_{\lambda \bullet \Psi}(e_0) \\
        & = \lambda \, \Gamma_{\lambda \bullet \Psi}^{-1}(-e_1) \beta(\lambda \bullet \Psi \otimes 1) \, e_0 \, \beta(\lambda \bullet \Psi^{-1} \otimes 1) \Gamma_{\lambda \bullet \Psi}(-e_1) \\
        & = \mbox{\small$\left(\lambda \bullet_{\V} \Gamma_{\Psi}^{-1}(-e_1)\right) \left(\lambda \bullet_{\V} \beta(\Psi \otimes 1)\right) \, \lambda \, e_0 \, \left(\lambda \bullet_{\V} \beta(\Psi^{-1} \otimes 1)\right) \left(\lambda \bullet \Gamma_{\Psi}(-e_1)\right)$} \\
        & = \lambda \bullet_{\V} \left( \Gamma_{\Psi}^{-1}(-e_1) \beta(\Psi \otimes 1) \, e_0 \, \beta(\Psi^{-1} \otimes 1) \Gamma_{\Psi}(-e_1)\right) \\
        & = \lambda \bullet_{\V} \,^{\Gamma}\aut^{\V, (1)}_{\Psi}(e_0),
    \end{align*}
    where the fourth equality comes from the commutativity of Diagram \eqref{Diag_bullets} and Identity \eqref{bullet_Gamma} and the fifth one from the fact that $\lambda \bullet_{\V} -$ is an algebra morphism. Next,
    \begin{align*}
        ^{\Gamma}\aut^{\V, (1)}_{\lambda \bullet \Psi}(\lambda \bullet_{\V} e_1) & = \,^{\Gamma}\aut^{\V, (1)}_{\lambda \bullet \Psi}(\lambda e_1) = \lambda \, ^{\Gamma}\aut^{\V, (1)}_{\lambda \bullet \Psi}(e_1) = \lambda \, \Gamma_{\lambda \bullet \Psi}^{-1}(-e_1) \, e_1 \, \Gamma_{\lambda \bullet \Psi}(-e_1) \\
        & = \left(\lambda \bullet_{\V} \Gamma_{\Psi}^{-1}(-e_1)\right) \, \lambda \, e_1 \, \left(\lambda \bullet \Gamma_{\Psi}(-e_1)\right) \\
        & = \lambda \bullet_{\V} \left( \Gamma_{\Psi}^{-1}(-e_1) \, e_1 \, \Gamma_{\Psi}(-e_1)\right) \\
        & = \lambda \bullet_{\V} \,^{\Gamma}\aut^{\V, (1)}_{\Psi}(e_1),
    \end{align*}
    where the fourth equality comes from Identity \eqref{bullet_Gamma} and the fifth one from the fact that $\lambda \bullet_{\V} -$ is an algebra morphism. Finally, for $g \in G$,
    \begin{align*}
        ^{\Gamma}\aut^{\V, (1)}_{\lambda \bullet \Psi}(\lambda \bullet_{\V} g) & = \,^{\Gamma}\aut^{\V, (1)}_{\lambda \bullet \Psi}(g) = \,^{\Gamma}\aut^{\V, (1)}_{\lambda \bullet \Psi}(g) \\
        & = \,\Gamma_{\lambda \bullet \Psi}^{-1}(-e_1) \beta(\lambda \bullet \Psi \otimes 1) \, g \, \beta(\lambda \bullet \Psi^{-1} \otimes 1) \Gamma_{\lambda \bullet \Psi}(-e_1) \\
        & = \mbox{\small$\left(\lambda \bullet_{\V} \Gamma_{\Psi}^{-1}(-e_1)\right) \left(\lambda \bullet_{\V} \beta(\Psi \otimes 1)\right) \, g \, \left(\lambda \bullet_{\V} \beta(\Psi^{-1} \otimes 1)\right) \left(\lambda \bullet \Gamma_{\Psi}(-e_1)\right)$} \\
        & = \lambda \bullet_{\V} \left( \Gamma_{\Psi}^{-1}(-e_1) \beta(\Psi \otimes 1) \, g \, \beta(\Psi^{-1} \otimes 1) \Gamma_{\Psi}(-e_1)\right) \\
        & = \lambda \bullet_{\V} \,^{\Gamma}\aut^{\V, (1)}_{\Psi}(g),
    \end{align*}
    where the fourth equality comes from the commutativity of Diagram \eqref{Diag_bullets} and Identity \eqref{bullet_Gamma} and the fifth one from the fact that $\lambda \bullet_{\V} -$ is an algebra morphism.
\end{proof}

\begin{corollary}
    \label{GammaautWM_lambdabullet}
    For $(\lambda, \Psi) \in \K^{\times} \times \G(\KX)$, we have
    \begin{enumerate}[label=(\roman*), leftmargin=*]
        \item \label{GammaautW_lambdabullet} $^{\Gamma}\aut^{\W, (1)}_{\lambda \bullet \Psi} \circ (\lambda \bullet_{\W} -) = (\lambda \bullet_{\W} -) \circ \,^{\Gamma}\aut^{\W, (1)}_{\Psi}$.
        \item \label{GammaautM_lambdabullet} $^{\Gamma}\aut^{\M, (10)}_{\lambda \bullet \Psi} \circ (\lambda \bullet_{\M} -) = (\lambda \bullet_{\M} -) \circ \,^{\Gamma}\aut^{\M, (10)}_{\Psi}$.
    \end{enumerate}
\end{corollary}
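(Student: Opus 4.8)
The plan is to deduce both identities from the already-established $\widehat{\V}^{\DR}_G$-level statement of Lemma \ref{Gammaaut_lambdabullet}, transporting it along the structural maps that connect $\widehat{\V}^{\DR}_G$ to the subalgebra $\widehat{\W}^{\DR}_G$ and to the quotient module $\widehat{\M}^{\DR}_G$. Part \ref{GammaautW_lambdabullet} will be a pure restriction argument, whereas part \ref{GammaautM_lambdabullet} requires one preliminary computation on $\widehat{\V}^{\DR}_G$ followed by a descent along a surjection.

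For part \ref{GammaautW_lambdabullet}, I would recall that $^{\Gamma}\aut^{\W, (1)}_{\Psi}$ and $\lambda \bullet_{\W} -$ are precisely the restrictions to the subalgebra $\widehat{\W}^{\DR}_G$ of $^{\Gamma}\aut^{\V, (1)}_{\Psi}$ and $\lambda \bullet_{\V} -$, as recorded by the commutativity of Diagrams \eqref{diag_GammaautW_GammaautV} and \eqref{diag_lambdaW_lambdaV} (and likewise with $\lambda \bullet \Psi$ in place of $\Psi$). Reading both composites of \ref{GammaautW_lambdabullet} inside $\widehat{\V}^{\DR}_G$ via the embedding $\widehat{\W}^{\DR}_G \hookrightarrow \widehat{\V}^{\DR}_G$, they become exactly the two composites appearing in Lemma \ref{Gammaaut_lambdabullet}; since that embedding is injective and those two composites are equal by the Lemma, \ref{GammaautW_lambdabullet} follows.

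For part \ref{GammaautM_lambdabullet}, the module twist $^{\Gamma}\aut^{\M, (10)}_{\Psi}$ is not a mere restriction, so I would first upgrade Lemma \ref{Gammaaut_lambdabullet} to the $(10)$-automorphism on $\widehat{\V}^{\DR}_G$, namely establish $^{\Gamma}\aut^{\V, (10)}_{\lambda \bullet \Psi} \circ (\lambda \bullet_{\V} -) = (\lambda \bullet_{\V} -) \circ \,^{\Gamma}\aut^{\V, (10)}_{\Psi}$. Writing $a_{\Psi} := \Gamma_{\Psi}^{-1}(-e_1)\beta(\Psi \otimes 1)$, so that by definition $^{\Gamma}\aut^{\V, (10)}_{\Psi} = \,^{\Gamma}\aut^{\V, (1)}_{\Psi} \circ r_{a_{\Psi}}$, the crucial point is the identity $a_{\lambda \bullet \Psi} = \lambda \bullet_{\V} a_{\Psi}$, which I would obtain from the commutativity of Diagram \eqref{Diag_bullets}, Identity \eqref{bullet_Gamma}, and the multiplicativity of $\lambda \bullet_{\V} -$. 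Multiplicativity then yields $r_{a_{\lambda \bullet \Psi}} \circ (\lambda \bullet_{\V} -) = (\lambda \bullet_{\V} -) \circ r_{a_{\Psi}}$, and substituting this together with Lemma \ref{Gammaaut_lambdabullet} into the defining factorisation produces the desired $(10)$-identity on $\widehat{\V}^{\DR}_G$.

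Finally I would descend this identity along the surjection $- \cdot 1_{\DR} : \widehat{\V}^{\DR}_G \to \widehat{\M}^{\DR}_G$: using the commutativity of Diagram \eqref{diag_GammaautV_GammaautM} (for both $\Psi$ and $\lambda \bullet \Psi$) and of Diagram \eqref{diag_lambdaV_lambdaM}, both composites of \ref{GammaautM_lambdabullet} become equal after precomposition with $- \cdot 1_{\DR}$, and surjectivity of $- \cdot 1_{\DR}$ then forces the composites themselves to agree. The only genuinely non-formal step, and hence the main obstacle, is the identity $a_{\lambda \bullet \Psi} = \lambda \bullet_{\V} a_{\Psi}$ controlling the extra right-multiplication factor that distinguishes the $(10)$-twist from the $(1)$-twist; once it is secured, both parts reduce to diagram chases built on results already proved.
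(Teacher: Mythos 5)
Your proof is correct and takes essentially the same route as the paper: part (i) by restriction along the embedding $\widehat{\W}^{\DR}_G \hookrightarrow \widehat{\V}^{\DR}_G$ using Diagrams \eqref{diag_GammaautW_GammaautV} and \eqref{diag_lambdaW_lambdaV}, and part (ii) by descent along the surjection $- \cdot 1_{\DR}$ using Diagrams \eqref{diag_GammaautV_GammaautM} and \eqref{diag_lambdaV_lambdaM}. If anything, your write-up is more complete than the paper's two-line proof, which leaves implicit the intermediate $(10)$-level identity on $\widehat{\V}^{\DR}_G$ that you establish through $\Gamma_{\lambda \bullet \Psi}^{-1}(-e_1)\,\beta(\lambda \bullet \Psi \otimes 1) = \lambda \bullet_{\V} \left(\Gamma_{\Psi}^{-1}(-e_1)\,\beta(\Psi \otimes 1)\right)$, the same computation the paper carries out in \eqref{lambda_bullet_DR} within the proof of Proposition \ref{bullet_DMR}.
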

\begin{proof} \ 
    \begin{enumerate}[label=(\roman*), leftmargin=*]
        \item This follows from Lemma \ref{Gammaaut_lambdabullet} thanks to Lemma \ref{from_lambdaV}.\ref{lambdaW} and to the commutativity of Diagram \eqref{diag_GammaautW_GammaautV}.
        \item This follows from Lemma \ref{Gammaaut_lambdabullet} thanks to Lemma \ref{from_lambdaV}.\ref{lambdaM} and to the commutativity of Diagram \eqref{diag_GammaautV_GammaautM}.
    \end{enumerate}
\end{proof}

\begin{corollary}
    \label{act_gamma_aut_WM}
    The group $\K^{\times} \ltimes \G(\KX)$ acts on $(\widehat{\W}^{\DR}_G, \widehat{\M}^{\DR}_G)$ by
    \begin{equation*}
        \K^{\times} \ltimes \G(\KX) \longrightarrow \Aut_{\K{\text -}\alg{\text -}\Mod}^{\mathrm{top}}\left(\widehat{\W}^{\DR}_G, \widehat{\M}^{\DR}_G\right); (\lambda, \Psi) \longmapsto \left(\,^{\Gamma}\aut^{\W, (1)}_{(\lambda, \Psi)},  \,^{\Gamma}\aut^{\M, (10)}_{(\lambda, \Psi)}\right).
    \end{equation*}
\end{corollary}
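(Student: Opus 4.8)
The plan is to prove that the indicated assignment is a genuine left action, i.e.\ a group morphism from $\K^{\times} \ltimes \G(\KX)$ to $\Aut_{\K{\text -}\alg{\text -}\Mod}^{\mathrm{top}}(\widehat{\W}^{\DR}_G, \widehat{\M}^{\DR}_G)$. The codomain is legitimate since each pair $\left(\,^{\Gamma}\aut^{\W, (1)}_{(\lambda, \Psi)}, \,^{\Gamma}\aut^{\M, (10)}_{(\lambda, \Psi)}\right)$ is already an automorphism of the algebra-module by Proposition-Definition \ref{Gamma_aut_alg_mod_2}. Since composition in this category is performed componentwise, the whole verification decouples into an identical check on the $\widehat{\W}^{\DR}_G$-component and the $\widehat{\M}^{\DR}_G$-component.

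First I would isolate the two ingredient actions. Writing $\rho_{\G}(\Psi) := \left(\,^{\Gamma}\aut^{\W, (1)}_{\Psi}, \,^{\Gamma}\aut^{\M, (10)}_{\Psi}\right)$ and $\rho_{\K^{\times}}(\lambda) := (\lambda \bullet_{\W} -, \lambda \bullet_{\M} -)$, the map under study is $\rho(\lambda, \Psi) := \rho_{\G}(\Psi) \circ \rho_{\K^{\times}}(\lambda)$, by the very definition in Proposition-Definition \ref{Gamma_aut_alg_mod_2}. Here $\rho_{\G}$ is a group morphism on $(\G(\KX), \circledast)$ by Proposition \ref{GKX_acts}.\ref{GKX_act_WM}, and $\rho_{\K^{\times}}$ is a group morphism on $\K^{\times}$, being the restriction (resp.\ induction) to $\widehat{\W}^{\DR}_G$ (resp.\ $\widehat{\M}^{\DR}_G$) of the $\K^{\times}$-action \eqref{bulletV_def} on $\widehat{\V}^{\DR}_G$.

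Then I would check the morphism property head-on. For $(\lambda, \Psi), (\nu, \Phi)$, the law of the semidirect product gives $(\lambda, \Psi) \circledast (\nu, \Phi) = (\lambda\nu, \Psi \circledast (\lambda \bullet \Phi))$, so, using that $\rho_{\G}$ and $\rho_{\K^{\times}}$ are morphisms,
\begin{align*}
    \rho\big((\lambda, \Psi) \circledast (\nu, \Phi)\big) &= \rho_{\G}\big(\Psi \circledast (\lambda \bullet \Phi)\big) \circ \rho_{\K^{\times}}(\lambda\nu) \\
    &= \rho_{\G}(\Psi) \circ \rho_{\G}(\lambda \bullet \Phi) \circ \rho_{\K^{\times}}(\lambda) \circ \rho_{\K^{\times}}(\nu),
\end{align*}
whereas $\rho(\lambda, \Psi) \circ \rho(\nu, \Phi) = \rho_{\G}(\Psi) \circ \rho_{\K^{\times}}(\lambda) \circ \rho_{\G}(\Phi) \circ \rho_{\K^{\times}}(\nu)$. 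The two sides coincide exactly when $\rho_{\G}(\lambda \bullet \Phi) \circ \rho_{\K^{\times}}(\lambda) = \rho_{\K^{\times}}(\lambda) \circ \rho_{\G}(\Phi)$, and this is precisely Corollary \ref{GammaautWM_lambdabullet}, item \ref{GammaautW_lambdabullet} furnishing the $\widehat{\W}^{\DR}_G$-component and item \ref{GammaautM_lambdabullet} the $\widehat{\M}^{\DR}_G$-component. Together with $\rho(1,1) = \mathrm{id}$, this yields the claim. The argument is a purely formal assembly of two actions into a semidirect-product action, so there is no real obstacle: the single non-trivial input is the intertwining relation of Corollary \ref{GammaautWM_lambdabullet}, which functions here as the required cocycle identity, and the only thing to watch is keeping the order of composition straight and invoking the compatibility on both components at once.
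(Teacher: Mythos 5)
Your proposal is correct and takes essentially the same approach as the paper: both arguments reduce the semidirect-product morphism property to the intertwining relation of Corollary \ref{GammaautWM_lambdabullet}, combined with the facts that $\Psi \mapsto \left(\,^{\Gamma}\aut^{\W, (1)}_{\Psi}, \,^{\Gamma}\aut^{\M, (10)}_{\Psi}\right)$ and $\lambda \mapsto (\lambda \bullet_{\W} -, \lambda \bullet_{\M} -)$ are group actions. The only difference is notational: you handle the $\widehat{\W}^{\DR}_G$- and $\widehat{\M}^{\DR}_G$-components simultaneously in pair form, whereas the paper writes out the same computation twice, once per component.
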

\begin{proof}
    Let $(\lambda, \Psi), (\nu, \Phi) \in \K^{\times} \times \G(\KX)$. We have
    \begin{align*}
        ^{\Gamma}\aut^{\W, (1)}_{(\lambda, \Psi) \circledast (\nu, \Phi)} & = \,^{\Gamma}\aut^{\W, (1)}_{(\lambda \nu, \Psi \circledast \lambda \bullet \Phi)} = \,^{\Gamma}\aut^{\W, (1)}_{\Psi \circledast \lambda \bullet \Phi} \, \circ \, (\lambda \nu \bullet_{\V} -) \\
        & = \,^{\Gamma}\aut^{\W, (1)}_{\Psi} \, \circ \,^{\Gamma}\aut^{\W, (1)}_{\lambda \bullet \Phi} \, \circ \, (\lambda \bullet_{\W} -) \, \circ \, (\nu \bullet_{\W} -) \\
        & = \,^{\Gamma}\aut^{\W, (1)}_{\Psi} \, \circ \, (\lambda \bullet_{\W} -) \, \circ \,^{\Gamma}\aut^{\W, (1)}_{\Phi} \, \circ \, (\nu \bullet_{\W} -) \\
        & = \,^{\Gamma}\aut^{\W, (1)}_{(\lambda, \Psi)} \, \circ \,^{\Gamma}\aut^{\W, (1)}_{(\nu, \Phi)}, 
    \end{align*}
    where the third equality comes from the fact that $\Psi \mapsto \,^{\Gamma}\aut^{\V, (1)}_{\Psi}$ and $\lambda \mapsto (\lambda \bullet_{\V} -)$ are group actions and the fourth one from Corollary \ref{GammaautWM_lambdabullet}.\ref{GammaautW_lambdabullet}. Next, we have
    \begin{align*}
        ^{\Gamma}\aut^{\M, (10)}_{(\lambda, \Psi) \circledast (\nu, \Phi)} & = \,^{\Gamma}\aut^{\M, (10)}_{(\lambda \nu, \Psi \circledast \lambda \bullet \Phi)} = \,^{\Gamma}\aut^{\M, (10)}_{\Psi \circledast \lambda \bullet \Phi} \, \circ \, (\lambda \nu \bullet_{\M} -) \\
        & = \,^{\Gamma}\aut^{\M, (10)}_{\Psi} \, \circ \,^{\Gamma}\aut^{\M, (10)}_{\lambda \bullet \Phi} \, \circ \, (\lambda \bullet_{\M} -) \, \circ \, (\nu \bullet_{\M} -) \\
        & = \,^{\Gamma}\aut^{\M, (1)}_{\Psi} \, \circ \, (\lambda \bullet_{\M} -) \, \circ \,^{\Gamma}\aut^{\M, (10)}_{\Phi} \, \circ \, (\nu \bullet_{\M} -) \\
        & = \,^{\Gamma}\aut^{\M, (10)}_{(\lambda, \Psi)} \, \circ \,^{\Gamma}\aut^{\M, (10)}_{(\nu, \Phi)}, 
    \end{align*}
    where the third equality comes from the fact that $\Psi \mapsto \,^{\Gamma}\aut^{\V, (1)}_{\Psi}$ and $\lambda \mapsto (\lambda \bullet_{\V} -)$ are group actions and the fourth one from Corollary \ref{GammaautWM_lambdabullet}.\ref{GammaautM_lambdabullet}.
\end{proof}

\subsubsection{Actions of the group \texorpdfstring{$(\Aut(G) \times \K^{\times}) \ltimes \G(\KX)$}{AutGkxGKX}}
\begin{definition}
    For $(\phi, \lambda, \Psi) \in \Aut(G) \times \K^{\times} \times \G(\KX)$, we define the topological $\K$-algebra-module automorphism $\left(\,^{\Gamma}\aut^{\V, (1)}_{(\phi, \lambda, \Psi)}, \,^{\Gamma}\aut^{\V, (10)}_{(\phi, \lambda, \Psi)}\right)$ of $\left(\widehat{\V}^{\DR}_G, \widehat{\V}^{\DR}_G\right)$ given by
    \[
        \left(\,^{\Gamma}\aut^{\V, (1)}_{(\phi, \lambda, \Psi)}, \,^{\Gamma}\aut^{\V, (10)}_{(\phi, \lambda, \Psi)}\right) := \left(\,^{\Gamma}\aut^{\V, (1)}_{(\lambda, \Psi)}, \,^{\Gamma}\aut^{\V, (10)}_{(\lambda, \Psi)}\right) \circ \left(\eta_{\phi}^{\V}, \eta_{\phi}^{\V}\right),
    \]
    with $\eta^{\V}_{\phi} \in \Aut_{\K{\text -}\alg_{\mathrm{top}}}(\widehat{\V}^{\DR}_G)$ given in \eqref{etaV_def}.
\end{definition}

\begin{propdef}
    For $(\phi, \lambda, \Psi) \in \Aut(G) \times \K^{\times} \times \G(\KX)$, we define the topological $\K$-algebra-module automorphism $\left(\,^{\Gamma}\aut^{\W, (1)}_{(\phi, \lambda, \Psi)}, \,^{\Gamma}\aut^{\M, (10)}_{(\phi, \lambda, \Psi)}\right)$ of $\left(\widehat{\W}^{\DR}_G, \widehat{\M}^{\DR}_G\right)$ given by
    \[
        \left(\,^{\Gamma}\aut^{\W, (1)}_{(\phi, \lambda, \Psi)}, \,^{\Gamma}\aut^{\M, (10)}_{(\phi, \lambda, \Psi)}\right) := \left(\,^{\Gamma}\aut^{\W, (1)}_{(\lambda, \Psi)}, \,^{\Gamma}\aut^{\M, (10)}_{(\lambda, \Psi)}\right) \circ \left(\eta_{\phi}^{\W}, \eta_{\phi}^{\M}\right).
    \]
    It is such that the following diagrams
    \begin{equation}
        \label{diag_GammaautW3_GammaautV3}
        \begin{tikzcd}
            \widehat{\W}^{\DR}_G \ar["^{\Gamma}\aut^{\W, (1)}_{(\phi, \lambda, \Psi)}"]{rrrr} \ar[hook]{d} &&&& \widehat{\W}^{\DR}_G \ar[hook]{d} \\
            \widehat{\V}^{\DR}_G \ar["^{\Gamma}\aut^{\V, (1)}_{(\phi, \lambda, \Psi)}"]{rrrr} &&&& \widehat{\V}^{\DR}_G
        \end{tikzcd}
    \end{equation}
    and
    \begin{equation}
        \label{diag_GammaautV3_GammaautM3}
        \begin{tikzcd}
            \widehat{\V}^{\DR}_G \ar["^{\Gamma}\aut^{\V, (10)}_{(\phi, \lambda, \Psi)}"]{rrrr} \ar["- \cdot 1_{\DR}"']{d} &&&& \widehat{\V}^{\DR}_G \ar["- \cdot 1_{\DR}"]{d} \\
            \widehat{\M}^{\DR}_G \ar["^{\Gamma}\aut^{\M, (10)}_{(\phi, \lambda, \Psi)}"]{rrrr} &&&& \widehat{\M}^{\DR}_G
        \end{tikzcd}
    \end{equation}
    commute.
\end{propdef}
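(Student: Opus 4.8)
The plan is to follow, almost verbatim, the structure of the proof of Proposition-Definition \ref{Gamma_aut_alg_mod_2}, since the new pair is \emph{defined} as a composite of two pairs each of which is already known to be an automorphism in $\K{\text -}\alg{\text -}\Mod_{\mathrm{top}}$. Nothing here requires computing on generators; the whole argument is formal.

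First I would verify that $(\,^{\Gamma}\aut^{\W, (1)}_{(\phi, \lambda, \Psi)}, \,^{\Gamma}\aut^{\M, (10)}_{(\phi, \lambda, \Psi)})$ is an automorphism of $(\widehat{\W}^{\DR}_G, \widehat{\M}^{\DR}_G)$ in $\K{\text -}\alg{\text -}\Mod_{\mathrm{top}}$. By Proposition-Definition \ref{Gamma_aut_alg_mod_2}, the pair $(\,^{\Gamma}\aut^{\W, (1)}_{(\lambda, \Psi)}, \,^{\Gamma}\aut^{\M, (10)}_{(\lambda, \Psi)})$ is such an automorphism, and by Proposition \ref{eta_V_etaWM}.\ref{etaW_etaM_HAMC} the pair $(\eta_{\phi}^{\W}, \eta_{\phi}^{\M})$ is an automorphism in $\K{\text -}\mathrm{HAMC}_{\mathrm{top}}$, hence a fortiori in $\K{\text -}\alg{\text -}\Mod_{\mathrm{top}}$. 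As the composite of two automorphisms in a category is again an automorphism, the defining formula yields the claim directly.

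Next I would obtain the two commutative diagrams by pasting known squares. For Diagram \eqref{diag_GammaautW3_GammaautV3}, I would juxtapse horizontally the commutative square \eqref{diag_etaW_etaV} (expressing that the inclusion $\widehat{\W}^{\DR}_G \hookrightarrow \widehat{\V}^{\DR}_G$ intertwines $\eta_{\phi}^{\W}$ and $\eta_{\phi}^{\V}$) with the commutative square \eqref{diag_GammaautW2_GammaautV2}, glued along their common inclusion column; since $^{\Gamma}\aut^{\W, (1)}_{(\phi, \lambda, \Psi)} = \,^{\Gamma}\aut^{\W, (1)}_{(\lambda, \Psi)} \circ \eta_{\phi}^{\W}$ and $^{\Gamma}\aut^{\V, (1)}_{(\phi, \lambda, \Psi)} = \,^{\Gamma}\aut^{\V, (1)}_{(\lambda, \Psi)} \circ \eta_{\phi}^{\V}$ by definition, the outer rectangle of the juxtaposition is exactly \eqref{diag_GammaautW3_GammaautV3}. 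Symmetrically, for Diagram \eqref{diag_GammaautV3_GammaautM3} I would glue the square \eqref{diag_etaV_etaM} (intertwining $\eta_{\phi}^{\V}$ and $\eta_{\phi}^{\M}$ via $- \cdot 1_{\DR}$) with the square \eqref{diag_GammaautV2_GammaautM2} along their common $- \cdot 1_{\DR}$ column, and read off the outer rectangle, using $^{\Gamma}\aut^{\V, (10)}_{(\phi, \lambda, \Psi)} = \,^{\Gamma}\aut^{\V, (10)}_{(\lambda, \Psi)} \circ \eta_{\phi}^{\V}$ and $^{\Gamma}\aut^{\M, (10)}_{(\phi, \lambda, \Psi)} = \,^{\Gamma}\aut^{\M, (10)}_{(\lambda, \Psi)} \circ \eta_{\phi}^{\M}$.

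I do not anticipate any genuine obstacle: the statement is purely formal, resting only on closure of $\K{\text -}\alg{\text -}\Mod_{\mathrm{top}}$ under composition and on two elementary diagram pastings. The sole point deserving care is the bookkeeping of the composition order, namely checking that the factorisations of the $\V$-level maps through $\eta_{\phi}^{\V}$ are compatible with those of the $\W$- and $\M$-level maps through $\eta_{\phi}^{\W}$ and $\eta_{\phi}^{\M}$, so that the pasted rectangles coincide on the nose with the asserted diagrams rather than merely up to identification.
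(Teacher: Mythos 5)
Your proposal is correct and matches the paper's own proof essentially verbatim: the paper likewise deduces the automorphism property from Proposition-Definition \ref{Gamma_aut_alg_mod_2} and Proposition \ref{eta_V_etaWM}.\ref{etaW_etaM_HAMC} together with closure under composition, and obtains the two diagrams by pasting \eqref{diag_etaW_etaV} with \eqref{diag_GammaautW2_GammaautV2} and \eqref{diag_etaV_etaM} with \eqref{diag_GammaautV2_GammaautM2}. No gaps; the composition-order bookkeeping you flag is exactly the only point of care, and you handle it correctly.
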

\begin{proof}
    From Proposition \ref{eta_V_etaWM}.\ref{etaW_etaM_HAMC} and Proposition-Definition \ref{Gamma_aut_alg_mod_2}, we have that the pairs $(\eta^{\W}_{\phi}, \eta^{\M}_{\phi})$ and $\left(\,^{\Gamma}\aut^{\W, (1)}_{(\lambda, \Psi)}, \,^{\Gamma}\aut^{\M, (10)}_{(\lambda, \Psi)}\right)$ are morphisms in $\K{\text -}\alg{\text -}\Mod_{\mathrm{top}}$; the composition is then a morphism in $\K{\text -}\alg{\text -}\Mod_{\mathrm{top}}$. Next, the commutativity of the diagrams follows from the commutativity of Diagrams \eqref{diag_etaW_etaV} and \eqref{diag_GammaautW2_GammaautV2} and Diagrams \eqref{diag_etaV_etaM} and \eqref{diag_GammaautV2_GammaautM2}.
\end{proof}

\begin{lemma}
    \label{Gammaaut_eta_etaV_Gammaaut}
    For $(\phi, \lambda, \Psi) \in \Aut(G) \times \K^{\times} \times \G(\KX)$, we have
    \[
        ^{\Gamma}\aut^{\V, (1)}_{(\lambda, \eta_{\phi}(\Psi))} = \eta^{\V}_{\phi} \circ \,^{\Gamma}\aut^{\V, (1)}_{(\lambda, \Psi)} \circ (\eta^{\V}_{\phi})^{-1}.
    \]
\end{lemma}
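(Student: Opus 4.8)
The plan is to reduce the statement to a $\lambda$-free conjugation identity and then check that on generators. Unfolding the definition $\,^{\Gamma}\aut^{\V, (1)}_{(\lambda, \Psi)} = \,^{\Gamma}\aut^{\V, (1)}_{\Psi} \circ (\lambda \bullet_{\V} -)$, the left-hand side is $\,^{\Gamma}\aut^{\V, (1)}_{\eta_{\phi}(\Psi)} \circ (\lambda \bullet_{\V} -)$ and the right-hand side is $\eta^{\V}_{\phi} \circ \,^{\Gamma}\aut^{\V, (1)}_{\Psi} \circ (\lambda \bullet_{\V} -) \circ (\eta^{\V}_{\phi})^{-1}$. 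First I would record the elementary commutation
\[
    \eta^{\V}_{\phi} \circ (\lambda \bullet_{\V} -) = (\lambda \bullet_{\V} -) \circ \eta^{\V}_{\phi},
\]
which is verified on the generators $e_0, e_1, g$ (it is the $\widehat{\V}^{\DR}_G$-analogue of Lemma \ref{eta_bullet}) and which, by conjugation, lets me rewrite $(\lambda \bullet_{\V} -) \circ (\eta^{\V}_{\phi})^{-1} = (\eta^{\V}_{\phi})^{-1} \circ (\lambda \bullet_{\V} -)$. Moving the factor $\lambda \bullet_{\V} -$ all the way to the right, the assertion becomes equivalent to the conjugation identity
\[
    \,^{\Gamma}\aut^{\V, (1)}_{\eta_{\phi}(\Psi)} = \eta^{\V}_{\phi} \circ \,^{\Gamma}\aut^{\V, (1)}_{\Psi} \circ (\eta^{\V}_{\phi})^{-1}.
\]

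Since every map involved is a $\K$-algebra automorphism of $\widehat{\V}^{\DR}_G$, it suffices to verify this last identity on the generators $e_0$, $e_1$ and $g \in G$. Using that $\eta^{\V}_{\phi}$ fixes $e_0$ and $e_1$ and sends $g \mapsto \phi(g)$ (so $(\eta^{\V}_{\phi})^{-1}(g) = \phi^{-1}(g)$), the computation reduces to pushing $\eta^{\V}_{\phi}$ through the explicit defining formula for $\,^{\Gamma}\aut^{\V, (1)}_{\Psi}$. The two inputs are: the commutativity of Diagram \eqref{Diag_etaV_eta}, giving $\eta^{\V}_{\phi}(\beta(\Psi^{\pm 1} \otimes 1)) = \beta(\eta_{\phi}(\Psi)^{\pm 1} \otimes 1)$ (using that $\beta \circ (- \otimes 1)$ and $\eta_{\phi}$ are algebra morphisms); and the behaviour of the $\Gamma$-factor, namely $\eta^{\V}_{\phi}(\Gamma_{\Psi}(-e_1)) = \Gamma_{\eta_{\phi}(\Psi)}(-e_1)$. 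Granting these, evaluating $\eta^{\V}_{\phi} \circ \,^{\Gamma}\aut^{\V, (1)}_{\Psi} \circ (\eta^{\V}_{\phi})^{-1}$ on each generator turns all the $\Psi$-data into $\eta_{\phi}(\Psi)$-data and turns $\phi^{-1}(g)$ back into $g$, reproducing exactly the defining formula for $\,^{\Gamma}\aut^{\V, (1)}_{\eta_{\phi}(\Psi)}$.

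The main point to justify is the $\Gamma$-factor identity $\eta^{\V}_{\phi}(\Gamma_{\Psi}(-e_1)) = \Gamma_{\eta_{\phi}(\Psi)}(-e_1)$. Here I would argue in two steps. Since $\eta^{\V}_{\phi}(e_1) = e_1$ and the coefficients $(\Psi | x_0^{n-1} x_1)$ are scalars, $\eta^{\V}_{\phi}$ fixes $\Gamma_{\Psi}(-e_1)$; and since $\eta_{\phi}(x_1) = x_1$ (because $\phi$ fixes the neutral element of $G$) and $\eta_{\phi}$ permutes the basis words fixing $x_0^{n-1} x_1$, one gets $(\eta_{\phi}(\Psi) | x_0^{n-1} x_1) = (\Psi | x_0^{n-1} x_1)$ for all $n$, whence $\Gamma_{\eta_{\phi}(\Psi)}(-e_1) = \Gamma_{\Psi}(-e_1)$. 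Combining the two equalities yields the claim. This is exactly the coefficient-invariance already exploited in the proof of Proposition \ref{eta_phi_DMR}, so beyond this observation the remaining work is only the routine generator-by-generator bookkeeping.
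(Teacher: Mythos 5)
Your proof is correct and is essentially the paper's proof: both reduce to a generator-by-generator check on $e_0$, $e_1$ and $g \in G$, using the commutativity of Diagram \eqref{Diag_etaV_eta} together with the invariances $\Gamma_{\eta_{\phi}(\Psi)}(-e_1) = \Gamma_{\Psi}(-e_1)$ and $\eta^{\V}_{\phi}\big(\Gamma_{\Psi}(-e_1)\big) = \Gamma_{\Psi}(-e_1)$. The only difference is organizational: you first strip off the factor $\lambda \bullet_{\V} -$ via its commutation with $\eta^{\V}_{\phi}$ and then treat the $\lambda$-free conjugation identity, whereas the paper carries $\lambda$ inline through the same computation (where it is harmless, since $\lambda \bullet_{\V} -$ merely rescales $e_0$, $e_1$ and fixes $g$).
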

\begin{proof}
    Since both sides are given as composition of $\K$-algebra morphisms of $\widehat{\V}^{\DR}_G$, it is enough to verify this identity on generators. We have
    \begin{align*}
        & ^{\Gamma}\aut^{\V, (1)}_{(\lambda, \eta_{\phi}(\Psi))}(e_0) = \Ad_{\Gamma_{\eta_{\phi}(\Psi)}^{-1}(-e_1) \beta(\eta_{\phi}(\Psi) \otimes 1)}(\lambda e_0) = \Ad_{\Gamma_{\Psi}^{-1}(-e_1) \eta^{\V}_{\phi}(\beta(\Psi \otimes 1))}(\lambda e_0) \\
        & = \eta^{\V}_{\phi}\left(\Ad_{\Gamma_{\Psi}^{-1}(-e_1) \beta(\Psi \otimes 1)}\big(\lambda (\eta^{\V}_{\phi})^{-1}(e_0)\big)\right) = \eta^{\V}_{\phi}\left(\Ad_{\Gamma_{\Psi}^{-1}(-e_1) \beta(\Psi \otimes 1)}\big(\lambda e_0\big)\right) \\
        & = \eta^{\V}_{\phi} \left(\,^{\Gamma}\aut^{\V, (1)}_{(\lambda, \Psi)}\big(e_0\big)\right) = \eta^{\V}_{\phi} \circ \,^{\Gamma}\aut^{\V, (1)}_{(\lambda, \Psi)} \circ (\eta^{\V}_{\phi})^{-1}(e_0),
    \end{align*}
    where the second equality comes from the identity $\Gamma_{\eta_{\phi}(\Psi)}(-e_1) = \Gamma_{\Psi}(-e_1)$ and from the commutativity of Diagram \eqref{Diag_etaV_eta} and the third one from the fact that $\eta^{\V}_{\phi}$ is an algebra morphism and from the equality $\eta^{\V}_{\phi}(\Gamma_{\Psi}(-e_1)) = \Gamma_{\Psi}(-e_1)$. Next,
    \begin{align*}
        & ^{\Gamma}\aut^{\V, (1)}_{(\lambda, \eta_{\phi}(\Psi))}(e_1) = \Ad_{\Gamma_{\eta_{\phi}(\Psi)}^{-1}(-e_1)}(\lambda e_1) = \Ad_{\Gamma_{\Psi}^{-1}(-e_1)}(\lambda e_1) \\
        & = \eta^{\V}_{\phi}\left(\Ad_{\Gamma_{\Psi}^{-1}(-e_1)}\big(\lambda (\eta^{\V}_{\phi})^{-1}(e_1)\big)\right) = \eta^{\V}_{\phi}\left(\Ad_{\Gamma_{\Psi}^{-1}(-e_1)}\big(\lambda e_1\big)\right) \\
        & = \eta^{\V}_{\phi}\left(\,^{\Gamma}\aut^{\V, (1)}_{(\lambda, \Psi)}(e_1)\right) = \eta^{\V}_{\phi} \circ \,^{\Gamma}\aut^{\V, (1)}_{(\lambda, \Psi)} \circ (\eta^{\V}_{\phi})^{-1}(e_1),
    \end{align*}
    where the second equality comes from the identity $\Gamma_{\eta_{\phi}(\Psi)}(-e_1) = \Gamma_{\Psi}(-e_1)$ and the third one from the fact that $\eta^{\V}_{\phi}$ is an algebra morphism and from the equality $\eta^{\V}_{\phi}(\Gamma_{\Psi}(-e_1)) = \Gamma_{\Psi}(-e_1)$. Finally, for $g \in G$,
    \begin{align*}
        & ^{\Gamma}\aut^{\V, (1)}_{(\lambda, \eta_{\phi}(\Psi))}(g) = \Ad_{\Gamma_{\eta_{\phi}(\Psi)}^{-1}(-e_1) \beta(\eta_{\phi}(\Psi) \otimes 1)}(g) = \Ad_{\Gamma_{\Psi}^{-1}(-e_1) \eta^{\V}_{\phi}(\beta(\Psi \otimes 1))}(g) \\
        & = \eta^{\V}_{\phi}\left(\Ad_{\Gamma_{\Psi}^{-1}(-e_1) \beta(\Psi \otimes 1)}\big( (\eta^{\V}_{\phi})^{-1}(g)\big)\right) = \eta^{\V}_{\phi}\left(\Ad_{\Gamma_{\Psi}^{-1}(-e_1) \beta(\Psi \otimes 1)}\big(\phi^{-1}(g)\big)\right) \\
        & = \eta^{\V}_{\phi} \left(\,^{\Gamma}\aut^{\V, (1)}_{(\lambda, \Psi)}\big(\phi^{-1}(g)\big)\right) = \eta^{\V}_{\phi} \circ \,^{\Gamma}\aut^{\V, (1)}_{(\lambda, \Psi)} \circ (\eta^{\V}_{\phi})^{-1}(g),
    \end{align*}
    where the second equality comes from the identity $\Gamma_{\eta_{\phi}(\Psi)}(-e_1) = \Gamma_{\Psi}(-e_1)$ and from the commutativity of Diagram \eqref{Diag_etaV_eta}, the third one from the fact that $\eta^{\V}_{\phi}$ is an algebra morphism and from the equality $\eta^{\V}_{\phi}(\Gamma_{\Psi}(-e_1)) = \Gamma_{\Psi}(-e_1)$ and the fourth and sixth ones from the fact that $(\eta^{\V}_{\phi})^{-1}(g) = \phi^{-1}(g)$.
\end{proof}

\begin{corollary}
    \label{Gammaaut_eta_etaWM_Gammaaut}
    For $(\phi, \lambda, \Psi) \in \Aut(G) \times \K^{\times} \times \G(\KX)$ we have
    \begin{enumerate}[label=(\roman*), leftmargin=*]
        \item \label{Gammaaut_eta_etaW_Gammaaut} $^{\Gamma}\aut^{\W, (1)}_{(\lambda, \eta_{\phi}(\Psi))} = \eta^{\W}_{\phi} \circ \,^{\Gamma}\aut^{\W, (1)}_{(\lambda, \Psi)} \circ (\eta^{\W}_{\phi})^{-1}$.
        \item \label{Gammaaut_eta_etaM_Gammaaut} $^{\Gamma}\aut^{\M, (10)}_{(\lambda, \eta_{\phi}(\Psi))} = \eta^{\M}_{\phi} \circ \,^{\Gamma}\aut^{\M, (10)}_{(\lambda, \Psi)} \circ (\eta^{\M}_{\phi})^{-1}$.
    \end{enumerate}
\end{corollary}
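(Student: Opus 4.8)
The plan is to derive both identities from the $\widehat{\V}^{\DR}_G$-level statement of Lemma \ref{Gammaaut_eta_etaV_Gammaaut}, transporting it along the injective inclusion $\widehat{\W}^{\DR}_G \hookrightarrow \widehat{\V}^{\DR}_G$ for part \ref{Gammaaut_eta_etaW_Gammaaut} and along the surjection $- \cdot 1_{\DR} : \widehat{\V}^{\DR}_G \to \widehat{\M}^{\DR}_G$ for part \ref{Gammaaut_eta_etaM_Gammaaut}. At one point I would use the $\lambda = 1$ instance of Lemma \ref{Gammaaut_eta_etaV_Gammaaut}, for which $\,^{\Gamma}\aut^{\V, (1)}_{(1, \Psi)} = \,^{\Gamma}\aut^{\V, (1)}_{\Psi}$.

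For part \ref{Gammaaut_eta_etaW_Gammaaut}, I would first recall that $\,^{\Gamma}\aut^{\W, (1)}_{(\lambda, \Psi)}$ and $\eta^{\W}_{\phi}$ are the restrictions to the subalgebra $\widehat{\W}^{\DR}_G$ of $\,^{\Gamma}\aut^{\V, (1)}_{(\lambda, \Psi)}$ and $\eta^{\V}_{\phi}$, by the commutativity of Diagrams \eqref{diag_GammaautW2_GammaautV2} and \eqref{diag_etaW_etaV}. Since each of these three automorphisms preserves $\widehat{\W}^{\DR}_G$, both sides of the desired identity are the restrictions to $\widehat{\W}^{\DR}_G$ of the two sides of the $\widehat{\V}^{\DR}_G$-identity of Lemma \ref{Gammaaut_eta_etaV_Gammaaut} applied to the pair $(\lambda, \Psi)$. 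As the inclusion is injective, equality of the two $\widehat{\V}^{\DR}_G$-maps forces equality of their restrictions, which is exactly part \ref{Gammaaut_eta_etaW_Gammaaut}. This part is routine.

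For part \ref{Gammaaut_eta_etaM_Gammaaut}, I would use that $\,^{\Gamma}\aut^{\M, (10)}_{(\lambda, \Psi)}$ and $\eta^{\M}_{\phi}$ are induced from $\,^{\Gamma}\aut^{\V, (10)}_{(\lambda, \Psi)}$ and $\eta^{\V}_{\phi}$ through $- \cdot 1_{\DR}$, by Diagrams \eqref{diag_GammaautV2_GammaautM2} and \eqref{diag_etaV_etaM}. Precomposing the claimed $\widehat{\M}^{\DR}_G$-identity with the surjection $- \cdot 1_{\DR}$ and pushing all three maps through these diagrams reduces the statement, by surjectivity of $- \cdot 1_{\DR}$, to the $(10)$-level intertwining on $\widehat{\V}^{\DR}_G$:
\begin{equation*}
    \,^{\Gamma}\aut^{\V, (10)}_{(\lambda, \eta_{\phi}(\Psi))} = \eta^{\V}_{\phi} \circ \,^{\Gamma}\aut^{\V, (10)}_{(\lambda, \Psi)} \circ (\eta^{\V}_{\phi})^{-1}.
\end{equation*}
The only genuine obstacle is that Lemma \ref{Gammaaut_eta_etaV_Gammaaut} is at the $(1)$-level while this identity is at the $(10)$-level. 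To bridge the gap I would observe that $\left(\,^{\Gamma}\aut^{\V, (1)}_{(\lambda, \Psi)}, \,^{\Gamma}\aut^{\V, (10)}_{(\lambda, \Psi)}\right)$ is an automorphism in $\K{\text -}\alg{\text -}\Mod_{\mathrm{top}}$ (Lemma \ref{compat_Gamma_MWV}.\ref{rel_Gamma_aut_V10V1} composed with the $\lambda \bullet_{\V} -$ action) and that $(\eta^{\V}_{\phi}, \eta^{\V}_{\phi})$ is one as well. Hence both sides of the displayed identity are the module components of algebra-module automorphisms of $(\widehat{\V}^{\DR}_G, \widehat{\V}^{\DR}_G)$ whose algebra components agree by Lemma \ref{Gammaaut_eta_etaV_Gammaaut}; since $\widehat{\V}^{\DR}_G$ is free of rank $1$ over itself, a module component $g$ satisfies $g = r_{g(1)} \circ f$ with $f$ its algebra component, so the two module maps coincide as soon as they agree on the generator $1$.

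The remaining generator check is the main computation. Using $\lambda \bullet_{\V} 1 = 1$ and the definition of $\,^{\Gamma}\aut^{\V, (10)}_{\Psi}$, I would write $\,^{\Gamma}\aut^{\V, (10)}_{(\lambda, \Psi)}(1) = \,^{\Gamma}\aut^{\V, (1)}_{\Psi}\!\left(\Gamma_{\Psi}^{-1}(-e_1) \beta(\Psi \otimes 1)\right)$, and likewise for $\eta_{\phi}(\Psi)$. I would then simplify the auxiliary element $\Gamma_{\eta_{\phi}(\Psi)}^{-1}(-e_1) \beta(\eta_{\phi}(\Psi) \otimes 1)$ using the three facts already established in the proof of Lemma \ref{Gammaaut_eta_etaV_Gammaaut}: the identity $\Gamma_{\eta_{\phi}(\Psi)}(-e_1) = \Gamma_{\Psi}(-e_1)$, the commutativity of Diagram \eqref{Diag_etaV_eta} giving $\beta(\eta_{\phi}(\Psi) \otimes 1) = \eta^{\V}_{\phi}(\beta(\Psi \otimes 1))$, and $\eta^{\V}_{\phi}(\Gamma_{\Psi}(-e_1)) = \Gamma_{\Psi}(-e_1)$. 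These show the auxiliary element equals $\eta^{\V}_{\phi}\!\left(\Gamma_{\Psi}^{-1}(-e_1) \beta(\Psi \otimes 1)\right)$, whence the $\lambda = 1$ instance of Lemma \ref{Gammaaut_eta_etaV_Gammaaut} yields $\,^{\Gamma}\aut^{\V, (10)}_{(\lambda, \eta_{\phi}(\Psi))}(1) = \eta^{\V}_{\phi}\!\left(\,^{\Gamma}\aut^{\V, (10)}_{(\lambda, \Psi)}(1)\right)$, completing the argument exactly in the spirit of the terse deduction of Corollary \ref{GammaautWM_lambdabullet}.\ref{GammaautM_lambdabullet}.
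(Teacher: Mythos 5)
Your proof is correct and follows essentially the same route as the paper: both identities are deduced from the $\widehat{\V}^{\DR}_G$-level Lemma \ref{Gammaaut_eta_etaV_Gammaaut} by transport along the commutative diagrams of Proposition-Definition \ref{Gamma_aut_alg_mod_2} and Lemma \ref{from_etaV}. The paper's one-line proof leaves implicit the passage from the $(1)$-level identity of Lemma \ref{Gammaaut_eta_etaV_Gammaaut} to the $(10)$-level identity needed for part \ref{Gammaaut_eta_etaM_Gammaaut}; your free-rank-one reduction together with the generator computation at $1$ (using $\Gamma_{\eta_{\phi}(\Psi)}(-e_1) = \Gamma_{\Psi}(-e_1)$, Diagram \eqref{Diag_etaV_eta}, and $\eta^{\V}_{\phi}(\Gamma_{\Psi}(-e_1)) = \Gamma_{\Psi}(-e_1)$) fills exactly that gap and checks out.
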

\begin{proof}
    This follows from Lemma \ref{Gammaaut_eta_etaV_Gammaaut} thanks to Proposition-Definition \ref{Gamma_aut_alg_mod_2} and Lemma \ref{from_etaV}.
\end{proof}

\begin{corollary}
    \label{Gammaaut_act2}
    The group $(\Aut(G) \times \K^{\times}) \ltimes \G(\KX)$ acts on $(\widehat{\W}^{\DR}_G, \widehat{\M}^{\DR}_G)$ by
    \begin{equation*}
        (\Aut(G) \times \K^{\times}) \ltimes \G(\KX) \to \Aut_{\K{\text -}\alg{\text -}\Mod}(\widehat{\W}^{\DR}_G, \widehat{\M}^{\DR}_G); (\phi, \lambda, \Psi) \mapsto \left(\,^{\Gamma}\aut^{\W, (1)}_{(\phi, \lambda, \Psi)}, \,^{\Gamma}\aut^{\M, (10)}_{(\phi, \lambda, \Psi)}\right)
    \end{equation*}
\end{corollary}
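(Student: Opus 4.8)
The plan is to verify that the assignment $(\phi,\lambda,\Psi)\mapsto\bigl(\,^{\Gamma}\aut^{\W,(1)}_{(\phi,\lambda,\Psi)},\,^{\Gamma}\aut^{\M,(10)}_{(\phi,\lambda,\Psi)}\bigr)$ is a morphism of groups from $(\Aut(G)\times\K^{\times})\ltimes\G(\KX)$ into the automorphism group of $(\widehat{\W}^{\DR}_G,\widehat{\M}^{\DR}_G)$ in $\K{\text -}\alg{\text -}\Mod_{\mathrm{top}}$. That each pair in the image is indeed such an automorphism is already recorded in the preceding Proposition-Definition, so only compatibility with the two group laws needs checking; since the arguments for the two components are formally identical, it suffices to treat the $\W$-component carefully and then transcribe. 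Compatibility with identities is automatic, as an idempotent invertible endomorphism is the identity.

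For the $\W$-component I would expand both sides of $\,^{\Gamma}\aut^{\W,(1)}_{(\phi,\lambda,\Psi)\circledast(\phi',\nu,\Phi)}=\,^{\Gamma}\aut^{\W,(1)}_{(\phi,\lambda,\Psi)}\circ\,^{\Gamma}\aut^{\W,(1)}_{(\phi',\nu,\Phi)}$ using the defining relation $\,^{\Gamma}\aut^{\W,(1)}_{(\phi,\lambda,\Psi)}=\,^{\Gamma}\aut^{\W,(1)}_{(\lambda,\Psi)}\circ\eta^{\W}_{\phi}$ together with the semidirect product law $(\phi,\lambda,\Psi)\circledast(\phi',\nu,\Phi)=(\phi\circ\phi',\lambda\nu,\Psi\circledast\eta_{\phi}(\lambda\bullet\Phi))$. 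The first key step is to absorb the twist appearing in the index: by Lemma~\ref{eta_bullet} one has $\eta_{\phi}(\lambda\bullet\Phi)=\lambda\bullet\eta_{\phi}(\Phi)$, hence $(\lambda\nu,\Psi\circledast\eta_{\phi}(\lambda\bullet\Phi))=(\lambda,\Psi)\circledast(\nu,\eta_{\phi}(\Phi))$ inside $\K^{\times}\ltimes\G(\KX)$. Applying Corollary~\ref{act_gamma_aut_WM} to this honest product, and using that $\phi\mapsto\eta^{\W}_{\phi}$ is a group action (so $\eta^{\W}_{\phi\circ\phi'}=\eta^{\W}_{\phi}\circ\eta^{\W}_{\phi'}$, checked on the generators $z_{n,g}$), rewrites the left-hand side as $\,^{\Gamma}\aut^{\W,(1)}_{(\lambda,\Psi)}\circ\,^{\Gamma}\aut^{\W,(1)}_{(\nu,\eta_{\phi}(\Phi))}\circ\eta^{\W}_{\phi}\circ\eta^{\W}_{\phi'}$.

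The decisive step is then to remove the twisted index $\eta_{\phi}(\Phi)$: Corollary~\ref{Gammaaut_eta_etaWM_Gammaaut}.\ref{Gammaaut_eta_etaW_Gammaaut} gives $\,^{\Gamma}\aut^{\W,(1)}_{(\nu,\eta_{\phi}(\Phi))}=\eta^{\W}_{\phi}\circ\,^{\Gamma}\aut^{\W,(1)}_{(\nu,\Phi)}\circ(\eta^{\W}_{\phi})^{-1}$, so that $\,^{\Gamma}\aut^{\W,(1)}_{(\nu,\eta_{\phi}(\Phi))}\circ\eta^{\W}_{\phi}=\eta^{\W}_{\phi}\circ\,^{\Gamma}\aut^{\W,(1)}_{(\nu,\Phi)}$. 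Substituting this and regrouping yields $\,^{\Gamma}\aut^{\W,(1)}_{(\lambda,\Psi)}\circ\eta^{\W}_{\phi}\circ\,^{\Gamma}\aut^{\W,(1)}_{(\nu,\Phi)}\circ\eta^{\W}_{\phi'}$, which is exactly $\,^{\Gamma}\aut^{\W,(1)}_{(\phi,\lambda,\Psi)}\circ\,^{\Gamma}\aut^{\W,(1)}_{(\phi',\nu,\Phi)}$ by the defining relation, closing the $\W$-case. The $\M$-component follows verbatim, replacing the $\W$-statement of Corollary~\ref{act_gamma_aut_WM} by its $\M$-statement and Corollary~\ref{Gammaaut_eta_etaWM_Gammaaut}.\ref{Gammaaut_eta_etaW_Gammaaut} by \ref{Gammaaut_eta_etaM_Gammaaut}. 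I expect the main obstacle to be purely organizational: one must first convert the semidirect-product twist into an honest product in $\K^{\times}\ltimes\G(\KX)$ via Lemma~\ref{eta_bullet} before Corollary~\ref{act_gamma_aut_WM} becomes applicable, and then commute $\eta_{\phi}$ past the $^{\Gamma}\aut$ term with the correct orientation of the conjugation formula, since a misplaced $\eta^{\W}_{\phi}$ or $(\eta^{\W}_{\phi})^{-1}$ breaks the match.
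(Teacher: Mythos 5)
Your proposal is correct and follows essentially the same route as the paper's proof: expand via the semidirect product law, use Lemma~\ref{eta_bullet} to turn the twist $\eta_{\phi}(\lambda\bullet\Phi)$ into $\lambda\bullet\eta_{\phi}(\Phi)$ so that Corollary~\ref{act_gamma_aut_WM} applies, then commute $\eta^{\W}_{\phi}$ past $^{\Gamma}\aut^{\W,(1)}_{(\nu,\Phi)}$ using the conjugation formula of Corollary~\ref{Gammaaut_eta_etaWM_Gammaaut}, and transcribe for the $\M$-component. Both the choice of key lemmas and the order in which they are applied match the paper exactly.
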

\begin{proof}
    Let $(\phi, \lambda, \Psi) , (\phi^{\prime}, \nu, \Phi) \in (\Aut(G) \times \K^{\times}) \rtimes \G(\KX)$. We have
    \begin{align*}
        ^{\Gamma}\aut^{\W, (1)}_{(\phi, \lambda, \Psi) \circledast (\phi^{\prime}, \nu, \Phi)} = & \,^{\Gamma}\aut^{\W, (1)}_{(\phi \circ \phi^{\prime}, \lambda \nu, \Psi \circledast (\eta_{\phi}(\lambda \bullet \Phi)))} = \,^{\Gamma}\aut^{\W, (1)}_{(\lambda \nu, \Psi \circledast (\lambda \bullet \eta_{\phi}(\Phi)))} \circ \eta^{\W}_{\phi \circ \phi^{\prime}} \\
        = & \,^{\Gamma}\aut^{\W, (1)}_{(\lambda, \Psi) \circledast (\nu, \eta_{\phi}(\Phi))} \circ \eta^{\W}_{\phi} \circ \eta^{\W}_{\phi^{\prime}} \\
        = & \,^{\Gamma}\aut^{\W, (1)}_{(\lambda, \Psi)} \circ \,^{\Gamma}\aut^{\W, (1)}_{(\nu, (\eta_{\phi}(\Phi)))} \circ \eta^{\W}_{\phi} \circ \eta^{\W}_{\phi^{\prime}} \\
        = & \,^{\Gamma}\aut^{\W, (1)}_{(\lambda, \Psi)} \circ \eta^{\W}_{\phi} \circ \,^{\Gamma}\aut^{\W, (1)}_{(\nu,\Phi)} \circ (\eta^{\W}_{\phi})^{-1} \circ \eta^{\W}_{\phi} \circ \eta^{\W}_{\phi^{\prime}} \\
        = & \,^{\Gamma}\aut^{\W, (1)}_{(\lambda, \Psi)} \circ \eta^{\W}_{\phi} \circ \,^{\Gamma}\aut^{\W, (1)}_{(\nu,\Phi)} \circ \eta^{\W}_{\phi^{\prime}} \\
        = & \,^{\Gamma}\aut^{\W, (1)}_{(\phi, \lambda, \Psi)} \circ \,^{\Gamma}\aut^{\W, (1)}_{(\phi^{\prime}, \nu, \Phi)},
    \end{align*}
    where the second equality comes from Lemma \ref{eta_bullet}, the third one from the fact that $\eta^{\W} : \Aut(G) \to \Aut_{\K{\text -}\alg}^{\mathrm{top}}(\widehat{\W}^{\DR}_G)$ is a group morphism, the fourth one from Corollary \ref{act_gamma_aut_WM} and the fifth one from Corollary \ref{Gammaaut_eta_etaWM_Gammaaut}.\ref{Gammaaut_eta_etaW_Gammaaut}. Next, we have
    \begin{align*}
        ^{\Gamma}\aut^{\M, (10)}_{(\phi, \lambda, \Psi) \circledast (\phi^{\prime}, \nu, \Phi)} = & \,^{\Gamma}\aut^{\M, (10)}_{(\phi \circ \phi^{\prime}, \lambda \nu, \Psi \circledast (\eta_{\phi}(\lambda \bullet \Phi)))} = \,^{\Gamma}\aut^{\M, (10)}_{(\lambda \nu, \Psi \circledast (\lambda \bullet \eta_{\phi}(\Phi)))} \circ \eta^{\M}_{\phi \circ \phi^{\prime}} \\
        = & \,^{\Gamma}\aut^{\M, (10)}_{(\lambda, \Psi) \circledast (\nu, \eta_{\phi}(\Phi))} \circ \eta^{\M}_{\phi} \circ \eta^{\M}_{\phi^{\prime}} \\
        = & \,^{\Gamma}\aut^{\M, (10)}_{(\lambda, \Psi)} \circ \,^{\Gamma}\aut^{\M, (10)}_{(\nu, (\eta_{\phi}(\Phi)))} \circ \eta^{\M}_{\phi} \circ \eta^{\M}_{\phi^{\prime}} \\
        = & \,^{\Gamma}\aut^{\M, (10)}_{(\lambda, \Psi)} \circ \eta^{\M}_{\phi} \circ \,^{\Gamma}\aut^{\M, (10)}_{(\nu,\Phi)} \circ (\eta^{\M}_{\phi})^{-1} \circ \eta^{\M}_{\phi} \circ \eta^{\M}_{\phi^{\prime}} \\
        = & \,^{\Gamma}\aut^{\M, (10)}_{(\lambda, \Psi)} \circ \eta^{\M}_{\phi} \circ \,^{\Gamma}\aut^{\M, (10)}_{(\nu,\Phi)} \circ \eta^{\M}_{\phi^{\prime}} \\
        = & \,^{\Gamma}\aut^{\M, (10)}_{(\phi, \lambda, \Psi)} \circ \,^{\Gamma}\aut^{\M, (10)}_{(\phi^{\prime}, \nu, \Phi)},
    \end{align*}
    where the second equality comes from Lemma \ref{eta_bullet}, the third one from the fact that $\eta^{\M} : \Aut(G) \to \Aut_{\K{\text -}\Mod}^{\mathrm{top}}(\widehat{\M}^{\DR}_G)$ is a group morphism, the fourth one from Corollary \ref{act_gamma_aut_WM} and the fifth one from Corollary \ref{Gammaaut_eta_etaWM_Gammaaut}.\ref{Gammaaut_eta_etaM_Gammaaut}.
\end{proof}

\subsection{The double shuffle group as a stabilizer of a ``de Rham'' coproduct} \label{Stabs}
\begin{proposition} \ 
    \begin{enumerate}[label=(\roman*), leftmargin=*]
        \item \label{act_on_DeltaW} The group $(\Aut(G) \times \K^{\times}) \ltimes \G(\KX)$ acts on $\mathrm{Cop}_{\K{\text -}\alg_{\mathrm{top}}}(\widehat{\W}^{\DR}_G)$ by
        \begin{equation*}
            (\phi, \lambda, \Psi) \cdot D^{\W} := \left(\,^{\Gamma}\aut^{\W, (1)}_{(\phi, \lambda, \Psi)}\right)^{\otimes 2} \circ D^{\W} \circ \left(\,^{\Gamma}\aut^{\W, (1)}_{(\phi, \lambda, \Psi)}\right)^{-1}.
        \end{equation*}
        \item \label{act_on_DeltaM} The group $(\Aut(G) \times \K^{\times}) \ltimes \G(\KX)$ acts on $\mathrm{Cop}_{\K{\text -}\Mod_{\mathrm{top}}}(\widehat{\M}^{\DR}_G)$ by
        \begin{equation*}
            (\phi, \lambda, \Psi) \cdot D^{\M} := \left(\,^{\Gamma}\aut^{\M, (10)}_{(\phi, \lambda, \Psi)}\right)^{\otimes 2} \circ D^{\M} \circ \left(\,^{\Gamma}\aut^{\M, (10)}_{(\phi, \lambda, \Psi)}\right)^{-1}.
        \end{equation*}
    \end{enumerate}
    \label{act_on_Delta}
\end{proposition}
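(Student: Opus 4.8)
The plan is to recognize both statements as instances of the general pullback construction associated to the canonical action \eqref{from_Aut_to_Cop}. Recall that for any symmetric tensor category $\mathcal{C}$ and object $O$, the group $\Aut_{\mathcal{C}}(O)$ acts on $\mathrm{Cop}_{\mathcal{C}}(O)$ by $\alpha \cdot D = \alpha^{\otimes 2} \circ D \circ \alpha^{-1}$. If $\Gamma$ is any group equipped with a group morphism $\rho : \Gamma \to \Aut_{\mathcal{C}}(O)$, then $\gamma \cdot D := \rho(\gamma) \cdot D$ is automatically an action of $\Gamma$ on $\mathrm{Cop}_{\mathcal{C}}(O)$, being the pullback of the action \eqref{from_Aut_to_Cop} along $\rho$. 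Hence, for each of the two parts, it suffices to exhibit the relevant group morphism $\rho$ and to check that the resulting formula matches the one in the statement.

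For part \ref{act_on_DeltaW}, I would take $\mathcal{C} = \K{\text -}\alg_{\mathrm{top}}$ and $O = \widehat{\W}^{\DR}_G$. Corollary \ref{Gammaaut_act2} provides an action of $(\Aut(G) \times \K^{\times}) \ltimes \G(\KX)$ on the pair $(\widehat{\W}^{\DR}_G, \widehat{\M}^{\DR}_G)$ in $\K{\text -}\alg{\text -}\Mod_{\mathrm{top}}$. Since composition in this category is componentwise and the forgetful functor to $\K{\text -}\alg_{\mathrm{top}}$ sends a pair to its first entry, projecting onto the first component yields a group morphism $(\phi, \lambda, \Psi) \mapsto \,^{\Gamma}\aut^{\W, (1)}_{(\phi, \lambda, \Psi)}$ into $\Aut_{\K{\text -}\alg_{\mathrm{top}}}(\widehat{\W}^{\DR}_G)$. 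Feeding this morphism into \eqref{from_Aut_to_Cop} reproduces exactly the displayed formula for $(\phi, \lambda, \Psi) \cdot D^{\W}$, which establishes part \ref{act_on_DeltaW}.

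For part \ref{act_on_DeltaM}, the argument is identical with $\mathcal{C} = \K{\text -}\Mod_{\mathrm{top}}$ and $O = \widehat{\M}^{\DR}_G$: projecting the same Corollary \ref{Gammaaut_act2} action onto its second component gives a group morphism $(\phi, \lambda, \Psi) \mapsto \,^{\Gamma}\aut^{\M, (10)}_{(\phi, \lambda, \Psi)}$ into $\Aut_{\K{\text -}\Mod_{\mathrm{top}}}(\widehat{\M}^{\DR}_G)$, and substituting into \eqref{from_Aut_to_Cop} yields the stated formula for $(\phi, \lambda, \Psi) \cdot D^{\M}$.

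The proof is essentially formal, so I do not anticipate a genuine obstacle; the only point deserving a moment's care is the observation that a group morphism into $\Aut_{\K{\text -}\alg{\text -}\Mod_{\mathrm{top}}}(\widehat{\W}^{\DR}_G, \widehat{\M}^{\DR}_G)$ descends to group morphisms into $\Aut_{\K{\text -}\alg_{\mathrm{top}}}(\widehat{\W}^{\DR}_G)$ and $\Aut_{\K{\text -}\Mod_{\mathrm{top}}}(\widehat{\M}^{\DR}_G)$ separately. This is guaranteed by the componentwise nature of composition in $\K{\text -}\alg{\text -}\Mod_{\mathrm{top}}$, which is already implicit in the two separate component computations carried out in the proof of Corollary \ref{Gammaaut_act2}.
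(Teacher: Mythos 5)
Your proposal is correct and follows exactly the paper's own proof: both parts are the pull-back of the action \eqref{from_Aut_to_Cop} (with $\mathcal{C} = \K{\text -}\alg_{\mathrm{top}}$, $O = \widehat{\W}^{\DR}_G$, resp. $\mathcal{C} = \K{\text -}\Mod_{\mathrm{top}}$, $O = \widehat{\M}^{\DR}_G$) along the group morphisms supplied by Corollary \ref{Gammaaut_act2}. Your added remark that the componentwise composition in $\K{\text -}\alg{\text -}\Mod_{\mathrm{top}}$ guarantees each projection is itself a group morphism is a point the paper leaves implicit, but it is the same argument.
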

\begin{proof} \ 
    \begin{enumerate}[label=(\roman*), leftmargin=*]
        \item This is the formula for the pull-back of the action \eqref{from_Aut_to_Cop} with $\mathcal{C} = \K{\text -}\alg_{\mathrm{top}}$ and $O = \widehat{\W}^{\DR}_G$ by the group morphism $(\phi, \lambda, \Psi) \mapsto \,^{\Gamma}\aut^{\W, (1)}_{(\phi, \lambda, \Psi)}$ of Corollary \ref{Gammaaut_act2}.
        \item This is the formula for the pull-back of the action \eqref{from_Aut_to_Cop} with $\mathcal{C} = \K{\text -}\Mod_{\mathrm{top}}$ and $O = \widehat{\M}^{\DR}_G$ by the group morphism $(\phi, \lambda, \Psi) \mapsto \,^{\Gamma}\aut^{\M, (10)}_{(\phi, \lambda, \Psi)}$ of Corollary \ref{Gammaaut_act2}.
    \end{enumerate}    
\end{proof}

\begin{definition} \ 
    \begin{enumerate}[label=(\roman*), leftmargin=*]
        \item \label{Stab_DeltaW} We denote $\Stab_{(\Aut(G) \times \K^{\times}) \ltimes \G(\KX)}(\widehat{\Delta}^{\W, \DR}_{G})(\K)$ the stabilizer subgroup of the coproduct $\widehat{\Delta}^{\W, \DR}_{G} \in \mathrm{Cop}_{\K{\text -}\alg_{\mathrm{top}}}(\widehat{\W}^{\DR}_G)$ for the action of Proposition \ref{act_on_Delta}.\ref{act_on_DeltaW}. Namely,
        \begin{align*}
            & \Stab_{(\Aut(G) \times \K^{\times}) \ltimes \G(\KX)}(\widehat{\Delta}^{\W, \DR}_{G})(\K) := \\
            & \mbox{\footnotesize$\left\{ (\phi, \lambda, \Psi) \in (\Aut(G) \times \K^{\times}) \ltimes \G(\KX) \, | \, \left( {^{\Gamma}\aut^{\W, (1)}_{(\phi, \lambda, \Psi)}} \right)^{\otimes 2} \circ \widehat{\Delta}^{\W, \DR}_{G} = \widehat{\Delta}^{\W, \DR}_{G} \circ \,^{\Gamma}\aut^{\W, (1)}_{(\phi, \lambda, \Psi)} \right\}$}. \notag
        \end{align*}
        \item \label{Stab_DeltaM} We denote $\Stab_{(\Aut(G) \times \K^{\times}) \ltimes \G(\KX)}(\widehat{\Delta}^{\M, \DR}_{G})(\K)$ the stabilizer subgroup of the coproduct $\widehat{\Delta}^{\M, \DR}_{G} \in \mathrm{Cop}_{\K{\text -}\Mod_{\mathrm{top}}}(\widehat{\M}^{\DR}_G)$ for the action of Proposition \ref{act_on_Delta}.\ref{act_on_DeltaM}. Namely,
        \begin{align*}
            & \Stab_{(\Aut(G) \times \K^{\times}) \ltimes \G(\KX)}(\widehat{\Delta}^{\M, \DR}_{G})(\K) := \\
            & \mbox{\footnotesize$\left\{ (\phi, \lambda, \Psi) \in (\Aut(G) \times \K^{\times}) \ltimes \G(\KX) \, | \, \left( {^{\Gamma}\aut^{\M, (10)}_{(\phi, \lambda, \Psi)}} \right)^{\otimes 2} \circ \widehat{\Delta}^{\M, \DR}_{G} = \widehat{\Delta}^{\M, \DR}_{G} \circ \,^{\Gamma}\aut^{\M, (10)}_{(\phi, \lambda, \Psi)} \right\}$}. \notag
        \end{align*}
    \end{enumerate}
    \label{Stab_DeltaWM}
\end{definition}

Since $(\G(\KX), \circledast)$ is a subgroup of $(\Aut(G) \times \K^{\times}) \ltimes \G(\KX)$, the actions of Proposition \ref{act_on_Delta} induce actions of $(\G(\KX), \circledast)$ on the spaces $\mathrm{Cop}_{\K{\text -}\alg_{\mathrm{top}}}(\widehat{\W}^{\DR}_G)$ and $\mathrm{Cop}_{\K{\text -}\Mod_{\mathrm{top}}}(\widehat{\M}^{\DR}_G)$. This enables us to define the stabilizer subgroups (see \cite[(2.29) and (2.31)]{Yad})
\begin{equation*}
    \mbox{\footnotesize$\Stab_{\G(\KX)}(\widehat{\Delta}^{\W, \DR}_{G})(\K) := \left\{ \Psi \in \G(\KX) \, | \, \left( {^{\Gamma}\aut^{\W, (1)}_{\Psi}} \right)^{\otimes 2} \circ \widehat{\Delta}^{\W, \DR}_{G} = \widehat{\Delta}^{\W, \DR}_{G} \circ \,^{\Gamma}\aut^{\W, (1)}_{\Psi} \right\}$}.
\end{equation*}
and
\begin{equation*}
    \mbox{\footnotesize$\Stab_{\G(\KX)}(\widehat{\Delta}^{\M, \DR}_{G})(\K) := \left\{ \Psi \in \G(\KX) \, | \, \left( {^{\Gamma}\aut^{\M, (10)}_{\Psi}} \right)^{\otimes 2} \circ \widehat{\Delta}^{\M, \DR}_{G} = \widehat{\Delta}^{\M, \DR}_{G} \circ \,^{\Gamma}\aut^{\M, (10)}_{\Psi} \right\}$}.
\end{equation*}
Moreover, we have (\cite[Theorem 2.4.1]{Yad})
\begin{equation}
    \label{Stab_inclusion}
    \Stab_{\G(\KX)}(\widehat{\Delta}^{\M, \DR}_{G})(\K) \subset \Stab_{\G(\KX)}(\widehat{\Delta}^{\W, \DR}_{G})(\K).
\end{equation}

\begin{proposition}
    \label{semidirect_Stabs}
    We have
    \begin{enumerate}[label=(\roman*), leftmargin=*]
        \item \label{semidirect_StabW} $\Stab_{(\Aut(G) \times \K^{\times}) \ltimes \G(\KX)}(\widehat{\Delta}^{\W, \DR}_{G})(\K) = (\Aut(G) \times \K^{\times}) \ltimes \Stab_{\G(\KX)}(\widehat{\Delta}^{\W, \DR}_{G})(\K)$.
        \item \label{semidirect_StabM} $\Stab_{(\Aut(G) \times \K^{\times}) \ltimes \G(\KX)}(\widehat{\Delta}^{\M, \DR}_{G})(\K) = (\Aut(G) \times \K^{\times}) \ltimes \Stab_{\G(\KX)}(\widehat{\Delta}^{\M, \DR}_{G})(\K)$.
    \end{enumerate}
\end{proposition}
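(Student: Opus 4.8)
The plan is to prove both statements by the same mechanism, treating (i) in detail since (ii) is obtained verbatim after replacing the Hopf algebra $(\widehat{\W}^{\DR}_G, \widehat{\Delta}^{\W, \DR}_G)$ and the automorphisms $\,^{\Gamma}\aut^{\W, (1)}_{-}$, $\lambda \bullet_{\W} -$, $\eta^{\W}_{\phi}$ by the coalgebra $(\widehat{\M}^{\DR}_G, \widehat{\Delta}^{\M, \DR}_G)$ and $\,^{\Gamma}\aut^{\M, (10)}_{-}$, $\lambda \bullet_{\M} -$, $\eta^{\M}_{\phi}$. The whole argument rests on the factorisation
\[
\,^{\Gamma}\aut^{\W, (1)}_{(\phi, \lambda, \Psi)} = \,^{\Gamma}\aut^{\W, (1)}_{\Psi} \circ A_{\phi, \lambda}, \qquad A_{\phi, \lambda} := (\lambda \bullet_{\W} -) \circ \eta^{\W}_{\phi},
\]
which is immediate from the definitions of $\,^{\Gamma}\aut^{\W, (1)}_{(\phi, \lambda, \Psi)}$ and $\,^{\Gamma}\aut^{\W, (1)}_{(\lambda, \Psi)}$, together with the observation that $A_{\phi, \lambda}$ is a \emph{Hopf algebra} automorphism of $(\widehat{\W}^{\DR}_G, \widehat{\Delta}^{\W, \DR}_G)$, being a composite of the two Hopf algebra automorphisms of Lemmas \ref{from_lambdaV}.\ref{lambdaW} and \ref{from_etaV}.\ref{etaW}. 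In particular $A_{\phi, \lambda}^{\otimes 2} \circ \widehat{\Delta}^{\W, \DR}_{G} = \widehat{\Delta}^{\W, \DR}_{G} \circ A_{\phi, \lambda}$; that is, the $\Aut(G) \times \K^{\times}$ directions act through automorphisms that already stabilise the coproduct.

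First I would establish the set-theoretic characterisation
\[
(\phi, \lambda, \Psi) \in \Stab_{(\Aut(G) \times \K^{\times}) \ltimes \G(\KX)}(\widehat{\Delta}^{\W, \DR}_{G})(\K) \iff \Psi \in \Stab_{\G(\KX)}(\widehat{\Delta}^{\W, \DR}_{G})(\K).
\]
Writing out the defining condition of the left-hand stabiliser (Definition \ref{Stab_DeltaWM}.\ref{Stab_DeltaW}) and substituting the factorisation, the left-hand side of that condition becomes $(\,^{\Gamma}\aut^{\W, (1)}_{\Psi})^{\otimes 2} \circ A_{\phi, \lambda}^{\otimes 2} \circ \widehat{\Delta}^{\W, \DR}_{G}$, which by the Hopf property of $A_{\phi, \lambda}$ equals $(\,^{\Gamma}\aut^{\W, (1)}_{\Psi})^{\otimes 2} \circ \widehat{\Delta}^{\W, \DR}_{G} \circ A_{\phi, \lambda}$, while its right-hand side is $\widehat{\Delta}^{\W, \DR}_{G} \circ \,^{\Gamma}\aut^{\W, (1)}_{\Psi} \circ A_{\phi, \lambda}$. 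Since $A_{\phi, \lambda}$ is invertible, post-composing with $A_{\phi, \lambda}^{-1}$ cancels it on the right, and the condition reduces exactly to $(\,^{\Gamma}\aut^{\W, (1)}_{\Psi})^{\otimes 2} \circ \widehat{\Delta}^{\W, \DR}_{G} = \widehat{\Delta}^{\W, \DR}_{G} \circ \,^{\Gamma}\aut^{\W, (1)}_{\Psi}$, which is the defining condition of $\Stab_{\G(\KX)}(\widehat{\Delta}^{\W, \DR}_{G})(\K)$. This yields the displayed equivalence, with no constraint imposed on the pair $(\phi, \lambda)$.

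It remains to interpret this equivalence as the claimed equality of subgroups. The characterisation says that the left-hand stabiliser is precisely the set of triples $(\phi, \lambda, \Psi)$ with $\Psi \in \Stab_{\G(\KX)}(\widehat{\Delta}^{\W, \DR}_{G})(\K)$. As a stabiliser it is a subgroup of $(\Aut(G) \times \K^{\times}) \ltimes \G(\KX)$; multiplying $(\phi, \lambda, 1)$ (which lies in it, since the identity $1 \in \G(\KX)$ lies in $\Stab_{\G(\KX)}(\widehat{\Delta}^{\W, \DR}_{G})(\K)$, its image under the action being the identity automorphism) by $(\mathrm{id}, 1, \Phi)$ for $\Phi \in \Stab_{\G(\KX)}(\widehat{\Delta}^{\W, \DR}_{G})(\K)$ shows that $\eta_{\phi}(\lambda \bullet \Phi)$ again lies in $\Stab_{\G(\KX)}(\widehat{\Delta}^{\W, \DR}_{G})(\K)$. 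Hence this group is stable under the action of $\Aut(G) \times \K^{\times}$ defining the semidirect product, the right-hand side $(\Aut(G) \times \K^{\times}) \ltimes \Stab_{\G(\KX)}(\widehat{\Delta}^{\W, \DR}_{G})(\K)$ is a well-defined subgroup whose underlying set is exactly that collection of triples, and the two subgroups coincide. This stability can alternatively be read directly off Corollaries \ref{GammaautWM_lambdabullet}.\ref{GammaautW_lambdabullet} and \ref{Gammaaut_eta_etaWM_Gammaaut}.\ref{Gammaaut_eta_etaW_Gammaaut}, which exhibit $\,^{\Gamma}\aut^{\W, (1)}_{\lambda \bullet \Psi}$ and $\,^{\Gamma}\aut^{\W, (1)}_{\eta_{\phi}(\Psi)}$ as conjugates of $\,^{\Gamma}\aut^{\W, (1)}_{\Psi}$ by the Hopf automorphisms $\lambda \bullet_{\W} -$ and $\eta^{\W}_{\phi}$.

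Part (ii) follows the same three steps in the module-coalgebra setting; the only point to watch is that there the two factors $\lambda \bullet_{\M} -$ and $\eta^{\M}_{\phi}$ are \emph{coalgebra} automorphisms of $(\widehat{\M}^{\DR}_G, \widehat{\Delta}^{\M, \DR}_G)$ rather than Hopf algebra automorphisms (Lemmas \ref{from_lambdaV}.\ref{lambdaM} and \ref{from_etaV}.\ref{etaM}), but this is precisely the property needed for $(\lambda \bullet_{\M} -) \circ \eta^{\M}_{\phi}$ to commute with $\widehat{\Delta}^{\M, \DR}_{G}$ in the module-coproduct action of Proposition \ref{act_on_Delta}.\ref{act_on_DeltaM}, so the cancellation argument carries over unchanged. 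I expect no genuine obstacle beyond this bookkeeping: the conceptual content is simply that the automorphisms implementing the $\Aut(G) \times \K^{\times}$ factors preserve the relevant coproduct, hence drop out of the stabiliser condition and leave only the $\G(\KX)$-condition on $\Psi$.
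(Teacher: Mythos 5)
Your proof is correct, and it reaches the result by a route that is organized differently from the paper's. The paper factors the whole argument through an abstract group-theoretic statement (Lemma \ref{semi_direct_universal_property}): if a subgroup $K$ of a semidirect product $H \ltimes R$ contains $H$, then $K = H \ltimes (K \cap R)$. It then applies this with $H = \Aut(G) \times \K^{\times}$, $R = \G(\KX)$ and $K$ the stabilizer of Definition \ref{Stab_DeltaWM}, so the only inputs are the containment $\Aut(G) \times \K^{\times} \subset K$ --- which rests on exactly the fact you isolate, namely that $A_{\phi, \lambda} = (\lambda \bullet_{\W} -) \circ \eta^{\W}_{\phi}$ (resp.\ its $\M$-analogue) commutes with the coproduct by Lemmas \ref{from_lambdaV} and \ref{from_etaV} --- together with the essentially definitional identity $K \cap \G(\KX) = \Stab_{\G(\KX)}(\widehat{\Delta}^{\W, \DR}_{G})(\K)$. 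You instead bypass the abstract lemma and prove the stronger pointwise characterization that $(\phi, \lambda, \Psi)$ lies in the big stabilizer if and only if $\Psi$ lies in the small one, via the factorization-and-cancellation computation, and then check by hand that the right-hand side is a well-defined subgroup, i.e.\ that $\Stab_{\G(\KX)}(\widehat{\Delta}^{\W, \DR}_{G})(\K)$ is stable under the $\Aut(G) \times \K^{\times}$-action. What the paper's route buys is economy and reusability: the lemma settles the set identification and the stability of $K \cap R$ under the $H$-action once and for all, from the single weaker hypothesis $H \subset K$. What your route buys is transparency: the explicit equivalence makes visible \emph{why} the proposition holds --- the $(\phi, \lambda)$-directions act through coproduct-preserving automorphisms and therefore cancel out of the stabilizer condition --- at the cost of redoing by hand the bookkeeping (well-definedness of the semidirect product, identification of underlying sets) that the paper's lemma packages abstractly. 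Both arguments ultimately rest on the same facts from Lemmas \ref{from_lambdaV} and \ref{from_etaV}, and your treatment of part (ii), replacing Hopf algebra automorphisms by coalgebra automorphisms, is exactly the right adaptation.
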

\noindent It is a consequence of the following general lemma
\begin{lemma}
    \label{semi_direct_universal_property}
    Let us consider the semidirect product group $H \ltimes R$. If $K$ is a subgroup of $H \ltimes R$ containing $H$, then
    \[
        K = H \ltimes (K \cap R).
    \]
\end{lemma}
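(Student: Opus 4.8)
The plan is to treat this as a standard fact about internal semidirect products, the only inputs being that $R$ is normal in $H \ltimes R$, that $H \cap R$ is trivial, and that every element of $H \ltimes R$ factors as a product of an element of $H$ and an element of $R$. None of these requires the explicit cocycle description of the product; they are built into the meaning of $H \ltimes R$.

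First I would record that $K \cap R$ is normalized by $H$: given $h \in H$ and $x \in K \cap R$, the element $hxh^{-1}$ lies in $K$ because $H \subset K$ and $K$ is a subgroup, and it lies in $R$ because $R$ is normal in $H \ltimes R$; hence $hxh^{-1} \in K \cap R$. This guarantees that the expression $H \ltimes (K \cap R)$ denotes a genuine internal semidirect product subgroup of $H \ltimes R$, namely the set of products $\{\, h x : h \in H,\ x \in K \cap R \,\}$, before one even begins the main argument.

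The core of the argument is the double inclusion. The inclusion $H \ltimes (K \cap R) \subset K$ is immediate, since both $H$ and $K \cap R$ are contained in the subgroup $K$, so any product $hx$ lies in $K$. For the reverse inclusion I would take an arbitrary $k \in K$ and use the semidirect product factorization to write $k = h r$ with $h \in H$ and $r \in R$. The key observation, and the one genuinely doing the work, is that $r = h^{-1} k$ then lies in $K$: indeed $h \in H \subset K$ forces $h^{-1} \in K$, and $k \in K$, so the product $h^{-1} k$ belongs to $K$; since also $r \in R$, we conclude $r \in K \cap R$, whence $k = h r \in H \ltimes (K \cap R)$.

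Combining the two inclusions gives the set-theoretic equality $K = H \ltimes (K \cap R)$; that this equality respects the semidirect product structure is automatic, since $K \cap R \subset R$ is normal in $K$, the intersection $H \cap (K \cap R) \subset H \cap R$ is trivial, and $H$ acts on $K \cap R$ by conjugation as established above. I do not anticipate a real obstacle here: the whole statement rests on the trivial-intersection and factorization properties of $H \ltimes R$, and the only subtlety worth flagging explicitly is the normalization check that makes $H \ltimes (K \cap R)$ well defined.
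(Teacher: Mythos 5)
Your proof is correct: the normalization check (so that $H \ltimes (K \cap R)$ makes sense as an internal semidirect product), the easy inclusion $H \ltimes (K \cap R) \subset K$, and the factorization argument $k = hr$ with $r = h^{-1}k \in K \cap R$ constitute the complete standard argument, and it is insensitive to which side the factorization is written on since $R$ is normal. Note that the paper itself states Lemma \ref{semi_direct_universal_property} without any proof --- the proof environment that follows it is the proof of Proposition \ref{semidirect_Stabs}, which merely applies the lemma --- so there is no approach to compare against; your write-up supplies exactly the argument the paper leaves implicit.
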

\begin{proof}[Proof of Proposition \ref{semidirect_Stabs}]
    Set $\mathcal{X} = \W$ or $\M$. We use Lemma \ref{semi_direct_universal_property} where $H = \Aut(G) \times \K^{\times}$, $R = \G(\KX)$ and $K = \Stab_{(\Aut(G) \times \K^{\times}) \ltimes \G(\KX)}(\widehat{\Delta}^{\mathcal{X}, \DR}_{G})(\K)$. We have that
    \[
        K \cap R = \Stab_{(\Aut(G) \times \K^{\times}) \ltimes \G(\KX)}(\widehat{\Delta}^{\mathcal{X}, \DR}_{G})(\K) \cap \G(\KX) = \Stab_{\G(\KX)}(\widehat{\Delta}^{\mathcal{X}, \DR}_{G})(\K).
    \]
    Additionally, $\Stab_{(\Aut(G) \times \K^{\times}) \ltimes \G(\KX)}(\widehat{\Delta}^{\mathcal{X}, \DR}_{G})(\K)$ contains $\Aut(G) \times \K^{\times}$.
    Therefore, the condition of Lemma \ref{semi_direct_universal_property} is met and the result then follows.
\end{proof}

\noindent Finally, one has from \cite[Theorem 1.2]{EF0} that
\begin{equation}
    \label{DMRsubsetStab}
    \DMR_0^G(\K) = \{ \Psi \in \Stab_{\G(\KX)}(\widehat{\Delta}^{\M, \DR}_{G})(\K) \, | \, (\Psi | x_0) = (\Psi | x_1) = 0 \}.
\end{equation}
This establishes an inclusion $\DMR_0^G(\K) \subset \Stab_{\G(\KX)}(\widehat{\Delta}^{\M, \DR}_{G})(\K)$ of subgroups of $(\G(\KX), \circledast)$. We then have the following result:

\begin{corollary}
    \label{DMR_sub_StabM_sub_StabW}
    We have
    \begin{eqnarray*}
        (\Aut(G) \times \K^{\times}) \ltimes \DMR_0^G(\K) \subset & \Stab_{(\Aut(G) \times \K^{\times}) \ltimes \G(\KX)}(\widehat{\Delta}^{\M, \DR}_{G})(\K) \\
        & \cap \\
        & \Stab_{(\Aut(G) \times \K^{\times}) \ltimes \G(\KX)}(\widehat{\Delta}^{\W, \DR}_{G})(\K)
    \end{eqnarray*}
\end{corollary}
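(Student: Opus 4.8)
The plan is to deduce this purely formally from the inclusions of subgroups of $(\G(\KX), \circledast)$ already recorded, together with the semidirect-product decompositions of Proposition \ref{semidirect_Stabs}. First I would assemble the chain of inclusions inside $\G(\KX)$: Equation \eqref{DMRsubsetStab} gives $\DMR_0^G(\K) \subset \Stab_{\G(\KX)}(\widehat{\Delta}^{\M, \DR}_{G})(\K)$, and Equation \eqref{Stab_inclusion} gives $\Stab_{\G(\KX)}(\widehat{\Delta}^{\M, \DR}_{G})(\K) \subset \Stab_{\G(\KX)}(\widehat{\Delta}^{\W, \DR}_{G})(\K)$; transitivity then yields $\DMR_0^G(\K) \subset \Stab_{\G(\KX)}(\widehat{\Delta}^{\W, \DR}_{G})(\K)$ as well.

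Next I would observe that all three of these subgroups of $\G(\KX)$ are stable under the action of $\Aut(G) \times \K^{\times}$ given by $\phi \mapsto \eta_{\phi}$ and $\lambda \mapsto (\lambda \bullet -)$: for $\DMR_0^G(\K)$ this is exactly the content of Proposition \ref{bullet_DMR} and Corollary \ref{eta_phi_DMR0}, while for the two stabilizers it is implicit in Proposition \ref{semidirect_Stabs}, whose semidirect products $(\Aut(G) \times \K^{\times}) \ltimes \Stab_{\G(\KX)}(\widehat{\Delta}^{\mathcal{X}, \DR}_{G})(\K)$ only make sense once such stability is known.

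Then comes the key formal step: for subgroups $A \subset B$ of $\G(\KX)$ that are both stable under the $\Aut(G) \times \K^{\times}$ action, the associated semidirect products satisfy $(\Aut(G) \times \K^{\times}) \ltimes A \subset (\Aut(G) \times \K^{\times}) \ltimes B$. This is immediate because both are realized as subgroups of the common ambient group $(\Aut(G) \times \K^{\times}) \ltimes \G(\KX)$ carrying one and the same twisted multiplication, so that the set-theoretic inclusion $\Aut(G) \times \K^{\times} \times A \subset \Aut(G) \times \K^{\times} \times B$ is automatically a subgroup inclusion. Applying this with $A = \DMR_0^G(\K)$ and $B$ taken to be each of the two stabilizers yields $(\Aut(G) \times \K^{\times}) \ltimes \DMR_0^G(\K) \subset (\Aut(G) \times \K^{\times}) \ltimes \Stab_{\G(\KX)}(\widehat{\Delta}^{\M, \DR}_{G})(\K)$ and the analogous inclusion into $(\Aut(G) \times \K^{\times}) \ltimes \Stab_{\G(\KX)}(\widehat{\Delta}^{\W, \DR}_{G})(\K)$.

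Finally I would invoke Proposition \ref{semidirect_Stabs} to rewrite these two targets as $\Stab_{(\Aut(G) \times \K^{\times}) \ltimes \G(\KX)}(\widehat{\Delta}^{\M, \DR}_{G})(\K)$ and $\Stab_{(\Aut(G) \times \K^{\times}) \ltimes \G(\KX)}(\widehat{\Delta}^{\W, \DR}_{G})(\K)$ respectively, and intersect the two resulting inclusions to obtain the claim. There is no genuine obstacle here; the only point requiring care is the bookkeeping that all the semidirect products in play are taken with respect to the same restricted action of $\Aut(G) \times \K^{\times}$, so that an inclusion of fibre subgroups of $\G(\KX)$ does indeed lift to an inclusion of the total semidirect-product groups.
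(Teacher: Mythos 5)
Your proof is correct and follows essentially the same route as the paper: both arguments combine the inclusions \eqref{DMRsubsetStab} and \eqref{Stab_inclusion} at the level of $\G(\KX)$, lift them to semidirect products over $\Aut(G) \times \K^{\times}$, and then apply Proposition \ref{semidirect_Stabs} to identify those semidirect products with the stabilizers for the $(\Aut(G) \times \K^{\times}) \ltimes \G(\KX)$-action. The only (immaterial) difference is organizational: the paper lifts the single inclusion $\DMR_0^G(\K) \subset \Stab_{\G(\KX)}(\widehat{\Delta}^{\M, \DR}_{G})(\K)$ and then deduces the containment of the $\M$-stabilizer in the $\W$-stabilizer, whereas you lift both inclusions separately and intersect at the end.
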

\begin{proof}
    Thanks to Proposition \ref{semidirect_Stabs}.\ref{semidirect_StabM}, we have
    \begin{equation}
        \label{semidirect_StabM_equality}
        \mbox{\small$\Stab_{(\Aut(G) \times \K^{\times}) \ltimes \G(\KX)}(\widehat{\Delta}^{\M, \DR}_{G})(\K) = (\Aut(G) \times \K^{\times}) \ltimes \Stab_{\G(\KX)}(\widehat{\Delta}^{\M, \DR}_{G})(\K)$}.
    \end{equation}
    On the other hand, using equality \eqref{DMRsubsetStab}, we obtain
    \begin{equation}
        \label{semidirect_DMR_subset_StabM}
        (\Aut(G) \times \K^{\times}) \ltimes \DMR_0^G(\K) \subset (\Aut(G) \times \K^{\times}) \ltimes \Stab_{\G(\KX)}(\widehat{\Delta}^{\M, \DR}_{G})(\K).
    \end{equation}
    From equality \eqref{semidirect_StabM_equality} and inclusion \eqref{semidirect_DMR_subset_StabM}, we obtain the inclusion
    \[
        (\Aut(G) \times \K^{\times}) \ltimes \DMR_0^G(\K) \subset \Stab_{(\Aut(G) \times \K^{\times}) \ltimes \G(\KX)}(\widehat{\Delta}^{\M, \DR}_{G})(\K),
    \]
    which is the wanted first inclusion. For the second inclusion, thanks to inclusion \eqref{Stab_inclusion}, we have that
    \[
        (\Aut(G) \times \K^{\times}) \ltimes \Stab_{\G(\KX)}(\widehat{\Delta}^{\M, \DR}_{G})(\K) \subset (\Aut(G) \times \K^{\times}) \ltimes \Stab_{\G(\KX)}(\widehat{\Delta}^{\W, \DR}_{G})(\K).
    \]
    Thanks to Proposition \ref{semidirect_Stabs}, this inclusion implies that
    \[
        \Stab_{(\Aut(G) \times \K^{\times}) \ltimes \G(\KX)}(\widehat{\Delta}^{\M, \DR}_{G})(\K) \subset \Stab_{(\Aut(G) \times \K^{\times}) \ltimes \G(\KX)}(\widehat{\Delta}^{\W, \DR}_{G})(\K).
    \]
\end{proof}
    \section{Construction of ``Betti'' coproducts} \label{Betti_side}
In this section, we construct a ``Betti'' version of the double shuffle formalism.
The relevant algebras and modules are introduced in §\ref{Betti_algmod} : (i) an algebra $\widehat{\V}^{\B}_N$ defined as the inverse limit of an algebra $\V^{\B}_N$ endowed with a suitable filtration; (ii) an algebra-module $(\widehat{\W}^{\B}_N, \widehat{\M}^{\B}_N)$ composed of a subalgebra $\widehat{\W}^{\B}_N$ of $\widehat{\V}^{\B}_N$ and a $\K$-module $\widehat{\M}^{\B}_N$ which has a $\widehat{\V}^{\B}_N$-module structure inducing a free rank one $\widehat{\W}^{\B}_N$-module structure on it. In proposition \ref{compat_iso_MWV}, we construct algebra-module isomorphisms $(\iso^{\W, \iota}, \iso^{\M, \iota})$ from $(\widehat{\W}^{\B}_N, \widehat{\M}^{\B}_N)$ to $(\widehat{\W}^{\DR}_G, \widehat{\M}^{\DR}_G)$ indexed by $\iota \in \Emb(G)$. This gives rise to a family of algebra-module isomorphisms $\left(\,^{\Gamma}\comp^{\W, (1)}_{(\iota, \lambda, \Psi)}, \,^{\Gamma}\comp^{\M, (10)}_{(\iota, \lambda, \Psi)}\right)$ indexed by elements $(\iota, \lambda, \Psi) \in \Emb(G) \times \K^{\times} \times \G(\KX)$.
In §\ref{Betti_coprod}, we show that the transport by this isomorphism of the ``de Rham'' pair of coproducts $(\widehat{\Delta}^{\W, \DR}_G, \widehat{\Delta}^{\M, \DR}_G)$ is independent of the element $(\iota, \lambda, \Psi) \in \DMR_{\times}(\K)$ (see Theorem \ref{Delta_B_N}). This is derived from the chain of inclusions of Corollary \ref{DMR_sub_StabM_sub_StabW} and from the torsor structure of $\DMR_{\times}(\K)$ over $(\Aut(G) \times \K^{\times}) \ltimes \DMR_0^G(\K)$ (see Proposition \ref{DMR_x_torsor}). The resulting pair of coproducts is denoted $(\widehat{\Delta}^{\W, \B}_N, \widehat{\Delta}^{\M, \B}_N)$ and equips $\widehat{\W}^{\B}_N$ and $\widehat{\M}^{\B}_N$ with Hopf algebra and coalgebra structures respectively (see Corollary \ref{WM_categories}).

\subsection{The topological algebra-module \texorpdfstring{$(\widehat{\W}^{\B}_N, \widehat{\M}^{\B}_N)$}{(WBN, MBN)}} \label{Betti_algmod}

\subsubsection{The filtered algebra \texorpdfstring{$\V^{\B}_N$}{VBN}}
Let \index{$F_2$}$F_2$ be the free group generated by two elements denoted \index{$X_0$}$X_0$ and \index{$X_1$}$X_1$. We consider the group morphism $F_2 \to \mu_N$ given by $X_0 \mapsto \zeta_N$ and $X_1 \mapsto 1$; where \index{$\zeta_N$}$\zeta_N := e^{\frac{i 2\pi}{N}}$.
\begin{lemma}
    The group $\ker(F_2 \to \mu_N)$ is isomorphic to the free group of rank $N+1$ denoted \index{$F_{N+1}$}$F_{N+1}$.
    \label{FN1}
\end{lemma}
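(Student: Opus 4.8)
The plan is to realize $\ker(F_2 \to \mu_N)$ as a finite-index subgroup of a free group and then invoke the Nielsen--Schreier theory. First I would observe that the morphism $F_2 \to \mu_N$ is surjective, since $\zeta_N$ generates $\mu_N$; hence $K := \ker(F_2 \to \mu_N)$ is a (normal) subgroup of index $N$ in the free group $F_2$ of rank $2$. By the Nielsen--Schreier theorem, every subgroup of a free group is free, so $K$ is free, and by the Schreier index (rank) formula its rank equals $1 + N(2-1) = N+1$. This already yields the abstract isomorphism $K \cong F_{N+1}$.

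To make the isomorphism explicit, which is convenient for the later identifications, I would exhibit a Schreier transversal together with the associated free generators. Take $T = \{1, X_0, \dots, X_0^{N-1}\}$ as a set of representatives for the cosets of $K$ in $F_2$; it is a Schreier transversal because it is closed under taking prefixes. Writing $\overline{g}$ for the representative in $T$ of the coset of $g$, Schreier's lemma asserts that the nontrivial elements among $\{\, t\,x\,\overline{t x}^{-1} \ : \ t \in T,\ x \in \{X_0, X_1\}\,\}$ form a free basis of $K$.

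A direct computation of these generators would then finish the proof. For $x = X_0$ and $0 \le i \le N-2$ one has $\overline{X_0^{i+1}} = X_0^{i+1}$, so the corresponding generator is trivial, whereas for $i = N-1$ one gets $\overline{X_0^{N}} = 1$ (since $\zeta_N^N = 1$) and hence the nontrivial element $X_0^N$. For $x = X_1$, since $X_1 \in K$ the coset of $X_0^i X_1$ coincides with that of $X_0^i$, so $\overline{X_0^i X_1} = X_0^i$ and the generator is $X_0^i X_1 X_0^{-i}$, which is nontrivial for each $0 \le i \le N-1$. Thus $K$ is freely generated by the $N+1$ elements $X_0^N$ and $X_0^i X_1 X_0^{-i}$ for $0 \le i \le N-1$, giving the desired isomorphism $K \cong F_{N+1}$.

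The step requiring the most care, though not genuinely difficult, is the bookkeeping in the last paragraph: correctly identifying which Schreier generators vanish and confirming that exactly $N+1$ survive. The only conceptual inputs are the Nielsen--Schreier theorem and the rank formula for finite-index subgroups; the explicit basis merely serves to pin down the isomorphism concretely and will be reused when transporting the filtration and coproducts to the Betti side.
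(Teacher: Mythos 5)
Your proof is correct and follows essentially the same route as the paper: both apply the Nielsen--Schreier theorem with the Schreier transversal $\{1, X_0, \dots, X_0^{N-1}\}$ and compute the same free basis $\{X_0^N,\ X_0^i X_1 X_0^{-i} \ (0 \le i \le N-1)\}$ of $\ker(F_2 \to \mu_N)$. Your preliminary appeal to the Schreier index formula is a harmless addition; the explicit generator computation, which both you and the paper carry out, is what matters for the later identifications.
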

\noindent In order to prove this, we use the following result:
\begin{proposition}[Nielsen-Schreier Theorem, see {\cite[Theorem 3]{Ste}}]
    Let $F$ be a free group on a non-empty set $X$ and let $H$ be a subgroup of $F$. Let $\sigma : H \backslash F \to F$ be a section of the canonical projection $F \to H \backslash F$ such that $T := \sigma(H \backslash F)$ is stable under left prefixation. Then $H$ is freely generated by
    \[
        \left\{ tx (\overline{tx})^{-1} \, | \, (t, x) \in T \times X \text{ and } tx (\overline{tx})^{-1} \neq 1 \right\},
    \]
    where for $g \in F$, $\bar{g}$ the image of $g$ under the composition $F \to H \backslash F \overset{\sigma}{\to} F$. 
\end{proposition}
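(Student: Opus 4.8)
The plan is to prove both assertions contained in the statement — that the listed elements generate $H$, and that they generate it freely — by the Reidemeister–Schreier method, exploiting the hypothesis that $T = \sigma(H\backslash F)$ is a Schreier transversal (a set of coset representatives closed under taking left prefixes). First I would record the elementary properties of the representative map $g \mapsto \bar g := \sigma(Hg)$: one has $\bar t = t$ for $t \in T$, one has $g\bar g^{\,-1} \in H$ for every $g$, and one has $\bar h = 1$ for $h \in H$, this last point because $Hh = H$ and, the empty word being a left prefix of every element of $T$, prefix-closure forces $1 \in T$ with $\sigma(H) = 1$. Writing $\gamma(t,x) := tx\,\overline{tx}^{\,-1} \in H$ for $(t,x) \in T \times X$, I would also note the symmetry $\gamma(t,x^{-1}) = \gamma(\overline{tx^{-1}},x)^{-1}$, which lets me work with $x \in X$ rather than $x \in X \cup X^{-1}$.

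For the generation step, given $h \in H$ in reduced form $h = x_1 \cdots x_n$ with $x_i \in X \cup X^{-1}$, I set $t_0 := 1$ and $t_i := \overline{x_1 \cdots x_i}$, so that $t_n = \bar h = 1$. The Schreier condition gives $\overline{t_{i-1}x_i} = t_i$, whence $\gamma(t_{i-1},x_i) = t_{i-1}x_i t_i^{-1}$, and the product $\prod_{i=1}^{n}\gamma(t_{i-1},x_i)$ telescopes to $h$. Together with the symmetry above, this shows that the nontrivial $\gamma(t,x)$ with $(t,x) \in T \times X$ generate $H$.

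The freeness step is the crux, and I expect the junction analysis in it to be the main obstacle. The key lemma is that each nontrivial generator carries a distinguished letter preserved under multiplication: for $\gamma(t,x) \neq 1$ one checks, using prefix-closure, that $tx$ is reduced (otherwise $tx$ is a left prefix of $t$, hence lies in $T$, and $\gamma(t,x) = 1$), and that $s := \overline{tx}$ does not end in $x$ (otherwise its left prefix $s''$ with $s = s''x$ lies in $T$ and satisfies $Hs'' = Ht$, forcing $s'' = t$ and again $\gamma(t,x) = 1$); consequently the reduced form of $\gamma(t,x) = tx\,s^{-1}$ is the plain concatenation, with the occurrence of $x$ at position $|t|+1$ flanked by $t$ and $s^{-1}$ and uncancelled. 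I would then take a reduced word $\gamma_1^{\epsilon_1}\cdots\gamma_m^{\epsilon_m}$ in the generators and argue that the distinguished letters of all $m$ factors survive in the product computed inside $F$: a cancellation at the junction of two consecutive factors $\gamma_i^{\epsilon_i}\gamma_{i+1}^{\epsilon_{i+1}}$ pits one coset representative against another, and if it were to reach the middle letters it would force $\gamma_{i+1}^{\epsilon_{i+1}} = \gamma_i^{-\epsilon_i}$, contradicting reducedness of the word. Since at least $m$ letters then survive, the image in $F$ is nontrivial whenever $m \geq 1$, which is freeness.

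A conceptually cleaner alternative, which I would mention, is the topological proof: realise $F$ as $\pi_1$ of a wedge of circles indexed by $X$, let $H$ correspond to its connected covering graph, and use that any graph is homotopy equivalent to a wedge of circles by contracting a spanning tree, so that $H$ is free. In this picture the Schreier transversal $T$ is precisely the vertex set of a spanning tree and the nontrivial generators $\gamma(t,x)$ correspond to the edges lying outside it, recovering the same explicit free generating set as in the statement.
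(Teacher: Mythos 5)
The paper does not actually prove this proposition: it is recalled as a classical result with a citation to \cite[Theorem 3]{Ste} and is used as a black box in the proof of Lemma \ref{FN1}, so there is no internal proof to compare yours against. Judged on its own terms, your proposal is the standard Reidemeister--Schreier argument and is essentially correct: the telescoping via $t_i = \overline{x_1\cdots x_i}$ (using $1 \in T$ and $\bar h = 1$, both correctly derived from prefix-closure and uniqueness of representatives) gives generation, the symmetry $\gamma(t,x^{-1}) = \gamma(\overline{tx^{-1}},x)^{-1}$ correctly reduces to positive letters, and your survival lemma is right on both counts --- if $tx$ were not reduced then $tx$ is a prefix of $t$, hence in $T$, forcing $\gamma(t,x)=1$, and if $\overline{tx}$ ended in $x$ then its prefix $s''$ lies in $T$ and represents $Ht$, forcing $s''=t$ and again $\gamma(t,x)=1$.

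One small imprecision in the freeness step is worth flagging, though it is not a gap in substance: cancellation at a junction reaching a \emph{single} distinguished letter does not force $\gamma_{i+1}^{\epsilon_{i+1}} = \gamma_i^{-\epsilon_i}$; rather, it forces that generator itself to be trivial (e.g., if the cancellation eats through $s^{-1}$ and attacks the $x$ of $\gamma(t,x) = txs^{-1}$, then $sx^{-1}$ is a prefix of the next factor's transversal word, hence lies in $T$ and represents $Ht$, giving $s = tx$ and $\gamma(t,x)=1$), which is excluded since only nontrivial generators appear. The mutual-inversion conclusion arises only in the case where the two distinguished letters cancel against each other, where uniqueness of representatives yields $(t,x) = (t',x')$ and contradicts reducedness of the word in the generators. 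Your own prefix arguments in the survival lemma are exactly what is needed for the first case, so the correction is a matter of stating the case split accurately rather than of supplying a new idea. Your topological alternative (covering space of a wedge of circles, spanning tree with vertex set corresponding to $T$, free basis from edges outside the tree) is also a complete and correct route, and in fact yields the same explicit basis; either version would serve as a self-contained replacement for the citation.
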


\begin{proof}[Proof of Lemma \ref{FN1}]
    We apply the Nielsen-Schreier Theorem for $X=\{X_0, X_1\}$, $F=F_2$, $H=\ker(F_2 \to \mu_N)$ and $\sigma : \ker(F_2 \to \mu_N) \backslash F_2 \simeq \mu_N \to F_2$ where the first map is the isomorphism induced by the surjective morphism $F_2 \to \mu_N$ and the second map given by $e^{i\frac{2n\pi}{N}} \mapsto X_0^n$ for $n \in \llbracket 0, N-1 \rrbracket$. Therefore, we have $T = \left\{X_0^n, n \in \llbracket 0, N-1 \rrbracket \right\}$. The theorem then states that $\ker(F_2 \to \mu_N)$ is freely generated by:
    \begin{itemize}[leftmargin=*]
        \item \(X_0^n X_0 (\overline{X_0^n X_0})^{-1} = X_0^{n+1} (\overline{X_0^{n+1}})^{-1}=
        \begin{cases}
            X_0^{n+1} (X_0^{n+1})^{-1} = 1 & \text{ if } n \in \llbracket 0, N-2 \rrbracket \\
            X_0^{N} 1^{-1} = X_0^N & \text{ if } n = N-1 
        \end{cases}\)
        \item \(X_0^n X_1 (\overline{X_0^n X_1})^{-1} = X_0^n X_1 (X_0^n)^{-1} = X_0^n X_1 X_0^{-n} \) 
    \end{itemize}
    Finally, $\ker(F_2 \to \mu_N)$ is freely generated by the $N+1$ elements
    \[
        \left\{ X_0^N, (X_0^n X_1 X_0^{-n})_{n \in \llbracket 0, N-1 \rrbracket} \right\}.
    \]
    Moreover, if we denote $\left(\widetilde{X}_0, \left(\widetilde{X}_{\zeta_N^{n}}\right)_{n \in \llbracket 0, N-1\rrbracket}\right)$ the generators of the free group $F_{N+1}$ of rank $N+1$, one checks that correspondence
    \[
        \widetilde{X}_0 \mapsto X_0^{N}, \, \widetilde{X}_{\zeta_N^{n}} \mapsto X_0^n X_1 X_0^{-n} \text{ for } n \in \llbracket 0, N-1 \rrbracket  
    \]
    defines a free group isomorphism from $F_{N+1}$ to $\ker(F_2 \to \mu_N)$.
\end{proof}
\noindent We then obtain the following short exact sequence
\begin{equation}
    \label{exact_seq}
    \{1\} \to F_{N+1} \to F_2 \to \mu_N \to \{1\} 
\end{equation}

Next, let \index{$\sigma$}$\sigma : \mu_N \to F_2$ be the set-theoretic section of $F_2 \to \mu_N$ given by $e^{\frac{i2n\pi}{N}} \mapsto X_0^n$ for $n \in \llbracket 0, N-1 \rrbracket$. Thanks to the exact sequence (\ref{exact_seq}) we obtain a bijection \index{$\Sigma$}
\begin{equation}
    \label{bijection_Sigma}
    \Sigma : \mu_N \times F_{N+1} \to F_2, \quad (\zeta, x) \mapsto \sigma(\zeta) x;
\end{equation}
where $F_{N+1}$ is seen as $\ker(F_2 \to \mu_N) \subset F_2$ thanks to Lemma \ref{FN1}. \newline
The set $\mu_N \times F_{N+1}$ is equipped with a right $F_{N+1}$-set structure by
\[
    (\zeta, x) * y := (\zeta, xy), \text{ for } (\zeta, x) \in \mu_N \times F_{N+1} \text{ and } y \in F_{N+1}.
\]
The group $F_2$ is also equipped with a right $F_{N+1}$-set structure given by
\[
    x*y:= xy, \text{ for } x \in F_2 \text{ and } y \in F_{N+1};
\]
where $F_{N+1}$ is seen as $\ker(F_2 \to \mu_N) \subset F_2$ thanks to Lemma \ref{FN1}. One checks that (\ref{bijection_Sigma}) upgrades to a right $F_{N+1}$-set isomorphism. \newline
Let us consider the tensor functor \index{$\K(-)$}
\[
    \K(-) : \{\text{right } F_{N+1}\text{-sets}\} \longrightarrow \{\text{right } \K F_{N+1}\text{-modules}\}
\]
taking $X$ to $\K X$, the set of finitely supported maps $X \to \K$. Applying this functor to the isomorphism of right $F_{N+1}$-sets (\ref{bijection_Sigma}), one obtains the right $\K F_{N+1}$-module isomorphism \index{$\K\Sigma$}
\begin{equation}
    \label{Def_kSigma}
    \K\Sigma : \K\mu_N \otimes \K F_{N+1} \to \K F_2, 
\end{equation}
where both the source and the target are equipped with the right $\K F_{N+1}$-module structure given by the right $F_{N+1}$-set structure on $\mu_N \times F_{N+1}$ and $F_2$ respectively.

Let us denote \index{$\mathcal{I}$}$\mathcal{I} := \ker(\K F_2 \to \K\mu_N)$ where $\K F_2 \to \K\mu_N$ is the $\K$-algebra morphism induced from the group morphism $F_2 \to \mu_N$. Then $\mathcal{I}$ is a two-sided ideal of $\K F_2$. In particular, $\mathcal{I}$ is a right $\K F_{N+1}$-module. \newline
Let \index{$\varepsilon$}$\varepsilon : \K F_{N+1} \to \K$ be the augmentation morphism of the group algebra $\K F_{N+1}$. It is equipped with a right regular $\K F_{N+1}$-module structure.

\begin{lemma} \ 
    \begin{enumerate}[label=(\roman*), leftmargin=*]
        \item \label{ker_id_eps} The $\K$-module isomorphism $\K\Sigma : \K\mu_N \otimes \K F_{N+1} \to \K F_2$ sets up a right $\K F_{N+1}$-module isomorphism of $\mathcal{I}$ with $\K\mu_N \otimes \ker(\varepsilon)$.
        \item \label{Igenerators} The ideal $\mathcal{I}$ is linearly generated by $\sigma(\zeta) (x-1)$ where $\zeta \in \mu_N$ and $x \in F_{N+1}$. 
    \end{enumerate}
    \label{IdealI}
\end{lemma}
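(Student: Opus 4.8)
The plan is to derive both statements from a single commutative square that exhibits the defining morphism $\K F_2 \to \K\mu_N$ of $\mathcal{I}$ as the transport, through the isomorphism $\K\Sigma$ of \eqref{Def_kSigma}, of the elementary map $\mathrm{id}_{\K\mu_N} \otimes \varepsilon$. First I would consider the diagram
\[
\begin{tikzcd}
    \K\mu_N \otimes \K F_{N+1} \ar["\K\Sigma"]{rr} \ar["\mathrm{id} \otimes \varepsilon"']{d} && \K F_2 \ar{d} \\
    \K\mu_N \ar["\mathrm{id}"]{rr} && \K\mu_N
\end{tikzcd}
\]
in which the right vertical arrow is the $\K$-algebra morphism $\K F_2 \to \K\mu_N$ defining $\mathcal{I}$, and check commutativity on the $\K$-basis $\{\zeta \otimes x \mid \zeta \in \mu_N,\, x \in F_{N+1}\}$ of the source. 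By \eqref{bijection_Sigma} the element $\zeta \otimes x$ is sent by $\K\Sigma$ to $\sigma(\zeta)x$, whose image in $\mu_N$ is $\zeta$, since $\sigma$ is a section of $F_2 \to \mu_N$ and $x \in F_{N+1} = \ker(F_2 \to \mu_N)$ by Lemma \ref{FN1}; on the other hand $(\mathrm{id} \otimes \varepsilon)(\zeta \otimes x) = \zeta$. As both composites are $\K$-linear and agree on a basis, the square commutes.

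Next, since $\K\Sigma$ is an isomorphism and the bottom arrow is the identity, $\K\Sigma$ carries $\ker(\mathrm{id} \otimes \varepsilon)$ isomorphically onto $\ker(\K F_2 \to \K\mu_N) = \mathcal{I}$. To identify the source kernel I would use freeness: $\K\mu_N$ is free of rank $N$ over $\K$, hence flat, so tensoring the short exact sequence $0 \to \ker(\varepsilon) \to \K F_{N+1} \xrightarrow{\varepsilon} \K \to 0$ by $\K\mu_N$ remains exact, giving $\ker(\mathrm{id} \otimes \varepsilon) = \K\mu_N \otimes \ker(\varepsilon)$. Because $\K\Sigma$ is a right $\K F_{N+1}$-module isomorphism, $\K\mu_N \otimes \ker(\varepsilon)$ is a right $\K F_{N+1}$-submodule (the augmentation ideal $\ker(\varepsilon)$ being a right ideal, stable under the action on the second factor), and $\mathcal{I}$ is a two-sided ideal of $\K F_2$, the induced bijection is an isomorphism of right $\K F_{N+1}$-modules. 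This is exactly statement \ref{ker_id_eps}.

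Finally, for \ref{Igenerators} I would combine the previous identification with the standard description of the augmentation ideal, namely that $\ker(\varepsilon)$ is $\K$-linearly spanned by $\{x - 1 \mid x \in F_{N+1}\}$; hence $\K\mu_N \otimes \ker(\varepsilon)$ is spanned by $\{\zeta \otimes (x-1)\}$. The one-line computation
\[
\K\Sigma\big(\zeta \otimes (x-1)\big) = \sigma(\zeta)x - \sigma(\zeta) = \sigma(\zeta)(x-1)
\]
then shows that $\mathcal{I} = \K\Sigma\big(\K\mu_N \otimes \ker(\varepsilon)\big)$ is linearly generated by the elements $\sigma(\zeta)(x-1)$. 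I do not expect a serious obstacle: the only points requiring care are the verification that the transported map is precisely $\mathrm{id} \otimes \varepsilon$ (which rests on the section and kernel properties invoked above) and the flatness step allowing the kernel of $\varepsilon$ to commute with the tensor factor $\K\mu_N$.
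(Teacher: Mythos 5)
Your proposal is correct and follows essentially the same route as the paper: both transport the defining morphism $\K F_2 \to \K\mu_N$ through $\K\Sigma$ to identify it with $\mathrm{id} \otimes \varepsilon$ (the paper deduces the commutative square from the underlying diagram of $F_{N+1}$-sets via the projection $p_1$, while you verify it directly on the basis $\zeta \otimes x$), and then read off $\mathcal{I} \simeq \K\mu_N \otimes \ker(\varepsilon)$ and its generators. Your explicit flatness justification for $\ker(\mathrm{id}\otimes\varepsilon) = \K\mu_N \otimes \ker(\varepsilon)$ is a small point the paper leaves implicit, but it does not change the argument.
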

\begin{proof} \ 
    \begin{enumerate}[label=(\roman*), leftmargin=*]
        \item The following commutative diagram of $F_{N+1}$-set morphisms
        \[\begin{tikzcd}
            \mu_N \times F_{N+1} \ar["\Sigma"]{rr} \ar["p_1"']{rd} & & F_2 \ar{ld} \\
            & \mu_N 
        \end{tikzcd}\]
        induces a commutative diagram of $\K F_{N+1}$-module morphisms
        \[\begin{tikzcd}
            \K\mu_N \otimes \K F_{N+1} \ar["\K\Sigma"]{rr} \ar["\mathrm{id} \otimes \varepsilon"']{rd} & & \K F_2 \ar{ld} \\
            & \K\mu_N
        \end{tikzcd}\]
        One checks that the associated group algebra morphism of the first projection $p_1 : \mu_N \times F_{N+1} \to \mu_N$ is identified with $\mathrm{id} \otimes \varepsilon : \K\mu_N \otimes \K F_{N+1} \to \K\mu_N$ thanks to the identification $\K\mu_N \otimes \K F_{N+1} \simeq \K(\mu_N \times F_{N+1})$. Therefore, the ideal $\mathcal{I}$ is mapped by the isomorphism $\K\Sigma$ to the ideal $\ker(\mathrm{id} \otimes \varepsilon) = \K\mu_N \otimes \ker(\varepsilon)$.
        \item Since $\varepsilon : \K F_{N+1} \to \K$ is the augmentation morphism, its kernel is generated by elements $x-1$ with $x \in F_{N+1}$. Therefore, taking the image of the generators by $\K\Sigma$, we obtain generators of the ideal $\mathcal{I}$ as announced.
    \end{enumerate}
\end{proof}

\begin{propdef}
    \label{VBfiltration}
    Let $\V^{\B}_N$ be the group algebra of $F_2$ over $\K$ endowed with the filtration \index{$\mathcal{F}^m \V^{\B}_N$}
    \begin{equation*}
        \mathcal{F}^m \V^{\B}_N = \mathcal{I}^m,
    \end{equation*}
    for $m \in \N$, where $\mathcal{I}^m$ is the $m^{\text{th}}$-power of the ideal $\mathcal{I}$ with the convention that $\mathcal{I}^0=\V^{\B}_N$. The filtration $(\mathcal{F}^m \V^{\B}_N)_{m \in \N}$ is an algebra filtration.
\end{propdef}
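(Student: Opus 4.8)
The plan is to verify directly that the family $\left(\mathcal{I}^m\right)_{m \in \N}$ satisfies the three defining conditions of an algebra filtration on $\V^{\B}_N = \K F_2$: (a) $\mathcal{F}^0 \V^{\B}_N = \V^{\B}_N$; (b) the filtration is decreasing, i.e. $\mathcal{F}^{m+1} \V^{\B}_N \subseteq \mathcal{F}^m \V^{\B}_N$ for all $m \in \N$; and (c) it is multiplicative, i.e. $\mathcal{F}^m \V^{\B}_N \cdot \mathcal{F}^n \V^{\B}_N \subseteq \mathcal{F}^{m+n} \V^{\B}_N$ for all $m, n \in \N$. The starting observation, already recorded before Lemma \ref{IdealI}, is that $\mathcal{I} = \ker(\K F_2 \to \K\mu_N)$ is a \emph{two-sided} ideal of $\V^{\B}_N$; consequently each power $\mathcal{I}^m$ is again a two-sided ideal, and all three conditions reduce to elementary properties of powers of a two-sided ideal.

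Condition (a) holds by the stated convention $\mathcal{I}^0 = \V^{\B}_N$. For condition (b), I would recall that $\mathcal{I}^{m+1}$ is by definition the $\K$-span of products $a_1 \cdots a_{m+1}$ with all $a_i \in \mathcal{I}$; writing such a product as $(a_1 \cdots a_m)\, a_{m+1}$ with $a_1 \cdots a_m \in \mathcal{I}^m$ and $a_{m+1} \in \mathcal{I} \subseteq \V^{\B}_N$, and using that $\mathcal{I}^m$ is a right ideal, one gets $\mathcal{I}^{m+1} \subseteq \mathcal{I}^m \cdot \V^{\B}_N \subseteq \mathcal{I}^m$ (for $m=0$ this is simply $\mathcal{I} \subseteq \V^{\B}_N$).

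Condition (c) is the compatibility with the algebra structure and is the one genuinely to be checked, though it too is immediate: a product $uv$ with $u \in \mathcal{I}^m$ and $v \in \mathcal{I}^n$ is a $\K$-linear combination of terms of the form $(a_1 \cdots a_m)(b_1 \cdots b_n)$ with $a_i, b_j \in \mathcal{I}$, which by associativity is a product of $m+n$ elements of $\mathcal{I}$, hence lies in $\mathcal{I}^{m+n}$; thus $\mathcal{I}^m \cdot \mathcal{I}^n \subseteq \mathcal{I}^{m+n}$. Since there is no delicate point here — everything follows formally from the ring-theoretic fact that the powers of a two-sided ideal form a multiplicative decreasing filtration — I do not expect any real obstacle in this statement. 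The substantive work lies entirely upstream, in the identification of $\mathcal{I}$ as a two-sided ideal together with the explicit description of its generators in Lemma \ref{IdealI}, which is what will actually be used later when computing the associated graded object and the completion $\widehat{\V}^{\B}_N$.
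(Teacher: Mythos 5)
Your proof is correct and matches the paper's intent: the paper's own proof is simply ``Immediate,'' precisely because, as you observe, all three conditions follow formally from $\mathcal{I}$ being a two-sided ideal of $\K F_2$. Your write-up just spells out the routine verification that the author left implicit.
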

\begin{proof}
    Immediate.
\end{proof}

\begin{lemma}
    \label{filtr}
    Let $m \in \N$. The $\K$-module isomorphism $\K\Sigma : \K\mu_N \otimes \K F_{N+1} \to \K F_2$ sets up a right $\K F_{N+1}$-module isomorphism of $\mathcal{F}^m \V^{\B}_N$ with $\K\mu_N \otimes (\K F_{N+1})_0^m$,
    where \index{$(\K F_{N+1})_0$}$(\K F_{N+1})_0$ is the augmentation ideal of the group algebra $\K F_{N+1}$.
\end{lemma}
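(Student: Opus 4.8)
The plan is to reduce everything to a single statement about powers of the augmentation ideal. Write $J := (\K F_{N+1})_0 = \ker(\varepsilon)$, so that the assertion to be proved is exactly $\K\Sigma(\K\mu_N \otimes J^m) = \mathcal{F}^m \V^{\B}_N = \mathcal{I}^m$. By the definition \eqref{Def_kSigma} of $\K\Sigma$ together with the bijection \eqref{bijection_Sigma}, the submodule $\K\Sigma(\K\mu_N \otimes J^m)$ is precisely $\bigoplus_{\zeta \in \mu_N} \sigma(\zeta)\, J^m$. So my goal is the identity of right $\K F_{N+1}$-submodules of $\K F_2$
\[
    \mathcal{I}^m = \bigoplus_{\zeta \in \mu_N} \sigma(\zeta)\, J^m .
\]
Since $\K\Sigma$ is already a right $\K F_{N+1}$-module isomorphism, and since $J^m$ is a two-sided ideal of $\K F_{N+1}$ (so $\K\mu_N \otimes J^m$ is a right submodule, while $\mathcal{I}^m$ is a right submodule of $\K F_2$), this identity will immediately upgrade the restriction of $\K\Sigma$ to the desired isomorphism.

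I would prove this identity by induction on $m$. The case $m=0$ is the full decomposition $\K F_2 = \bigoplus_\zeta \sigma(\zeta)\,\K F_{N+1}$ coming from $\Sigma$, and the case $m=1$ is exactly Lemma \ref{IdealI}.\ref{ker_id_eps}, which yields $\mathcal{I} = \bigoplus_\zeta \sigma(\zeta)\, J$. The whole induction rests on two structural facts that I would record first: \textbf{(a)} $F_{N+1}$ is normal in $F_2$, so conjugation $\Ad_{\sigma(\zeta)^{-1}}$ restricts to an algebra automorphism of $\K F_{N+1}$; as it permutes group elements it preserves $\varepsilon$, hence preserves every power $J^k$; and \textbf{(b)} $J^k$ is a two-sided ideal of $\K F_{N+1}$, so left or right multiplication by a group element of $F_{N+1}$ maps $J^k$ into itself.

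For the inductive step, assuming $\mathcal{I}^m = \bigoplus_\zeta \sigma(\zeta)\, J^m$, I would compute $\mathcal{I}^{m+1} = \mathcal{I}^m \cdot \mathcal{I}$. For the inclusion $\subseteq$, a typical generator is $\sigma(\zeta)\,a \cdot \sigma(\zeta')\,b$ with $a \in J^m$ and $b \in J$; I move the coset representatives to the left by writing $a\,\sigma(\zeta') = \sigma(\zeta')\,\Ad_{\sigma(\zeta')^{-1}}(a)$ and $\sigma(\zeta)\sigma(\zeta') = \tau\,\sigma(\zeta\zeta')$, where $\tau := \sigma(\zeta)\sigma(\zeta')\sigma(\zeta\zeta')^{-1} \in F_{N+1}$. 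This recasts the generator as $\sigma(\zeta\zeta') \cdot \Ad_{\sigma(\zeta\zeta')^{-1}}(\tau)\,\Ad_{\sigma(\zeta')^{-1}}(a)\, b$. By \textbf{(a)} one has $\Ad_{\sigma(\zeta')^{-1}}(a) \in J^m$, hence $\Ad_{\sigma(\zeta')^{-1}}(a)\,b \in J^{m+1}$, and by \textbf{(b)} the group element $\Ad_{\sigma(\zeta\zeta')^{-1}}(\tau) \in F_{N+1}$ keeps this product inside $J^{m+1}$; so the generator lies in $\sigma(\zeta\zeta')\, J^{m+1}$. For $\supseteq$, note $J^{m+1}$ is spanned by products $a b$ with $a \in J^m$, $b \in J$, and for such a product $\sigma(\xi)\,ab = (\sigma(\xi)\,a)\,b$, where $\sigma(\xi)\,a \in \mathcal{I}^m$ by the inductive hypothesis and $b \in J \subseteq \mathcal{I}$ (the $\zeta=1$ summand of the base case); thus $\sigma(\xi)\,ab \in \mathcal{I}^{m+1}$. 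This closes the induction.

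The step I expect to be the main obstacle is the inclusion $\subseteq$ of the inductive step. Because the section $\sigma$ is only set-theoretic and not a group homomorphism, commuting the representatives $\sigma(\zeta)$ past elements of $\K F_{N+1}$ unavoidably introduces the cocycle factor $\tau$, and one must check that neither this factor nor the conjugations disturb membership in the augmentation-ideal power $J^{m+1}$. This is exactly what facts \textbf{(a)} and \textbf{(b)} are designed to control, so the crux is simply to isolate these two properties cleanly before running the computation, after which the rest is bookkeeping.
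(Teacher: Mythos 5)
Your proof is correct. It rests on the same two pillars as the paper's own argument: the description of $\mathcal{I}$ as linearly generated by the elements $\sigma(\zeta)(x-1)$ (Lemma \ref{IdealI}), and the fact that conjugation by the set-theoretic section representatives preserves $\K F_{N+1}$ and its augmentation-ideal powers, because $F_{N+1}$ is normal in $F_2$. Where you genuinely diverge is in the architecture. The paper does not prove the explicit coset decomposition $\mathcal{I}^m = \bigoplus_{\zeta}\sigma(\zeta)(\K F_{N+1})_0^m$ by induction; instead it exploits the right $\K F_{N+1}$-module structure to write $\K\mu_N \otimes (\K F_{N+1})_0^m = \left(\K\mu_N \otimes (\K F_{N+1})_0\right)\cdot(\K F_{N+1})_0^{m-1}$, so that after applying $\K\Sigma$ and the case $m=1$ the lemma reduces to the single identity $\mathcal{I}\cdot(\K F_{N+1})_0^{m-1} = \mathcal{I}^m$ inside $\K F_2$. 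There the inclusion $\subseteq$ is immediate from $(\K F_{N+1})_0 \subset \mathcal{I}$, and for $\supseteq$ the paper commutes all $m$ representatives to the left in one pass, rewriting $\sigma(\zeta_1)(x_1-1)\cdots\sigma(\zeta_m)(x_m-1)$ as $\sigma(\zeta_1)\cdots\sigma(\zeta_m)$ times a product of terms $(y_i - 1)$ with each $y_i \in F_{N+1}$ a suitable conjugate of $x_i$; the prefix $\sigma(\zeta_1)\cdots\sigma(\zeta_m)(y_1 - 1)$ is then absorbed into $\K F_2\cdot(\K F_{N+1})_0 \subset \K F_2\cdot\mathcal{I} = \mathcal{I}$, with no need to normalize the product of representatives. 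Because the target is $\mathcal{I}\cdot(\K F_{N+1})_0^{m-1}$ rather than the explicit direct sum, the cocycle $\tau = \sigma(\zeta)\sigma(\zeta')\sigma(\zeta\zeta')^{-1}$ you had to introduce never appears in the paper's computation. In short: your induction buys a more explicit statement at every stage (the precise decomposition of $\mathcal{I}^m$ along cosets), at the cost of the cocycle bookkeeping you correctly identified as the crux; the paper's reduction buys brevity by letting the right-module structure absorb exactly that bookkeeping.
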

\begin{proof}
    If $m=0$, we have $\K\mu_N \otimes \K F_{N+1} \simeq \K(\mu_N \times F_{N+1}) \xrightarrow{\K\Sigma} \V^{\B}_N = \mathcal{F}^0 \V^{\B}_N$. \newline
    Next, if $m=1$, we have
    \[
        \mathcal{F}^1 \V^{\B}_N = \mathcal{I} \simeq \K\mu_N \otimes \ker(\varepsilon) = \K\mu_N \otimes (\K F_{N+1})_0,
    \]
    where the identification is given by Lemma \ref{IdealI} \ref{ker_id_eps}. \newline
    Now, let $m \geq 2$. Since $\K\mu_N \otimes \K F_{N+1}$ is a right $\K F_{N+1}$-module, we have that
    \begin{equation}
        \label{right_decomp}
        \K\mu_N \otimes (\K F_{N+1})_0^m = \left(\K\mu_N \otimes (\K F_{N+1})_0\right) \cdot (\K F_{N+1})_0^{m-1}.
    \end{equation}
    The composition $\K\mu_N \otimes \K F_{N+1} \simeq \K( \mu_N \times F_{N+1}) \xrightarrow{\K\Sigma} \V^{\B}_N$ is a right $\K F_{N+1}$-module isomorphism which, combined with the identification $\mathcal{I} \simeq \K\mu_N \otimes (\K F_{N+1})_0$ and equality (\ref{right_decomp}), gives us
    \[
        \K\mu_N \otimes (\K F_{N+1})_0^m \simeq \mathcal{I} \cdot (\K F_{N+1})_0^{m-1},
    \]
    where $(\K F_{N+1})_0^{m-1}$ is seen as a subset of $\K F_{N+1} = \K\ker(F_2 \to \mu_N) \subset \K F_2$. \newline
    It remains to show that $\mathcal{I} \cdot (\K F_{N+1})_0^{m-1} = \mathcal{I}^m$. First, since $(\K F_{N+1})_0 \subset \mathcal{I}$, we have $\mathcal{I} \cdot (\K F_{N+1})_0^{m-1} \subset \mathcal{I}^m$.
    Conversely, thanks to Lemma \ref{IdealI} \ref{Igenerators}, $\mathcal{I}^m$ is linearly generated by elements \index{$\Pi$}
    \[
        \Pi((\zeta_1, x_1), \dots, (\zeta_m, x_m)) := \sigma(\zeta_1) (x_1 - 1) \cdots \sigma(\zeta_m) (x_m - 1)
    \]
    with $(\zeta_1, x_1), \dots, (\zeta_m, x_m) \in \mu_N \times F_{N+1}$. Moreover, we have that
    \begin{align*}
        & \Pi((\zeta_1, x_1), \dots, (\zeta_m, x_m)) = \sigma(\zeta_1) \cdots \sigma(\zeta_m) \left(\Ad_{\sigma(\zeta_m)^{-1} \cdots \sigma(\zeta_{2})^{-1}}(x_1) - 1\right) & \\
        & \left(\Ad_{\sigma(\zeta_m)^{-1} \cdots \sigma(\zeta_{3})^{-1}}(x_2) - 1\right) \cdots \left(\Ad_{\sigma(\zeta_m)^{-1}}(x_{m-1}) - 1\right) \, (x_m - 1). &
    \end{align*}
    Next, since $F_{N+1}$ is a normal subgroup of $F_2$, we have that
    \[
        \left(\Ad_{\sigma(\zeta_m)^{-1} \cdots \sigma(\zeta_{3})^{-1}}(x_2) - 1\right) \cdots \left(\Ad_{\sigma(\zeta_m)^{-1}}(x_{m-1}) - 1\right) \, (x_m - 1) \in (\K F_{N+1})_0^{m-1}.
    \]
    In addition, thanks to Lemma \ref{IdealI} \ref{Igenerators}, we have
    \[
        \sigma(\zeta_1) \cdots \sigma(\zeta_m) \left(\Ad_{\sigma(\zeta_m)^{-1} \cdots \sigma(\zeta_{2})^{-1}}(x_1) - 1\right) \in \K F_2 \cdot (\K F_{N+1})_0. 
    \]
    Since $(\K F_{N+1})_0 \subset \mathcal{I}$, it follows that $\K F_2 \cdot (\K F_{N+1})_0 \subset \K F_2 \cdot \mathcal{I}$ and since $\mathcal{I}$ is a two-sided ideal of $\K F_2$, we have $\K F_2 \cdot \mathcal{I} = \mathcal{I}$. Therefore,
    \[
        \sigma(\zeta_1) \cdots \sigma(\zeta_m) \left(\Ad_{\sigma(\zeta_m)^{-1} \cdots \sigma(\zeta_{2})^{-1}}(x_1) - 1\right) \in \mathcal{I},
    \]
    and then $\Pi((\zeta_1, x_1), \dots, (\zeta_m, x_m)) \in \mathcal{I} \cdot (\K F_{N+1})_0^{m-1}$, thus proving the wanted inclusion.
\end{proof}

\subsubsection{The topological algebra \texorpdfstring{$\widehat{\V}^{\B}_N$}{hatVBN}}
The decreasing filtration $(\mathcal{F}^m \V^{\B}_N)_{m \in \N}$ given in Proposition-Definition \ref{VBfiltration} induces an algebra morphism $\V^{\B}_N / \mathcal{F}^{m+1} \V^{\B}_N \to \V^{\B}_N / \mathcal{F}^m \V^{\B}_N$. One defines
\begin{definition}
    We denote \index{$\widehat{\V}^{\B}_N$}
    \[
        \widehat{\V}^{\B}_N := \lim_{\longleftarrow} \V^{\B}_N/\mathcal{F}^m \V^{\B}_N
    \]
    the inverse limit of the system $\left(\V^{\B}_N/\mathcal{F}^m \V^{\B}_N, \V^{\B}_N / \mathcal{F}^{m+1} \V^{\B}_N \to \V^{\B}_N / \mathcal{F}^m \V^{\B}_N\right)$.
\end{definition}
\noindent The algebra $\widehat{\V}^{\B}_N$ is equipped with the filtration \index{$\mathcal{F}^m \widehat{\V}^{\B}_N$} \(\displaystyle \mathcal{F}^m \widehat{\V}^{\B}_N := \lim_{\longleftarrow} \mathcal{F}^m \V^{\B}_N / \mathcal{F}^{\max(m,l)} \V^{\B}_N\). When equipped with the topology defined by this filtration, $\widehat{\V}^{\B}_N$ is a complete separated topological algebra. \newline
Recall that $\K F_{N+1}$ is a group algebra equipped with a filtration given by the powers of its augmentation ideal. Let us denote \index{$\widehat{\K F_{N+1}}$}$\widehat{\K F_{N+1}}$ the completion of this group algebra with respect to this filtration.  
\begin{lemma} \ 
    \begin{enumerate}[label=(\roman*), leftmargin=*]
        \item \label{hatKSigma_circ_1} The $\K$-algebra morphism $\K\Sigma \circ (1 \otimes -) : \K F_{N+1} \to \V^{\B}_N$ gives rise to a topological $\K$-algebra morphism $\widehat{\K F_{N+1}} \to \widehat{\V}^{\B}_N$.
        \item \label{hatKSigma} The $\K$-module morphism $\K\Sigma : \K\mu_N \otimes \K F_{N+1} \to \V^{\B}_N$ gives rise to an isomorphism of topological right $\widehat{\K F_{N+1}}$-module $\widehat{\K\Sigma} : \K\mu_N \otimes \widehat{\K F_{N+1}} \to \widehat{\V}^{\B}_N$.
        \item The $\K$-algebra morphism $\widehat{\K F_{N+1}} \to \widehat{\V}^{\B}_N$ is injective. 
    \end{enumerate}
\end{lemma}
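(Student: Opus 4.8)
The plan is to deduce all three statements from Lemma \ref{filtr}, which already identifies, through $\K\Sigma$, the filtration term $\mathcal{F}^m \V^{\B}_N$ with $\K\mu_N \otimes (\K F_{N+1})_0^m$; once this is in hand, the three parts follow by passing to completions.

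For part \ref{hatKSigma_circ_1}, I would first observe that $\K\Sigma \circ (1 \otimes -)$ is nothing but the inclusion $\K F_{N+1} \hookrightarrow \V^{\B}_N$ induced by $F_{N+1} = \ker(F_2 \to \mu_N) \subset F_2$: indeed $\sigma(1) = X_0^0 = 1$, so $\K\Sigma(1 \otimes x) = \sigma(1) x = x$. This is a $\K$-algebra morphism since $F_{N+1}$ is a subgroup of $F_2$. To obtain a morphism of completions it suffices to check that it is filtered, i.e. that it maps the augmentation filtration $(\K F_{N+1})_0^m$ into $\mathcal{F}^m \V^{\B}_N = \mathcal{I}^m$. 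This is immediate from the inclusion $(\K F_{N+1})_0 \subset \mathcal{I}$ (each $x-1$ with $x \in F_{N+1}$ lies in $\mathcal{I}$ because $x \mapsto 1$ in $\mu_N$), already used in the proof of Lemma \ref{filtr}; taking $m$-fold products gives $(\K F_{N+1})_0^m \subset \mathcal{I}^m$. Passing to the inverse limits then yields the topological $\K$-algebra morphism $\widehat{\K F_{N+1}} \to \widehat{\V}^{\B}_N$.

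For part \ref{hatKSigma}, the key point is that $\K\mu_N$ is free of finite rank $N$ over $\K$, so tensoring with it commutes with the inverse limit; consequently $\K\mu_N \otimes \widehat{\K F_{N+1}}$ is canonically the completion of $\K\mu_N \otimes \K F_{N+1}$ for the filtration $\big(\K\mu_N \otimes (\K F_{N+1})_0^m\big)_m$. By Lemma \ref{filtr}, the isomorphism $\K\Sigma$ of \eqref{Def_kSigma} carries this filtration onto $(\mathcal{F}^m \V^{\B}_N)_m$, hence induces for every $m$ an isomorphism of quotients compatible with the transition maps of the two projective systems. Taking the inverse limit produces the topological isomorphism $\widehat{\K\Sigma}$. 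That it is right $\widehat{\K F_{N+1}}$-linear follows by completing the right $\K F_{N+1}$-module isomorphism $\K\Sigma$, the module structure on the target being the one defined through part \ref{hatKSigma_circ_1}.

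Finally, for part (iii), I would write the morphism of part \ref{hatKSigma_circ_1} as the composite of $\widehat{\K\Sigma}$ with the completion of $1 \otimes - : \K F_{N+1} \to \K\mu_N \otimes \K F_{N+1}$. Since the unit $1 \in \mu_N$ belongs to the canonical $\K$-basis of $\K\mu_N$, the map $1 \otimes -$ is the inclusion of a direct summand and hence stays injective after completion; composing with the isomorphism $\widehat{\K\Sigma}$ preserves injectivity. The only genuinely delicate point is the legitimacy of interchanging the tensor product with the inverse limit in part \ref{hatKSigma}, which is exactly where the finiteness of $\mu_N$ together with Lemma \ref{filtr} enter; the remaining verifications are formal.
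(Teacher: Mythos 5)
Your proposal is correct and follows essentially the same route as the paper: check filtration compatibility of $\K\Sigma \circ (1 \otimes -)$ and complete for (i), pass the filtered right $\K F_{N+1}$-module isomorphism $\K\Sigma$ to completions for (ii), and factor the map of (i) as $\widehat{\K\Sigma} \circ (1 \otimes -)$ to get injectivity for (iii). Your added justifications (the explicit inclusion $(\K F_{N+1})_0^m \subset \mathcal{I}^m$, and the use of finiteness of $\mu_N$ to commute the tensor product with the inverse limit) merely fill in details the paper leaves implicit.
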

\begin{proof} \ 
    \begin{enumerate}[label=(\roman*), leftmargin=*]
        \item This follows from the fact that the $\K$-algebra morphism $\K\Sigma \circ (1 \otimes -) : \K F_{N+1} \to \V^{\B}_N$ is compatible with filtrations, which follows from Lemma \ref{filtr}.
        \item This follows from the fact that $\K\Sigma : \K\mu_N \otimes \K F_{N+1} \to \V^{\B}_N$ is an isomorphism of filtered right module over $ \K F_{N+1}$ (see \eqref{Def_kSigma}).
        \item By \ref{hatKSigma_circ_1}, the topological $\K$-algebra morphism $\widehat{\K F_{N+1}} \to \widehat{\V}^{\B}_N$ is equal to the composition $\widehat{\K\Sigma} \circ (1 \otimes -) : \widehat{\K F_{N+1}} \to \widehat{\V}^{\B}_N$. The map $1 \otimes - : \widehat{\K F_{N+1}} \to \K\mu_N \otimes \widehat{\K F_{N+1}}$ is trivially injective and $\widehat{\K\Sigma} : \K\mu_N \otimes \widehat{\K F_{N+1}} \to \widehat{\V}^{\B}_N$ is injective by \ref{hatKSigma}. This implies that their composition is injective, implying the claim.  
    \end{enumerate}
\end{proof}

\begin{propdef}
    \label{isoViota}
    Let $\iota \in \Emb(G)$. There is a unique topological algebra isomorphism \index{$\iso^{\V, \iota}$}$\iso^{\V, \iota} : \widehat{\V}^{\B}_N \to \widehat{\V}^{\DR}_G$ given by
    \[
        X_0 \mapsto \exp\left(\frac{1}{N} e_0\right) g_{\iota}; \quad \text{and} \quad X_1 \mapsto \exp(e_1),
    \]
    where $g_{\iota} = \iota^{-1}(e^{\frac{i2\pi}{N}})$.
\end{propdef}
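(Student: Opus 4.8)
The plan is to realise $\iso^{\V,\iota}$ through the universal properties of both algebras and then to recognise it as a filtered isomorphism by exhibiting an explicit inverse.

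First I would construct the underlying morphism of algebras. Since $\V^{\B}_N = \K F_2$ is the group algebra of the free group on $X_0, X_1$, and the proposed images $\exp(\tfrac{1}{N}e_0)g_\iota$ and $\exp(e_1)$ are invertible in $\widehat{\V}^{\DR}_G$ (their inverses $g_\iota^{-1}\exp(-\tfrac{1}{N}e_0)$ and $\exp(-e_1)$ converge because $e_0$ and $e_1$ have positive degree), the assignment extends uniquely to a $\K$-algebra morphism $\iso_0 : \K F_2 \to \widehat{\V}^{\DR}_G$. To pass to completions I would check that $\iso_0$ is filtered. Composing $\iso_0$ with the projection $\widehat{\V}^{\DR}_G \to \widehat{\V}^{\DR}_G/\mathcal{F}^1\widehat{\V}^{\DR}_G = \K G$ sends $X_0 \mapsto g_\iota$ and $X_1 \mapsto 1$; under the algebra isomorphism $\K\mu_N \to \K G$ induced by $\iota^{-1}$ this is precisely the defining morphism $\K F_2 \to \K\mu_N$, whence $\iso_0(\mathcal{I}) \subseteq \mathcal{F}^1\widehat{\V}^{\DR}_G$. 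As $\widehat{\V}^{\DR}_G$ is graded, $\iso_0(\mathcal{I}^m) = \iso_0(\mathcal{I})^m \subseteq \mathcal{F}^m\widehat{\V}^{\DR}_G$, so $\iso_0$ is filtered and, the target being complete, extends to a continuous algebra morphism $\iso^{\V,\iota} : \widehat{\V}^{\B}_N \to \widehat{\V}^{\DR}_G$.

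Next I would produce an inverse. Since $X_0^N - 1$ and $X_1 - 1$ lie in $\mathcal{I} = \mathcal{F}^1\widehat{\V}^{\B}_N$, the series $\log(X_0^N)$ and $\log(X_1)$ converge in $\widehat{\V}^{\B}_N$. I would define a morphism on the generators of $\widehat{\V}^{\DR}_G$ by $e_0 \mapsto \log(X_0^N)$, $e_1 \mapsto \log(X_1)$ and $g_\iota \mapsto \exp(-\tfrac{1}{N}\log(X_0^N))\,X_0$, and verify the three defining relations: relation (iii) holds because $\log(X_0^N)$ is a series in $X_0$ and therefore commutes with $X_0$ and with $\exp(-\tfrac{1}{N}\log(X_0^N))$; moreover the image of $g_\iota$ has $N$-th power $\exp(-\log(X_0^N))X_0^N = 1$, matching $g_\iota^N = 1$, so the assignment defines a morphism on $G = \langle g_\iota\rangle$. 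The degree-one generators map into $\mathcal{F}^1$, so this extends to a continuous morphism $J : \widehat{\V}^{\DR}_G \to \widehat{\V}^{\B}_N$. A direct check on generators gives $J\circ \iso^{\V,\iota} = \mathrm{id}$ and $\iso^{\V,\iota}\circ J = \mathrm{id}$ --- for instance $\iso^{\V,\iota}(X_0)^N = \exp(e_0)$ yields $\iso^{\V,\iota}(\log(X_0^N)) = e_0$ --- so $\iso^{\V,\iota}$ is a topological isomorphism.

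Uniqueness is immediate, since $X_0^{\pm 1}$ and $X_1^{\pm 1}$ topologically generate $\widehat{\V}^{\B}_N$, so a continuous algebra morphism is determined by their images. As an alternative to the explicit inverse, one may pass to associated graded: by Lemma \ref{filtr} one has $\mathrm{gr}^m\widehat{\V}^{\B}_N \cong \K\mu_N \otimes \big((\K F_{N+1})_0^m/(\K F_{N+1})_0^{m+1}\big)$, of dimension $N(N+1)^m$ by Magnus' description of the graded of a free group algebra, which matches $\dim(\widehat{\V}^{\DR}_G)_m$; since $\mathrm{gr}(\iso^{\V,\iota})$ is onto in degrees $0$ and $1$ (the free generators $X_0^N, X_0^n X_1 X_0^{-n}$ of $F_{N+1}$ map to $e_0$ and the conjugates $g_\iota^n e_1 g_\iota^{-n} = -\beta(x_{g_\iota^n}\otimes 1)$) and both graded algebras are generated in those degrees, it is an isomorphism. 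The main obstacle is this isomorphism step: in the inverse route, checking that $J$ is well defined (convergence together with relation (iii) and the order-$N$ condition); in the graded route, pinning down $\mathrm{gr}\,\widehat{\V}^{\B}_N$ and its dimensions.
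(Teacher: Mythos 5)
Your proof is correct, and it takes a genuinely different route from the paper's. The two proofs begin identically: the universal property of $\K F_2$ (invertibility of the proposed images) yields the algebra morphism $\V^{\B}_N \to \widehat{\V}^{\DR}_G$. The paper then transports this morphism through $\K\Sigma : \K\mu_N \otimes \K F_{N+1} \to \V^{\B}_N$ and $\beta^{-1}$, regards it as a morphism of free rank-$N$ modules over the algebra morphism $\K F_{N+1} \to \KX$ sending $\widetilde{X}_0 \mapsto \exp(x_0)$ and $\widetilde{X}_{\zeta_N^n} \mapsto \exp\left(\tfrac{n}{N}x_0\right)\exp(-x_{g_{\iota}^n})\exp\left(-\tfrac{n}{N}x_0\right)$, and imports the isomorphism property from the Magnus--Quillen isomorphism $\widehat{\K F_{N+1}} \simeq \KX$ (the external citation to Quillen) together with a matching of module bases, recovering multiplicativity at the very end by density. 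You instead stay inside the two completed algebras: you check that the morphism is filtered by reducing modulo $\mathcal{F}^1$ (this is sound, since $\mathcal{I}$ is also the kernel of $\K F_2 \to \K G$ and the filtration of $\widehat{\V}^{\DR}_G$ is multiplicative, so $\iso_0(\mathcal{I}^m) \subset \mathcal{F}^m$), extend to completions, and then exhibit an explicit continuous inverse $J$ on the generators $e_0, e_1, g_{\iota}$ via logarithms; the key verifications — the defining relations (i)--(iii), notably that $\bigl(\exp\bigl(-\tfrac{1}{N}\log(X_0^N)\bigr)X_0\bigr)^N = 1$ because $\log(X_0^N)$ is a series in $X_0$, and the two composites on topological generators followed by density — are all present and correct, as is uniqueness. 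Your route is more elementary and self-contained: it needs neither the Nielsen--Schreier decomposition and Lemma \ref{filtr}, nor the Quillen input. What the paper's route buys is economy of means at the level of the whole section: the $\K\Sigma$/$\widehat{\K F_{N+1}}$ machinery it invokes is developed anyway and reused later (e.g.\ in Proposition \ref{WB} and Proposition \ref{hatM_is_quotient}), and the proof makes the compatibility of $\iso^{\V, \iota}$ with that machinery transparent, whereas your inverse $J$ serves only this one statement. Your alternative sketch via associated graded is also viable and is in fact closest in spirit to the paper (it rests on Lemma \ref{filtr} and Magnus' description of $\mathrm{gr}\,\K F_{N+1}$), but, as you yourself flag, the passage from a graded isomorphism to an isomorphism of completions is left implicit there; the explicit-inverse argument is the one that stands complete on its own.
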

\begin{proof}
    Recall that the set $\mathrm{Mor}_{\K{\text -}\alg}(\K F_2, \widehat{\V}^{\DR}_G)$ is identified with $\mathrm{Mor}_{\mathrm{grp}}\Big(F_2, \big(\widehat{\V}^{\DR}_G\big)^{\times}\Big)$. As a consequence, there is an algebra morphism $\V^{\B}_N \to \widehat{\V}^{\DR}_G$ given by
    \[
        X_0 \mapsto \exp\left(\frac{1}{N} e_0\right) g_{\iota} \quad \text{and} \quad X_1 \mapsto \exp(e_1)
    \]
    since the images of $X_0$ and $X_1$ are invertible.
    Composing the $\K$-algebra morphism $\V^{\B}_N \to \widehat{\V}^{\DR}_G$ with the $\K$-module isomorphism $\K\Sigma : \K\mu_N \otimes \K F_{N+1} \to \V^{\B}_N$ and the inverse of the $\K$-algebra isomorphism $\beta : \KX \rtimes G \to \widehat{\V}^{\DR}_G$ respectively from the left and from the right, we obtain a $\K$-module morphism
    \begin{equation}
        \label{tensor_morphism}
        \K\mu_N \otimes \K F_{N+1} \to \KX \rtimes G.
    \end{equation}
    One checks that morphism \eqref{tensor_morphism} is a right module morphism over the $\K$-algebra morphism $\K F_{N+1} \to \KX$ given by
    \[
        \widetilde{X}_0 \mapsto \exp(x_0) \text{ and } \widetilde{X}_{\zeta_N^n} \mapsto \exp\left(\frac{n}{N} x_0\right) \exp(-x_{g_{\iota}^n}) \exp\left(-\frac{n}{N} x_0\right), \text{ for } n \in \llbracket 0, N-1 \rrbracket.
    \]
    In addition, $(\zeta_N^l \otimes 1)_{l \in \llbracket 0, N-1 \rrbracket}$ and $\left(\exp\left(\frac{l}{N} x_0\right) \otimes g_{\iota}^l\right)_{l \in \llbracket 0, N-1 \rrbracket}$ are bases of $\K\mu_N \otimes \K F_{N+1}$ and $\KX \rtimes G$ respectively and the morphism \eqref{tensor_morphism} induces the following bijection between the bases
    \begin{equation}
        \label{zeta_to_expg}
        \zeta_N^l \otimes 1 \mapsto \exp\left(\frac{l}{N} x_0\right) \otimes g_{\iota}^l, \text{ for } l \in \llbracket 0, N-1 \rrbracket.
    \end{equation}
    Furthermore, there is a topological $\K$-algebra isomorphism $\widehat{\K F_{N+1}} \to \KX$ such that the following diagram
    \[\begin{tikzcd}
        \K F_{N+1} \ar{rr} \ar[hook]{rd} & & \KX \\
        & \widehat{\K F_{N+1}} \ar[]{ru} &
    \end{tikzcd}\]
    commutes, where $\K F_{N+1} \hookrightarrow \widehat{\K F_{N+1}}$ is the canonical $\K$-algebra morphism. \newline
    Indeed, such an isomorphism is obtained by composing the topological $\K$-algebra isomorphism $\widehat{\K F_{N+1}} \to \KX$ obtained from \cite[Example A2.12]{Qui} and the topological $\K$-algebra automorphism of $\widehat{\K F_{N+1}}$ given by
    \[
        \widetilde{X}_0 \mapsto \widetilde{X}_0 \text{ and } \widetilde{X}_{\zeta_{N}^n} \mapsto \Ad_{\exp\left(\frac{n}{N}\log(\widetilde{X}_0)\right)}(\widetilde{X}_{\zeta_{N}^n}^{-1}) \text{ for } n \in \llbracket 0, N-1 \rrbracket. 
    \]
    On the other hand, one checks that $\K\mu_N \otimes \widehat{\K F_{N+1}}$ is a free right $\widehat{\K F_{N+1}}$-module with basis $(\zeta_N^l \otimes 1)_{l \in \llbracket 0, N-1 \rrbracket}$ and recall that $\KX \rtimes G$ is a free right $\KX$-module with basis $\left(\exp\left(\frac{l}{N} x_0\right) \otimes g_{\iota}^l\right)_{l \in \llbracket 0, N-1 \rrbracket}$. Therefore, there is a unique module isomorphism $\K\mu_N \otimes \widehat{\K F_{N+1}} \to \KX \rtimes G$ over the $\K$-algebra isomorphism $\widehat{\K F_{N+1}} \to \KX$ which extends bijection \eqref{zeta_to_expg} between bases.
    Therefore, the restriction to the bases of the following diagram
    \begin{equation}
        \label{Diag_hatB_hatDR_tensor}
        \begin{tikzcd}
            \K\mu_N \otimes \K F_{N+1} \ar{rr} \ar[hook]{rd} & & \KX \rtimes G \\
            & \K\mu_N \otimes \widehat{\K F_{N+1}} \ar[]{ru} &
        \end{tikzcd}
    \end{equation}
    commutes, where $\K\mu_N \otimes \K F_{N+1} \to \K\mu_N \otimes \widehat{\K F_{N+1}}$ is the tensor product of the identity of $\K\mu_N$ with $\K F_{N+1} \hookrightarrow \widehat{\K F_{N+1}}$. This implies that the diagram commutes. \newline
    Next, by composing the $\K$-module isomorphism $\K\mu_N \otimes \widehat{\K F_{N+1}} \to \KX \rtimes G$ from the left and from the right with the isomorphisms $\K\Sigma^{-1} : \widehat{\V}^{\B}_N \to \K\mu_N \otimes \widehat{\K F_{N+1}}$ and $\beta : \KX \rtimes G \to \widehat{\V}^{\B}_N$ respectively, we obtain a $\K$-module isomorphism $\widehat{\V}^{\B}_N \to \widehat{\V}^{\DR}_G$. \newline
    Let us prove that this $\K$-module isomorphism is a $\K$-algebra isomorphism. It is, therefore, enough to show that it is a $\K$-algebra morphism. Let us consider the following prism
    \[\begin{tikzcd}
        \mathbf{k}\mu_N \otimes \mathbf{k}F_{N+1} \arrow[dd, "\mathbf{k}\Sigma"'] \arrow[rr] \arrow[rd, hook] & & \mathbf{k}\langle\langle X \rangle\rangle \rtimes G \arrow[dd, "\beta"] \\
        & \mathbf{k}\mu_N \otimes \widehat{\mathbf{k}F_{N+1}} \arrow[ru] \arrow[dd, "\widehat{\mathbf{k}\Sigma}"', near start] & \\
        \mathcal{V}^{\mathrm{B}}_N \arrow[rd, hook] \arrow[rr] & & \widehat{\mathcal{V}}^{\mathrm{DR}}_G \\
        & \widehat{\mathcal{V}}^{\mathrm{B}}_N \arrow[ru] &
    \end{tikzcd}\]
    The left, right and middle squares commute by definition of $\widehat{\K\Sigma}$, $\widehat{\V}^{\B}_N \to \widehat{\V}^{\DR}_G$ and $\K\mu_N \otimes \K F_{N+1} \to \KX \rtimes G$ respectively and the upper triangle is Diagram \eqref{Diag_hatB_hatDR_tensor}, so is commutative. Additionally, the arrows going from the upper triangle to the lower triangle are isomorphisms. Therefore, the lower triangle is commutative.
    The restriction of the topological $\K$-module isomorphism $\widehat{\V}^{\B}_N \to \widehat{\V}^{\DR}_G$ to $\V^{\B}_N$ is an algebra morphism, which by the density of $\V^{\B}_N$ in $\widehat{\V}^{\B}_N$ implies that $\widehat{\V}^{\B}_N \to \widehat{\V}^{\DR}_G$ is a topological $\K$-algebra morphism and therefore a topological $\K$-algebra isomorphism.
    Finally, the commutativity of the triangle also implies that the $\K$-algebra isomorphism $\widehat{\V}^{\B}_N \to \widehat{\V}^{\DR}_G$ is as announced.
\end{proof}

\begin{proposition}
    \label{eta_phi_iso}
    Let $\iota \in \Emb(G)$ and $\phi \in \Aut(G)$. We have
    \[
        \iso^{\V, \iota \circ \phi^{-1}} = \eta_{\phi}^{\V} \circ \iso^{\V, \iota},
    \]
    with $\eta^{\V}_{\phi} \in \Aut_{\K{\text -}\alg_{\mathrm{top}}}(\widehat{\V}^{\DR}_G)$ given in \eqref{etaV_def}.
\end{proposition}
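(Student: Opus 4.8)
The plan is to invoke the uniqueness clause of Proposition-Definition \ref{isoViota}. Both $\iso^{\V, \iota \circ \phi^{-1}}$ and $\eta_{\phi}^{\V} \circ \iso^{\V, \iota}$ are topological $\K$-algebra isomorphisms $\widehat{\V}^{\B}_N \to \widehat{\V}^{\DR}_G$; the second is an isomorphism because it is the composition of the isomorphism $\iso^{\V, \iota}$ with the automorphism $\eta_{\phi}^{\V}$ of \eqref{etaV_def}. Since $\widehat{\V}^{\B}_N$ is topologically generated by the images of $X_0$ and $X_1$ (this is exactly what underlies the uniqueness assertion in Proposition-Definition \ref{isoViota}), such a morphism is determined by its values on these two elements. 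It therefore suffices to check that $\eta_{\phi}^{\V} \circ \iso^{\V, \iota}$ sends $X_0$ and $X_1$ to the images prescribing $\iso^{\V, \iota \circ \phi^{-1}}$, namely $\exp(\frac{1}{N} e_0) g_{\iota \circ \phi^{-1}}$ and $\exp(e_1)$ respectively.

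The one preliminary identity I would record is $g_{\iota \circ \phi^{-1}} = \phi(g_{\iota})$. Indeed, by definition $g_{\iota \circ \phi^{-1}} = (\iota \circ \phi^{-1})^{-1}(e^{\frac{i 2\pi}{N}}) = \phi \circ \iota^{-1}(e^{\frac{i 2\pi}{N}}) = \phi(g_{\iota})$, where one uses $(\iota \circ \phi^{-1})^{-1} = \phi \circ \iota^{-1}$.

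It then remains to evaluate on the two generators, using that $\eta_{\phi}^{\V}$ is a topological $\K$-algebra morphism (hence multiplicative and compatible with $\exp$ of an element by continuity), fixing $e_0$ and $e_1$ and sending each $g \in G$ to $\phi(g)$. On $X_1$ both sides give $\exp(e_1)$, since $\eta_{\phi}^{\V}(\iso^{\V, \iota}(X_1)) = \eta_{\phi}^{\V}(\exp(e_1)) = \exp(\eta_{\phi}^{\V}(e_1)) = \exp(e_1)$. On $X_0$ one computes
\[
\eta_{\phi}^{\V}\big(\iso^{\V, \iota}(X_0)\big) = \eta_{\phi}^{\V}\Big(\exp\big(\tfrac{1}{N} e_0\big) g_{\iota}\Big) = \exp\big(\tfrac{1}{N} e_0\big) \phi(g_{\iota}) = \exp\big(\tfrac{1}{N} e_0\big) g_{\iota \circ \phi^{-1}},
\]
which matches the prescribed image of $X_0$ under $\iso^{\V, \iota \circ \phi^{-1}}$. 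By uniqueness the two isomorphisms coincide.

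This argument carries no genuine obstacle beyond the bookkeeping; it is a verification on generators. The only points demanding a moment's care are the generation statement that legitimizes the reduction to $X_0, X_1$, and the computation of $g_{\iota \circ \phi^{-1}}$, where one must take the inverse of $\iota \circ \phi^{-1}$ correctly and not confuse $\phi$ with $\phi^{-1}$.
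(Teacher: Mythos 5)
Your proposal is correct and follows essentially the same route as the paper: both reduce to checking the equality on the topological generators $X_0$ and $X_1$, using that $\eta_{\phi}^{\V}$ fixes $e_0, e_1$ and sends $g$ to $\phi(g)$, together with the identity $g_{\iota \circ \phi^{-1}} = \phi(g_{\iota})$. The only difference is cosmetic: you spell out the verification of $g_{\iota \circ \phi^{-1}} = \phi(g_{\iota})$, which the paper uses without comment.
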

\begin{proof}
    Since both sides are given as a composition of topological $\K$-algebra morphisms, let us the equality by checking on the family of topological generators:
    \[
        \mathrm{iso}^{\V, \iota \circ \phi^{-1}}(X_1) = \exp(e_1) = \eta_{\phi}^{\V} \circ \mathrm{iso}^{\V, \iota}(X_1)
    \]
    and
    \begin{align*}
        \mathrm{iso}^{\V, \iota \circ \phi^{-1}}(X_0) & = \exp\left(\frac{1}{N} e_0\right) g_{\iota \circ \phi^{-1}}
        = \exp\left(\frac{1}{N} e_0\right) \phi(g_{\iota}) = \eta_{\phi}^{\V}\left( \exp\left(\frac{1}{N} e_0\right) g_{\iota} \right) \\
        & = \eta_{\phi}^{\V} \circ \mathrm{iso}^{\V, \iota}(X_0)
    \end{align*}
\end{proof}

\subsubsection{The filtered algebra \texorpdfstring{$\W^{\B}_N$}{WBN}}
\begin{propdef}
    Let us denote \index{$\W^{\B}_N$}
    \begin{equation}
        \W^{\B}_N := \K \oplus \V^{\B}_N (X_1 - 1).
    \end{equation}
    It is a subalgebra of $\V^{\B}_N$ endowed with the filtration \index{$\mathcal{F}^m \W^{\B}_N$}
    \begin{equation}
        \label{WBFiltration}
        \mathcal{F}^m \W^{\B}_N := \W^{\B}_N \cap \mathcal{F}^m \V^{\B}_N 
    \end{equation}
    for $m \in \N$. The filtration $(\mathcal{F}^m \W^{\B}_N)_{m \in \N}$ is an algebra filtration.
\end{propdef}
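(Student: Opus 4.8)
The plan is to establish the two assertions of the statement separately: first that $\W^{\B}_N = \K \oplus \V^{\B}_N(X_1 - 1)$ is indeed a subalgebra of $\V^{\B}_N$, and then that the induced filtration is an algebra filtration, the latter being a formal consequence of the former together with Proposition-Definition \ref{VBfiltration}.

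First I would record that $\V^{\B}_N(X_1-1)$ is the \emph{left} ideal of $\V^{\B}_N$ generated by $X_1 - 1$, hence stable under left multiplication by any element of $\V^{\B}_N$. I would also note that the sum $\K \oplus \V^{\B}_N(X_1-1)$ is direct: applying the augmentation $\V^{\B}_N = \K F_2 \to \K$ (which sends $X_1 - 1$ to $0$) to an element of $\K \cap \V^{\B}_N(X_1-1)$ forces it to vanish, which justifies the $\oplus$ notation. To prove closure under multiplication, I would write two general elements as $a = \lambda + u(X_1-1)$ and $b = \mu + v(X_1-1)$ with $\lambda, \mu \in \K$ and $u, v \in \V^{\B}_N$, and expand:
\[
    ab = \lambda\mu + \bigl(\lambda v + \mu u + u(X_1-1)v\bigr)(X_1-1).
\]
Since the coefficient of $(X_1-1)$ lies in $\V^{\B}_N$, the product $ab$ lies in $\K \oplus \V^{\B}_N(X_1-1) = \W^{\B}_N$; together with $1 \in \K \subset \W^{\B}_N$ this shows $\W^{\B}_N$ is a unital subalgebra. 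Equivalently, one may invoke the left-ideal stability directly: the cross term $u(X_1-1)\cdot v(X_1-1)$ lies in $\V^{\B}_N(X_1-1)$ because $u(X_1-1) \in \V^{\B}_N$ and $\V^{\B}_N(X_1-1)$ is a left ideal.

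For the filtration, I would recall that being an algebra filtration means $\mathcal{F}^0\W^{\B}_N = \W^{\B}_N$, that the filtration is decreasing, and that $\mathcal{F}^m\W^{\B}_N \cdot \mathcal{F}^n\W^{\B}_N \subseteq \mathcal{F}^{m+n}\W^{\B}_N$. Each follows by intersecting the corresponding property of $(\mathcal{F}^m\V^{\B}_N)_m$ --- known to be an algebra filtration by Proposition-Definition \ref{VBfiltration} --- with the subalgebra $\W^{\B}_N$: for $\mathcal{F}^0$ one uses $\mathcal{F}^0\V^{\B}_N = \V^{\B}_N$; the decreasing property is inherited verbatim; and for multiplicativity, given $a \in \mathcal{F}^m\W^{\B}_N$ and $b \in \mathcal{F}^n\W^{\B}_N$ one has $ab \in \W^{\B}_N$ by the subalgebra property just proved and $ab \in \mathcal{F}^{m+n}\V^{\B}_N$ by multiplicativity in $\V^{\B}_N$, whence $ab \in \W^{\B}_N \cap \mathcal{F}^{m+n}\V^{\B}_N = \mathcal{F}^{m+n}\W^{\B}_N$.

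I expect no serious obstacle here; the argument is entirely routine and mirrors the $N=1$ construction of $\widehat{\W}^{\B}$ in \cite{EF1}. The only point requiring a moment's care is the closure under multiplication, where one must exploit that $\V^{\B}_N(X_1-1)$ is a left (not merely two-sided) ideal so that the cross term $u(X_1-1)v(X_1-1)$ reabsorbs into $\V^{\B}_N(X_1-1)$; once this is observed, the filtration statement is purely formal.
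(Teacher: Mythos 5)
Your proof is correct, and it simply fills in the routine verification that the paper dismisses with the single word ``Immediate'': the directness of the sum via the augmentation, closure under multiplication via the left-ideal property of $\V^{\B}_N(X_1-1)$, and the inheritance of the algebra-filtration axioms by intersection. There is no divergence from the paper's (non-existent) argument to report.
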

\begin{proof}
    Immediate.
\end{proof}

\begin{lemma}
    \label{FmWB}
    For $m \in \N^{\ast}$, we have
    \begin{multicols}{2}
    \begin{enumerate}[label=(\roman*), leftmargin=*]
        \item \label{FmWB_equality} $\mathcal{F}^m \W^{\B}_N = \mathcal{F}^m \V^{\B}_N \cap \V^{\B}_N (X_1 - 1)$.
        \item \label{FmWB_to_FmVB}$\mathcal{F}^m \W^{\B}_N = \mathcal{F}^{m-1} \V^{\B}_N (X_1 - 1)$.
    \end{enumerate}
    \end{multicols}
\end{lemma}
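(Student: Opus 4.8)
The plan is to establish (i) directly and then deduce (ii) from (i) by transporting the problem, through the right $\K F_{N+1}$-module isomorphism $\K\Sigma$ of Lemma~\ref{filtr}, into a statement about powers of the augmentation ideal of the free group algebra $\K F_{N+1}$. Throughout I use that $X_1 \mapsto 1$ under $F_2 \to \mu_N$, so that $X_1 - 1 \in \mathcal{I} = \mathcal{F}^1 \V^{\B}_N$ and hence $\V^{\B}_N(X_1-1) \subseteq \mathcal{I}$.

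For (i), the point is simply that the scalar summand in $\W^{\B}_N = \K \oplus \V^{\B}_N(X_1-1)$ cannot meet $\mathcal{F}^m\V^{\B}_N$ once $m \geq 1$: indeed $\mathcal{F}^m\V^{\B}_N = \mathcal{I}^m \subseteq \mathcal{I}$, and $\K \cap \mathcal{I} = \{0\}$ since a scalar $c$ maps to $c\cdot 1 \in \K\mu_N$, which vanishes only for $c = 0$. Writing $w \in \mathcal{F}^m\W^{\B}_N$ as $w = c + v(X_1-1)$ with $c \in \K$, $v \in \V^{\B}_N$, and noting that $w$ and $v(X_1-1)$ both lie in $\mathcal{I}$, I get $c \in \mathcal{I}$, hence $c=0$ and $w \in \V^{\B}_N(X_1-1)$. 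This gives $\mathcal{F}^m\W^{\B}_N \subseteq \mathcal{F}^m\V^{\B}_N \cap \V^{\B}_N(X_1-1)$, and the reverse inclusion is immediate from $\V^{\B}_N(X_1-1) \subseteq \W^{\B}_N$.

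For (ii), by (i) it suffices to prove $\mathcal{I}^m \cap \V^{\B}_N(X_1-1) = \mathcal{I}^{m-1}(X_1-1)$. The inclusion $\supseteq$ is formal: $X_1-1 \in \mathcal{I}$ gives $\mathcal{I}^{m-1}(X_1-1) \subseteq \mathcal{I}^m$, and trivially $\mathcal{I}^{m-1}(X_1-1) \subseteq \V^{\B}_N(X_1-1)$. For $\subseteq$ I would push everything through $\K\Sigma$. By Lemma~\ref{filtr} it carries $\mathcal{I}^m$ onto $\K\mu_N \otimes (\K F_{N+1})_0^m$; since $X_1 = \widetilde{X}_{\zeta_N^0}$ is one of the free generators of $F_{N+1}$ (Lemma~\ref{FN1}) and so lies in $\K F_{N+1}$, right $\K F_{N+1}$-linearity of $\K\Sigma$ carries $\V^{\B}_N(X_1-1)$ onto $\K\mu_N \otimes \big(\K F_{N+1}(X_1-1)\big)$ and $\mathcal{I}^{m-1}(X_1-1)$ onto $\K\mu_N \otimes \big((\K F_{N+1})_0^{m-1}(X_1-1)\big)$. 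As $\K\mu_N$ is free of finite rank over $\K$, the functor $\K\mu_N \otimes_{\K} -$ is exact and commutes with finite intersections of $\K$-submodules of $\K F_{N+1}$, so the claim reduces to the statement in $R := \K F_{N+1}$, with $R_0 := (\K F_{N+1})_0$ and $g := X_1$:
\[
    R_0^m \cap R(g-1) = R_0^{m-1}(g-1).
\]

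The remaining inclusion $R_0^m \cap R(g-1) \subseteq R_0^{m-1}(g-1)$ is the heart of the matter, and I would handle it with Fox's free differential calculus on the free group ring $R$. Recall that $R_0$ is free as a left $R$-module on $\{g_j - 1\}_j$, with coordinate maps the Fox derivatives $D_j : R \to R$ determined by $D_j(1)=0$, $D_j(g_i)=\delta_{ij}$ and the Leibniz rule $D_j(uv)=D_j(u)\varepsilon(v)+uD_j(v)$. A short induction on $m$, using $\varepsilon\equiv 0$ on $R_0^{m-1}$, yields $D_j(R_0^m)\subseteq R_0^{m-1}$: writing $u = ab$ with $a \in R_0$, $b\in R_0^{m-1}$, one gets $D_j(u) = aD_j(b) \in R_0\cdot R_0^{m-2} = R_0^{m-1}$. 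Now if $x = r(g-1)\in R_0^m$ with $g = g_{j_0}$, uniqueness of the Fox decomposition of $x \in R_0$ forces $r = D_{j_0}(x) \in D_{j_0}(R_0^m) \subseteq R_0^{m-1}$, whence $x \in R_0^{m-1}(g-1)$. The main obstacle is exactly this inclusion: unlike $\supseteq$ and part (i), which are purely formal, it genuinely requires the freeness of the augmentation ideal of a free group ring and the degree-lowering behaviour of the Fox derivatives; the preliminary reduction through $\K\Sigma$—which turns the non-augmentation ideal $\mathcal{I}$ into the honest augmentation ideal $R_0$—is precisely what makes this machinery available.
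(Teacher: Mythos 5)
Your proof is correct and follows essentially the same route as the paper: part (i) by showing the scalar component of $w = c + v(X_1-1)$ must vanish, and part (ii) by transporting through $\K\Sigma$ to the statement $R_0^m \cap R(g-1) = R_0^{m-1}(g-1)$ in $R = \K F_{N+1}$ and invoking the freeness of the augmentation ideal $R_0$ as an $R$-module on the shifted generators (the paper cites exactly this fact, via \cite[Proposition 6.2.6]{Wei}). Your Fox-derivative formulation --- the coordinate maps $D_j$ together with the degree-lowering lemma $D_j(R_0^m) \subseteq R_0^{m-1}$ --- is a repackaging of the paper's componentwise computation of the same intersection under the module isomorphism $(\K F_{N+1})^{\oplus (N+1)} \simeq (\K F_{N+1})_0$.
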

\begin{proof} \ 
    \begin{enumerate}[label=(\roman*), leftmargin=*]
        \item Let $m \in \N^{\ast}$. We have
        \begin{align*}
            \mathcal{F}^m \W^{\B}_N = & \mathcal{F}^m \V^{\B}_N \cap \left(\K \oplus \V^{\B}_N (X_1 - 1)\right) \\
            = & \mathcal{F}^m \V^{\B}_N \cap \left(\ker(\V^{\B}_N \to \K) \cap (\K \oplus \V^{\B}_N(X_1 - 1))\right) \\
            = & \mathcal{F}^m \V^{\B}_N \cap \V^{\B}_N (X_1 - 1),
        \end{align*}
        where the second equality follows from the inclusion $\mathcal{F}^m \V^{\B}_N \subset \ker(\V^{\B}_N \to \K)$ since
        \begin{equation}
            \label{kerVBkmuk}
            \mathcal{F}^m \V^{\B}_N = \ker(\V^{\B}_N \to \K\mu_N)^m \subset \ker(\V^{\B}_N \to \K\mu_N) \subset \ker(\V^{\B}_N \to \K),
        \end{equation}
        where the last inclusion of \eqref{kerVBkmuk} is a consequence of the fact that $\V^{\B}_N \to \K$ is the composition $\V^{\B}_N \to \K\mu_N \to \K$ (the maps with target $\K$ being the augmentation morphisms).
        The third equality follows from
        \[
            \ker(\V^{\B}_N \to \K) \cap (\K \oplus \V^{\B}_N(X_1 - 1)) = \V^{\B}_N(X_1 - 1)
        \]
        which, in turn, follows from the fact that $\ker(\V^{\B}_N \to \K) \cap (\K \oplus \V^{\B}_N(X_1 - 1))$ is the kernel of the composed map $\K \oplus \V^{\B}_N(X_1 - 1) \subset \V^{\B}_N \to \K$ which is the identity on $\K$ and takes $\V^{\B}_N(X_1 - 1)$ to $0$. Its kernel is therefore $\V^{\B}_N(X_1 - 1)$.
        \item Recall from Lemma \ref{filtr} that, for $m \in \N^{\ast}$, the $\K$-module isomorphism $\K\Sigma : \K\mu_N \otimes \K F_{N+1} \to \V^{\B}_N$ induces an isomorphism
        \begin{equation}
            \label{FmVB}
            \mathcal{F}^m \V^{\B}_N \simeq \K\mu_N \otimes (\K F_{N+1})_0^m,
        \end{equation}
        where $(\K F_{N+1})_0$ is the augmentation ideal of the group algebra $\K F_{N+1}$. The isomorphism $\K\Sigma$ also induces an isomorphism
        \[
            \V^{\B}_N (X_1 -1) \simeq \K\mu_N \otimes \K F_{N+1} (\widetilde{X}_{\zeta_N^0} - 1).   
        \]
        Thanks to Lemma \ref{FmVB} \ref{FmWB_equality}, this induces the isomorphism
        \[
            \mathcal{F}^m \W^{\B}_N \simeq \K\mu_N \otimes \left((\K F_{N+1})_0^m \cap \K F_{N+1}(\widetilde{X}_{\zeta_N^0} - 1)\right).
        \]
        Next, thanks to \cite[Proposition 6.2.6]{Wei}, we have a $\K F_{N+1}$-module isomorphism $(\K F_{N+1})^{\oplus (N+1)} \to (\K F_{N+1})_0$. This isomorphism induces the following isomorphisms
        \[
            \K F_{N+1} \oplus \{0\}^{\oplus N} \simeq \K F_{N+1} (X_{\zeta_N^0}-1) \text{ and } (\K F_{N+1})_0^{m-1})^{\oplus (N+1)} \simeq (\K F_{N+1})_0^m,
        \]
        where for the latter one we use the fact that $(\K F_{N+1})_0^m = (\K F_{N+1})_0^{m-1} (\K F_{N+1})_0$ and the fact that $(\K F_{N+1})_0^{m-1}$ is an ideal of $\K F_{N+1}$. \newline
        On the other hand, using the inclusion $(\K F_{N+1})_0^{m-1} \subset \K F_{N+1}$ and the isomorphism $\K F_{N+1} \oplus \{0\}^{\oplus N} \simeq \K F_{N+1} (X_{\zeta_N^0}-1)$, one obtains
        \[
            (\K F_{N+1})_0^{m-1})^{\oplus (N+1)} \cap \left(\K F_{N+1} \oplus \{0\}^{\oplus N}\right) = (\K F_{N+1})_0^{m-1} \oplus \{0\}^{\oplus N}.
        \]
        Finally, one checks that the isomorphism $(\K F_{N+1})^{\oplus (N+1)} \to (\K F_{N+1})_0$ induces an isomorphism
        \[
            (\K F_{N+1})_0^{m-1} \oplus \{0\}^{\oplus N} \simeq (\K F_{N+1})_0^{m-1} (\widetilde{X}_{\zeta_N^0} - 1)
        \]
        and using \eqref{FmVB} for $m$ replaced by $m-1$, together with the fact that $\K\Sigma$ intertwines right multiplication by $X_1 - 1$ on $\V^{\B}_N$ with the tensor product of the identity on $\K\mu_N$ with right multiplication by $\widetilde{X}_{\zeta_N^0} - 1$ on $\K F_{N+1}$ implies 
        \[
            \K\mu_N \otimes (\K F_{N+1})_0^{m-1} (\widetilde{X}_{\zeta_N^0} - 1) \simeq \mathcal{F}^{m-1} \V^{\B}_N (X_1 - 1),
        \]
        thus proving the wanted result. 
    \end{enumerate}
\end{proof}

\subsubsection{The topological algebra \texorpdfstring{$\widehat{\W}^{\B}_N$}{hatWBN}}
The decreasing filtration $(\mathcal{F}^m \W^{\B}_N)_{m \in \N}$ given in (\ref{WBFiltration}) induces an algebra morphism $\W^{\B}_N / \mathcal{F}^{m+1} \W^{\B}_N \to \W^{\B}_N / \mathcal{F}^m \W^{\B}_N$.
\begin{definition}
    We denote \index{$\widehat{\W}^{\B}_N$}
    \[
        \widehat{\W}^{\B}_N := \lim_{\longleftarrow} \W^{\B}_N/\mathcal{F}^m \W^{\B}_N
    \]
    the inverse limit of the projective system $(\W^{\B}_N/\mathcal{F}^m \W^{\B}_N, \W^{\B}_N / \mathcal{F}^{m+1} \W^{\B}_N \to \W^{\B}_N / \mathcal{F}^m \W^{\B}_N)$.
\end{definition}
The algebra $\widehat{\W}^{\B}_N$ equipped with filtration \index{$\mathcal{F}^m \widehat{\W}^{\B}_N$}
\[
    \mathcal{F}^m \widehat{\W}^{\B}_N := \lim_{\longleftarrow} \mathcal{F}^m \W^{\B}_N / \mathcal{F}^{\max(m,l)} \W^{\B}_N
\]
and endowed with the topology defined by this filtration is a complete separated topological algebra.

\begin{lemma}
    \label{hatW_subset_hatV}
    The $\K$-algebra inclusion $\W^{\B}_N \subset \V^{\B}_N$ gives rise to an injective morphism of topological $\K$-algebras $\widehat{\W}^{\B}_N \to \widehat{\V}^{\B}_N$.
\end{lemma}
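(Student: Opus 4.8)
The plan is to reduce the statement to the level of the defining projective systems and to invoke the left-exactness of the inverse limit functor, so that the whole argument takes place at the finite quotients $\W^{\B}_N/\mathcal{F}^m \W^{\B}_N$ and $\V^{\B}_N/\mathcal{F}^m \V^{\B}_N$.

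First I would record that the inclusion $\W^{\B}_N \hookrightarrow \V^{\B}_N$ is a morphism of \emph{filtered} $\K$-algebras: by the very definition \eqref{WBFiltration} of the filtration on $\W^{\B}_N$ one has $\mathcal{F}^m \W^{\B}_N = \W^{\B}_N \cap \mathcal{F}^m \V^{\B}_N \subseteq \mathcal{F}^m \V^{\B}_N$ for every $m \in \N$. Consequently the inclusion descends, for each $m$, to a $\K$-algebra morphism $\W^{\B}_N/\mathcal{F}^m \W^{\B}_N \to \V^{\B}_N/\mathcal{F}^m \V^{\B}_N$, and these morphisms are compatible with the structure maps of the two projective systems. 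Passing to the inverse limit then yields a $\K$-algebra morphism $\widehat{\W}^{\B}_N \to \widehat{\V}^{\B}_N$, which is continuous --- hence a morphism of topological $\K$-algebras --- because it carries $\mathcal{F}^m \widehat{\W}^{\B}_N$ into $\mathcal{F}^m \widehat{\V}^{\B}_N$.

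The core of the argument is injectivity, which I would establish at each finite level. Fix $m \in \N$. The kernel of $\W^{\B}_N/\mathcal{F}^m \W^{\B}_N \to \V^{\B}_N/\mathcal{F}^m \V^{\B}_N$ consists of the classes of those $w \in \W^{\B}_N$ lying in $\mathcal{F}^m \V^{\B}_N$, that is, of $(\W^{\B}_N \cap \mathcal{F}^m \V^{\B}_N)/\mathcal{F}^m \W^{\B}_N$, which vanishes precisely because the filtration on $\W^{\B}_N$ is the induced one. Thus every component of the morphism of projective systems is injective, and a compatible family $(w_m)_m \in \widehat{\W}^{\B}_N$ whose image in $\widehat{\V}^{\B}_N$ is zero has each $w_m$ in the kernel of an injective map, forcing $w_m = 0$ for all $m$ and hence the whole family to vanish. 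This gives the injectivity of $\widehat{\W}^{\B}_N \to \widehat{\V}^{\B}_N$.

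I expect the only genuinely delicate point to be this passage to completions. For an arbitrary injective morphism of filtered $\K$-algebras the induced maps on the finite quotients need not be injective --- their kernels measure the gap between $\mathcal{F}^m \W^{\B}_N$ and the preimage of $\mathcal{F}^m \V^{\B}_N$ --- and injectivity can then be lost after completing. Here that gap vanishes identically, since \eqref{WBFiltration} defines the filtration of $\W^{\B}_N$ as the subspace filtration $\W^{\B}_N \cap \mathcal{F}^m \V^{\B}_N$. Injectivity of all component maps then combines with the left-exactness of $\lim_{\longleftarrow}$ to yield the claim, so no analysis of the associated graded or of a $\lim^1$ term is needed and the argument remains elementary.
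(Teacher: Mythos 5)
Your proof is correct and follows the same route as the paper's (much terser) proof: compatibility of the inclusion with the filtrations yields the morphism of completions, and the fact that $\mathcal{F}^m \W^{\B}_N = \W^{\B}_N \cap \mathcal{F}^m \V^{\B}_N$ is the induced filtration forces injectivity of each map $\W^{\B}_N/\mathcal{F}^m \W^{\B}_N \to \V^{\B}_N/\mathcal{F}^m \V^{\B}_N$, whence injectivity in the limit by left-exactness of $\lim_{\longleftarrow}$. Your write-up simply makes explicit the level-wise kernel computation that the paper leaves implicit.
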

\begin{proof}
    This follows from the compatibility of the inclusion $\W^{\B}_N \subset \V^{\B}_N$ with filtrations and the fact that the filtration on $\W^{\B}_N$ is induced by that of $\V^{\B}_N$. 
\end{proof}

\begin{proposition}
    \label{WB}
    The topological algebra $\widehat{\W}^{\B}_N$ is isomorphic to the topological subalgebra $\K \oplus \widehat{\V}^{\B}_N (X_1 - 1)$ of $\widehat{\V}^{\B}_N$. 
\end{proposition}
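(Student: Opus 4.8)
The plan is to promote the injective topological algebra morphism $\widehat{\W}^{\B}_N \to \widehat{\V}^{\B}_N$ of Lemma \ref{hatW_subset_hatV} to a topological isomorphism onto the subalgebra $\K \oplus \widehat{\V}^{\B}_N(X_1-1)$, where $\widehat{\V}^{\B}_N(X_1-1)$ denotes the image of right multiplication $r_{X_1-1}$ on $\widehat{\V}^{\B}_N$. Because the filtration on $\W^{\B}_N$ is by definition induced from that of $\V^{\B}_N$, this morphism is automatically a homeomorphism onto its (closed) image; so the only real task is to identify that image as a set with $\K \oplus \widehat{\V}^{\B}_N(X_1-1)$.

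First I would record the algebraic input that drives everything: $X_1-1$ is not a right zero-divisor in $\V^{\B}_N = \K F_2$. This holds over any commutative $\K$, since $\K F_2$ is free as a right module over the Laurent subalgebra $\K[X_1^{\pm 1}]$ — a basis being a set of representatives for the left cosets of $\langle X_1\rangle$ in $F_2$ — and $X_1-1$ acts within each free summand as multiplication by $X_1-1 \in \K[X_1^{\pm 1}]$, which is a non-zero-divisor because it is monic in $\K[X_1]$. Hence $r_{X_1-1}$ is injective on $\V^{\B}_N$, and cancellation of $X_1-1$ on the right is legitimate.

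Next I would exploit Lemma \ref{FmWB}. Part \ref{FmWB_to_FmVB}, namely $\mathcal{F}^m\W^{\B}_N = \mathcal{F}^{m-1}\V^{\B}_N(X_1-1)$, gives for each $m \geq 1$ the decomposition
\[
    \W^{\B}_N/\mathcal{F}^m\W^{\B}_N = \K \oplus \bigl(\V^{\B}_N(X_1-1)/\mathcal{F}^{m-1}\V^{\B}_N(X_1-1)\bigr),
\]
and the non-zero-divisor property shows that $r_{X_1-1}$ induces an isomorphism $\V^{\B}_N/\mathcal{F}^{m-1}\V^{\B}_N \xrightarrow{\sim} \V^{\B}_N(X_1-1)/\mathcal{F}^{m-1}\V^{\B}_N(X_1-1)$ (surjectivity is clear, injectivity is exactly the cancellation). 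These isomorphisms are compatible with the projection maps, so taking the inverse limit over $m$ yields a topological isomorphism $\widehat{\W}^{\B}_N \cong \K \oplus \widehat{\V}^{\B}_N$ whose composite with the embedding of Lemma \ref{hatW_subset_hatV} is $(c,\hat v) \mapsto c + \hat v(X_1-1)$.

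Finally I would identify the image, which by the previous step is $\K\cdot 1 + \widehat{\V}^{\B}_N(X_1-1)$. That this sum is direct, hence equal to $\K \oplus \widehat{\V}^{\B}_N(X_1-1)$, follows by applying the augmentation $\widehat{\V}^{\B}_N \to \K\mu_N$: since $X_1-1 \mapsto 0$, every $\hat v(X_1-1)$ dies while $c\cdot 1 \mapsto c$, so $\K \cap \widehat{\V}^{\B}_N(X_1-1) = \{0\}$. As the composite is an algebra morphism, its image is a subalgebra, and the induced filtration guarantees it is a homeomorphism onto that image; in particular $\K \oplus \widehat{\V}^{\B}_N(X_1-1)$ is closed. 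The main obstacle is the finite-level isomorphism: one must be able to cancel $X_1-1$ on the right at each stage, which is precisely why the non-zero-divisor lemma is needed, and it is Lemma \ref{FmWB} that ensures $r_{X_1-1}$ shifts the filtration by exactly one degree so that the inverse-limit decomposition — and thus the matching of topologies — goes through.
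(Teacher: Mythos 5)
Your proof is correct, and its skeleton is the same as the paper's: both rest on the decomposition $\W^{\B}_N = \K \oplus \V^{\B}_N(X_1-1)$, on Lemma \ref{FmWB}~\ref{FmWB_to_FmVB} (the filtration shift $\mathcal{F}^m\W^{\B}_N = \mathcal{F}^{m-1}\V^{\B}_N(X_1-1)$), on right cancellation of $X_1-1$, and on passage to inverse limits. Two differences are worth recording. First, the paper justifies injectivity of $r_{X_1-1}$ on $\V^{\B}_N$ by invoking ``the integral domain status of the algebra $\V^{\B}_N$'', whereas you prove it by noting that $\K F_2$ is free as a right $\K[X_1^{\pm 1}]$-module on left-coset representatives of $\langle X_1 \rangle$ and that $X_1-1$ is a non-zero-divisor in $\K[X_1^{\pm 1}]$ (being monic); since $\K$ is an arbitrary commutative $\Q$-algebra (e.g. $\K = \Q \times \Q$), the group algebra $\K F_2$ need not be a domain, so your argument is strictly more general and in fact repairs a weak point in the paper's justification. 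Second, you merge what the paper does in two separate steps --- constructing $\widehat{\varphi} : \widehat{\V}^{\B}_N \to \widehat{\W}^{\B}_{N,+}$ (its Step 2) and $\tilde{\phi} : \widehat{\W}^{\B}_{N,+} \to \widehat{\V}^{\B}_N$ (its Step 3) to obtain the two opposite inclusions of images --- into a single compatible system of finite-level isomorphisms $\V^{\B}_N/\mathcal{F}^{m-1}\V^{\B}_N \xrightarrow{\sim} \W^{\B}_{N,+}/\mathcal{F}^m\W^{\B}_{N,+}$ whose inverse limit is taken once; this yields a shorter argument and makes the degree shift (the paper's $\widehat{\pi}$) transparent. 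The one point you assert rather than argue --- that the inclusion $\widehat{\W}^{\B}_N \hookrightarrow \widehat{\V}^{\B}_N$ is a homeomorphism onto its image because the filtration is induced --- does hold: from $\mathcal{F}^m\W^{\B}_N = \W^{\B}_N \cap \mathcal{F}^m\V^{\B}_N$ one deduces $\mathcal{F}^m\widehat{\W}^{\B}_N = \widehat{\W}^{\B}_N \cap \mathcal{F}^m\widehat{\V}^{\B}_N$ (a compatible sequence lying in $\mathcal{F}^m\widehat{\V}^{\B}_N$ has all its components of index $l \geq m$ in $\mathcal{F}^m\W^{\B}_N/\mathcal{F}^l\W^{\B}_N$), and the paper is no more explicit about this than you are.
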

\begin{proof}
    This will be done following this program:
    \begin{description}[leftmargin=1pt]
        \item[Step 1] Construction of the topological $\K$-module $\widehat{\W}^{\B}_{N, +}$. \newline
        Let us define a $\K$-submodule \index{$\W^{\B}_{N, +}$}$\W^{\B}_{N, +} := \V^{\B}_N (X_1 - 1) \subset \W^{\B}_N$. It is equipped with the filtration \index{$\mathcal{F}^m \W^{\B}_{N, +}$}
        \[
            \mathcal{F}^m \W^{\B}_{N, +} := \W^{\B}_{N, +} \cap \mathcal{F}^{m} \W^{\B}_N \text{ , for } m \in \N
        \]
        induced by the inclusion $\W^{\B}_{N, +} \subset \W^{\B}_N$. Denote as follows the associated inverse limit \index{$\widehat{\W}^{\B}_{N, +}$}
        \[
            \widehat{\W}^{\B}_{N, +} := \lim_{\longleftarrow} \W^{\B}_{N, +}/\mathcal{F}^m \W^{\B}_{N, +}.
        \]
        One checks that the $\K$-module inclusion $\W^{\B}_{N, +} \subset \W^{\B}_N$ is compatible with the filtrations, which induces a morphism of topological $\K$-modules $\widehat{\W}^{\B}_{N, +} \to \widehat{\W}^{\B}_N$. As the filtration of $\W^{\B}_{N, +}$ is induced by that of $\W^{\B}_{N}$, this morphism is injective. Thanks to Lemma \ref{hatW_subset_hatV}, we then have a chain of injections
        \begin{equation}
            \label{hat_inclusion}
            \widehat{\W}^{\B}_{N, +} \hookrightarrow \widehat{\W}^{\B}_N \hookrightarrow \widehat{\V}^{\B}_N.
        \end{equation}
        On the other hand, for any $m \in \N^{\ast}$, we have
        \[
            \mathcal{F}^m \W^{\B}_{N, +} = \W^{\B}_{N, +} \cap \mathcal{F}^{m} \W^{\B}_N = \V^{\B}_N (X_1 - 1) \cap \mathcal{F}^{m-1} \V^{\B}_N (X_1 - 1) = \mathcal{F}^{m-1} \V^{\B}_N (X_1 - 1),
        \]
        where the second equality comes from Lemma \ref{FmWB} \ref{FmWB_to_FmVB}. Therefore, for any $m \in \N$,
        \begin{equation}
            \label{FmWB_plus}
            \mathcal{F}^m \W^{\B}_{N, +} =
            \begin{cases}
                \V^{\B}_N (X_1 - 1) & \text{ if } m = 0 \\
                \mathcal{F}^{m-1} \V^{\B}_N (X_1 - 1) & \text{ otherwise}
            \end{cases}
        \end{equation}
        Moreover, let us notice that $\W^{\B}_N = \K \, \oplus \, \W^{\B}_{N, +}$. Using \eqref{FmWB_plus} we obtain
        \begin{align*}
            \mathcal{F}^0 \W^{\B}_N & = \K \, \oplus \, \mathcal{F}^0 \W^{\B}_{N, +}; \\
            \mathcal{F}^m \W^{\B}_N & = \mathcal{F}^m \W^{\B}_{N, +}, \text{ for } m \in \N^{\ast}. 
        \end{align*}
        These equalities induce the following topological $\K$-algebra isomorphism
        \begin{equation}
            \label{hatWB_hatWB+}
            \widehat{\W}^{\B}_N = \lim_{\longleftarrow} \W^{\B}_N / \mathcal{F}^m \W^{\B}_N \simeq \K \, \oplus \, \lim_{\longleftarrow} \W^{\B}_{N, +} / \mathcal{F}^m \W^{\B}_{N, +} = \K \, \oplus \, \widehat{\W}^{\B}_{N, +}.
        \end{equation}
        where, on the right, the algebra structure is defined by the conditions that $1 \in \K$ is a unit and that the inclusion $\widehat{\W}^{\B}_{N, +} \subset \K \oplus \widehat{\W}^{\B}_{N, +}$ is a non-unital algebra morphism.
        \item[Step 2] The existence of a topological $\K$-module morphism $\widehat{\varphi} : \widehat{\V}^{\B}_N \to \widehat{\W}^{\B}_{N, +}$ such that the triangle
        \begin{equation}
            \label{triangle_hat_varphi}
            \begin{tikzcd}
                \widehat{\W}^{\B}_{N, +} \ar[hook]{rrrr} & & & & \widehat{\V}^{\B}_N \\
                & & \widehat{\V}^{\B}_N \ar["\widehat{\varphi}"]{llu} \ar["- \cdot (X_1 - 1)"']{rru} & 
            \end{tikzcd}
        \end{equation}
        commutes.
        First, let us consider the $\K$-module morphism $\varphi : \V^{\B}_N \to \W^{\B}_{N, +}$ given by $v \mapsto v (X_1 - 1)$. For any $m \in \N^{\ast}$, one has
        \[
            \varphi(\mathcal{F}^m \V^{\B}_N) = \mathcal{F}^m \V^{\B}_N (X_1-1) \subset \mathcal{F}^{m-1} \V^{\B}_N (X_1-1) = \mathcal{F}^m \W^{\B}_{N, +},
        \]
        where the first equality follows from the definition of $\varphi$, the inclusion follows from decreasing character of $(\mathcal{F}^m \V^{\B}_N)_{m \in \N}$ and the last equality follows from \eqref{FmWB_plus}. One also has
        \[
            \varphi(\mathcal{F}^0 \V^{\B}_N) = \V^{\B}_N (X_1-1) = \mathcal{F}^0 \W^{\B}_{N, +}.
        \]
        This implies that the morphism $\varphi : \V^{\B}_N \to \W^{\B}_{N, +}$ is compatible with filtrations. This induces a $\K$-module morphism $\varphi_m : \V^{\B}_N / \mathcal{F}^m \V^{\B}_N \to \W^{\B}_{N, +} / \mathcal{F}^m \W^{\B}_{N, +}$. \newline
        In the following prism
        \[\begin{tikzcd}
            \W^{\B}_{N, +} \ar[hook]{rr} \ar[two heads]{dd} & & \V^{\B}_N \ar[two heads]{dd} \\
            & \V^{\B}_N \ar[two heads]{dd} \ar["\varphi"]{lu} \ar["- \cdot (X_1 - 1)"']{ru} & \\
            \W^{\B}_{N, +} / \mathcal{F}^m \W^{\B}_{N, +} \ar[hook]{rr} & & \V^{\B}_N / \mathcal{F}^m \V^{\B}_N \\
            & \V^{\B}_N / \mathcal{F}^m \V^{\B}_N \ar["\varphi_m"]{lu} \ar["- \cdot (X_1 - 1)"']{ru} &
        \end{tikzcd}\]
        the upper triangle commutes by definition of $\varphi : \V^{\B}_N \to \W^{\B}_{N, +}$ and all the squares commute thanks to the compatibility of the maps $\varphi : \V^{\B}_N \to \W^{\B}_{N, +}$, $- \cdot (X_1 - 1) : \V^{\B}_N \to \V^{\B}_N$ and $\W^{\B}_{N, +} \subset \V^{\B}_{N}$ with filtrations. Therefore, thanks to the surjectivity of the projection $\V^{\B}_N \to \V^{\B}_N / \mathcal{F}^m \V^{\B}_N$, the lower triangle commutes.
        As a consequence, the morphism $\varphi : \V^{\B}_N \to \W^{\B}_{N, +}$ induces a morphism of topological $\K$-modules $\widehat{\varphi} : \widehat{\V}^{\B}_N \to \widehat{\W}^{\B}_{N, +}$ such that Diagram \eqref{triangle_hat_varphi} commutes. Finally, the commutativity of the latter diagram implies
        \begin{align}
            \widehat{\V}^{\B}_N (X_1 - 1) & = \mathrm{Im}\big(- \cdot (X_1 - 1)\big) \notag \\
            \label{mult_subset_W+} & = \mathrm{Im}\left(\widehat{\V}^{\B}_N \xrightarrow{\widehat{\varphi}} \widehat{\W}^{\B}_{N, +} \hookrightarrow \widehat{\V}^{\B}_N\right) \subset \mathrm{Im}\left(\widehat{\W}^{\B}_{N, +} \hookrightarrow \widehat{\V}^{\B}_N\right).
        \end{align}
        \item[Step 3] The existence of a topological $\K$-module morphism $\tilde{\phi} : \widehat{\W}^{\B}_{N, +} \to \widehat{\V}^{\B}_N$ such that the triangle
        \begin{equation}
            \label{triangle_tilde_phi}
            \begin{tikzcd}
                \widehat{\W}^{\B}_{N, +} \ar[hook]{rrrr} \ar["\tilde{\phi}"']{rrd} & & & & \widehat{\V}^{\B}_N \\
                & & \widehat{\V}^{\B}_N \ar["- \cdot (X_1 - 1)"']{rru} & & 
            \end{tikzcd}
        \end{equation}
        commutes. First, one notices that $\varphi : \V^{\B}_N \to \W^{\B}_{N, +}$ is a surjective $\K$-module morphism. It is injective thanks to the integral domain status of the algebra $\V^{\B}_N$. Therefore, the map $\varphi : \V^{\B}_N \to \W^{\B}_{N, +}$ is a $\K$-module isomorphism whose inverse will be denoted $\phi : \W^{\B}_{N, +} \to \V^{\B}_N$. Thanks to \eqref{FmWB_plus}, the $\K$-module isomorphism $\phi : \W^{\B}_{N, +} \to \V^{\B}_N$ restricts to an isomorphism $\mathcal{F}^m \W^{\B}_{N, +} \to \mathcal{F}^{m-1} \V^{\B}_N$, for any $m \in \N^{\ast}$. This induces a $\K$-module isomorphism $\phi_m : \W^{\B}_{N, +} / \mathcal{F}^m \W^{\B}_{N, +} \to \V^{\B}_N / \mathcal{F}^{m-1} \V^{\B}_N$, for any $m \in \N^{\ast}$ and, via a prism similar to the one of Step 2, one checks that the following triangle
        \[\begin{tikzcd}
            \W^{\B}_{N, +} / \mathcal{F}^m \W^{\B}_{N, +} \ar[hook]{rr} \ar["\phi_m"']{rd} & & \V^{\B}_N / \mathcal{F}^m \V^{\B}_N \\
            & \V^{\B}_N / \mathcal{F}^{m-1} \V^{\B}_N  \ar["- \cdot (X_1 - 1)"']{ru} &
        \end{tikzcd}\]
        commutes where the morphism $- \cdot (X_1 - 1) : \V^{\B}_N / \mathcal{F}^{m-1} \V^{\B}_N \to \V^{\B}_N / \mathcal{F}^m \V^{\B}_N$ is well-defined thanks to the inclusion $\mathcal{F}^{m-1} \V^{\B}_N (X_1 - 1) \subset \mathcal{F}^m \V^{\B}_N$ being a consequence of \eqref{FmWB_plus}. On the other hand, we have, for any $m \in \N^{\ast}$, the following triangle
        \[\begin{tikzcd}
            & \V^{\B}_N / \mathcal{F}^m \V^{\B}_N & \\
            \V^{\B}_N / \mathcal{F}^{m-1} \V^{\B}_N \ar["- \cdot (X_1 - 1)"]{ru} & & \V^{\B}_N / \mathcal{F}^m \V^{\B}_N \ar[two heads, "- \cdot (X_1 - 1)"']{lu} \ar[two heads, "\pi_m"]{ll}
        \end{tikzcd}\]
        where $\pi_m : \V^{\B}_N / \mathcal{F}^m \V^{\B}_N \twoheadrightarrow \V^{\B}_N / \mathcal{F}^{m-1} \V^{\B}_N$ is the morphism which associates to the class of an element modulo $\mathcal{F}^m \V^{\B}_N$, its class modulo $\mathcal{F}^{m-1} \V^{\B}_N$; this is well-defined and surjective thanks to the inclusion $\mathcal{F}^m \V^{\B}_N \subset \mathcal{F}^{m-1} \V^{\B}_N$. One then checks that this triangle commutes. By linking the two triangles and doing the inverse limit we obtain the following diagram
        \[\begin{tikzcd}
            \widehat{\W}^{\B}_{N, +} \ar[hook]{rr} \ar["\widehat{\phi}"']{rd} & & \widehat{\V}^{\B}_N & \\
            & \displaystyle \lim_{\longleftarrow} \V^{\B}_N / \mathcal{F}^{m-1} \V^{\B}_N \ar[]{ru} & & \widehat{\V}^{\B}_N \ar[two heads, "- \cdot (X_1 - 1)"']{lu} \ar["\widehat{\pi}"]{ll}
        \end{tikzcd}\]
        where $\displaystyle \widehat{\pi} := \lim_{\longleftarrow} \pi_m : \widehat{\V}^{\B}_N \to \displaystyle \lim_{\longleftarrow} \V^{\B}_N / \mathcal{F}^{m-1} \V^{\B}_N$ is obtained by degree shifting and is therefore a topological $\K$-module isomorphism. Let us set $\widetilde{\phi} := \widehat{\pi}^{-1} \circ \widehat{\phi} : \widehat{\W}^{\B}_{N, +} \to \widehat{\V}^{\B}_N$. It is a topological $\K$-module morphism such that Diagram \eqref{triangle_tilde_phi} commutes. Finally, the commutativity of the latter diagram implies
        \begin{align}
           \mathrm{Im}\left(\widehat{\W}^{\B}_{N, +} \hookrightarrow \widehat{\V}^{\B}_N\right)
           & = \mathrm{Im}\big(- \cdot (X_1 - 1) \circ \tilde{\phi} \big) \notag \\
           \label{W+_subset_mult} & \subset \mathrm{Im}\big(- \cdot (X_1 - 1)\big) = \widehat{\V}^{\B}_N (X_1 - 1).
        \end{align}
    \end{description}
    Finally, combining inclusions \eqref{mult_subset_W+} and \eqref{W+_subset_mult} we obtain
    \[
        \widehat{\W}^{\B}_{N, +} \simeq \mathrm{Im}\left(\widehat{\W}^{\B}_{N, +} \hookrightarrow \widehat{\V}^{\B}_N\right) = \widehat{\V}^{\B}_N (X_1 - 1).
    \]
    In addition, thanks to \ref{hatWB_hatWB+}, the topological $\K$-algebras $\K \oplus \widehat{\W}^{\B}_{N, +}$ and $\widehat{\W}^{\B}_N$ are isomorphic. One then obtains the isomorphism of topological $\K$-algebras
    \[
        \widehat{\W}^{\B}_N \simeq \K \oplus \widehat{\V}^{\B}_N (X_1 - 1).  
    \]
\end{proof}

\begin{propdef}
    \label{isoW}
    Let $\iota \in \Emb(G)$. There exists a topological algebra isomorphism \index{$\mathrm{iso}^{\W, \iota}$}$\mathrm{iso}^{\W, \iota} : \widehat{\W}^{\B}_N \to \widehat{\W}^{\DR}_G$ such that the following diagram
    \begin{equation}
        \label{diag_isoW}
        \begin{tikzcd}
            \widehat{\W}^{\B}_N \ar["\iso^{\W, \iota}"]{rrr} \ar[hook]{d} &&& \widehat{\W}^{\DR}_G \ar[hook]{d} \\
            \widehat{\V}^{\B}_N \ar["\iso^{\V, \iota}"']{rrr} &&& \widehat{\V}^{\DR}_G
        \end{tikzcd}
    \end{equation}
    commutes.
\end{propdef}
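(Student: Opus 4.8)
The plan is to obtain $\iso^{\W, \iota}$ simply as a corestriction of the isomorphism $\iso^{\V, \iota}$ of Proposition-Definition \ref{isoViota}: I would show that $\iso^{\V, \iota}$ carries the subalgebra $\widehat{\W}^{\B}_N$ of $\widehat{\V}^{\B}_N$ bijectively onto the subalgebra $\widehat{\W}^{\DR}_G$ of $\widehat{\V}^{\DR}_G$, after which commutativity of Diagram \eqref{diag_isoW} holds by construction. By Proposition \ref{WB} I identify $\widehat{\W}^{\B}_N$ with the topological subalgebra $\K \oplus \widehat{\V}^{\B}_N (X_1 - 1)$ of $\widehat{\V}^{\B}_N$, while on the de Rham side $\widehat{\W}^{\DR}_G = \K \oplus \widehat{\V}^{\DR}_G e_1$ by definition. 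Everything thus reduces to the single equality $\iso^{\V, \iota}\big(\widehat{\V}^{\B}_N (X_1 - 1)\big) = \widehat{\V}^{\DR}_G e_1$.

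The key computation uses that $\iso^{\V, \iota}(X_1) = \exp(e_1)$, so that $\iso^{\V, \iota}(X_1 - 1) = \exp(e_1) - 1$. Writing
\[
    \exp(e_1) - 1 = u \, e_1, \qquad u := \sum_{n \geq 1} \frac{e_1^{n-1}}{n!} = 1 + \tfrac{1}{2} e_1 + \cdots,
\]
the element $u \in \widehat{\V}^{\DR}_G$ has constant term $1$ and is therefore invertible in the complete algebra $\widehat{\V}^{\DR}_G$. Since $\iso^{\V, \iota}$ is a surjective topological algebra morphism, the image of the left ideal $\widehat{\V}^{\B}_N (X_1 - 1)$ is $\widehat{\V}^{\DR}_G (\exp(e_1) - 1) = \widehat{\V}^{\DR}_G \, u \, e_1$; and as right multiplication by the invertible $u$ is a bijection of $\widehat{\V}^{\DR}_G$, one has $\widehat{\V}^{\DR}_G u = \widehat{\V}^{\DR}_G$, whence $\widehat{\V}^{\DR}_G u e_1 = \widehat{\V}^{\DR}_G e_1$. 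This yields the desired equality.

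Combining this with the matching constant terms gives $\iso^{\V, \iota}(\widehat{\W}^{\B}_N) = \K \oplus \widehat{\V}^{\DR}_G e_1 = \widehat{\W}^{\DR}_G$, so $\iso^{\V, \iota}$ restricts to a bijective algebra morphism $\iso^{\W, \iota} : \widehat{\W}^{\B}_N \to \widehat{\W}^{\DR}_G$. It then remains to observe that this restriction is a topological isomorphism: by Lemma \ref{hatW_subset_hatV} the inclusion $\widehat{\W}^{\B}_N \hookrightarrow \widehat{\V}^{\B}_N$ carries the subspace filtration, and likewise for $\widehat{\W}^{\DR}_G \hookrightarrow \widehat{\V}^{\DR}_G$; since $\iso^{\V, \iota}$ is bicontinuous, its restriction to these subspaces equipped with the induced topologies is again bicontinuous, and Diagram \eqref{diag_isoW} commutes tautologically. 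The only genuinely substantive point is the factorization $\exp(e_1) - 1 = u e_1$ with $u$ invertible, together with the left-ideal manipulation $\widehat{\V}^{\DR}_G u e_1 = \widehat{\V}^{\DR}_G e_1$; the remainder is bookkeeping about induced filtrations.
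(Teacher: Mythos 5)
Your proposal is correct and follows essentially the same route as the paper's own proof: the same key factorization $\iso^{\V, \iota}(X_1 - 1) = \exp(e_1) - 1 = u\, e_1$ with $u$ an invertible series in $e_1$, the same left-ideal manipulation $\widehat{\V}^{\DR}_G u e_1 = \widehat{\V}^{\DR}_G e_1$, and the same appeal to Proposition \ref{WB} to identify $\widehat{\W}^{\B}_N$ with $\K \oplus \widehat{\V}^{\B}_N (X_1 - 1)$ before restricting $\iso^{\V, \iota}$. Your additional remarks on bicontinuity of the restriction merely make explicit a point the paper leaves implicit.
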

\begin{proof}
    We have
    \[
        \iso^{\V, \iota}(X_1 - 1) = \exp(e_1) - 1 = u e_1,
    \]
    where $u= f(e_1)$ with $f(x)$ being the invertible formal series $\frac{\exp(x)-1}{x}$.
    Moreover, since $\mathrm{iso}^{\V, \iota} : \widehat{\V}^{\B}_N \to \widehat{\V}^{\DR}_G$ is a $\K$-algebra isomorphism, we obtain 
    \[
        \iso^{\V, \iota}\left(\widehat{\V}^{\B}_N(X_1 - 1)\right) = \iso^{\V, \iota}(\widehat{\V}^{\B}_N) \,\, \iso^{\V, \iota}(X_1 - 1) = \widehat{\V}^{\DR}_G u e_1 = \widehat{\V}^{\DR}_G e_1.
    \]
    This implies that $\mathrm{iso}^{\V, \iota}_{\big| \widehat{\V}^{\B}_N (X_1 - 1)} : \widehat{\V}^{\B}_N(X_1 - 1) \to \widehat{\V}^{\DR}_G e_1$ is a surjective $\K$-module morphism which is trivially injective, therefore, is a $\K$-module isomorphism. Taking the direct sum with $\K$, we obtain a $\K$-module isomorphism
    \[
        \K \oplus \widehat{\V}^{\B}_N(X_1 - 1) \to \K \oplus \widehat{\V}^{\DR}_G e_1,
    \]
    which is a $\K$-algebra isomorphism. Finally, thanks to Lemma \ref{WB}, this isomorphism is the wanted $\K$-algebra isomorphism $\mathrm{iso}^{\W, \iota} : \widehat{\W}^{\B}_N \to \widehat{\W}^{\DR}_G$.
\end{proof}

\begin{corollary}
    \label{eta_W_phi_iso}
    Let $\iota \in \mathrm{Emb}(G)$ and $\phi \in \Aut(G)$. We have
    \[
        \iso^{\W, \iota \circ \phi^{-1}} = \eta_{\phi}^{\W} \circ \iso^{\W, \iota},
    \]
    with $\eta^{\W}_{\phi} \in \Aut_{\K{\text -}\alg_{\mathrm{top}}}(\widehat{\W}^{\DR}_G)$ given in Lemma \ref{from_etaV}.\ref{etaW}.
\end{corollary}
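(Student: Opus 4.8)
The plan is to deduce this $\W$-level identity formally from its already-established $\V$-level analogue, Proposition \ref{eta_phi_iso}, using that both $\iso^{\W, \iota}$ and $\eta_\phi^{\W}$ are by construction compatible with the respective inclusions into $\widehat{\V}^{\B}_N$ and $\widehat{\V}^{\DR}_G$. Write $j_{\B} : \widehat{\W}^{\B}_N \hookrightarrow \widehat{\V}^{\B}_N$ (Lemma \ref{hatW_subset_hatV}) and $j_{\DR} : \widehat{\W}^{\DR}_G \hookrightarrow \widehat{\V}^{\DR}_G$ for the canonical inclusions; the latter is the inclusion of the subalgebra $\widehat{\W}^{\DR}_G = \K \oplus \widehat{\V}^{\DR}_G e_1$, hence injective. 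The key observation is that, since $j_{\DR}$ is injective, it suffices to prove the desired equality after postcomposing both sides with $j_{\DR}$. In other words, I would reduce the claim to showing that the two morphisms $\widehat{\W}^{\B}_N \to \widehat{\V}^{\DR}_G$ given by $j_{\DR} \circ \iso^{\W, \iota \circ \phi^{-1}}$ and $j_{\DR} \circ \eta_\phi^{\W} \circ \iso^{\W, \iota}$ coincide; no computation on the generators $z_{n,g}$ of $\widehat{\W}^{\DR}_G$ is then required.

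The verification is a diagram chase gluing together three commutative squares along the common inclusion $j_{\DR}$. Starting from the left-hand composite, the commutativity of Diagram \eqref{diag_isoW} for the embedding $\iota \circ \phi^{-1}$ rewrites $j_{\DR} \circ \iso^{\W, \iota \circ \phi^{-1}}$ as $\iso^{\V, \iota \circ \phi^{-1}} \circ j_{\B}$; Proposition \ref{eta_phi_iso} then replaces $\iso^{\V, \iota \circ \phi^{-1}}$ by $\eta_\phi^{\V} \circ \iso^{\V, \iota}$; commutativity of Diagram \eqref{diag_isoW} for $\iota$ turns $\iso^{\V, \iota} \circ j_{\B}$ back into $j_{\DR} \circ \iso^{\W, \iota}$; and finally commutativity of Diagram \eqref{diag_etaW_etaV} from Lemma \ref{from_etaV}.\ref{etaW} moves $\eta_\phi^{\V}$ across the inclusion, converting $\eta_\phi^{\V} \circ j_{\DR}$ into $j_{\DR} \circ \eta_\phi^{\W}$. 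Concretely, the chain of equalities of morphisms $\widehat{\W}^{\B}_N \to \widehat{\V}^{\DR}_G$ reads
\begin{align*}
    j_{\DR} \circ \iso^{\W, \iota \circ \phi^{-1}}
    &= \iso^{\V, \iota \circ \phi^{-1}} \circ j_{\B}
    = \eta_\phi^{\V} \circ \iso^{\V, \iota} \circ j_{\B} \\
    &= \eta_\phi^{\V} \circ j_{\DR} \circ \iso^{\W, \iota}
    = j_{\DR} \circ \eta_\phi^{\W} \circ \iso^{\W, \iota}.
\end{align*}
Since $j_{\DR}$ is injective, this yields $\iso^{\W, \iota \circ \phi^{-1}} = \eta_\phi^{\W} \circ \iso^{\W, \iota}$, as desired.

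I do not expect any genuine obstacle here: the statement is a purely formal consequence of Proposition \ref{eta_phi_iso} together with the two restriction diagrams, and the only substantive input is the $\V$-level identity, which has already been proved by checking it on the topological generators $X_0, X_1$. The single point deserving a line of justification is the reduction to the $\V$-level, namely that it is legitimate to test the equality after the injective inclusion $j_{\DR}$; this is immediate, and it is precisely what lets me avoid recomputing $\iso^{\W, \iota}$ and $\eta_\phi^{\W}$ directly on generators.
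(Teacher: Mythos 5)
Your proof is correct and takes essentially the same route as the paper: the paper's own proof simply states that the corollary "follows from Proposition \ref{eta_phi_iso} thanks to the commutativity of diagrams \eqref{diag_isoW} and \eqref{diag_etaW_etaV}," which is precisely your diagram chase through the inclusions. You merely make explicit the (correct and implicit) final step that the inclusion $\widehat{\W}^{\DR}_G \hookrightarrow \widehat{\V}^{\DR}_G$ is injective, so the equality can be tested after postcomposition with it.
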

\begin{proof}
    The statement follows from Proposition \ref{eta_phi_iso} thanks to the commutativity of diagrams \eqref{diag_isoW} and \eqref{diag_etaW_etaV}.
\end{proof}

\subsubsection{The filtered module \texorpdfstring{$\M^{\B}_N$}{MBN}}
\begin{propdef}
    \label{prop_def_MB}
    The quotient $\K$-module \index{$\M^{\B}_N$}
    \begin{equation}
        \M^{\B}_N := \V^{\B}_N \Big/ \V^{\B}_N (X_0 - 1)
    \end{equation}
    is a $\V^{\B}_N$-module. Moreover, if we denote \index{$1_{\B}$}$1_{\B}$ the class of $1 \in \V^{\B}_N$ in $\M^{\B}_N$, then the canonical projection \index{$- \cdot 1_{\B}$}
    \[
        - \cdot 1_{\B} : \V^{\B}_N \to \M^{\B}_N
    \]
    is a surjective $\V^{\B}_N$-module morphism and its restriction to $\W^{\B}_N$ is a $\W^{\B}_N$-module isomorphism.
\end{propdef}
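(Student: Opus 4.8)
The plan is to treat the three assertions separately: the first two are formal, and the third reduces to the freeness of the augmentation ideal of a free group algebra.

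First I would record the module structure. The subset $\V^{\B}_N(X_0-1) = \{v(X_0-1) \mid v \in \V^{\B}_N\}$ is a left ideal of $\V^{\B}_N$, since $a\,v(X_0-1) = (av)(X_0-1)$ for all $a, v \in \V^{\B}_N$; hence the quotient $\M^{\B}_N$ is a left $\V^{\B}_N$-module. By construction the action of $v$ on the class $1_{\B}$ of $1$ gives $v \cdot 1_{\B} = v + \V^{\B}_N(X_0-1)$, so the map $-\cdot 1_{\B}$ coincides with the canonical projection $\V^{\B}_N \to \M^{\B}_N$; it is therefore a surjective $\V^{\B}_N$-module morphism with kernel $\V^{\B}_N(X_0-1)$.

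The substance of the statement is the isomorphism on $\W^{\B}_N$, and here I would establish the $\K$-module decomposition $\V^{\B}_N = \W^{\B}_N \oplus \V^{\B}_N(X_0-1)$. Since $\V^{\B}_N = \K F_2$ is the group algebra of the free group $F_2 = \langle X_0, X_1 \rangle$, the augmentation ideal $\ker(\K F_2 \to \K)$ is a free left $\K F_2$-module on $\{X_0-1, X_1-1\}$ (the same freeness of the augmentation ideal of a free group algebra used in the proof of Lemma \ref{FmWB}, cf. \cite[Proposition 6.2.6]{Wei}). This yields $\ker(\K F_2 \to \K) = \V^{\B}_N(X_0-1) \oplus \V^{\B}_N(X_1-1)$, and combining with $\V^{\B}_N = \K \oplus \ker(\K F_2 \to \K)$ and $\W^{\B}_N = \K \oplus \V^{\B}_N(X_1-1)$ gives the claimed decomposition $\V^{\B}_N = \big(\K \oplus \V^{\B}_N(X_1-1)\big) \oplus \V^{\B}_N(X_0-1) = \W^{\B}_N \oplus \V^{\B}_N(X_0-1)$.

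Finally I would conclude. As $\W^{\B}_N$ is a $\K$-module complement to $\ker(-\cdot 1_{\B}) = \V^{\B}_N(X_0-1)$, the restriction $(-\cdot 1_{\B})_{|\W^{\B}_N} : \W^{\B}_N \to \M^{\B}_N$ is a bijective $\K$-module morphism. Since $\W^{\B}_N$ is a subalgebra of $\V^{\B}_N$, restriction of scalars endows $\M^{\B}_N$ with a left $\W^{\B}_N$-module structure for which this map is $\W^{\B}_N$-linear, so it is a $\W^{\B}_N$-module isomorphism. The only real obstacle is the direct sum decomposition of the second paragraph; once the freeness of the augmentation ideal is invoked, everything else is bookkeeping, and the argument is the exact Betti counterpart of the de Rham situation recorded in Diagram \eqref{diag_projections} and Lemma \ref{WM_in_algmod}.
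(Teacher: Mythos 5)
Your proposal is correct and follows essentially the same route as the paper: the paper's proof also rests on the direct sum decomposition $\V^{\B}_N = \K \oplus \V^{\B}_N(X_1-1) \oplus \V^{\B}_N(X_0-1) = \W^{\B}_N \oplus \V^{\B}_N(X_0-1)$ obtained from the freeness of the augmentation ideal of $\K F_2$ (\cite[Proposition 6.2.6]{Wei}). You merely spell out the bookkeeping (left ideal structure, identification of $-\cdot 1_{\B}$ with the projection, bijectivity on the complement) that the paper leaves implicit.
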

\begin{proof}
    This follows from the direct sum decomposition
    \[
        \V^{\B}_N = \K \oplus \V^{\B}_N (X_1 - 1) \oplus \V^{\B}_N (X_0 - 1) = \W^{\B}_N \oplus \V^{\B}_N (X_0 - 1)
    \]
    given by \cite[Proposition 6.2.6]{Wei}.
\end{proof}

\noindent\textbf{Remark}. The statement implies that $(- \cdot 1_{\B})_{|\W^{\B}_N} : \W^{\B}_N \to \M^{\B}_N$ is a $\W^{\B}_N$-module isomorphism, therefore $\M^{\B}_N$ is a free $\W^{\B}_N$-module of rank $1$.  
\begin{propdef}
    The $\K$-module $\M^{\B}_N$ is endowed with the decreasing $\K$-module filtration given by \index{$\mathcal{F}^m \M^{\B}_N$}
    \begin{equation}
        \label{MBFiltration}
        \mathcal{F}^m \M^{\B}_N := \mathcal{F}^m \W^{\B}_N \cdot 1_{\B} \text{ \, for } m \in \N.
    \end{equation}
    Moreover, the pair $\left(\M^{\B}_N, \left(\mathcal{F}^m \M^{\B}_N\right)_{m \in \N}\right)$ is a filtered module over the filtered algebra $\left(\W^{\B}_N, \left(\mathcal{F}^m \W^{\B}_N\right)_{m \in \N}\right)$.
\end{propdef}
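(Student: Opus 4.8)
The plan is to derive both assertions directly from two facts already established in the excerpt: first, from the preceding Proposition-Definition, that $- \cdot 1_{\B} : \V^{\B}_N \to \M^{\B}_N$ is a surjective $\V^{\B}_N$-module morphism (whose restriction to $\W^{\B}_N$ is a $\W^{\B}_N$-module isomorphism); and second, that $(\mathcal{F}^m \W^{\B}_N)_{m \in \N}$ is an algebra filtration. Everything then reduces to transporting the algebra filtration on $\W^{\B}_N$ along the map $- \cdot 1_{\B}$.

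First I would check that $(\mathcal{F}^m \M^{\B}_N)_{m \in \N}$ is a well-defined decreasing filtration of $\K$-submodules, which validates the definitional part of the statement. Each $\mathcal{F}^m \M^{\B}_N = \mathcal{F}^m \W^{\B}_N \cdot 1_{\B}$ is the image of the $\K$-submodule $\mathcal{F}^m \W^{\B}_N$ under the $\K$-linear map $- \cdot 1_{\B}$, hence is itself a $\K$-submodule of $\M^{\B}_N$. Since $(\mathcal{F}^m \W^{\B}_N)_{m \in \N}$ is decreasing, applying the $\K$-linear map $- \cdot 1_{\B}$ gives $\mathcal{F}^{m+1} \M^{\B}_N \subseteq \mathcal{F}^m \M^{\B}_N$ for all $m$, so the filtration is decreasing.

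Next, for the filtered-module property, I would verify the inclusion $\mathcal{F}^m \W^{\B}_N \cdot \mathcal{F}^n \M^{\B}_N \subseteq \mathcal{F}^{m+n} \M^{\B}_N$ for all $m, n \in \N$. Let $w \in \mathcal{F}^m \W^{\B}_N$ and $\mu \in \mathcal{F}^n \M^{\B}_N$; by the very definition of the filtration on $\M^{\B}_N$ there exists $w' \in \mathcal{F}^n \W^{\B}_N$ with $\mu = w' \cdot 1_{\B}$. Using that $- \cdot 1_{\B}$ is a $\V^{\B}_N$-module morphism, and hence a $\W^{\B}_N$-module morphism after restriction along $\W^{\B}_N \subseteq \V^{\B}_N$, I would rewrite $w \cdot \mu = w \cdot (w' \cdot 1_{\B}) = (w w') \cdot 1_{\B}$. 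Since $(\mathcal{F}^m \W^{\B}_N)_{m \in \N}$ is an algebra filtration, $w w' \in \mathcal{F}^{m+n} \W^{\B}_N$, whence $w \cdot \mu \in \mathcal{F}^{m+n} \W^{\B}_N \cdot 1_{\B} = \mathcal{F}^{m+n} \M^{\B}_N$, as required.

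I do not expect any genuine obstacle: the argument is essentially the observation that the module structure on $\M^{\B}_N$ is compatible with the algebra structure on $\W^{\B}_N$ through the module morphism $- \cdot 1_{\B}$. The only point requiring a little care is to invoke the module-morphism property of $- \cdot 1_{\B}$ to convert the module action $w \cdot (w' \cdot 1_{\B})$ into the product $(w w') \cdot 1_{\B}$; once this is done, the already-established algebra-filtration property of $(\mathcal{F}^m \W^{\B}_N)_{m \in \N}$ closes the argument.
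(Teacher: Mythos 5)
Your proposal is correct: the paper itself dismisses this Proposition-Definition with the single word ``Immediate,'' and your argument is precisely the routine verification behind that word --- transporting the decreasing algebra filtration of $\W^{\B}_N$ through the $\K$-linear map $- \cdot 1_{\B}$, and using that $- \cdot 1_{\B}$ is a module morphism to reduce $w \cdot (w' \cdot 1_{\B}) = (ww') \cdot 1_{\B}$ to the algebra-filtration property $\mathcal{F}^m \W^{\B}_N \cdot \mathcal{F}^n \W^{\B}_N \subseteq \mathcal{F}^{m+n} \W^{\B}_N$. No gaps; this is exactly the intended argument, merely written out in full.
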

\begin{proof}
    Immediate.
\end{proof}

\begin{lemma} \ 
    \begin{enumerate}[label=(\roman*), leftmargin=*]
        \item \label{FmWFmM} For any $m \in \N$, the $\K$-module isomorphism $- \cdot 1_{\B} : \W^{\B}_N \to \M^{\B}_N$ induces a $\K$-modules isomorphism $\mathcal{F}^m\W^{\B}_N \to \mathcal{F}^m\M^{\B}_N$.
        \item \label{FmVFmM} For any $m \in \N$, we have $\mathcal{F}^m\M^{\B}_N = \mathcal{F}^m\V^{\B}_N \cdot 1_{\B}$.
    \end{enumerate}
    \label{1B_filtrations}
\end{lemma}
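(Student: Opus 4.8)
The plan is to dispatch \ref{FmWFmM} as a formality and to concentrate all the effort on \ref{FmVFmM}. For \ref{FmWFmM}, I would simply observe that by Proposition-Definition \ref{prop_def_MB} the map $- \cdot 1_{\B} : \W^{\B}_N \to \M^{\B}_N$ is a $\K$-module isomorphism, hence its restriction to the submodule $\mathcal{F}^m \W^{\B}_N$ is injective, and by the very definition \eqref{MBFiltration} its image is $\mathcal{F}^m \W^{\B}_N \cdot 1_{\B} = \mathcal{F}^m \M^{\B}_N$. So the restriction is already the desired isomorphism, with nothing more to check.

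For \ref{FmVFmM}, one inclusion is free: since $\mathcal{F}^m \W^{\B}_N = \W^{\B}_N \cap \mathcal{F}^m \V^{\B}_N \subseteq \mathcal{F}^m \V^{\B}_N$, we get $\mathcal{F}^m \M^{\B}_N = \mathcal{F}^m \W^{\B}_N \cdot 1_{\B} \subseteq \mathcal{F}^m \V^{\B}_N \cdot 1_{\B}$, and the case $m=0$ is trivial as both sides equal $\M^{\B}_N$. The entire problem, for $m \ge 1$, is the reverse inclusion, which (recalling $\ker(- \cdot 1_{\B}) = \V^{\B}_N(X_0-1)$) amounts to $\mathcal{I}^m \subseteq \mathcal{F}^m \W^{\B}_N + \V^{\B}_N(X_0 - 1)$. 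I would prove this on the linear generators of $\mathcal{I}^m$ produced in the proof of Lemma \ref{filtr}: each such generator $\Pi = \sigma(\zeta_1)(x_1 - 1) \cdots \sigma(\zeta_m)(x_m - 1)$ rewrites there as $\Pi = X_0^K Q$ with $X_0^K = \sigma(\zeta_1) \cdots \sigma(\zeta_m)$ and $Q \in (\K F_{N+1})_0^m$. Using the free basis $(\widetilde{X}_0 - 1, (\widetilde{X}_{\zeta_N^n} - 1)_n)$ of the augmentation ideal $(\K F_{N+1})_0$ from \cite[Proposition 6.2.6]{Wei}, together with the fact that $(\K F_{N+1})_0^{m-1}$ is an ideal of $\K F_{N+1}$, I would write $Q = \sum_j Q_j (\widetilde{X}_j - 1)$ with $Q_j \in (\K F_{N+1})_0^{m-1} \subseteq \mathcal{I}^{m-1}$, so that $\Pi = \sum_j X_0^K Q_j (\widetilde{X}_j - 1)$.

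The heart of the argument is then a case analysis of these generators modulo $\V^{\B}_N(X_0 - 1)$. For $\widetilde{X}_0 = X_0^N$ one has $\widetilde{X}_0 - 1 = (X_0^{N-1} + \cdots + 1)(X_0 - 1) \in \V^{\B}_N(X_0 - 1)$, so those terms die in $\M^{\B}_N$. For $\widetilde{X}_{\zeta_N^n} = X_0^n X_1 X_0^{-n}$ one has the identity $\widetilde{X}_{\zeta_N^n} - 1 = X_0^n(X_1 - 1)X_0^{-n}$; since $v X_0^{\pm 1} \equiv v \pmod{\V^{\B}_N(X_0 - 1)}$ (because $v(X_0 - 1) \in \V^{\B}_N(X_0-1)$), the trailing $X_0^{-n}$ may be deleted, reducing the term to $X_0^K Q_j X_0^n (X_1 - 1) \cdot 1_{\B}$. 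Finally, as $\mathcal{I}$ is a two-sided ideal, $X_0^K Q_j X_0^n \in \mathcal{I}^{m-1} = \mathcal{F}^{m-1}\V^{\B}_N$, whence $X_0^K Q_j X_0^n (X_1 - 1) \in \mathcal{F}^{m-1}\V^{\B}_N (X_1 - 1) = \mathcal{F}^m \W^{\B}_N$ by Lemma \ref{FmWB}\ref{FmWB_to_FmVB}. Summing over $j$ gives $\Pi \cdot 1_{\B} \in \mathcal{F}^m \W^{\B}_N \cdot 1_{\B} = \mathcal{F}^m \M^{\B}_N$, which closes the inclusion.

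The main obstacle is conceptual rather than computational. The isomorphism $\K\Sigma$ that governs the filtration $\mathcal{F}^m \V^{\B}_N$ is an isomorphism of \emph{right} $\K F_{N+1}$-modules, whereas $\V^{\B}_N(X_0 - 1)$ is a \emph{left} ideal that is not a right $F_{N+1}$-submodule; consequently one cannot transport the splitting $\V^{\B}_N = \W^{\B}_N \oplus \V^{\B}_N(X_0 - 1)$ of Proposition-Definition \ref{prop_def_MB} through $\K\Sigma$ to conclude that it respects the filtration, and indeed $X_0 - 1 \notin \mathcal{I}$ for $N \ge 2$, so no crude degree bound is available. The device that bypasses this is to perform the bookkeeping directly on the explicit generators $\Pi$ of Lemma \ref{filtr}: pushing all $X_0$-powers to the left, discarding the $\widetilde{X}_0 - 1$ contributions that already lie in $\V^{\B}_N(X_0 - 1)$, and exploiting the triviality of right multiplication by $X_0$ in $\M^{\B}_N$ to expose an $(X_1 - 1)$ factor and land inside $\mathcal{F}^m \W^{\B}_N$ via Lemma \ref{FmWB}\ref{FmWB_to_FmVB}.
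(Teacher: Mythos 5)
Your proof is correct, but for the hard inclusion of \ref{FmVFmM} you take a genuinely different route from the paper. Both arguments reduce, as you do, to showing $\mathcal{I}^m \subset \mathcal{F}^m\W^{\B}_N + \V^{\B}_N(X_0-1)$, and both ultimately rest on the freeness of augmentation ideals (\cite[Proposition 6.2.6]{Wei}) — but applied to different groups. The paper never touches the generators $\Pi$ of Lemma \ref{filtr}: it uses the decomposition of the augmentation ideal of $\K F_2$ itself, namely $\mathcal{I} \subset \ker(\V^{\B}_N \to \K) = \V^{\B}_N(X_0-1) + \V^{\B}_N(X_1-1)$, writes $\mathcal{I}^m = \mathcal{I}^{m-1}\mathcal{I}$, distributes to get $\mathcal{I}^m \subset \mathcal{I}^{m-1}\V^{\B}_N(X_1-1) + \V^{\B}_N(X_0-1)$, and then observes that $\mathcal{I}^{m-1}\V^{\B}_N(X_1-1)$ lies in both $\mathcal{I}^m$ and $\V^{\B}_N(X_1-1)$, hence in $\mathcal{F}^m\W^{\B}_N$ by Lemma \ref{FmWB}\ref{FmWB_equality}. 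You instead work at the level of the explicit generators $\Pi = X_0^K Q$ with $Q \in (\K F_{N+1})_0^m$, expand $Q$ over the free basis $\bigl(\widetilde{X}_0 - 1, (\widetilde{X}_{\zeta_N^n}-1)_n\bigr)$ of $(\K F_{N+1})_0$, and run a case analysis: the $\widetilde{X}_0 - 1$ terms die modulo $\V^{\B}_N(X_0-1)$, while the $\widetilde{X}_{\zeta_N^n}-1$ terms, after deleting trailing powers of $X_0$ modulo the kernel, expose an $(X_1-1)$ factor and land in $\mathcal{F}^{m-1}\V^{\B}_N(X_1-1) = \mathcal{F}^m\W^{\B}_N$ by Lemma \ref{FmWB}\ref{FmWB_to_FmVB}. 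Every step of this checks out (in particular the decomposition $Q = \sum_j Q_j(\widetilde{X}_j - 1)$ with $Q_j \in (\K F_{N+1})_0^{m-1}$ is justified since $(\K F_{N+1})_0^{m-1}$ is a two-sided ideal, and the two-sidedness of $\mathcal{I}$ lets you absorb the $X_0$-powers). What the paper's argument buys is brevity and the avoidance of any case analysis or bookkeeping with $X_0$-powers; what yours buys is an element-level picture that makes visible \emph{why} the absorption works — your case analysis is, in effect, a concrete verification of the paper's one-line inclusion $\mathcal{I} \subset \V^{\B}_N(X_0-1) + \V^{\B}_N(X_1-1)$ after transport through the generators of Lemma \ref{filtr}. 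Your diagnosis of the obstacle (the left/right mismatch between $\K\Sigma$ and the left ideal $\V^{\B}_N(X_0-1)$, and the failure of $X_0 - 1 \in \mathcal{I}$ for $N \geq 2$) is also accurate and explains why neither proof can simply transport the splitting of Proposition-Definition \ref{prop_def_MB} through the filtration.
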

\begin{proof} \ 
    \begin{enumerate}[label=(\roman*), leftmargin=*]
        \item By definition of $\mathcal{F}^m\M^{\B}_N$, the isomorphism $- \cdot 1_{\B} : \W^{\B}_N \to \M^{\B}_N$ restricts to a surjective $\K$-module morphism $\mathcal{F}^m\W^{\B}_N \to \mathcal{F}^m\M^{\B}_N$. In addition, since $- \cdot 1_{\B} : \W^{\B}_N \to \M^{\B}_N$ is injective, so is the restriction $\mathcal{F}^m\W^{\B}_N \to \mathcal{F}^m\M^{\B}_N$.
        \item First, if $m = 0$, the equality follows from Proposition-Definition \ref{prop_def_MB}. \newline
        From now on, let $m \in \N^{\ast}$. Since $\mathcal{F}^m\W^{\B}_N \subset \mathcal{F}^m\V^{\B}_N$, we have that
        \[
            \mathcal{F}^m\M^{\B}_N \subset \mathcal{F}^m\V^{\B}_N \cdot 1_{\B}.
        \]
        Conversely, let us prove the inclusion $\mathcal{F}^m\V^{\B}_N \cdot 1_{\B} \subset \mathcal{F}^m\M^{\B}_N$.
        This inclusion is equivalent to
        \[
            \mathcal{F}^m\V^{\B}_N \subset \mathcal{F}^m\W^{\B}_N + \V^{\B}_N (X_0 - 1).
        \]
        Since $\mathcal{F}^m\V^{\B}_N = \mathcal{I}^m = \ker(\V^{\B}_N \to \K\mu_N)^m$ and by Lemma \ref{FmWB} \ref{FmWB_equality}, this inclusion is also equivalent to
        \begin{equation}
            \mathcal{I}^m \subset \left(\mathcal{I}^m \cap \V^{\B}_N (X_1 - 1)\right) + \V^{\B}_N (X_0 - 1).
        \end{equation}
        We have
        \[
            \mathcal{I} = \ker(\V^{\B}_N \to \K\mu_N) \subset \ker(\V^{\B}_N \to \K) = \V^{\B}_N (X_0 - 1) + \V^{\B}_N (X_1 - 1),  
        \]
        with $\ker(\V^{\B}_N \to \K)$ being the augmentation ideal of the group algebra $\V^{\B}_N = \K F_2$ and the last equality being a consequence of \cite[Proposition 6.2.6]{Wei}. \newline
        This implies
        \begin{align}
            \mathcal{I}^m = \mathcal{I}^{m-1} \mathcal{I} & \subset \mathcal{I}^{m-1} \left(\V^{\B}_N (X_1 - 1) + \V^{\B}_N (X_0 - 1)\right) \notag \\
            \label{subset_Im_direct_sum} & \subset \mathcal{I}^{m-1} \V^{\B}_N (X_1 - 1) + \V^{\B}_N (X_0 - 1).
        \end{align}
        Moreover, $\V^{\B}_N (X_1 - 1) \subset \ker(\V^{\B}_N \to \K\mu_N)$ since $X_1 - 1 \mapsto 0$ through the map $\V^{\B}_N \to \K\mu_N$. This implies
        \begin{equation}
            \label{subset_Im}
            \mathcal{I}^{m-1} \V^{\B}_N (X_1 - 1) \subset \mathcal{I}^{m-1} \mathcal{I} = \mathcal{I}^{m}.
        \end{equation}
        On the other hand, we have
        \begin{equation}
            \label{subset_VBX1}
            \mathcal{I}^{m-1} \V^{\B}_N (X_1 - 1) \subset \V^{\B}_N (X_1 - 1).
        \end{equation}
        From \eqref{subset_Im} and \eqref{subset_VBX1} we obtain
        \begin{equation}
            \label{subset_and_cap}
            \mathcal{I}^{m-1} \V^{\B}_N (X_1 - 1) \subset \left(\mathcal{I}^{m} \cap \V^{\B}_N (X_1 - 1)\right). 
        \end{equation}
        Finally, from \eqref{subset_Im_direct_sum} and \eqref{subset_and_cap}, we obtain
        \[
            \mathcal{I}^m \subset \left(\mathcal{I}^{m} \cap \V^{\B}_N (X_1 - 1)\right) + \V^{\B}_N (X_0 - 1),
        \]
        which is the wanted inclusion.
    \end{enumerate}
\end{proof}

\subsubsection{The topological module \texorpdfstring{$\widehat{\M}^{\B}_N$}{hatMBN}}
The decreasing filtration $(\mathcal{F}^m \M^{\B}_N)_{m \in \N}$ given in (\ref{MBFiltration}) induces a $\K$-module morphism $\M^{\B}_N / \mathcal{F}^{m+1} \M^{\B}_N \to \M^{\B}_N / \mathcal{F}^m \M^{\B}_N$.
\begin{definition}
    We denote \index{$\widehat{\M}^{\B}_N$}
    \[
        \widehat{\M}^{\B}_N := \lim_{\longleftarrow} \M^{\B}_N/\mathcal{F}^m \M^{\B}_N. 
    \]
    the limit of the projective system $(\M^{\B}_N/\mathcal{F}^m \M^{\B}_N, \M^{\B}_N / \mathcal{F}^{m+1} \M^{\B}_N \to \M^{\B}_N / \mathcal{F}^m \M^{\B}_N)$.
\end{definition}
\noindent The $\K$-module $\widehat{\M}^{\B}_N$ is a $\widehat{\V}^{\B}_N$-module equipped with the filtration \index{$\mathcal{F}^m \widehat{\M}^{\B}_N$}
\[
    \mathcal{F}^m \widehat{\M}^{\B}_N := \lim_{\longleftarrow} \mathcal{F}^m \M^{\B}_N / \mathcal{F}^{\max(m,l)} \M^{\B}_N, \text{ for } m \in \N.
\]
When equipped with the topology defined by this filtration, $\widehat{\M}^{\B}_N$ is a complete separated topological $\K$-module.

\begin{lemma} \ 
    \begin{enumerate}[label=(\roman*), leftmargin=*]
        \item \label{hatVhatM} The surjective $\K$-module morphism $- \cdot 1_{\B} : \V^{\B}_N \to \M^{\B}_N$ induces a topological surjective $\K$-module morphism \index{$\widehat{- \cdot 1_{\B}}$}$\widehat{- \cdot 1_{\B}} : \widehat{\V}^{\B}_N \to \widehat{\M}^{\B}_N$.
        \item \label{hatWhatM} The $\K$-module isomorphism $- \cdot 1_{\B} : \W^{\B}_N \to \M^{\B}_N$ induces a topological $\K$-module isomorphism $\widehat{- \cdot 1_{\B}} : \widehat{\W}^{\B}_N \to \widehat{\M}^{\B}_N$. 
    \end{enumerate}
    \label{hat1B}
\end{lemma}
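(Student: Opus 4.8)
The plan is to deduce both statements by passing the map $- \cdot 1_{\B}$ and its restriction to $\W^{\B}_N$ to inverse limits, using the two filtration identities of Lemma \ref{1B_filtrations} together with the free rank one structure recorded in Proposition-Definition \ref{prop_def_MB}. Since the $\V$- and $\W$-versions of $- \cdot 1_{\B}$ are induced by one and the same underlying map, the whole argument is essentially a matter of checking filtration compatibility and then invoking elementary properties of inverse limits; I would deliberately arrange the logic so that the surjectivity in \ref{hatVhatM} is extracted from the isomorphism in \ref{hatWhatM}, avoiding any $\lim^1$ computation.

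For the existence of the induced map in \ref{hatVhatM}, I would note that Lemma \ref{1B_filtrations}.\ref{FmVFmM} gives $\mathcal{F}^m\M^{\B}_N = \mathcal{F}^m\V^{\B}_N \cdot 1_{\B}$, so $- \cdot 1_{\B}$ carries $\mathcal{F}^m\V^{\B}_N$ onto, and in particular into, $\mathcal{F}^m\M^{\B}_N$ for every $m$. Hence it is compatible with the filtrations and descends to $\K$-module morphisms $\V^{\B}_N/\mathcal{F}^m\V^{\B}_N \to \M^{\B}_N/\mathcal{F}^m\M^{\B}_N$ commuting with the transition maps; taking the inverse limit produces the continuous morphism $\widehat{- \cdot 1_{\B}} : \widehat{\V}^{\B}_N \to \widehat{\M}^{\B}_N$.

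For \ref{hatWhatM}, I would combine the fact that $- \cdot 1_{\B} : \W^{\B}_N \to \M^{\B}_N$ is a $\K$-module isomorphism (Proposition-Definition \ref{prop_def_MB}) with Lemma \ref{1B_filtrations}.\ref{FmWFmM}, which says this isomorphism restricts to isomorphisms $\mathcal{F}^m\W^{\B}_N \to \mathcal{F}^m\M^{\B}_N$. A module isomorphism carrying a submodule isomorphically onto a submodule induces an isomorphism on the quotients, so one obtains isomorphisms $\W^{\B}_N/\mathcal{F}^m\W^{\B}_N \xrightarrow{\sim} \M^{\B}_N/\mathcal{F}^m\M^{\B}_N$ at each level; passing to the inverse limit yields the topological isomorphism $\widehat{- \cdot 1_{\B}} : \widehat{\W}^{\B}_N \xrightarrow{\sim} \widehat{\M}^{\B}_N$.

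Finally, I would recover the surjectivity claimed in \ref{hatVhatM} directly from \ref{hatWhatM}: the composite of the inclusion $\widehat{\W}^{\B}_N \hookrightarrow \widehat{\V}^{\B}_N$ of Lemma \ref{hatW_subset_hatV} with $\widehat{- \cdot 1_{\B}} : \widehat{\V}^{\B}_N \to \widehat{\M}^{\B}_N$ coincides, at each finite level and hence in the limit, with the restriction $\widehat{- \cdot 1_{\B}}|_{\widehat{\W}^{\B}_N}$, which is the isomorphism of \ref{hatWhatM}; since a surjective composite forces its last factor to be surjective, $\widehat{- \cdot 1_{\B}} : \widehat{\V}^{\B}_N \to \widehat{\M}^{\B}_N$ is onto. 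The only point requiring genuine care is this identification of the restricted limit map with the $\W$-version, which holds because $\mathcal{F}^m\W^{\B}_N = \W^{\B}_N \cap \mathcal{F}^m\V^{\B}_N$ makes the finite-level maps restrict correctly. This is the main obstacle only in the sense that the alternative direct route would demand checking a Mittag--Leffler condition on the kernels of the level maps; the factorization through \ref{hatWhatM} bypasses that issue entirely.
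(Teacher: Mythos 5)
Your proof is correct, and it rests on the same two inputs as the paper's own (very terse) proof: Lemma \ref{1B_filtrations}.\ref{FmVFmM} for the $\V$-version and Lemma \ref{1B_filtrations}.\ref{FmWFmM} together with Proposition-Definition \ref{prop_def_MB} for the $\W$-version. The paper simply cites these facts and stops, whereas you reorganize the logic at one genuine point: the surjectivity of $\widehat{- \cdot 1_{\B}} : \widehat{\V}^{\B}_N \to \widehat{\M}^{\B}_N$. Read literally, the paper deduces it from the strictness $\mathcal{F}^m\M^{\B}_N = \mathcal{F}^m\V^{\B}_N \cdot 1_{\B}$, i.e.\ from levelwise surjectivity; but an inverse limit of surjections need not be surjective, so making that route rigorous requires an extra check (the kernels $\left(\V^{\B}_N(X_0-1)+\mathcal{F}^m\V^{\B}_N\right)/\mathcal{F}^m\V^{\B}_N$ have surjective transition maps, equivalently an inductive lifting or Mittag-Leffler-type argument). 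Your alternative --- proving part \ref{hatWhatM} first and then obtaining surjectivity of the $\V$-version from the factorization $\widehat{\W}^{\B}_N \hookrightarrow \widehat{\V}^{\B}_N \to \widehat{\M}^{\B}_N$, which agrees with the isomorphism of \ref{hatWhatM} because it does so at every finite level (this is where $\mathcal{F}^m\W^{\B}_N = \W^{\B}_N \cap \mathcal{F}^m\V^{\B}_N$ enters) and inverse limits commute with composition --- sidesteps that issue entirely. So your proof is not just correct: it supplies, at the cost of a slightly longer argument, precisely the detail that the paper's one-line citation leaves implicit, and it is arguably the cleanest way to make that one-liner rigorous.
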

\begin{proof} \ 
    \begin{enumerate}[label=(\roman*), leftmargin=*]
        \item By definition of $\widehat{\V}^{\B}_N$ and $\widehat{\M}^{\B}_N$, this follows from Lemma \ref{1B_filtrations} \ref{FmVFmM}.
        \item By definition of $\widehat{\M}^{\B}_N$, this follows from Lemma \ref{1B_filtrations} \ref{FmWFmM}.  
    \end{enumerate}
\end{proof}

\begin{corollary} \ 
    \begin{enumerate}[label=(\roman*), leftmargin=*]
        \item \label{hatVhatM_algmod} The pair $(\widehat{\V}^{\B}_N, \widehat{\M}^{\B}_N)$ is an object in the category $\K{\text -}\alg{\text -}\Mod_{\mathrm{top}}$.
        \item \label{hatWhatM_algmod} The pair $(\widehat{\W}^{\B}_N, \widehat{\M}^{\B}_N)$ is an object in the category $\K{\text -}\alg{\text -}\Mod_{\mathrm{top}}$. Moreover, $\widehat{\M}^{\B}_N$ is a free $\widehat{\W}^{\B}_N$-module of rank $1$.
    \end{enumerate}
    \label{algmod_B}
\end{corollary}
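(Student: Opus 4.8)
The plan is to mirror the de Rham argument of Lemma \ref{WM_in_algmod}, assembling the filtration facts already established.

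For \ref{hatVhatM_algmod}, I would start from Proposition-Definition \ref{prop_def_MB}, which makes $\M^{\B}_N$ a $\V^{\B}_N$-module via the surjection $- \cdot 1_{\B}$, together with the fact that $\widehat{\V}^{\B}_N$ is a complete separated topological algebra and $\widehat{\M}^{\B}_N$ a complete separated topological $\K$-module. The only genuine point to verify is that the action is continuous for the filtration topologies, i.e. that $\mathcal{F}^i \V^{\B}_N \cdot \mathcal{F}^j \M^{\B}_N \subset \mathcal{F}^{i+j} \M^{\B}_N$. Using Lemma \ref{1B_filtrations}.\ref{FmVFmM} I would compute
\[
    \mathcal{F}^i \V^{\B}_N \cdot \mathcal{F}^j \M^{\B}_N = \mathcal{F}^i \V^{\B}_N \cdot \mathcal{F}^j \V^{\B}_N \cdot 1_{\B} \subset \mathcal{F}^{i+j} \V^{\B}_N \cdot 1_{\B} = \mathcal{F}^{i+j} \M^{\B}_N,
\]
the inclusion being the algebra-filtration property of Proposition-Definition \ref{VBfiltration}. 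Passing to the inverse limit, the action extends to a continuous map $\widehat{\V}^{\B}_N \times \widehat{\M}^{\B}_N \to \widehat{\M}^{\B}_N$ compatible with the surjection $\widehat{- \cdot 1_{\B}}$ of Lemma \ref{hat1B}.\ref{hatVhatM}, so that $\widehat{\M}^{\B}_N$ is a topological $\widehat{\V}^{\B}_N$-module and $(\widehat{\V}^{\B}_N, \widehat{\M}^{\B}_N)$ is an object of $\K{\text -}\alg{\text -}\Mod_{\mathrm{top}}$.

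For \ref{hatWhatM_algmod}, the first assertion follows by pulling back the $\K$-algebra-module of \ref{hatVhatM_algmod} along the topological $\K$-algebra injection $\widehat{\W}^{\B}_N \hookrightarrow \widehat{\V}^{\B}_N$ of Lemma \ref{hatW_subset_hatV}, exactly as in the de Rham case of Lemma \ref{WM_in_algmod}. For the freeness of rank $1$, I would observe that the topological $\K$-module isomorphism $\widehat{- \cdot 1_{\B}} : \widehat{\W}^{\B}_N \to \widehat{\M}^{\B}_N$ of Lemma \ref{hat1B}.\ref{hatWhatM} is in fact $\widehat{\W}^{\B}_N$-linear: for $w, w' \in \widehat{\W}^{\B}_N$ one has $(ww') \cdot 1_{\B} = w \cdot (w' \cdot 1_{\B})$, so it intertwines the left regular action on $\widehat{\W}^{\B}_N$ with the action on $\widehat{\M}^{\B}_N$. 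Being a bijective $\widehat{\W}^{\B}_N$-module morphism, it exhibits $\widehat{\M}^{\B}_N$ as a free $\widehat{\W}^{\B}_N$-module of rank $1$ with generator $1_{\B}$.

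Since every needed filtration-compatibility and isomorphism has already been proved in Lemmas \ref{1B_filtrations}, \ref{hat1B} and \ref{hatW_subset_hatV}, I do not expect a serious obstacle here; the only care required is to record that $\widehat{- \cdot 1_{\B}}$ is module-linear and not merely $\K$-linear, since it is this observation that upgrades the $\K$-module isomorphism of Lemma \ref{hat1B}.\ref{hatWhatM} to the desired freeness statement.
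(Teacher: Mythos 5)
Your proposal is correct and takes essentially the same route as the paper, whose entire proof is that the corollary ``immediately follows from Lemma \ref{hat1B}'': the filtration-compatibility computation $\mathcal{F}^i \V^{\B}_N \cdot \mathcal{F}^j \M^{\B}_N \subset \mathcal{F}^{i+j}\M^{\B}_N$, the pullback along $\widehat{\W}^{\B}_N \hookrightarrow \widehat{\V}^{\B}_N$, and the observation that $\widehat{- \cdot 1_{\B}}$ is $\widehat{\W}^{\B}_N$-linear are exactly the details the paper leaves implicit. No gap; your write-up simply makes the ``immediate'' step explicit.
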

\begin{proof}
    It immediately follows from Lemma \ref{hat1B}.
\end{proof}

\begin{proposition}
    \label{hatM_is_quotient}
    The topological $\K$-module morphism $\widehat{- \cdot 1_{\B}} : \widehat{\V}^{\B}_N \to \widehat{\M}^{\B}_N$ induces an isomorphism $\widehat{\V}^{\B}_N \big/ \widehat{\V}^{\B}_N (X_0 - 1) \to \widehat{\M}^{\B}_N$ of topological $\K$-modules.
\end{proposition}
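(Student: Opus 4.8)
The plan is to identify the kernel of the surjection $\widehat{- \cdot 1_{\B}} : \widehat{\V}^{\B}_N \to \widehat{\M}^{\B}_N$ (Lemma \ref{hat1B}.\ref{hatVhatM}) with the closed submodule $\widehat{\V}^{\B}_N(X_0-1)$, and to check that the resulting continuous bijection is a homeomorphism. I would deduce both from a single structural input, namely a \emph{topological direct sum decomposition}
\[
    \widehat{\V}^{\B}_N = \widehat{\W}^{\B}_N \,\oplus\, \widehat{\V}^{\B}_N(X_0-1),
\]
combined with the fact that $\widehat{- \cdot 1_{\B}}$ annihilates $\widehat{\V}^{\B}_N(X_0-1)$ (since $(X_0-1)\cdot 1_{\B}=0$ in $\M^{\B}_N$, this persists after completion by continuity) while its restriction to $\widehat{\W}^{\B}_N$ is the isomorphism of Lemma \ref{hat1B}.\ref{hatWhatM}. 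Granting the decomposition, the kernel is forced to be the complementary summand $\widehat{\V}^{\B}_N(X_0-1)$, and the induced filtered map $\widehat{\V}^{\B}_N/\widehat{\V}^{\B}_N(X_0-1)\to\widehat{\M}^{\B}_N$ is identified with the filtered isomorphism $\widehat{\W}^{\B}_N\xrightarrow{\sim}\widehat{\M}^{\B}_N$, hence is an isomorphism of topological $\K$-modules.

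I would obtain the decomposition by completing the algebraic splitting $\V^{\B}_N=\W^{\B}_N\oplus\V^{\B}_N(X_0-1)$ of Proposition-Definition \ref{prop_def_MB}. The crux is to promote it to a \emph{filtered} splitting
\[
    \mathcal{F}^m\V^{\B}_N = \mathcal{F}^m\W^{\B}_N \,\oplus\, \big(\mathcal{F}^m\V^{\B}_N\cap\V^{\B}_N(X_0-1)\big) \qquad (m\in\N),
\]
after which applying $\displaystyle\lim_{\longleftarrow}$ — which commutes with finite direct sums — yields the topological direct sum, with $\widehat{\V}^{\B}_N(X_0-1)$ realized as the completion of $\V^{\B}_N(X_0-1)$ for the induced filtration, i.e. as its closure inside $\widehat{\V}^{\B}_N$. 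For $m=0$ the filtered splitting is the decomposition itself; for $m\geq 1$ I would argue as follows. Given $v\in\mathcal{F}^m\V^{\B}_N=\mathcal{I}^m$, the inclusion $\mathcal{I}^m\subseteq(\mathcal{I}^m\cap\V^{\B}_N(X_1-1))+\V^{\B}_N(X_0-1)$ already established in the proof of Lemma \ref{1B_filtrations}.\ref{FmVFmM} (via inclusions \eqref{subset_Im_direct_sum} and \eqref{subset_and_cap}) lets me write $v=a+b$ with $a\in\mathcal{I}^m\cap\V^{\B}_N(X_1-1)$ and $b\in\V^{\B}_N(X_0-1)$. Uniqueness of the triple decomposition $\V^{\B}_N=\K\oplus\V^{\B}_N(X_1-1)\oplus\V^{\B}_N(X_0-1)$ then forces $a\in\mathcal{F}^m\W^{\B}_N$ (Lemma \ref{FmWB}.\ref{FmWB_equality}) and $b=v-a\in\mathcal{I}^m\cap\V^{\B}_N(X_0-1)$, while the two summands meet only in $0$; this is the filtered splitting.

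I expect the genuine obstacle to lie in the precise meaning of the symbol $\widehat{\V}^{\B}_N(X_0-1)$ and in the topological (as opposed to merely algebraic) nature of the isomorphism. Unlike the $X_1$ situation of Proposition \ref{WB}, where $\iso^{\V,\iota}$ sends $X_1-1$ to $u e_1$ with $u$ invertible, one has $\iso^{\V,\iota}(X_0-1)=\exp(\frac{1}{N} e_0)g_{\iota}-1$ (Proposition-Definition \ref{isoViota}), whose degree-zero part $g_{\iota}-1$ is a zero-divisor in the degree-zero subalgebra $\K G$ of $\widehat{\V}^{\DR}_G$. Consequently right multiplication by $X_0-1$ on $\widehat{\V}^{\B}_N$ is \emph{not} strict for the filtration, so its set-theoretic image need not be closed; the symbol $\widehat{\V}^{\B}_N(X_0-1)$ must therefore be understood as the closure of $\V^{\B}_N(X_0-1)$ (equivalently, the complementary summand produced above), and it is exactly this closed submodule that the filtered-splitting argument identifies with the kernel. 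Establishing the compatibility of the algebraic splitting with the $\mathcal{I}$-adic filtration is thus the technical heart; once it is in place, the separatedness and completeness of $\widehat{\V}^{\B}_N/\widehat{\V}^{\B}_N(X_0-1)$ and the homeomorphism property of the comparison map follow formally from Lemma \ref{hat1B}.\ref{hatWhatM} and Lemma \ref{1B_filtrations}.\ref{FmWFmM}.
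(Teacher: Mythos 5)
Your filtered-splitting argument is sound as far as it goes, and it is a genuinely different route from the paper's: the splitting $\mathcal{F}^m \V^{\B}_N = \mathcal{F}^m \W^{\B}_N \oplus \left(\mathcal{F}^m \V^{\B}_N \cap \V^{\B}_N(X_0-1)\right)$ does follow from the inclusion $\mathcal{I}^m \subset \left(\mathcal{I}^m \cap \V^{\B}_N(X_1-1)\right) + \V^{\B}_N(X_0-1)$ proved inside Lemma \ref{1B_filtrations} together with the triple decomposition of Proposition-Definition \ref{prop_def_MB}, and completing it does yield $\widehat{\V}^{\B}_N = \widehat{\W}^{\B}_N \oplus \overline{\V^{\B}_N(X_0-1)}$, hence a topological isomorphism $\widehat{\V}^{\B}_N \big/ \overline{\V^{\B}_N(X_0-1)} \to \widehat{\M}^{\B}_N$. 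The paper instead transports the whole problem to $\K F_{N+1}$ via the isomorphism $P_0$ and Lemma \ref{linear_lemma}, and compares the cokernel of the completed map $- \cdot (X_0-1)$ with the completion of its cokernel via a $\varprojlim^1$ argument.

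The gap is that what you prove is not the stated proposition. In the paper, $\widehat{\V}^{\B}_N(X_0-1)$ means the set-theoretic image of right multiplication by $X_0 - 1$ on $\widehat{\V}^{\B}_N$: Step 5 of the paper's proof writes $\widehat{\V}^{\B}_N \big/ \widehat{\V}^{\B}_N(X_0-1) = \mathrm{coker}\big(- \cdot (X_0-1)\big)$, and the application in Proposition-Definition \ref{isoMiota} uses exactly this reading. By continuity one only gets $\widehat{\V}^{\B}_N(X_0-1) \subset \overline{\V^{\B}_N(X_0-1)}$, so your argument shows that the induced map out of $\widehat{\V}^{\B}_N \big/ \widehat{\V}^{\B}_N(X_0-1)$ is well defined and surjective, but not that it is injective. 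Declaring that the symbol ``must be understood as the closure'' because the image ``need not be closed'' replaces the statement by a weaker one, and the justification is wrong in substance: the image \emph{is} closed here, and proving this is precisely the hard content of the paper's Step 6 (injectivity of the maps $\K F_{N+1}/(\K F_{N+1})_0^{m-1} \to \K F_{N+1}/(\K F_{N+1})_0^{m}$ induced by $- \cdot (\widetilde{X}_0 - 1)$, proved via the associated graded, plus $\varprojlim^1$-vanishing). Concretely, what your proof is missing is the shifted strictness statement: if $v(X_0-1) \in \mathcal{I}^m$ then $v \in \mathcal{I}^{m-1}$. This follows from the displayed injectivity by writing $v = \sum_{i=0}^{N-1} v_i X_0^i$ with $v_i \in \K F_{N+1}$, observing that all $v_i$ are congruent modulo $(\K F_{N+1})_0^m$ and that $v_0(\widetilde{X}_0 - 1) \in (\K F_{N+1})_0^m$. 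Granting it, the inverse of the injection $- \cdot (X_0-1) : \V^{\B}_N \to \V^{\B}_N(X_0-1)$ is filtered up to a shift by $1$, so any Cauchy sequence $v_n(X_0-1)$ forces $v_n$ to be Cauchy; hence every element of the closure lies in the image, image equals closure, and your argument then proves the proposition as stated. Without this input, it does not.
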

\noindent In order to prove this, we will need the following Lemma:
\begin{lemma}
    \label{linear_lemma}
    Let $V$ be a $\K$-module and $u$ be an endomorphism of $V$. Let $f : V^N \to V^N$ be the endomorphism given by
    \[
        (v_0, \dots, v_{N-1}) \mapsto (u(v_{N-1}) - v_0, v_0 - v_1, v_1 - v_2, \dots, v_{N-2} - v_{N-1}).  
    \]
    Then we have an isomorphism
    \[
        \mathrm{coker}(f) \simeq \mathrm{coker}(u-\mathrm{id}).
    \]
\end{lemma}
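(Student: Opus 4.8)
The plan is to exhibit the isomorphism directly, by constructing two mutually inverse $\K$-module morphisms between $\mathrm{coker}(f) = V^N / \mathrm{Im}(f)$ and $\mathrm{coker}(u - \mathrm{id}) = V / \mathrm{Im}(u - \mathrm{id})$. Write $q : V^N \to \mathrm{coker}(f)$ for the canonical projection and, for $v \in V$ and $0 \le i \le N-1$, denote by $[v]_i := q(0, \dots, v, \dots, 0)$ the class of the tuple carrying $v$ in the $i$-th slot. First I would record two evaluations of $f$ on special tuples. Evaluating $f$ on the constant tuple $(v, v, \dots, v)$ gives $(u(v) - v, 0, \dots, 0)$, since every difference $v_{j-1} - v_j$ vanishes. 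Moreover, summing $f$ over the tuples carrying $v$ in a single slot $j$ for $0 \le j \le i-1$ telescopes to the tuple with $-v$ in slot $0$ and $+v$ in slot $i$; in particular $[v]_i - [v]_0 \in \mathrm{Im}(f)$.

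From the first computation, the tuple $(u(v) - v, 0, \dots, 0)$ lies in $\mathrm{Im}(f)$, so the composite $V \xrightarrow{\,v \mapsto (v,0,\dots,0)\,} V^N \xrightarrow{\,q\,} \mathrm{coker}(f)$ annihilates $\mathrm{Im}(u - \mathrm{id})$ and therefore factors through a morphism $\alpha : \mathrm{coker}(u - \mathrm{id}) \to \mathrm{coker}(f)$. In the other direction, I would consider the sum-of-coordinates map $\mathrm{s} : V^N \to V$, $(v_0, \dots, v_{N-1}) \mapsto \sum_{j=0}^{N-1} v_j$, followed by the projection $V \to \mathrm{coker}(u - \mathrm{id})$. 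The key point is the telescoping identity $\mathrm{s}(f(v_0, \dots, v_{N-1})) = (u - \mathrm{id})(v_{N-1})$, whose value is killed in $\mathrm{coker}(u - \mathrm{id})$; hence this composite annihilates $\mathrm{Im}(f)$ and descends to a morphism $\beta : \mathrm{coker}(f) \to \mathrm{coker}(u - \mathrm{id})$.

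It then remains to verify that $\alpha$ and $\beta$ are mutually inverse. The composite $\beta \circ \alpha$ is the identity of $\mathrm{coker}(u - \mathrm{id})$ at once, since $\mathrm{s}(v, 0, \dots, 0) = v$. For $\alpha \circ \beta = \mathrm{id}$, the one structural input is the slot-collapsing relation $[v]_i = [v]_0$ in $\mathrm{coker}(f)$, which is exactly the second telescoping computation above. Granting it, an arbitrary class $\sum_i [v_i]_i$ of $\mathrm{coker}(f)$ satisfies $\sum_i [v_i]_i = \big[\sum_i v_i\big]_0 = \alpha\big(\overline{\sum_i v_i}\big) = \alpha\big(\beta(\sum_i [v_i]_i)\big)$, so $\alpha \circ \beta = \mathrm{id}$ and the two cokernels are isomorphic. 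The only mildly delicate point — the main obstacle — is establishing $[v]_i = [v]_0$; every other step is a direct telescoping computation that I would carry out in one line each.
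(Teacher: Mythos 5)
Your proof is correct, and it takes a genuinely (if mildly) different route from the paper's. Both arguments hinge on the same key map: the sum-of-coordinates morphism $\mathrm{s} : V^N \to V$, which descends to $\beta : \mathrm{coker}(f) \to \mathrm{coker}(u - \mathrm{id})$ thanks to the telescoping identity $\mathrm{s}\bigl(f(v_0, \dots, v_{N-1})\bigr) = (u - \mathrm{id})(v_{N-1})$; this is exactly the paper's first step. Where you diverge is in proving bijectivity. The paper never introduces your $\alpha$: it shows surjectivity directly from surjectivity of $\mathrm{s}$, and injectivity by explicitly solving the triangular system, i.e.\ given $(w_0, \dots, w_{N-1})$ with $w_0 + \cdots + w_{N-1} = u(v) - v$ it exhibits the preimage $v_{N-1} = v$, $v_{N-2} = w_{N-1} + v$, \dots, $v_0 = w_1 + \cdots + w_{N-1} + v$ under $f$. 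You instead construct a two-sided inverse $\alpha$ induced by the slot-$0$ inclusion, well-defined because $f(v, v, \dots, v) = (u(v) - v, 0, \dots, 0)$, and reduce $\alpha \circ \beta = \mathrm{id}$ to the slot-collapsing relation $[v]_i = [v]_0$. Note that the relation you flag as the ``main obstacle'' is in fact already closed by your own telescoping computation $\sum_{j=0}^{i-1} f\bigl(e_j(v)\bigr) = e_i(v) - e_0(v)$, where $e_j(v)$ is the tuple with $v$ in slot $j$; so nothing is missing. The two verifications encode the same linear algebra — your telescoping relations are the paper's triangular solution read off column by column — but your version buys an isomorphism explicit in both directions and isolates a reusable structural relation, while the paper's is marginally shorter, needing only the one descended map and a direct kernel computation.
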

\begin{proof}[Proof of Lemma \ref{linear_lemma}]
    Let us consider the $\K$-module morphism $\mathrm{sum} : V^N \to V$ given by $(v_0, \dots, v_{N-1}) \mapsto v_0 + \cdots + v_{N-1}$. This morphism sends $\mathrm{Im}(f)$ to $\mathrm{Im}(u - \mathrm{id})$. Therefore, there is a unique $\K$-module morphism $V^N/\mathrm{Im}(f) \to V/\mathrm{Im}(u - \mathrm{id})$ such that the diagram
    \[
        \begin{tikzcd}
            V^N \ar["\mathrm{sum}"]{r} \ar[two heads]{d} & V \ar[two heads]{d} \\
            V^N/\mathrm{Im}(f) \ar[]{r} & V/\mathrm{Im}(u - \mathrm{id})
        \end{tikzcd}
    \]
    commutes. Let us show that the morphism $V^N/\mathrm{Im}(f) \to V/\mathrm{Im}(u - \mathrm{id})$ is an isomorphism. First, the surjectivity of the morphism $\mathrm{sum} : V^N \to V$ implies that the morphism $V^N/\mathrm{Im}(f) \to V/\mathrm{Im}(u - \mathrm{id})$ is surjective as well. Second, let $(w_0, \dots, w_{N-1}) \in V^N$ such that there exists an element $v \in V$ such that $w_0 + \cdots w_{N-1} = u(v) - v$. The element $(v_0, \dots, v_{N-1}) \in V^N$ given by
    \[
        v_{N-1} = v, v_{N-2} = w_{N-1} + v, v_{N-3} = w_{N-2} + w_{N-1} + v, \dots, v_0 = w_1 + \cdots w_{N-1} + v
    \]
    is such that
    \[
        (w_0, \dots, w_{N-1}) = (u(v_{N-1}) - v_0, v_0 - v_1, \dots, v_{N-2} - v_{N-1}) \in \mathrm{Im}(f).
    \]
    Thus proving the injectivity of $V^N/\mathrm{Im}(f) \to V/\mathrm{Im}(u - \mathrm{id})$. 
\end{proof}
\begin{proof}[Proof of Proposition \ref{hatM_is_quotient}]
    The proof consists of the following steps:
    \begin{description}[leftmargin=1pt]
        \item[Step 1] Construction of the $\K$-module isomorphism $P_0 : (\K F_{N+1})^N \to \V^{\B}_N$. \newline
        As in \eqref{Def_kSigma}, one defines the $\K$-module isomorphism $\K F_{N+1} \otimes \K\mu_N \to \V^{\B}_N$ such that for $(x, \zeta) \in F_{N+1} \times \mu_N$
        \begin{equation}
            \label{tildeSigma}
            x \otimes \zeta \mapsto x \, \sigma(\zeta),
        \end{equation}
        where $F_{N+1}$ is seen as $\ker(F_2 \to \mu_N) \subset F_2$ thanks to Lemma \ref{FN1}.
        Moreover, one checks there is a $\K$-module isomorphism $(\K F_{N+1})^N \to \K F_{N+1} \otimes \K\mu_N$ given by
        \begin{equation}
            \label{kFNN}
            (v_0, \dots, v_{N-1}) \mapsto \sum_{i=0}^{N-1} v_i \otimes \zeta_N^i.
        \end{equation}
        Therefore, the composition $P_0 : (\K F_{N+1})^N \to \K F_{N+1} \otimes \K\mu_N \to \V^{\B}_N$ is a $\K$-module isomorphism and is given by
        \[
            (v_0, \dots, v_{N-1}) \mapsto v_0 + v_1 X_0 + \cdots + v_{N-1} X_0^{N-1}.
        \]
        \item[Step 2] Identification of $\M^{\B}_N$. \newline
        One checks that the endomorphism $f : (\K F_{N+1})^N \to (\K F_{N+1})^N$ given by
        \begin{equation}
            \label{endo_f}
            (v_0, \dots, v_{N-1}) \mapsto (v_{N-1} \widetilde{X}_0 - v_0, v_0 - v_1, v_1 - v_2, \dots, v_{N-2} - v_{N-1}),
        \end{equation}
        is such that the following diagram
        \begin{equation}
            \label{Diag_f}
            \begin{tikzcd}
                (\K F_{N+1})^N \ar["f"]{rr} \ar["P_0"']{d} && (\K F_{N+1})^N \ar["P_0"]{d} \\
                \V^{\B}_N \ar["- \cdot (X_0 - 1)"']{rr} && \V^{\B}_N 
            \end{tikzcd}
        \end{equation}
        commutes. This induces a $\K$-module isomorphism $\mathrm{coker}(f) \simeq \mathrm{coker}(-\cdot(X_0-1))$. \newline
        On the other hand, by applying Lemma \ref{linear_lemma} with $V = \K F_{N+1}$ and $u = - \cdot \widetilde{X}_0$, we obtain an isomomorphism $\mathrm{coker}(f) \simeq \mathrm{coker}(u - \mathrm{id})$.
        It then follows that
        \begin{align*}
            \M^{\B}_N = \V^{\B}_N / \V^{\B}_N (X_0 - 1) & = \mathrm{coker}(-\cdot(X_0-1)) \\ & \simeq \mathrm{coker}(f) \simeq \mathrm{coker}(u - \mathrm{id}) = \K F_{N+1} / \K F_{N+1} (\widetilde{X}_0 - 1). 
        \end{align*}
        \item[Step 3] Compatibility of the isomorphism $\K F_{N+1} / \K F_{N+1} (\widetilde{X}_0 - 1) \to \M^{\B}_N$ with filtrations. Let us show that for any $m \in \N$, we have
        \[
            (\K F_{N+1})_0^m \Big/ \left((\K F_{N+1})_0^m \cap \K F_{N+1} (\widetilde{X}_0 - 1)\right) \simeq \mathcal{F}^m \M^{\B}_N.
        \]
        If $m=0$, this has been proved in Step 2. From now on, let us assume that $m \in \N^{\ast}$.
        The isomorphism $\K F_{N+1} / \K F_{N+1} (\widetilde{X}_0 - 1) \to \M^{\B}_N$ fits in the following commutative diagram
        \begin{equation}
            \label{Diag_kF_MB}
            \begin{tikzcd}
                \K F_{N+1} \ar[hook]{r} \ar[two heads]{d} & \V^{\B}_N \ar[two heads]{d} \\
                \K F_{N+1} / \K F_{N+1} (\widetilde{X}_0 - 1) \ar["\simeq"]{r} & \M^{\B}_N
            \end{tikzcd}
        \end{equation}
        where $\K F_{N+1} \to \V^{\B}_N$ is the group algebra morphism induced by the group morphism $F_{N+1} \simeq \ker(F_2 \to \mu_N) \subset F_2$ obtained in Lemma \ref{FN1}.
        This group algebra morphism induces the injection
        \[
            (\K F_{N+1})_0^m \hookrightarrow \mathcal{F}^m \V^{\B}_N.
        \]
        Then, thanks to the commutativity of Diagram \eqref{Diag_kF_MB}, the $\K$-module isomorphism $\K F_{N+1} / \K F_{N+1} (\widetilde{X}_0 - 1) \to \M^{\B}_N$ induces an injection
        \[
            (\K F_{N+1})_0^m \Big/ \left((\K F_{N+1})_0^m \cap \K F_{N+1} (\widetilde{X}_0 - 1)\right) \hookrightarrow \mathcal{F}^m \V^{\B}_N \cdot 1_{\B} = \mathcal{F}^m \M^{\B}_N,  
        \]
        where the equality comes from Lemma \ref{1B_filtrations}.\ref{FmVFmM}. This implies that
        \[
            \mathrm{Im}\left((\K F_{N+1})_0^m \Big/ \left((\K F_{N+1})_0^m \cap \K F_{N+1} (\widetilde{X}_0 - 1)\right) \to \M^{\B}_N \right) \subset \mathcal{F}^m \M^{\B}_N.
        \]
        Conversely, let us show the opposite inclusion. Thanks to Lemma \ref{FmWB}\ref{FmWB_to_FmVB}, we have  
        \[
            \mathcal{F}^m \M^{\B}_N = \mathcal{F}^m \W^{\B}_N \cdot 1_{\B} = \mathcal{F}^{m-1} \V^{\B}_N (X_1 - 1) \cdot 1_{\B}.
        \]
        Moreover, we have by definition that $\mathcal{F}^{m-1} \V^{\B}_N = (\mathcal{F}^1 \V^{\B}_N)^{m-1}$. This implies, thanks to Lemma \ref{IdealI} \ref{Igenerators} that $\mathcal{F}^{m-1} \V^{\B}_N (X_1 - 1) \cdot 1_{\B}$ is linearly generated by elements
        \[
            \sigma(\zeta_1) (x_1 - 1) \cdots \sigma(\zeta_{m-1}) (x_{m-1} - 1) (X_1 - 1) \cdot 1_{\B}
        \]
        with $(\zeta_1, x_1), \dots, (\zeta_{m-1}, x_{m-1}) \in \mu_N \times F_{N+1}$. Additionally, we have that
        \begin{align*}
            & \sigma(\zeta_1) (x_1 - 1) \cdots \sigma(\zeta_{m-1}) (x_{m-1} - 1) (X_1 - 1) \cdot 1_{\B} = \\
            & \left(\Ad_{\sigma(\zeta_1)}(x_1) - 1\right) \cdots \left(\Ad_{\sigma(\zeta_1) \cdots \sigma(\zeta_{m-1})}(x_{m-1}) - 1\right) \left(\Ad_{\sigma(\zeta_1) \cdots \sigma(\zeta_{m-1})}(X_1) - 1\right) \\
            & \sigma(\zeta_1) \cdots \sigma(\zeta_{m-1}) \cdot 1_{\B} = \\
            & \left(\Ad_{\sigma(\zeta_1)}(x_1) - 1\right) \cdots \left(\Ad_{\sigma(\zeta_1) \cdots \sigma(\zeta_{m-1})}(x_{m-1}) - 1\right) \left(\Ad_{\sigma(\zeta_1) \cdots \sigma(\zeta_{m-1})}(X_1) - 1\right) \cdot 1_{\B} 
        \end{align*}
        which belongs to the image of $(\K F_{N+1})_0^m \Big/ \left((\K F_{N+1})_0^m \cap \K F_{N+1} (\widetilde{X}_0 - 1)\right)$ by the isomorphism $\K F_{N+1} / \K F_{N+1} (\widetilde{X}_0 - 1) \to \M^{\B}_N$ as
        \[
            \left(\Ad_{\sigma(\zeta_1)}(x_1) - 1\right) \cdots \left(\Ad_{\sigma(\zeta_1) \cdots \sigma(\zeta_{m-1})}(x_{m-1}) - 1\right) \left(\Ad_{\sigma(\zeta_1) \cdots \sigma(\zeta_{m-1})}(X_1) - 1\right),
        \]
        seen as an element of $\K F_{N+1}$, belongs to $(\K F_{N+1})_0^m$. This implies that
        \[
            \mathcal{F}^m \M^{\B}_N \subset \mathrm{Im}\left((\K F_{N+1})_0^m \Big/ \left((\K F_{N+1})_0^m \cap \K F_{N+1} (\widetilde{X}_0 - 1)\right) \to \M^{\B}_N \right).
        \]
        Therefore, one has equality
        \[
            \mathrm{Im}\left((\K F_{N+1})_0^m \Big/ \left((\K F_{N+1})_0^m \cap \K F_{N+1} (\widetilde{X}_0 - 1)\right) \to \M^{\B}_N \right) = \mathcal{F}^m \M^{\B}_N,
        \]
        which establishes the wanted isomorphism.  
        \item[Step 4] Identification of $\widehat{\M}^{\B}_N$. \newline
        Thanks to Step 3, one has for any $m \in \N$
        \[
            \M^{\B}_N / \mathcal{F}^m \M^{\B}_N \simeq \K F_{N+1} \Big/ \left(\K F_{N+1} (\widetilde{X}_0 - 1) + (\K F_{N+1})_0^m \right)
        \]
        and, on the other hand, for any $m \in \N^{\ast}$,
        \[
            \mbox{\small$\K F_{N+1} \Big/ \left(\K F_{N+1} (\widetilde{X}_0 - 1) + (\K F_{N+1})_0^m \right) \simeq \mathrm{coker}\left( \K F_{N+1} / (\K F_{N+1})_0^{m-1} \to  \K F_{N+1} / (\K F_{N+1})_0^m \right)$},
        \]
        where the morphism
        \begin{equation}
            \label{m_minus_1_to_m}
            \K F_{N+1} / (\K F_{N+1})_0^{m-1} \to  \K F_{N+1} / (\K F_{N+1})_0^m
        \end{equation}
        is induced by the endomorphism $- \cdot (\widetilde{X}_0 - 1)$ of $\K F_{N+1}$.
        Therefore,
        \[
            \widehat{\M}^{\B}_N \simeq \lim_{\longleftarrow} \mathrm{coker}\left( \K F_{N+1} / (\K F_{N+1})_0^{m-1} \to  \K F_{N+1} / (\K F_{N+1})_0^m \right). 
        \]
        \item[Step 5] Identification of $\widehat{\V}^{\B}_N / \widehat{\V}^{\B}_N (X_0 - 1)$. \newline
        As in Lemma \ref{filtr}, one proves that the $\K$-module isomorphism $\K F_{N+1} \otimes \K\mu_N \to \V^{\B}_N$ given in \eqref{tildeSigma} allows us to identify $(\K F_{N+1})_0^m \otimes \K\mu_N$ with $\mathcal{F}^m \V^{\B}_N$ for any $m \in \N$.
        Recall the isomorphism $(\K F_{N+1})^N \to \K F_{N+1} \otimes \K\mu_N$ given in \eqref{kFNN}. One checks that it is compatible with the filtration of $(\K F_{N+1})^N$ given for any $m \in \N$ by $\prod_{i=1}^N (\K F_{N+1})_0^m$.
        Therefore the isomorphism $P_0 : (\K F_{N+1})^N \to \K F_{N+1} \otimes \K\mu_N \to \V^{\B}_N$ of Step 1 is compatible with filtrations. Therefore, it extends to a topological $\K$-module isomorphism
        \[
            \widehat{P}_0 : \widehat{(\K F_{N+1})^N} \to \widehat{\V}^{\B}_N.
        \]
        On the other hand, the endomorphism $f : (\K F_{N+1})^N \to (\K F_{N+1})^N$ given in \eqref{endo_f} is compatible with filtrations and then extends to a topological endomorphism $\hat{f} : \widehat{(\K F_{N+1})^N} \to \widehat{(\K F_{N+1})^N}$ and, thanks to Diagram \eqref{Diag_f}, it is such that the following diagram
        \[
            \begin{tikzcd}
                \widehat{(\K F_{N+1})^N} \ar["\hat{f}"]{rr} \ar["\simeq"']{d} && \widehat{(\K F_{N+1})^N} \ar["\simeq"]{d} \\
                \widehat{\V}^{\B}_N \ar["- \cdot (X_0 - 1)"']{rr} && \widehat{\V}^{\B}_N 
            \end{tikzcd}
        \]
        commutes. This induces a $\K$-module isomorphism $\mathrm{coker}(\hat{f}) \simeq \mathrm{coker}(- \cdot (X_0-1))$. \newline
        Similarly to Step 1, by applying Lemma \ref{linear_lemma} with $V = \widehat{\K F_{N+1}}$ and $u = - \cdot \widetilde{X}_0$, we obtain an isomomorphism $\mathrm{coker}(\hat{f}) \simeq \mathrm{coker}(u - \mathrm{id})$.
        It then follows that
        \begin{align*}
            \widehat{\V}^{\B}_N / \widehat{\V}^{\B}_N (X_0 - 1) & = \mathrm{coker}(-\cdot(X_0-1)) \\ & \simeq \mathrm{coker}(\hat{f}) \simeq \mathrm{coker}(u - \mathrm{id}) = \widehat{\K F_{N+1}} / \widehat{\K F_{N+1}} (\widetilde{X}_0 - 1). 
        \end{align*}
        On the other hand, we have
        \[
            \mbox{\small$\displaystyle \widehat{\K F_{N+1}} / \widehat{\K F_{N+1}} (\widetilde{X}_0 - 1) \simeq \mathrm{coker}\left( \lim_{\longleftarrow} \K F_{N+1} / (\K F_{N+1})_0^{m-1} \to  \lim_{\longleftarrow} \K F_{N+1} / (\K F_{N+1})_0^m \right)$},
        \]
        where the morphism $\displaystyle\lim_{\longleftarrow} \K F_{N+1} / (\K F_{N+1})_0^{m-1} \to  \lim_{\longleftarrow} \K F_{N+1} / (\K F_{N+1})_0^m$ is induced by the morphism $\K F_{N+1} / (\K F_{N+1})_0^{m-1} \to  \K F_{N+1} / (\K F_{N+1})_0^m$ given in \eqref{m_minus_1_to_m}.
        \item[Step 6] Cokernel of limits and limit of cokernels coincide. \newline
        For any $m \in \N^{\ast}$, the morphism
        \begin{equation}
            \label{kFN_degree_morphism}
            \K F_{N+1} / (\K F_{N+1})_0^{m-1} \to  \K F_{N+1} / (\K F_{N+1})_0^m
        \end{equation}
        induced by the endomorphism $- \cdot (\widetilde{X}_0 - 1)$ of $\K F_{N+1}$ is injective. Indeed, let $x \in \K F_{N+1}$ such that $x(\widetilde{X}_0 - 1) \in (\K F_{N+1})_0^m$. Let $l$ to be the smallest integer such that $x \in (\K F_{N+1})_0^l$. Let us show that $l \geq m-1$. Otherwise, since $x \in (\K F_{N+1})_0^l$, we have that $[x] \in \mathrm{gr}_l(\K F_{N+1})$. Moreover, we have that $[\widetilde{X}_0 - 1] \in \mathrm{gr}_1(\K F_{N+1})$. Therefore, $[x(\widetilde{X}_0 - 1)] \in \mathrm{gr}_{l+1}(\K F_{N+1})$. Since, by assumption, $l+1 \leq m-1$, the condition $x(\widetilde{X}_0 - 1) \in (\K F_{N+1})_0^m$ implies that $[x(\widetilde{X}_0 - 1)] = 0$. Finally, since $\mathrm{gr}_{l+1}(\K F_{N+1})$ is an integral domain we would obtain that $[x]=0$, contradicting the minimality of $l$. Therefore, $l \geq m-1$ and the morphism \eqref{kFN_degree_morphism} is injective. \newline
        In addition, the image of morphism \eqref{kFN_degree_morphism} is the same as the image of the morphism $\K F_{N+1} / (\K F_{N+1})_0^m \to  \K F_{N+1} / (\K F_{N+1})_0^m$ induced by the endomorphism $- \cdot (\widetilde{X}_0 - 1)$ of $\K F_{N+1}$. We then have the short exact sequence
        \[
            \{0\} \to \K F_{N+1} / (\K F_{N+1})_0^{m-1} \to \K F_{N+1} / (\K F_{N+1})_0^m
        \]
        \[
            \to \mathrm{coker}\big((\K F_{N+1} / (\K F_{N+1})_0^m \to \K F_{N+1} / (\K F_{N+1})_0^m)\big) \to \{0\}
        \]
        which, by applying the inverse limit functor, gives us
        \begin{align*}
            &\{0\} \to \lim_{\longleftarrow} \K F_{N+1} / (\K F_{N+1})_0^{m-1} \to \lim_{\longleftarrow} \K F_{N+1} / (\K F_{N+1})_0^m \to \\
            &\lim_{\longleftarrow} \mathrm{coker}\big((\K F_{N+1} / (\K F_{N+1})_0^m \to \K F_{N+1} / (\K F_{N+1})_0^m)\big) \to \lim_{\longleftarrow}{}^1 \, \K F_{N+1} / (\K F_{N+1})_0^{m-1},
        \end{align*}
        where $\lim_{\longleftarrow}{}^1$ is the functor given in \cite[§IX.2.1]{BK72}. \newline
        Since the transition maps of the inverse system $(\K F_{N+1} / (\K F_{N+1})_0^{m-1})_{m \in \N^{\ast}}$ are surjective, this implies that $\displaystyle \lim_{\longleftarrow}{}^1 \, \K F_{N+1} / (\K F_{N+1})_0^{m-1} = 0$ (see, for example, \cite[Propostion IX.2.4]{BK72}). As a consequence,
        \begin{align*}
            &\lim_{\longleftarrow} \mathrm{coker}\big((\K F_{N+1} / (\K F_{N+1})_0^m \to \K F_{N+1} / (\K F_{N+1})_0^m)\big) \simeq \\ &\mathrm{coker}\big(\lim_{\longleftarrow}(\K F_{N+1} / (\K F_{N+1})_0^m \to \lim_{\longleftarrow} \K F_{N+1} / (\K F_{N+1})_0^m)\big).
        \end{align*}
        Thanks to Step 4 and Step 5, this proves the wanted result.
    \end{description}
\end{proof}

\begin{propdef}
    \label{isoMiota}
    Let $\iota \in \Emb(G)$. There exists an unique topological $\K$-module isomorphism \index{$\mathrm{iso}^{\M, \iota}$} $\mathrm{iso}^{\M, \iota} : \widehat{\M}^{\B}_N \to \widehat{\M}^{\DR}_G$ such that the following diagram
    \begin{equation}
        \label{diag_isoM}
        \begin{tikzcd}
            \widehat{\V}^{\B}_N \ar["\mathrm{iso}^{\V, \iota}"]{rrr} \ar["\widehat{- \cdot 1_{\B}}"']{d} &&& \widehat{\V}^{\DR}_G \ar["\widehat{- \cdot 1_{\DR}}"]{d} \\
            \widehat{\M}^{\B}_N \ar["\mathrm{iso}^{\M, \iota}"']{rrr} &&& \widehat{\M}^{\DR}_G \\
        \end{tikzcd}
    \end{equation}
    commutes.
\end{propdef}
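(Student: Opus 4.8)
The plan is to obtain $\iso^{\M, \iota}$ as the map that $\iso^{\V, \iota}$ induces on quotients. Recall from Proposition \ref{hatM_is_quotient} that $\widehat{- \cdot 1_{\B}}$ identifies $\widehat{\M}^{\B}_N$ with $\widehat{\V}^{\B}_N / \widehat{\V}^{\B}_N(X_0 - 1)$, so that the kernel of the left vertical arrow of Diagram \eqref{diag_isoM} is the closed left ideal $\widehat{\V}^{\B}_N(X_0 - 1)$; on the de Rham side the kernel of $- \cdot 1_{\DR}$ is $\widehat{\V}^{\DR}_G e_0 + \sum_{g \in G} \widehat{\V}^{\DR}_G(g - 1)$. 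Since both vertical maps are surjective and $\iso^{\V, \iota}$ is a topological algebra isomorphism, it suffices to establish the single equality of closed left ideals
\begin{equation*}
    \iso^{\V, \iota}\big(\widehat{\V}^{\B}_N(X_0 - 1)\big) = \widehat{\V}^{\DR}_G e_0 + \sum_{g \in G} \widehat{\V}^{\DR}_G(g - 1).
\end{equation*}
Once this holds, passing to quotients yields a topological $\K$-module isomorphism $\iso^{\M, \iota}$ making the square commute, and uniqueness follows from the surjectivity of $\widehat{- \cdot 1_{\B}}$.

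First I would compute the image of the generator. Setting $u := \iso^{\V, \iota}(X_0) = \exp(\tfrac{1}{N} e_0)\, g_{\iota}$ by Proposition-Definition \ref{isoViota}, one has $\iso^{\V, \iota}\big(\widehat{\V}^{\B}_N(X_0 - 1)\big) = \widehat{\V}^{\DR}_G (u - 1)$, a left ideal I will call $J$. The inclusion of $J$ in the de Rham kernel is immediate from the splitting $u - 1 = (\exp(\tfrac{1}{N} e_0) - 1)\, g_{\iota} + (g_{\iota} - 1)$ together with the fact that $\exp(\tfrac{1}{N} e_0) - 1 \in \widehat{\V}^{\DR}_G e_0$, so that the first summand lies in $\widehat{\V}^{\DR}_G e_0\, g_{\iota} = \widehat{\V}^{\DR}_G e_0$ and the second in $\sum_{g} \widehat{\V}^{\DR}_G(g - 1)$.

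The substance of the argument is the reverse inclusion. Here I would use that $g_{\iota}$ generates $G$ and has order $N$, and that $e_0$ commutes with every $g \in G$ (relation $g\, e_0 = e_0\, g$), so that $u^N = \exp(e_0)$. Since $u - 1 \in J$ and $J$ is a left ideal, $u^N - 1 = \exp(e_0) - 1 \in J$; because $\tfrac{\exp(t) - 1}{t}$ is an invertible formal power series, $\exp(e_0) - 1 = v\, e_0$ with $v$ invertible and commuting with $e_0$, whence $e_0 \in J$ and therefore $\widehat{\V}^{\DR}_G e_0 \subseteq J$. Using centrality again, $(\exp(\tfrac{1}{N} e_0) - 1)\, g_{\iota} \in \widehat{\V}^{\DR}_G e_0 \subseteq J$, so the splitting above forces $g_{\iota} - 1 \in J$; then $g_{\iota}^k - 1 = (1 + g_{\iota} + \cdots + g_{\iota}^{k-1})(g_{\iota} - 1) \in J$ gives $g - 1 \in J$ for every $g \in G$. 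This yields the opposite inclusion, hence the desired equality of ideals.

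I expect the \emph{crux} to be exactly this passage from the single relation $u - 1$ to the full generating set $\{e_0\} \cup \{g - 1\}_{g \in G}$ of the de Rham kernel, the key device being the identity $u^N = \exp(e_0)$ and the invertibility of $\tfrac{\exp(t) - 1}{t}$. By contrast, the topological bookkeeping is automatic: $\iso^{\V, \iota}$ is a homeomorphism carrying the closed left ideal $\widehat{\V}^{\B}_N(X_0 - 1)$ onto $J$, and the induced map between the complete separated quotients $\widehat{\M}^{\B}_N$ and $\widehat{\M}^{\DR}_G$ is then automatically a topological $\K$-module isomorphism.
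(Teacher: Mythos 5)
Your proof is correct, but it establishes bijectivity by a genuinely different mechanism than the paper. Both arguments start the same way: writing $u := \iso^{\V, \iota}(X_0) = \exp(\tfrac{1}{N}e_0)g_{\iota}$, the splitting $u - 1 = (\exp(\tfrac{1}{N}e_0)-1)g_{\iota} + (g_{\iota}-1)$ is exactly the paper's computation showing that $\widehat{-\cdot 1_{\DR}} \circ \iso^{\V, \iota}$ kills $\widehat{\V}^{\B}_N(X_0-1)$, and both then invoke Proposition \ref{hatM_is_quotient} to factor this composition through $\widehat{\M}^{\B}_N$. The divergence is in proving that the induced map is an isomorphism. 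The paper never computes the reverse inclusion of kernels: it observes that $\iso^{\M, \iota}$ is a module morphism over the algebra isomorphism $\iso^{\W, \iota}$ of Proposition-Definition \ref{isoW}, that $\widehat{\M}^{\B}_N$ and $\widehat{\M}^{\DR}_G$ are free of rank one over $\widehat{\W}^{\B}_N$ and $\widehat{\W}^{\DR}_G$ respectively, and that $1_{\B} \mapsto 1_{\DR}$, so a basis is sent to a basis; bijectivity is then formal. You instead prove the equality of left ideals $\widehat{\V}^{\DR}_G(u-1) = \widehat{\V}^{\DR}_G e_0 + \sum_{g \in G}\widehat{\V}^{\DR}_G(g-1)$, using $u^N = \exp(e_0)$, the invertibility of $(\exp(t)-1)/t$, the centrality of $e_0$, and the fact that $g_{\iota}$ generates $G$; this matched-kernel statement then gives injectivity directly. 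Notably, your key computation is essentially the content of the Remark the paper places immediately after this Proposition-Definition (stated there for the element $g_{\iota}\exp(e_0)$; your $\tfrac{1}{N}$-rescaled version is identical in substance), so the paper's authors were aware of this route but relegated it to an aside. What each approach buys: the paper's argument is shorter given the already-established free rank-one structure, and it simultaneously exhibits $\iso^{\M, \iota}$ as a module morphism over $\iso^{\W, \iota}$, which is what Proposition \ref{compat_iso_MWV} needs later; your argument is more self-contained and yields the stronger explicit fact that the de Rham kernel is the principal left ideal generated by $\iso^{\V, \iota}(X_0) - 1$. The only point to state a bit more carefully in your write-up is the topological step at the end: one should say that the bijection of quotients is a homeomorphism because $\iso^{\V, \iota}$ is one and both modules carry the quotient topologies (for $\widehat{\M}^{\B}_N$ this is part of Proposition \ref{hatM_is_quotient}, for $\widehat{\M}^{\DR}_G$ it holds by definition), but this is a minor gloss rather than a gap.
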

\begin{proof}
    Let us construct a topological module morphism $\iso^{\M, \iota} : \widehat{\M}^{\B}_N \to \widehat{\M}^{\DR}_G$ over the topological algebra morphism $\iso^{\V, \iota} : \widehat{\V}^{\B}_N \to \widehat{\V}^{\DR}_G$. We consider the composition
    \begin{equation}
        \label{iso_circ_hat1DR}
        \widehat{\V}^{\B}_N \xrightarrow{\iso^{\V, \iota}} \widehat{\V}^{\DR}_G \xrightarrow{\widehat{-\cdot 1_{\DR}}} \widehat{\M}^{\DR}_G.
    \end{equation}
    This composition sends the $\K$-submodule $\widehat{\V}^{\B}_N (X_0 - 1)$ to $0$. Indeed, this comes from the fact that \eqref{iso_circ_hat1DR} is a module morphism over the algebra morphism $\iso^{\V, \iota}$ and the following computation
    \[
        \mbox{\small $\iso^{\V, \iota}(X_0 - 1) = g_{\iota} \exp\left( \frac{1}{N} e_0 \right) - 1 = g_{\iota} \left(\exp\left( \frac{1}{N} e_0 \right) - 1\right) + (g_{\iota} - 1) \in \widehat{\V}^{\DR} e_0 + \widehat{\V}^{\DR} (g_{\iota} - 1)$}
    \]
    Therefore, thanks to Proposition \ref{hatM_is_quotient}, the composition \eqref{iso_circ_hat1DR} factorises into a $\K$-module morphism $\iso^{\M, \iota} : \widehat{\M}^{\B}_N \to \widehat{\M}^{\DR}_G$ which is a module morphism over the algebra morphism $\iso^{\V, \iota} : \widehat{\V}^{\B}_N \to \widehat{\V}^{\DR}_G$. \newline
    Next, let us show that $\iso^{\M, \iota} : \widehat{\M}^{\B}_N \to \widehat{\M}^{\DR}_G$ is an isomorphism. Recall from Proposition \ref{isoW} that $\iso^{\W, \iota} : \widehat{\W}^{\B}_N \to \widehat{\W}^{\DR}_G$ is an algebra submorphism of $\iso^{\V, \iota} : \widehat{\V}^{\B}_N \to \widehat{\V}^{\DR}_G$. As a result, $\iso^{\M, \iota} : \widehat{\M}^{\B}_N \to \widehat{\M}^{\DR}_G$ is a module morphism over the algebra isomorphism $\iso^{\W, \iota} : \widehat{\W}^{\B}_N \to \widehat{\W}^{\DR}_G$. In addition, $\widehat{\M}^{\B}_N$ and $\widehat{\M}^{\DR}_G$ are both free rank $1$ modules over $\widehat{\W}^{\B}_N$ and $\widehat{\W}^{\DR}_G$ respectively and $\iso^{\M, \iota}$ sends $1_{\B}$ to $1_{\DR}$ and therefore a basis of the source to a basis of the target. Thus $\iso^{\M, \iota} : \widehat{\M}^{\B}_N \to \widehat{\M}^{\DR}_G$ is a module isomorphism over $\iso^{\W, \iota}$ and then a $\K$-module isomorphism.
\end{proof}

\begin{remark}
    Let us notice that we have the following equality of $\K$-submodules of $\widehat{\V}^{\DR}_G$:
    \[
        \widehat{\V}^{\DR}_G (g_{\iota}\exp(e_0) - 1) = \widehat{\V}^{\DR}_G e_0 + \widehat{\V}^{\DR}_G (g_{\iota} - 1).
    \]
    Indeed, since we have that
    \[
        g_{\iota} \exp( e_0 ) - 1 = g_{\iota} (\exp(e_0) - 1) + (g_{\iota} - 1),
    \]
    this gives us the inclusion $\widehat{\V}^{\DR}_G (g_{\iota}\exp(e_0) - 1) \subset \widehat{\V}^{\DR}_G e_0 + \widehat{\V}^{\DR}_G (g_{\iota} - 1)$. Conversely, the inclusion $\widehat{\V}^{\DR}_G e_0 + \widehat{\V}^{\DR}_G (g_{\iota} - 1) \subset \widehat{\V}^{\DR}_G (g_{\iota}\exp(e_0) - 1)$ follows from
    \[
        e_0 = \frac{e_0}{\exp(Ne_0) - 1} \big( 1 + g_{\iota} \exp(e_0) + \cdots + g_{\iota}^{N-1} \exp((N-1)e_0) \big) (g_{\iota} \exp(e_0) - 1)
    \]
    and from
    \begin{align*}
        g_{\iota} - 1 = & \exp(-e_0) (g_{\iota} \exp(e_0) - 1) + (\exp(-e_0) - 1) \\
        = & \exp(-e_0) (g_{\iota} \exp(e_0) - 1) + \\
        & \frac{\exp(-e_0) - 1}{\exp(Ne_0) - 1} \big( 1 + g_{\iota} \exp(e_0) + \cdots + g_{\iota}^{N-1} \exp((N-1)e_0) \big) (g_{\iota} \exp(e_0) - 1) \\
        = & \mbox{\small$\left(\exp(-e_0) + \frac{\exp(-e_0) - 1}{\exp(Ne_0) - 1} \big( 1 + g_{\iota} \exp(e_0) + \cdots + g_{\iota}^{N-1} \exp((N-1)e_0) \big)\right)(g_{\iota} \exp(e_0) - 1)$}.  
    \end{align*}
\end{remark}

\begin{proposition}
    \label{compat_iso_MWV}
    For any $(\lambda, \Psi) \in \K^{\times} \times \G(\KX)$, the following pairs are isomorphisms in the category $\K\text{-}\alg\text{-}\Mod_{\mathrm{top}}$: 
    \begin{enumerate}[label=(\roman*), leftmargin=*]
        \item \label{rel_iso_MV} $\left(\iso^{\V, \iota}, \iso^{\M, \iota}\right) : (\widehat{\V}^{\B}_N, \widehat{\M}^{\B}_N) \to (\widehat{\V}^{\DR}_G, \widehat{\M}^{\DR}_G)$.
        \item \label{rel_iso_MW} $\left(\iso^{\W, \iota}, \iso^{\M, \iota}\right) : (\widehat{\W}^{\B}_N, \widehat{\M}^{\B}_N) \to (\widehat{\W}^{\DR}_G, \widehat{\M}^{\DR}_G)$.
    \end{enumerate}
\end{proposition}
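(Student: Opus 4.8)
The plan is to observe that essentially all of the substance has already been established in the constructions of $\iso^{\V, \iota}$, $\iso^{\W, \iota}$ and $\iso^{\M, \iota}$, so that what remains is to package these into morphisms of the category $\K\text{-}\alg\text{-}\Mod_{\mathrm{top}}$ and to record their invertibility. Recall that a morphism $(A, M) \to (A', M')$ in $\K\text{-}\alg\text{-}\Mod_{\mathrm{top}}$ is a pair $(f, g)$ consisting of a topological $\K$-algebra morphism $f : A \to A'$ and a topological $\K$-module morphism $g : M \to M'$ satisfying $g(a \cdot m) = f(a) \cdot g(m)$ for all $(a, m) \in A \times M$; it is an isomorphism exactly when both $f$ and $g$ are bijective, the compatibility for the inverse pair $\left((f)^{-1}, (g)^{-1}\right)$ then being automatic. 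Indeed, applying $g$ to $f^{-1}(a') \cdot g^{-1}(m')$ and using the compatibility of $(f, g)$ yields $a' \cdot m'$, whence $f^{-1}(a') \cdot g^{-1}(m') = g^{-1}(a' \cdot m')$.

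For \ref{rel_iso_MV}, I would recall that $\iso^{\V, \iota}$ is a topological $\K$-algebra isomorphism by Proposition-Definition \ref{isoViota} and that $\iso^{\M, \iota}$ is a topological $\K$-module isomorphism by Proposition-Definition \ref{isoMiota}. Crucially, $\iso^{\M, \iota}$ was constructed there precisely as a module morphism over the algebra morphism $\iso^{\V, \iota}$, i.e. the required identity $\iso^{\M, \iota}(v \cdot m) = \iso^{\V, \iota}(v) \cdot \iso^{\M, \iota}(m)$ holds by construction. Combined with the algebra-module structures recorded in Lemma \ref{WM_in_algmod} and Corollary \ref{algmod_B}, this shows that $(\iso^{\V, \iota}, \iso^{\M, \iota})$ is a morphism in $\K\text{-}\alg\text{-}\Mod_{\mathrm{top}}$, and since both components are bijective it is an isomorphism by the formal remark above.

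For \ref{rel_iso_MW}, I would exploit the commutativity of Diagram \eqref{diag_isoW}, which says that $\iso^{\W, \iota}$ is the restriction of $\iso^{\V, \iota}$ to the subalgebra $\widehat{\W}^{\B}_N \subset \widehat{\V}^{\B}_N$, together with the fact that the $\widehat{\W}^{\B}_N$-module structure on $\widehat{\M}^{\B}_N$ (resp. the $\widehat{\W}^{\DR}_G$-module structure on $\widehat{\M}^{\DR}_G$) is the one obtained by restriction from the $\widehat{\V}^{\B}_N$-module (resp. $\widehat{\V}^{\DR}_G$-module) structure. Consequently the compatibility identity for $(\iso^{\W, \iota}, \iso^{\M, \iota})$ is just the special case, for $w \in \widehat{\W}^{\B}_N \subset \widehat{\V}^{\B}_N$, of the one already proved in \ref{rel_iso_MV}. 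As $\iso^{\W, \iota}$ and $\iso^{\M, \iota}$ are isomorphisms by Proposition-Definitions \ref{isoW} and \ref{isoMiota}, the pair is again an isomorphism in $\K\text{-}\alg\text{-}\Mod_{\mathrm{top}}$.

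I do not expect a genuine obstacle here: the proof is essentially a formal assembly, the real content lying upstream in Proposition-Definitions \ref{isoViota}, \ref{isoW} and \ref{isoMiota}. The only points deserving explicit mention are that the module compatibilities are built into the construction of $\iso^{\M, \iota}$ and survive restriction to $\widehat{\W}^{\B}_N$, and that bijectivity of each component upgrades a morphism of algebra-modules to an isomorphism; both are handled by the observations above.
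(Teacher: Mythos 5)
Your proposal is correct and follows essentially the same route as the paper: the paper's own proof also reduces everything to Proposition-Definitions \ref{isoViota}, \ref{isoW} and \ref{isoMiota}, verifying the compatibility $\iso^{\M, \iota}(am) = \iso^{\V, \iota}(a)\,\iso^{\M, \iota}(m)$ via the surjectivity of $-\cdot 1_{\B}$ and the commutativity of Diagram \eqref{diag_isoM} (which is exactly the ``built into the construction'' fact you invoke), and then obtains \ref{rel_iso_MW} by restricting to $\widehat{\W}^{\B}_N$ through Diagram \eqref{diag_isoW}. Your only additions are the explicit (and correct) remark that module-compatibility of the inverse pair is automatic, with the minor caveat that in the topological category one should say the components are isomorphisms (as the cited Proposition-Definitions assert) rather than merely bijective.
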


\begin{proof} \ 
    \begin{enumerate}[label=(\roman*), leftmargin=*]
        \item The fact that $\iso^{\V, \iota}$ (resp. $\iso^{\M, \iota}$) is a $\K$-algebra (resp $\K$-module) isomorphism follows from Proposition-Definition \ref{isoViota} (resp. Proposition-Definition \ref{isoMiota}). Let $(a, m) \in \widehat{\V}^{\B}_N \times \widehat{\M}^{\B}_N$. There exists $v \in \widehat{\V}^{\B}_N$ such that $m = v \cdot 1_{\B}$. We have
        \begin{align*}
            \iso^{\M, \iota}(am) = & \iso^{\M, \iota}(av \cdot 1_{\B}) = \iso^{\V, \iota}(av) \cdot 1_{\DR} = \iso^{\V, \iota}(a) \, \iso^{\V, \iota}(v) \cdot 1_{\DR} \\
            = & \iso^{\V, \iota}(a) \, \iso^{\M, \iota}(v \cdot 1_{\B})
            = \iso^{\V, \iota}(a) \, \iso^{\M, \iota}(m),
        \end{align*}
        where the second and fourth equalities come from Proposition-Definition \ref{isoMiota}.
        \item The fact that $\iso^{\W, \iota}$ (resp. $\iso^{\M, \iota}$) is a $\K$-algebra (resp $\K$-module) isomorphism follows from Proposition-Definition \ref{isoW} (resp. Proposition-Definition \ref{isoMiota}). One proves, for any $(w, m) \in \widehat{\W}^{\B}_N \times \widehat{\M}^{\B}_N$, that 
        \[
             \iso^{\M, \iota}(wm) =  \iso^{\W, \iota}(w) \iso^{\M, \iota}(m)
        \]
        using the argument of \ref{rel_iso_MV} and Proposition-Definition \ref{isoW}.
    \end{enumerate}
\end{proof}

\begin{corollary}
    \label{eta_M_phi_iso}
    Let $\iota \in \mathrm{Emb}(G)$ and $\phi \in \Aut(G)$. We have
    \[
        \iso^{\M, \iota \circ \phi^{-1}} = \eta_{\phi}^{\M} \circ \iso^{\M, \iota},
    \]
    with $\eta^{\M}_{\phi} \in \Aut_{\K{\text -}\alg_{\mathrm{top}}}(\widehat{\M}^{\DR}_G)$ given in Lemma \ref{from_etaV}.\ref{etaM}.
\end{corollary}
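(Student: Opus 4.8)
The plan is to mirror the proof of Corollary \ref{eta_W_phi_iso}, reducing the statement to the already-established $\V$-level identity of Proposition \ref{eta_phi_iso} via a diagram chase. Both $\iso^{\M, \iota \circ \phi^{-1}}$ and $\eta_{\phi}^{\M} \circ \iso^{\M, \iota}$ are topological $\K$-module morphisms $\widehat{\M}^{\B}_N \to \widehat{\M}^{\DR}_G$, so by the surjectivity of the morphism $\widehat{- \cdot 1_{\B}} : \widehat{\V}^{\B}_N \to \widehat{\M}^{\B}_N$ (Lemma \ref{hat1B}.\ref{hatVhatM}) it suffices to check that the two morphisms coincide after precomposition with $\widehat{- \cdot 1_{\B}}$. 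This is the standard device the paper uses to transport identities along the quotient map defining $\widehat{\M}$.

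Concretely, I would compute $\iso^{\M, \iota \circ \phi^{-1}} \circ \widehat{- \cdot 1_{\B}}$ and track it through four commutativities. First, the defining diagram \eqref{diag_isoM} for $\iota \circ \phi^{-1}$ rewrites this as $(- \cdot 1_{\DR}) \circ \iso^{\V, \iota \circ \phi^{-1}}$. Next, Proposition \ref{eta_phi_iso} replaces $\iso^{\V, \iota \circ \phi^{-1}}$ by $\eta_{\phi}^{\V} \circ \iso^{\V, \iota}$, giving $(- \cdot 1_{\DR}) \circ \eta_{\phi}^{\V} \circ \iso^{\V, \iota}$. Then the commutativity of Diagram \eqref{diag_etaV_etaM}, namely $(- \cdot 1_{\DR}) \circ \eta_{\phi}^{\V} = \eta_{\phi}^{\M} \circ (- \cdot 1_{\DR})$, turns this into $\eta_{\phi}^{\M} \circ (- \cdot 1_{\DR}) \circ \iso^{\V, \iota}$. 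Finally, applying Diagram \eqref{diag_isoM} for $\iota$ in the form $(- \cdot 1_{\DR}) \circ \iso^{\V, \iota} = \iso^{\M, \iota} \circ \widehat{- \cdot 1_{\B}}$ yields $\eta_{\phi}^{\M} \circ \iso^{\M, \iota} \circ \widehat{- \cdot 1_{\B}}$. Chaining these equalities gives
\[
    \iso^{\M, \iota \circ \phi^{-1}} \circ \widehat{- \cdot 1_{\B}} = \eta_{\phi}^{\M} \circ \iso^{\M, \iota} \circ \widehat{- \cdot 1_{\B}},
\]
and cancelling the surjection $\widehat{- \cdot 1_{\B}}$ on the right proves the claim.

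I do not expect any genuine obstacle here: the argument is a purely formal diagram chase, and every ingredient (the defining square \eqref{diag_isoM} for each embedding, the $\V$-level identity of Proposition \ref{eta_phi_iso}, the compatibility \eqref{diag_etaV_etaM} of $\eta^{\V}_{\phi}$ with $\eta^{\M}_{\phi}$, and the surjectivity from Lemma \ref{hat1B}) has already been secured. The only point requiring a modicum of care is bookkeeping: making sure the two instances of Diagram \eqref{diag_isoM} are invoked for the correct embeddings ($\iota \circ \phi^{-1}$ on the left end and $\iota$ on the right end) and that the cancellation is justified by surjectivity rather than injectivity. As with Corollary \ref{eta_W_phi_iso}, one could alternatively package the four squares into a single commutative cube and read off the back face, but the sequential rewriting above is cleaner to write out.
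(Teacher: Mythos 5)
Your proof is correct and is precisely the paper's argument spelled out in full: the paper's proof of this corollary is a one-line reduction to Proposition \ref{eta_phi_iso} via the commutativity of Diagrams \eqref{diag_isoM} and \eqref{diag_etaV_etaM}, which is exactly your four-step chase, with the right-cancellation of the surjection $\widehat{- \cdot 1_{\B}}$ left implicit there but correctly made explicit by you.
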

\begin{proof}
    It follows from Proposition \ref{eta_phi_iso} thanks to the commutativity of diagrams \eqref{diag_isoM} and \eqref{diag_etaV_etaM}.
\end{proof}

\subsection{The coproducts \texorpdfstring{$\widehat{\Delta}^{\W, \B}_N$}{DWBN} and \texorpdfstring{$\widehat{\Delta}^{\M, \B}_N$}{DMBN}} \label{Betti_coprod}

\subsubsection{Comparison isomorphisms}

\begin{definition}
    For $(\iota, \lambda, \Psi) \in \Emb(G) \times \K^{\times} \times \G(\KX)$, we define the topological $\K$-algebra-module isomorphism
    \begin{equation}
        \left(\,^{\Gamma}\comp^{\V, (1)}_{(\iota, \lambda, \Psi)}, \,^{\Gamma}\comp^{\V, (10)}_{(\iota, \lambda, \Psi)}\right) : (\widehat{\V}^{\B}_N, \widehat{\V}^{\B}_N) \to (\widehat{\V}^{\DR}_G, \widehat{\V}^{\DR}_G)
    \end{equation}
    given by
    \[
        \left(\,^{\Gamma}\comp^{\V, (1)}_{(\iota, \lambda, \Psi)}, \,^{\Gamma}\comp^{\V, (10)}_{(\iota, \lambda, \Psi)}\right) := \left(\,^{\Gamma}\aut^{\V, (1)}_{(\lambda, \Psi)}, \,^{\Gamma}\aut^{\V, (10)}_{(\lambda, \Psi)}\right) \circ \left(\mathrm{iso}^{\V, \iota}, \mathrm{iso}^{\V, \iota} \right)
    \]
\end{definition}

\begin{propdef}
    \label{comp_W_Gamma_comp_W}
    For $(\iota, \lambda, \Psi) \in \Emb(G) \times \K^{\times} \times \G(\KX)$, we define the topological $\K$-algebra-module isomorphism
    \begin{equation}
        \left(\,^{\Gamma}\comp^{\W, (1)}_{(\iota, \lambda, \Psi)}, \,^{\Gamma}\comp^{\M, (10)}_{(\iota, \lambda, \Psi)}\right) :  (\widehat{\W}^{\B}_N, \widehat{\W}^{\B}_N) \to (\widehat{\W}^{\DR}_G, \widehat{\W}^{\DR}_G)
    \end{equation}
    given by
    \[
        \left(\,^{\Gamma}\comp^{\W, (1)}_{(\iota, \lambda, \Psi)}, \,^{\Gamma}\comp^{\M, (10)}_{(\iota, \lambda, \Psi)}\right) := \left(\,^{\Gamma}\aut^{\W, (1)}_{(\lambda, \Psi)}, \,^{\Gamma}\aut^{\M, (10)}_{(\lambda, \Psi)}\right) \circ \left(\mathrm{iso}^{\W, \iota}, \mathrm{iso}^{\M, \iota}\right).
    \]
    It is such that the following diagrams
    \begin{equation}
        \label{diag_GammacompW_GammacompV}
        \begin{tikzcd}
            \widehat{\W}^{\B}_N \ar["^{\Gamma}\comp^{\W, (1)}_{(\iota, \lambda, \Psi)}"]{rrr} \ar[hook]{d} &&& \widehat{\W}^{\DR}_N \ar[hook]{d} \\
            \widehat{\V}^{\B}_N \ar["^{\Gamma}\comp^{\V, (1)}_{(\iota, \lambda, \Psi)}"']{rrr} &&& \widehat{\V}^{\DR}_N
        \end{tikzcd}
    \end{equation}
    and
    \begin{equation}
        \label{Diag_Gamma_comp_M}
        \begin{tikzcd}
            \widehat{\V}^{\B}_N \ar["^{\Gamma}\comp^{\V, (10), \iota}_{(\lambda, \Psi)}"]{rrr} \ar["\widehat{- \cdot 1_{\B}}"']{d} &&& \widehat{\V}^{\DR}_G \ar["\widehat{- \cdot 1_{\DR}}"]{d} \\
            \widehat{\M}^{\B}_N \ar["^{\Gamma}\comp^{\M, (10), \iota}_{(\lambda, \Psi)}"']{rrr} &&& \widehat{\M}^{\DR}_G
        \end{tikzcd}
    \end{equation}
    commute.
\end{propdef}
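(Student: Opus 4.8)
The plan is to mirror the proof of Proposition-Definition \ref{Gamma_aut_alg_mod_2}, treating the new comparison isomorphism as a composite of two isomorphisms in $\K{\text -}\alg{\text -}\Mod_{\mathrm{top}}$ and then deducing the commutativity of the two diagrams by pasting together pairs of previously established commutative squares.

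First I would check that the defined pair $\left(\,^{\Gamma}\comp^{\W, (1)}_{(\iota, \lambda, \Psi)}, \,^{\Gamma}\comp^{\M, (10)}_{(\iota, \lambda, \Psi)}\right)$ is an isomorphism $(\widehat{\W}^{\B}_N, \widehat{\M}^{\B}_N) \to (\widehat{\W}^{\DR}_G, \widehat{\M}^{\DR}_G)$ in $\K{\text -}\alg{\text -}\Mod_{\mathrm{top}}$. By Proposition \ref{compat_iso_MWV}.\ref{rel_iso_MW} the pair $(\iso^{\W, \iota}, \iso^{\M, \iota})$ is an isomorphism in this category, and by Proposition-Definition \ref{Gamma_aut_alg_mod_2} the pair $\left(\,^{\Gamma}\aut^{\W, (1)}_{(\lambda, \Psi)}, \,^{\Gamma}\aut^{\M, (10)}_{(\lambda, \Psi)}\right)$ is an automorphism of $(\widehat{\W}^{\DR}_G, \widehat{\M}^{\DR}_G)$. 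Since morphisms in $\K{\text -}\alg{\text -}\Mod_{\mathrm{top}}$ are stable under composition, the composite defining our pair is again an isomorphism in this category.

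Next I would establish the commutativity of Diagram \eqref{diag_GammacompW_GammacompV}. Using the defining identities $^{\Gamma}\comp^{\W, (1)}_{(\iota, \lambda, \Psi)} = \,^{\Gamma}\aut^{\W, (1)}_{(\lambda, \Psi)} \circ \iso^{\W, \iota}$ and $^{\Gamma}\comp^{\V, (1)}_{(\iota, \lambda, \Psi)} = \,^{\Gamma}\aut^{\V, (1)}_{(\lambda, \Psi)} \circ \iso^{\V, \iota}$, I would precompose the inclusion $\widehat{\W}^{\DR}_G \hookrightarrow \widehat{\V}^{\DR}_G$ with $^{\Gamma}\comp^{\W, (1)}_{(\iota, \lambda, \Psi)}$ and move it through the two squares: the commutativity of Diagram \eqref{diag_GammaautW2_GammaautV2} lets me swap the inclusion past $^{\Gamma}\aut^{\W, (1)}_{(\lambda, \Psi)}$, and that of Diagram \eqref{diag_isoW} lets me swap it past $\iso^{\W, \iota}$, yielding $^{\Gamma}\comp^{\V, (1)}_{(\iota, \lambda, \Psi)}$ precomposed with the inclusion $\widehat{\W}^{\B}_N \hookrightarrow \widehat{\V}^{\B}_N$. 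The commutativity of Diagram \eqref{Diag_Gamma_comp_M} is entirely analogous, obtained by pasting the commutative square of Diagram \eqref{diag_GammaautV2_GammaautM2} (for $-\cdot 1_{\DR}$) on top of the commutative square of Diagram \eqref{diag_isoM}, which relates $\iso^{\V, \iota}$ and $\iso^{\M, \iota}$ via $\widehat{-\cdot 1_{\B}}$ and $\widehat{-\cdot 1_{\DR}}$.

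I do not expect a genuine obstacle here: once the building blocks are in place the statement is purely formal, and the only care needed is bookkeeping — matching each composite to the correct pair of previously established squares and keeping the directions of the inclusion and quotient arrows straight. The whole argument is the Betti-side counterpart of the de Rham computation of Proposition-Definition \ref{Gamma_aut_alg_mod_2}, with the pair $(\lambda \bullet_{\W} -, \lambda \bullet_{\M} -)$ there replaced by $(\iso^{\W, \iota}, \iso^{\M, \iota})$ here.
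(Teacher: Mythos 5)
Your proposal is correct and follows essentially the same route as the paper: both establish the pair as a composition of the isomorphisms from Proposition \ref{compat_iso_MWV}.\ref{rel_iso_MW} and Proposition-Definition \ref{Gamma_aut_alg_mod_2} in the category $\K{\text -}\alg{\text -}\Mod_{\mathrm{top}}$, and both obtain the two commutative diagrams by pasting Diagrams \eqref{diag_GammaautW2_GammaautV2} and \eqref{diag_isoW}, respectively Diagrams \eqref{diag_GammaautV2_GammaautM2} and \eqref{diag_isoM}. No gaps; your explicit chasing of the inclusion and quotient arrows through the pasted squares is just a spelled-out version of what the paper leaves implicit.
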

\begin{proof}
    From Proposition-Definition \ref{Gamma_aut_alg_mod_2} and Proposition \ref{compat_iso_MWV}.\ref{rel_iso_MW}, we have that the pairs $\left(\,^{\Gamma}\aut^{\W, (1)}_{(\lambda, \Psi)}, \,^{\Gamma}\aut^{\M, (10)}_{(\lambda, \Psi)}\right)$ and $\left(\mathrm{iso}^{\W, \iota}, \mathrm{iso}^{\W, \iota}\right)$ are isomorphisms in $\K{\text -}\alg{\text -}\Mod_{\mathrm{top}}$; the composition is then an isomorphism in $\K{\text -}\alg{\text -}\Mod_{\mathrm{top}}$. Next, the commutativity of the diagrams follows from the commutativity of Diagrams \eqref{diag_GammaautW2_GammaautV2} and \eqref{diag_isoW} and Diagrams \eqref{diag_GammaautV2_GammaautM2} and \eqref{diag_isoM}.
\end{proof}

\noindent Recall the action of the group $(\Aut(G) \times \K^{\times}) \ltimes \G(\KX)$ on $\Emb(G) \times \K^{\times} \times \G(\KX)$ given in Corollary \ref{EmbGxkxxGkX_torsor}. One has the following result:
\begin{proposition}
    \label{Gamma_comp_aut1}
    For $(\phi, \lambda, \Psi) \in \Aut(G) \times \K^{\times} \times \G(\KX)$ and $(\iota, \nu, \Phi) \in \Emb(G) \times \K^{\times} \times \G(\KX)$, we have
    \begin{enumerate}[label=(\roman*), leftmargin=*]
        \item \label{compWxautW} $^{\Gamma}\comp^{\W, (1)}_{(\phi, \lambda, \Psi) \cdot (\iota, \nu, \Phi)} = \, ^{\Gamma}\aut^{\W, (1)}_{(\phi, \lambda, \Psi)} \circ \,^{\Gamma}\comp^{\W, (1), \iota}_{(\iota, \nu, \Phi)}$.
        \item \label{compMxautM} $^{\Gamma}\comp^{\M, (10)}_{(\phi, \lambda, \Psi) \cdot (\iota, \nu, \Phi)} = \, ^{\Gamma}\aut^{\M, (10)}_{(\phi, \lambda, \Psi)} \circ \,^{\Gamma}\comp^{\M, (10), \iota}_{(\iota, \nu, \Phi)}$.
    \end{enumerate}
\end{proposition}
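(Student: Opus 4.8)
The plan is to prove both identities by a direct composition chase: expand each side through the definition of the comparison isomorphisms in Proposition-Definition \ref{comp_W_Gamma_comp_W}, and then match the resulting factors using the intertwining relations already established. I would treat part \ref{compWxautW} in full and note that \ref{compMxautM} runs through the identical template with the $\M$-decorated analogue of each ingredient.

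First I would unwind the left-hand side. By Corollary \ref{EmbGxkxxGkX_torsor} the acted-upon triple is $(\phi,\lambda,\Psi)\cdot(\iota,\nu,\Phi)=(\iota\circ\phi^{-1},\lambda\nu,\Psi\circledast\eta_{\phi}(\lambda\bullet\Phi))$, so by the definition of $^{\Gamma}\comp^{\W,(1)}$,
\[
^{\Gamma}\comp^{\W,(1)}_{(\phi,\lambda,\Psi)\cdot(\iota,\nu,\Phi)}
= \,^{\Gamma}\aut^{\W,(1)}_{(\lambda\nu,\,\Psi\circledast\eta_{\phi}(\lambda\bullet\Phi))}\circ\iso^{\W,\iota\circ\phi^{-1}}.
\]
The crucial reformulation is to recognise the group element in the first factor: Lemma \ref{eta_bullet} rewrites $\eta_{\phi}(\lambda\bullet\Phi)=\lambda\bullet\eta_{\phi}(\Phi)$, and the product law of $\K^{\times}\ltimes\G(\KX)$ then gives $(\lambda\nu,\Psi\circledast\eta_{\phi}(\lambda\bullet\Phi))=(\lambda,\Psi)\circledast(\nu,\eta_{\phi}(\Phi))$. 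Since $(\lambda,\Psi)\mapsto\,^{\Gamma}\aut^{\W,(1)}_{(\lambda,\Psi)}$ is a group action by Corollary \ref{act_gamma_aut_WM}, the first factor factorises as $^{\Gamma}\aut^{\W,(1)}_{(\lambda,\Psi)}\circ\,^{\Gamma}\aut^{\W,(1)}_{(\nu,\eta_{\phi}(\Phi))}$.

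Next I would feed in the two equivariance relations involving $\eta_{\phi}$. Corollary \ref{eta_W_phi_iso} supplies $\iso^{\W,\iota\circ\phi^{-1}}=\eta^{\W}_{\phi}\circ\iso^{\W,\iota}$, and Corollary \ref{Gammaaut_eta_etaWM_Gammaaut}.\ref{Gammaaut_eta_etaW_Gammaaut} supplies $^{\Gamma}\aut^{\W,(1)}_{(\nu,\eta_{\phi}(\Phi))}=\eta^{\W}_{\phi}\circ\,^{\Gamma}\aut^{\W,(1)}_{(\nu,\Phi)}\circ(\eta^{\W}_{\phi})^{-1}$. Substituting both and cancelling the adjacent $(\eta^{\W}_{\phi})^{-1}\circ\eta^{\W}_{\phi}$ collapses the left-hand side to
\[
^{\Gamma}\aut^{\W,(1)}_{(\lambda,\Psi)}\circ\eta^{\W}_{\phi}\circ\,^{\Gamma}\aut^{\W,(1)}_{(\nu,\Phi)}\circ\iso^{\W,\iota}.
\]
Finally, the defining formula $^{\Gamma}\aut^{\W,(1)}_{(\phi,\lambda,\Psi)}=\,^{\Gamma}\aut^{\W,(1)}_{(\lambda,\Psi)}\circ\eta^{\W}_{\phi}$ identifies the first two factors, while $^{\Gamma}\comp^{\W,(1)}_{(\iota,\nu,\Phi)}=\,^{\Gamma}\aut^{\W,(1)}_{(\nu,\Phi)}\circ\iso^{\W,\iota}$ identifies the last two, producing exactly the right-hand side.

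The argument is essentially bookkeeping of the semidirect-product action, so the only delicate point — the expected main obstacle — is verifying that the $\eta_{\phi}$-twist built into the action of Corollary \ref{EmbGxkxxGkX_torsor} matches precisely the conjugation of Corollary \ref{Gammaaut_eta_etaWM_Gammaaut} together with the $\iso$-equivariance of Corollary \ref{eta_W_phi_iso}, so that the stray $\eta^{\W}_{\phi}$-factors cancel and reassemble as intended rather than accumulating. For part \ref{compMxautM} I would run the identical chain using Corollary \ref{eta_M_phi_iso}, Corollary \ref{Gammaaut_eta_etaWM_Gammaaut}.\ref{Gammaaut_eta_etaM_Gammaaut}, the $\M$-component of the action in Corollary \ref{act_gamma_aut_WM}, and the defining formula of $^{\Gamma}\aut^{\M,(10)}_{(\phi,\lambda,\Psi)}$.
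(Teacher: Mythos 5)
Your proposal is correct and follows essentially the same route as the paper's proof: both expand the action via Corollary \ref{EmbGxkxxGkX_torsor}, rewrite the group element using Lemma \ref{eta_bullet} and the semidirect product law, split off $^{\Gamma}\aut^{\W,(1)}_{(\lambda,\Psi)}$ via Corollary \ref{act_gamma_aut_WM}, commute $\eta^{\W}_{\phi}$ past $^{\Gamma}\aut^{\W,(1)}_{(\nu,\Phi)}$ using Corollaries \ref{eta_W_phi_iso} and \ref{Gammaaut_eta_etaWM_Gammaaut}, and reassemble the defining formulas; the $\M$-case is handled by the same substitutions in both. No gaps.
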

\begin{proof} \ 
    \begin{enumerate}[label=(\roman*), leftmargin=*]
        \item We have
        \begin{align*}
            ^{\Gamma}\comp^{\W, (1)}_{(\phi, \lambda, \Psi) \cdot (\iota, \nu, \Phi)} & = \,^{\Gamma}\comp^{\W, (1)}_{(\iota \circ \phi^{-1}, \lambda \nu, \Psi \circledast \eta_{\phi}(\lambda \bullet \Phi))} = \, ^{\Gamma}\aut^{\W, (1)}_{(\lambda, \Psi) \circledast (\nu, \eta_{\phi}(\Phi))} \circ \iso^{\W, \iota \circ \phi^{-1}} \\
            & = \,^{\Gamma}\aut^{\W, (1)}_{(\lambda, \Psi)} \circ \, ^{\Gamma}\aut^{\W, (1)}_{(\nu, \eta_{\phi}(\Phi))} \circ \eta_{\phi}^{\W} \circ \iso^{\W, \iota} \\
            & = \,^{\Gamma}\aut^{\W, (1)}_{(\lambda, \Psi)} \circ \eta_{\phi}^{\W} \circ \,^{\Gamma}\aut^{\W, (1)}_{(\nu, \Phi)} \circ \iso^{\W, \iota} \\
            & = \,^{\Gamma}\aut^{\W, (1)}_{(\phi, \lambda, \Psi)} \circ \, ^{\Gamma}\comp^{\W, (1)}_{(\iota, \nu, \Phi)},
        \end{align*}
        where the second equality follows from Lemma \ref{eta_bullet}, the third equality from Corollary \ref{act_gamma_aut_WM} and Corollary \ref{eta_W_phi_iso} and the fourth one from Corollary \ref{Gammaaut_eta_etaWM_Gammaaut}.\ref{Gammaaut_eta_etaW_Gammaaut}.
        \item We have
        \begin{align*}
            ^{\Gamma}\comp^{\M, (10)}_{(\phi, \lambda, \Psi) \cdot (\iota, \nu, \Phi)} & = \,^{\Gamma}\comp^{\M, (10)}_{(\iota \circ \phi^{-1}, \lambda \nu, \Psi \circledast \eta_{\phi}(\lambda \bullet \Phi))} = \, ^{\Gamma}\aut^{\M, (10)}_{(\lambda, \Psi) \circledast (\nu, \eta_{\phi}(\Phi))} \circ \iso^{\M, \iota \circ \phi^{-1}} \\
            & = \,^{\Gamma}\aut^{\M, (10)}_{(\lambda, \Psi)} \circ \, ^{\Gamma}\aut^{\M, (10)}_{(\nu, \eta_{\phi}(\Phi))} \circ \eta_{\phi}^{\M} \circ \iso^{\M, \iota} \\
            & = \,^{\Gamma}\aut^{\M, (10)}_{(\lambda, \Psi)} \circ \eta_{\phi}^{\M} \circ \,^{\Gamma}\aut^{\M, (10)}_{(\nu, \Phi)} \circ \iso^{\M, \iota} \\
            & = \,^{\Gamma}\aut^{\M, (10)}_{(\phi, \lambda, \Psi)} \circ \, ^{\Gamma}\comp^{\M, (10)}_{(\iota, \nu, \Phi)},
        \end{align*}
        where the second equality follows from Lemma \ref{eta_bullet}, the third equality from Corollary \ref{act_gamma_aut_WM} and Corollary \ref{eta_M_phi_iso} and the fourth one from Corollary \ref{Gammaaut_eta_etaWM_Gammaaut}.\ref{Gammaaut_eta_etaM_Gammaaut}.
    \end{enumerate}
\end{proof}

\subsubsection{The coproducts \texorpdfstring{$\widehat{\Delta}^{\W, \B}_{N}$}{DeltaWBN} and \texorpdfstring{$\widehat{\Delta}^{\M, \B}_{N}$}{DeltaMBN}}

\begin{theorem}
    \label{Delta_B_N}
    The composition
    \begin{align*}
        &\mbox{\footnotesize$\left(\left(\left(^{\Gamma}\comp^{\W, (1)}_{(\iota, \nu, \Phi)}\right)^{\otimes 2}\right)^{-1}, \left(\left(^{\Gamma}\comp^{\M, (10)}_{(\iota, \nu, \Phi)}\right)^{\otimes 2}\right)^{-1}\right) \circ \left(\widehat{\Delta}^{\W, \DR}_G, \widehat{\Delta}^{\M, \DR}_G\right) \circ \left(\,^{\Gamma}\comp^{\W, (1)}_{(\iota, \nu, \Phi)}, \,^{\Gamma}\comp^{\M, (10)}_{(\iota, \nu, \Phi)}\right)$} : \\
        &\left(\widehat{\W}^{\B}_N, \widehat{\M}^{\B}_N\right) \to \left((\widehat{\W}^{\B}_N)^{\otimes 2}, (\widehat{\M}^{\B}_N)^{\otimes 2}\right)
    \end{align*}
    is independent of the choice of $(\iota, \nu, \Phi) \in \DMR_{\times}(\K)$. We denote it $(\widehat{\Delta}^{\W, \B}_N, \widehat{\Delta}^{\M, \B}_N)$. Moreover, the pair $(\widehat{\Delta}^{\W, \B}_N, \widehat{\Delta}^{\M, \B}_N)$ is an element of $\mathrm{Cop}_{\K{\text -}\alg{\text -}\Mod_{\mathrm{top}}}\left(\widehat{\W}^{\B}_N, \widehat{\M}^{\B}_N\right)$.
\end{theorem}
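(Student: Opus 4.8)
The plan is to establish the two assertions separately: first that the displayed composition is independent of the choice of $(\iota, \nu, \Phi) \in \DMR_{\times}(\K)$, and then that the resulting pair lies in $\mathrm{Cop}_{\K{\text -}\alg{\text -}\Mod_{\mathrm{top}}}(\widehat{\W}^{\B}_N, \widehat{\M}^{\B}_N)$. Throughout I would abbreviate $c^{\W}_{(\iota,\nu,\Phi)} := \,^{\Gamma}\comp^{\W, (1)}_{(\iota, \nu, \Phi)}$ and $c^{\M}_{(\iota,\nu,\Phi)} := \,^{\Gamma}\comp^{\M, (10)}_{(\iota, \nu, \Phi)}$, so that the $\W$-component of the composition is the pull-back $\big((c^{\W}_{(\iota,\nu,\Phi)})^{\otimes 2}\big)^{-1} \circ \widehat{\Delta}^{\W, \DR}_G \circ c^{\W}_{(\iota,\nu,\Phi)}$, and the $\M$-component is its analogue with $\M$ in place of $\W$.

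For the independence, I would fix two points $(\iota, \nu, \Phi), (\iota', \nu', \Phi') \in \DMR_{\times}(\K)$. By the torsor structure of Proposition \ref{DMR_x_torsor}, there is a unique $(\phi, \lambda, \Psi) \in (\Aut(G) \times \K^{\times}) \ltimes \DMR_0^G(\K)$ with $(\phi, \lambda, \Psi) \cdot (\iota, \nu, \Phi) = (\iota', \nu', \Phi')$. Proposition \ref{Gamma_comp_aut1}.\ref{compWxautW} then yields $c^{\W}_{(\iota',\nu',\Phi')} = \,^{\Gamma}\aut^{\W, (1)}_{(\phi, \lambda, \Psi)} \circ c^{\W}_{(\iota,\nu,\Phi)}$. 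Substituting this into the $\W$-component pull-back at $(\iota',\nu',\Phi')$ and using functoriality of $(-)^{\otimes 2}$, the factor $c^{\W}_{(\iota',\nu',\Phi')}$ splits and the computation reduces to $\big((c^{\W}_{(\iota,\nu,\Phi)})^{\otimes 2}\big)^{-1} \circ \big[(\,^{\Gamma}\aut^{\W, (1)}_{(\phi, \lambda, \Psi)})^{\otimes 2}\big]^{-1} \circ \widehat{\Delta}^{\W, \DR}_G \circ \,^{\Gamma}\aut^{\W, (1)}_{(\phi, \lambda, \Psi)} \circ c^{\W}_{(\iota,\nu,\Phi)}$. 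The crucial step is that $(\phi, \lambda, \Psi)$ lies in $\Stab_{(\Aut(G) \times \K^{\times}) \ltimes \G(\KX)}(\widehat{\Delta}^{\W, \DR}_G)(\K)$ by Corollary \ref{DMR_sub_StabM_sub_StabW}, so Definition \ref{Stab_DeltaWM}.\ref{Stab_DeltaW} gives $(\,^{\Gamma}\aut^{\W, (1)}_{(\phi, \lambda, \Psi)})^{\otimes 2} \circ \widehat{\Delta}^{\W, \DR}_G = \widehat{\Delta}^{\W, \DR}_G \circ \,^{\Gamma}\aut^{\W, (1)}_{(\phi, \lambda, \Psi)}$; the inner $\,^{\Gamma}\aut$-sandwich thus collapses to $\widehat{\Delta}^{\W, \DR}_G$, and what remains is exactly the pull-back along $c^{\W}_{(\iota,\nu,\Phi)}$. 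The identical argument, now invoking Proposition \ref{Gamma_comp_aut1}.\ref{compMxautM} together with the $\M$-stabilizer clause of Corollary \ref{DMR_sub_StabM_sub_StabW} and Definition \ref{Stab_DeltaWM}.\ref{Stab_DeltaM}, settles the $\M$-component. This proves the independence and justifies the notation $(\widehat{\Delta}^{\W, \B}_N, \widehat{\Delta}^{\M, \B}_N)$.

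For membership in $\mathrm{Cop}_{\K{\text -}\alg{\text -}\Mod_{\mathrm{top}}}$, I would argue by transport of structure. By Proposition-Definition \ref{comp_W_Gamma_comp_W}, the pair $(c^{\W}_{(\iota,\nu,\Phi)}, c^{\M}_{(\iota,\nu,\Phi)})$ is an isomorphism $(\widehat{\W}^{\B}_N, \widehat{\M}^{\B}_N) \to (\widehat{\W}^{\DR}_G, \widehat{\M}^{\DR}_G)$ in $\K{\text -}\alg{\text -}\Mod_{\mathrm{top}}$; consequently conjugation by it carries $\mathrm{Cop}_{\K{\text -}\alg{\text -}\Mod_{\mathrm{top}}}(\widehat{\W}^{\DR}_G, \widehat{\M}^{\DR}_G)$ into $\mathrm{Cop}_{\K{\text -}\alg{\text -}\Mod_{\mathrm{top}}}(\widehat{\W}^{\B}_N, \widehat{\M}^{\B}_N)$. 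Concretely, the $\W$-component is a composite of topological $\K$-algebra morphisms and hence a topological $\K$-algebra morphism, the $\M$-component is a composite of topological $\K$-module morphisms, and the algebra-module compatibility $\widehat{\Delta}^{\M, \B}_N(wm) = \widehat{\Delta}^{\W, \B}_N(w)\,\widehat{\Delta}^{\M, \B}_N(m)$ is inherited from the corresponding identity for $(\widehat{\Delta}^{\W, \DR}_G, \widehat{\Delta}^{\M, \DR}_G)$, which holds by Lemma \ref{DeltaW_DelaM_DR_Cop}, using that the comparison pair respects the algebra-module actions. Since by the first part the composition agrees with this transported pair for any base point, it follows that $(\widehat{\Delta}^{\W, \B}_N, \widehat{\Delta}^{\M, \B}_N) \in \mathrm{Cop}_{\K{\text -}\alg{\text -}\Mod_{\mathrm{top}}}(\widehat{\W}^{\B}_N, \widehat{\M}^{\B}_N)$.

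The main obstacle is conceptual rather than computational: one must recognize that the ambiguity in the base point is precisely an element of the stabilizer group, so that the torsor structure of $\DMR_{\times}(\K)$ from Proposition \ref{DMR_x_torsor} dovetails exactly with the stabilizer inclusion of Corollary \ref{DMR_sub_StabM_sub_StabW}. Once this matching is in place, the cancellation of the $\,^{\Gamma}\aut$ factors is a formal manipulation, and the remaining verifications are routine transport-of-structure arguments relying on Proposition-Definition \ref{comp_W_Gamma_comp_W} and Lemma \ref{DeltaW_DelaM_DR_Cop}.
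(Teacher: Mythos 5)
Your proposal is correct and follows essentially the same route as the paper's proof: independence via the torsor structure of Proposition \ref{DMR_x_torsor}, the compatibility identities of Proposition \ref{Gamma_comp_aut1}, and the stabilizer inclusions of Corollary \ref{DMR_sub_StabM_sub_StabW}, then membership in $\mathrm{Cop}_{\K{\text -}\alg{\text -}\Mod_{\mathrm{top}}}$ by transport of structure through Proposition-Definition \ref{comp_W_Gamma_comp_W} and Lemma \ref{DeltaW_DelaM_DR_Cop}. The only differences are cosmetic (order of the $\W$ and $\M$ components and slightly more explicit unwinding of the algebra-module compatibility).
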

\begin{proof}
    Let $(\iota, \nu, \Phi)$ and $(\iota^{\prime}, \nu^{\prime}, \Phi^{\prime}) \in \DMR_{\times}(\K)$. Thanks to Proposition \ref{DMR_x_torsor}, there exists a unique $(\phi, \lambda, \Psi) \in (\Aut(G) \times \K^{\times}) \ltimes \DMR_0^G(\K)$ such that $(\iota^{\prime}, \nu^{\prime}, \Phi^{\prime}) = (\phi, \lambda, \Psi) \cdot (\iota, \nu, \Phi)$. We have
    \begin{align*}
        & \left(\left(^{\Gamma}\comp^{\M, (10)}_{(\iota^{\prime}, \nu^{\prime}, \Phi^{\prime})}\right)^{\otimes 2}\right)^{-1} \circ \widehat{\Delta}^{\M, \DR}_G \circ \,^{\Gamma}\comp^{\M, (10)}_{(\iota^{\prime}, \nu^{\prime}, \Phi^{\prime})} \\
        & = \left(\left(^{\Gamma}\comp^{\M, (10)}_{(\phi, \lambda, \Psi) \cdot (\iota, \nu, \Phi)}\right)^{\otimes 2}\right)^{-1} \circ \widehat{\Delta}^{\M, \DR}_G \circ \,^{\Gamma}\comp^{\M, (10)}_{(\phi, \lambda, \Psi) \cdot (\iota, \nu, \Phi)} \\
        & = \mbox{\small$\left(\left(^{\Gamma}\comp^{\M, (10)}_{(\iota, \nu, \Phi)}\right)^{\otimes 2}\right)^{-1} \circ \left(\left(^{\Gamma}\aut^{\M, (10)}_{(\phi, \lambda, \Psi)}\right)^{\otimes 2}\right)^{-1} \circ \widehat{\Delta}^{\M, \DR}_G \circ \, ^{\Gamma}\aut^{\M, (10)}_{(\phi, \lambda, \Psi)} \circ \,^{\Gamma}\comp^{\M, (10)}_{(\iota, \nu, \Phi)}$} \\
        & = \left(\left(^{\Gamma}\comp^{\M, (10)}_{(\iota, \nu, \Phi)}\right)^{\otimes 2}\right)^{-1} \circ \widehat{\Delta}^{\M, \DR}_G \circ \,^{\Gamma}\comp^{\M, (10)}_{(\iota, \nu, \Phi)},
    \end{align*}
    where the second equality comes from Proposition \ref{Gamma_comp_aut1}.\ref{compMxautM} and the last equality from the inclusion $(\Aut(G) \times \K^{\times}) \ltimes \DMR_0^G(\K) \subset \Stab_{(\Aut(G) \times \K^{\times}) \ltimes \G(\KX)}(\widehat{\Delta}^{\M, \DR}_{G})(\K)$ of Corollary \ref{DMR_sub_StabM_sub_StabW}.
    Similary, we prove that
    \begin{equation*}
        \mbox{\scriptsize$\left(\left(^{\Gamma}\comp^{\W, (1)}_{(\iota^{\prime}, \nu^{\prime}, \Phi^{\prime})}\right)^{\otimes 2}\right)^{-1} \circ \widehat{\Delta}^{\W, \DR}_G \circ \,^{\Gamma}\comp^{\W, (1)}_{(\iota^{\prime}, \nu^{\prime}, \Phi^{\prime})} = \left(\left(^{\Gamma}\comp^{\W, (1)}_{(\iota, \nu, \Phi)}\right)^{\otimes 2}\right)^{-1} \circ \widehat{\Delta}^{\W, \DR}_G \circ \,^{\Gamma}\comp^{\W, (1)}_{(\iota, \nu, \Phi)}$},
    \end{equation*}
    by replacing $\M, (10)$ (resp. $\M, \DR)$ by $\W, (1)$ (resp. $\W, \DR$) in the exponents and the use of Proposition \ref{Gamma_comp_aut1}.\ref{compMxautM} by that of Proposition \ref{Gamma_comp_aut1}.\ref{compWxautW}; and using the the inclusion $(\Aut(G) \times \K^{\times}) \ltimes \DMR_0^G(\K) \subset \Stab_{(\Aut(G) \times \K^{\times}) \ltimes \G(\KX)}(\widehat{\Delta}^{\W, \DR}_{G})(\K)$ of Corollary \ref{DMR_sub_StabM_sub_StabW}. \newline
    Finally, $\left(\widehat{\Delta}^{\W, \B}_N, \widehat{\Delta}^{\M, \B}_N\right)$ is an element of $\mathrm{Cop}_{\K{\text -}\alg{\text -}\Mod_{\mathrm{top}}}\left(\widehat{\W}^{\B}_N, \widehat{\M}^{\B}_N\right)$ since the pair $\left(\widehat{\Delta}^{\W, \DR}_G, \widehat{\Delta}^{\M, \DR}_G\right)$ is an element of $\mathrm{Cop}_{\K{\text -}\alg{\text -}\Mod_{\mathrm{top}}}\left(\widehat{\W}^{\DR}_G, \widehat{\M}^{\DR}_G\right)$ thanks to Lemma \ref{DeltaW_DelaM_DR_Cop} and the pair $\left(\,^{\Gamma}\comp^{\W, (1)}_{(\iota, \lambda, \Psi)}, \,^{\Gamma}\comp^{\M, (10)}_{(\iota, \lambda, \Psi)}\right)$ is a $\K$-algebra-module isomorphism thanks to Proposition-Definition \ref{comp_W_Gamma_comp_W}. 
\end{proof}

\begin{corollary}
    We have $\widehat{\Delta}^{\M, \B}_N(1_{\B}) = 1_{\B}^{\otimes 2}$.
\end{corollary}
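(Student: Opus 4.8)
The plan is to read off the conclusion from the very definition of $\widehat{\Delta}^{\M, \B}_N$ supplied by Theorem \ref{Delta_B_N}. Fix any $(\iota, \nu, \Phi) \in \DMR_{\times}(\K)$ (such a triple exists because each $\DMR_{\nu}^{\iota}(\K)$ is non-empty and $\K^{\times} \neq \emptyset$). By construction the pair $\left(\,^{\Gamma}\comp^{\W, (1)}_{(\iota, \nu, \Phi)}, \,^{\Gamma}\comp^{\M, (10)}_{(\iota, \nu, \Phi)}\right)$ intertwines the de Rham coproducts with $(\widehat{\Delta}^{\W, \B}_N, \widehat{\Delta}^{\M, \B}_N)$, so $^{\Gamma}\comp^{\M, (10)}_{(\iota, \nu, \Phi)}$ is an isomorphism of coalgebras from $(\widehat{\M}^{\B}_N, \widehat{\Delta}^{\M, \B}_N)$ to $(\widehat{\M}^{\DR}_G, \widehat{\Delta}^{\M, \DR}_G)$. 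Hence $1_{\B}$ is grouplike for $\widehat{\Delta}^{\M, \B}_N$ as soon as its image $^{\Gamma}\comp^{\M, (10)}_{(\iota, \nu, \Phi)}(1_{\B})$ is grouplike for $\widehat{\Delta}^{\M, \DR}_G$: indeed, writing $c := \,^{\Gamma}\comp^{\M, (10)}_{(\iota, \nu, \Phi)}(1_{\B})$ and assuming $\widehat{\Delta}^{\M, \DR}_G(c) = c \otimes c$, one gets $\widehat{\Delta}^{\M, \B}_N(1_{\B}) = \left(\left(\,^{\Gamma}\comp^{\M, (10)}_{(\iota, \nu, \Phi)}\right)^{-1}\right)^{\otimes 2}(c \otimes c) = \left(\left(\,^{\Gamma}\comp^{\M, (10)}_{(\iota, \nu, \Phi)}\right)^{-1}(c)\right)^{\otimes 2} = 1_{\B}^{\otimes 2}$.

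First I would compute $c$ explicitly. Unwinding $^{\Gamma}\comp^{\M, (10)}_{(\iota, \nu, \Phi)} = \,^{\Gamma}\aut^{\M, (10)}_{(\nu, \Phi)} \circ \iso^{\M, \iota}$: the construction of $\iso^{\M, \iota}$ in Proposition-Definition \ref{isoMiota} sends $1_{\B}$ to $1_{\DR}$; since $\nu \bullet_{\V} -$ fixes $1 \in \widehat{\V}^{\DR}_G$, the commutativity of Diagram \eqref{diag_lambdaV_lambdaM} shows $\nu \bullet_{\M} -$ fixes $1_{\DR}$; and $^{\Gamma}\aut^{\M, (10)}_{(\nu, \Phi)}(1_{\DR}) = \,^{\Gamma}\aut^{\M, (10)}_{\Phi}(1_{\DR})$. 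Thus $c = \,^{\Gamma}\aut^{\M, (10)}_{\Phi}(1_{\DR})$, and the whole argument reduces to identifying this element with $\Phi^{\star}$.

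The crux is therefore the identity $^{\Gamma}\aut^{\M, (10)}_{\Phi}(1_{\DR}) = \Phi^{\star}$. By the commutativity of Diagram \eqref{diag_GammaautV_GammaautM} together with the definition $^{\Gamma}\aut^{\V, (10)}_{\Phi} = \,^{\Gamma}\aut^{\V, (1)}_{\Phi} \circ r_{\Gamma_{\Phi}^{-1}(-e_1) \beta(\Phi \otimes 1)}$, one has $^{\Gamma}\aut^{\M, (10)}_{\Phi}(1_{\DR}) = \,^{\Gamma}\aut^{\V, (1)}_{\Phi}(A) \cdot 1_{\DR}$ with $A := \Gamma_{\Phi}^{-1}(-e_1) \beta(\Phi \otimes 1)$, whereas by definition $\Phi^{\star} = A \cdot 1_{\DR}$. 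So the identity amounts to showing that $^{\Gamma}\aut^{\V, (1)}_{\Phi}(A) - A$ lies in the left ideal $\widehat{\V}^{\DR}_G e_0 + \sum_{g \in G} \widehat{\V}^{\DR}_G (g-1) = \ker(- \cdot 1_{\DR})$. This is exactly the orbit-map description of $\Psi \mapsto \Psi^{\star}$ underlying the action \eqref{GKX_act_MG} of \cite{Yad}; I would either cite it there or verify it directly, using that $^{\Gamma}\aut^{\V, (1)}_{\Phi}(\Gamma_{\Phi}^{-1}(-e_1)) = \Gamma_{\Phi}^{-1}(-e_1)$ and that $^{\Gamma}\aut^{\V, (1)}_{\Phi}$ acts by $\Ad_A$ on $e_0$ and on $G$, so that the difference collapses in $\widehat{\M}^{\DR}_G$ once $e_0 \cdot 1_{\DR} = 0$ and $g \cdot 1_{\DR} = 1_{\DR}$ are invoked. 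I expect this identity to be the only non-formal point.

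Granting $c = \Phi^{\star}$, the conclusion is immediate. Since $\Phi \in \DMR_{\nu}^{\iota}(\K)$ satisfies condition \ref{DMR_lambda_condition_ii} of Definition \ref{DMR_lambda}, Proposition \ref{DMR_lambda_condition_ii_new} gives $\widehat{\Delta}^{\M, \DR}_G(\Phi^{\star}) = \Phi^{\star} \otimes \Phi^{\star}$; note that this holds for every $\nu \in \K^{\times}$, so no stabilizer hypothesis on $\Phi$ is needed. Applying $\left(\left(\,^{\Gamma}\comp^{\M, (10)}_{(\iota, \nu, \Phi)}\right)^{\otimes 2}\right)^{-1}$ and using $\left(\,^{\Gamma}\comp^{\M, (10)}_{(\iota, \nu, \Phi)}\right)^{-1}(\Phi^{\star}) = 1_{\B}$ then yields $\widehat{\Delta}^{\M, \B}_N(1_{\B}) = 1_{\B} \otimes 1_{\B}$, as claimed; the independence of the choice of $(\iota, \nu, \Phi)$ is already guaranteed by Theorem \ref{Delta_B_N}, so a single choice suffices.
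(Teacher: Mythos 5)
Your proposal is correct and follows essentially the same route as the paper's own proof: fix a single $(\iota, \nu, \Phi) \in \DMR_{\times}(\K)$ (Theorem \ref{Delta_B_N} makes the choice irrelevant), identify $^{\Gamma}\comp^{\M, (10)}_{(\iota, \nu, \Phi)}(1_{\B})$ with $\Phi^{\star}$, invoke Proposition \ref{DMR_lambda_condition_ii_new} together with condition \ref{DMR_lambda_condition_ii} of Definition \ref{DMR_lambda} to see that $\Phi^{\star}$ is grouplike for $\widehat{\Delta}^{\M, \DR}_G$, and pull back through $\left(\left(\,^{\Gamma}\comp^{\M, (10)}_{(\iota, \nu, \Phi)}\right)^{\otimes 2}\right)^{-1}$. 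The step you isolate as the crux, namely $^{\Gamma}\aut^{\M, (10)}_{\Phi}(1_{\DR}) = \Phi^{\star}$, and propose to cite from \cite{Yad} is precisely the content of the chain of equalities \eqref{comp_1B} that the paper asserts without further justification, so both arguments rest on the same fact invoked at the same point.
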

\begin{proof}
    From Theorem \ref{Delta_B_N}, let us compute $\widehat{\Delta}^{\M, \B}_N(1_{\B})$ by considering an element $(\iota, \lambda, \Psi) \in \DMR_{\times}(\K)$. First, we have
    \begin{equation}
        \label{comp_1B}
        ^{\Gamma}\comp^{\M, (10)}_{(\iota, \lambda, \Psi)}(1_{\B}) = \,^{\Gamma}\comp^{\V, (10)}_{(\iota, \lambda, \Psi)}(1) \cdot 1_{\DR} = \Gamma_{\Psi}^{-1}(-e_1) \beta(\Psi \otimes 1) \cdot 1_{\DR} = \Psi^{\star}. 
    \end{equation}
    Therefore,
    \begin{align*}
        \widehat{\Delta}^{\M, \B}_N(1_{\B}) & = \left(\left(^{\Gamma}\comp^{\M, (10)}_{(\iota, \lambda, \Psi)}\right)^{\otimes 2}\right)^{-1} \circ \widehat{\Delta}^{\M, \DR}_G \circ \, ^{\Gamma}\comp^{\M, (10)}_{(\iota, \lambda, \Psi)}(1_{\B}) \\
        & = \left(\left(^{\Gamma}\comp^{\M, (10)}_{(\iota, \lambda, \Psi)}\right)^{\otimes 2}\right)^{-1} \circ \widehat{\Delta}^{\M, \DR}_G \left(\Psi^{\star}\right) \\
        & = \left(\left(^{\Gamma}\comp^{\M, (10)}_{(\iota, \lambda, \Psi)}\right)^{\otimes 2}\right)^{-1} (\Psi^{\star} \otimes \Psi^{\star}) = 1_{\B}^{\otimes 2},
    \end{align*}
    where the first and last equalities come from \eqref{comp_1B} and the second one from the fact that $\Psi \in \DMR_{\lambda}^{\iota}(\K)$.
\end{proof}

\begin{corollary} \ 
    \begin{enumerate}[label=(\roman*), leftmargin=*]
        \item \label{betti_Hopf} The pair $(\widehat{\W}^{\B}_N, \widehat{\Delta}^{\W, \B}_N)$ is an object in the category $\K{\text -}\mathrm{Hopf}_{\mathrm{top}}$.
        \item \label{betti_coalg} The pair $(\widehat{\M}^{\B}_N, \widehat{\Delta}^{\M, \B}_N)$ is an object in the category $\K{\text -}\mathrm{coalg}_{\mathrm{top}}$.
        \item The pair $\left((\widehat{\W}^{\B}_N, \widehat{\Delta}^{\W, \B}_N), (\widehat{\M}^{\B}_N, \widehat{\Delta}^{\M, \B}_N)\right)$ is an object in the category $\K{\text -}\mathrm{HAMC}_{\mathrm{top}}$.
    \end{enumerate}
    \label{WM_categories}
\end{corollary}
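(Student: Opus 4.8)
The plan is to deduce all three statements from Theorem \ref{Delta_B_N} by transporting the structure of the de Rham object along a single comparison isomorphism. Fix an element $(\iota, \nu, \Phi) \in \DMR_{\times}(\K)$. By Proposition-Definition \ref{comp_W_Gamma_comp_W} the pair
\[
    \left(\,^{\Gamma}\comp^{\W, (1)}_{(\iota, \nu, \Phi)}, \,^{\Gamma}\comp^{\M, (10)}_{(\iota, \nu, \Phi)}\right) : (\widehat{\W}^{\B}_N, \widehat{\M}^{\B}_N) \to (\widehat{\W}^{\DR}_G, \widehat{\M}^{\DR}_G)
\]
is an isomorphism in $\K{\text -}\alg{\text -}\Mod_{\mathrm{top}}$, and by the defining formula of Theorem \ref{Delta_B_N} this isomorphism carries the de Rham pair of coproducts to $(\widehat{\Delta}^{\W, \B}_N, \widehat{\Delta}^{\M, \B}_N)$ by conjugation. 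I would also recall that $\left((\widehat{\W}^{\DR}_G, \widehat{\Delta}^{\W, \DR}_G), (\widehat{\M}^{\DR}_G, \widehat{\Delta}^{\M, \DR}_G)\right)$ is an object of $\K{\text -}\mathrm{HAMC}_{\mathrm{top}}$, as established just after Lemma \ref{DeltaW_DelaM_DR_Cop}. The whole argument is then a pure transport of structure.

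For \ref{betti_Hopf}, the map $\,^{\Gamma}\comp^{\W, (1)}_{(\iota, \nu, \Phi)} : \widehat{\W}^{\B}_N \to \widehat{\W}^{\DR}_G$ is a topological $\K$-algebra isomorphism, hence so is its tensor square. Since $\widehat{\Delta}^{\W, \B}_N = \big((\,^{\Gamma}\comp^{\W, (1)}_{(\iota, \nu, \Phi)})^{\otimes 2}\big)^{-1} \circ \widehat{\Delta}^{\W, \DR}_G \circ \,^{\Gamma}\comp^{\W, (1)}_{(\iota, \nu, \Phi)}$, the Hopf axioms for $(\widehat{\W}^{\DR}_G, \widehat{\Delta}^{\W, \DR}_G)$ transport verbatim: coassociativity and the property of being a topological algebra morphism follow from conjugation by the algebra isomorphism and its tensor powers, while the counit and antipode of $\widehat{\W}^{\B}_N$ are produced as $\varepsilon^{\DR} \circ \,^{\Gamma}\comp^{\W, (1)}_{(\iota, \nu, \Phi)}$ and $\big(\,^{\Gamma}\comp^{\W, (1)}_{(\iota, \nu, \Phi)}\big)^{-1} \circ S^{\DR} \circ \,^{\Gamma}\comp^{\W, (1)}_{(\iota, \nu, \Phi)}$, where $\varepsilon^{\DR}$ and $S^{\DR}$ denote the counit and antipode of the de Rham Hopf algebra. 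Statement \ref{betti_coalg} is proved identically, replacing the algebra isomorphism by the topological $\K$-module isomorphism $\,^{\Gamma}\comp^{\M, (10)}_{(\iota, \nu, \Phi)} : \widehat{\M}^{\B}_N \to \widehat{\M}^{\DR}_G$: the coassociativity and counit of the coalgebra $(\widehat{\M}^{\DR}_G, \widehat{\Delta}^{\M, \DR}_G)$ transport along the same conjugation formula.

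For the last statement, the pair $(\widehat{\W}^{\B}_N, \widehat{\M}^{\B}_N)$ is an object of $\K{\text -}\alg{\text -}\Mod_{\mathrm{top}}$ by Corollary \ref{algmod_B}.\ref{hatWhatM_algmod}, and by the two previous points $(\widehat{\W}^{\B}_N, \widehat{\Delta}^{\W, \B}_N)$ is a Hopf algebra and $(\widehat{\M}^{\B}_N, \widehat{\Delta}^{\M, \B}_N)$ is a coalgebra. The only remaining condition in the definition of $\K{\text -}\mathrm{HAMC}_{\mathrm{top}}$ is the compatibility $\widehat{\Delta}^{\M, \B}_N(wm) = \widehat{\Delta}^{\W, \B}_N(w)\,\widehat{\Delta}^{\M, \B}_N(m)$ for $(w, m) \in \widehat{\W}^{\B}_N \times \widehat{\M}^{\B}_N$, which is exactly the assertion that $(\widehat{\Delta}^{\W, \B}_N, \widehat{\Delta}^{\M, \B}_N) \in \mathrm{Cop}_{\K{\text -}\alg{\text -}\Mod_{\mathrm{top}}}(\widehat{\W}^{\B}_N, \widehat{\M}^{\B}_N)$ recorded in the final line of Theorem \ref{Delta_B_N}. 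Hence the Betti pair is an object of $\K{\text -}\mathrm{HAMC}_{\mathrm{top}}$. I expect no genuine obstacle here, since everything reduces to transport along an isomorphism; the only point requiring care is to confirm that each ingredient of the Hopf and coalgebra structures (unit, counit, antipode, and the morphism properties of the coproducts) is intertwined by the comparison isomorphism, which holds precisely because that isomorphism lives in $\K{\text -}\alg{\text -}\Mod_{\mathrm{top}}$.
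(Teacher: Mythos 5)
Your proof is correct and takes essentially the same route as the paper: both deduce the statement by transporting the de Rham Hopf-algebra and coalgebra structures along the comparison isomorphisms furnished by Theorem \ref{Delta_B_N}, and both obtain the HAMC compatibility in part (iii) from the fact that $(\widehat{\Delta}^{\W, \B}_N, \widehat{\Delta}^{\M, \B}_N)$ is an element of $\mathrm{Cop}_{\K{\text -}\alg{\text -}\Mod_{\mathrm{top}}}\big(\widehat{\W}^{\B}_N, \widehat{\M}^{\B}_N\big)$. Your write-up is merely more explicit than the paper's terse ``one checks'' in spelling out the transported counit and antipode.
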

\begin{proof} \ 
    \begin{enumerate}[label=(\roman*), leftmargin=*]
        \item From Theorem \ref{Delta_B_N}, it follows that $\widehat{\Delta}^{\W, \B}_N$ is an algebra morphism. In addition, one checks that the coassociativity of $\widehat{\Delta}^{\W, \B}_N$ follows from the coassociativity of $\widehat{\Delta}^{\W, \DR}_G$.
        \item From Theorem \ref{Delta_B_N}, it follows that $\widehat{\Delta}^{\M, \B}_N$ is a $\K$-module morphism. In addition, one checks that the coassociativity of $\widehat{\Delta}^{\M, \B}_N$ follows from the coassociativity of $\widehat{\Delta}^{\M, \DR}_G$.
        \item It follows from \ref{betti_Hopf} and \ref{betti_coalg} and the fact that the pair $(\widehat{\Delta}^{\W, \B}_N, \widehat{\Delta}^{\M, \B}_N)$ is an element of $\mathrm{Cop}_{\K{\text -}\alg{\text -}\Mod_{\mathrm{top}}}\left(\widehat{\W}^{\B}_N, \widehat{\M}^{\B}_N\right)$.
    \end{enumerate}
\end{proof}
    \section{Expression of the torsor \texorpdfstring{$\DMR_{\times}(\K)$}{DMRx} in terms of the Betti and de Rham coproducts} \label{torsorBDR}
In this section, we show that $\DMR_{\times}(\K)$ is a subtorsor of a stabilizer torsor of the pair of coproducts $\left(\widehat{\Delta}^{\M, \B}_{N}, \widehat{\Delta}^{\M, \DR}_{G}\right)$. In §\ref{subtorsor_stabs}, we define the setwise stabilizers $\mbox{\small$\Stab_{\Emb(G) \times \K^{\times} \times \G(\KX)}\left(\widehat{\Delta}^{\M, \B}_{N}, \widehat{\Delta}^{\M, \DR}_{G}\right)(\K)$}$ and $\mbox{\small$\Stab_{\Emb(G) \times \K^{\times} \times \G(\KX)}\left(\widehat{\Delta}^{\W, \B}_{N}, \widehat{\Delta}^{\W, \DR}_{G}\right)(\K)$}$ and show that they are equipped with a torsor structure for the actions of the stabilizer groups $\mbox{\small$\Stab_{(\Aut(G) \times \K^{\times}) \ltimes \G(\KX)}\left(\widehat{\Delta}^{\M, \DR}_{G}\right)(\K)$}$ and $\mbox{\normalsize$\Stab_{(\Aut(G) \times \K^{\times}) \ltimes \G(\KX)}\left(\widehat{\Delta}^{\W, \DR}_{G}\right)(\K)$}$ respectively. In §\ref{inclusion_torsor_stabs}, we obtain a chain of inclusions of torsors involving these stabilizers and $\DMR_{\times}(\K)$.

\subsection{The stabilizer subtorsors} \label{subtorsor_stabs}
\begin{definition} \ 
    \begin{enumerate}[label=(\roman*), leftmargin=*]
        \item We denote $\Stab_{\Emb(G) \times \K^{\times} \times \G(\KX)}\left(\widehat{\Delta}^{\W, \B}_{N}, \widehat{\Delta}^{\W, \DR}_{G}\right)(\K)$ the setwise stabilizer of the pair of coproducts $\left(\widehat{\Delta}^{\W, \B}_{N}, \widehat{\Delta}^{\W, \DR}_{G}\right)  \in \mathrm{Cop}_{\K{\text -}\alg_{\mathrm{top}}}(\widehat{\W}^{\B}_N) \times \mathrm{Cop}_{\K{\text -}\alg_{\mathrm{top}}}(\widehat{\W}^{\DR}_G)$ given by
        \begin{align*}
            & \Stab_{\Emb(G) \times \K^{\times} \times \G(\KX)}\left(\widehat{\Delta}^{\W, \B}_{N}, \widehat{\Delta}^{\W, \DR}_{G}\right)(\K) := \\
            & \mbox{\footnotesize$\left\{ (\iota, \nu, \Phi) \in \Emb(G) \times \K^{\times} \times \G(\KX) \, | \, \left( {^{\Gamma}\comp^{\W, (1)}_{(\iota, \nu, \Phi)}} \right)^{\otimes 2} \circ \widehat{\Delta}^{\W, \B}_{N} = \widehat{\Delta}^{\W, \DR}_{G} \circ \,^{\Gamma}\comp^{\W, (1)}_{(\iota, \nu, \Phi)} \right\}$}.
        \end{align*}
        \item We denote $\Stab_{\Emb(G) \times \K^{\times} \times \G(\KX)}\left(\widehat{\Delta}^{\M, \B}_{N}, \widehat{\Delta}^{\M, \DR}_{G}\right)(\K)$ the setwise stabilizer of the pair of coproducts $\left(\widehat{\Delta}^{\M, \B}_{N}, \widehat{\Delta}^{\M, \DR}_{G}\right)  \in \mathrm{Cop}_{\K{\text -}\Mod_{\mathrm{top}}}(\widehat{\M}^{\B}_N) \times \mathrm{Cop}_{\K{\text -}\Mod_{\mathrm{top}}}(\widehat{\W}^{\DR}_G)$ given by
        \begin{align*}
            & \Stab_{\Emb(G) \times \K^{\times} \times \G(\KX)}\left(\widehat{\Delta}^{\M, \B}_{N}, \widehat{\Delta}^{\M, \DR}_{G}\right)(\K) := \\
            & \mbox{\footnotesize$\left\{ (\iota, \nu, \Phi) \in \Emb(G) \times \K^{\times} \times \G(\KX) \, | \, \left( {^{\Gamma}\comp^{\M, (10)}_{(\iota, \nu, \Phi)}} \right)^{\otimes 2} \circ \widehat{\Delta}^{\M, \B}_{N} = \widehat{\Delta}^{\M, \DR}_{G} \circ \,^{\Gamma}\comp^{\M, (10)}_{(\iota, \nu, \Phi)} \right\}$}.
        \end{align*}
    \end{enumerate}
\end{definition}

\begin{remark}
    \label{nonempty_stabs}
    Theorem \ref{Delta_B_N} implies that $\Stab_{\Emb(G) \times \K^{\times} \times \G(\KX)}\left(\widehat{\Delta}^{\W, \B}_{N}, \widehat{\Delta}^{\W, \DR}_{G}\right)(\K)$ and $\Stab_{\Emb(G) \times \K^{\times} \times \G(\KX)}\left(\widehat{\Delta}^{\M, \B}_{N}, \widehat{\Delta}^{\M, \DR}_{G}\right)(\K)$ contain $\DMR_{\times}(\K)$, which implies that these are nonempty sets.
\end{remark}

\begin{proposition} \ 
    \begin{enumerate}[label=\roman*., leftmargin=*]
        \item \label{StabW_subtor} The pair 
        \[
            \mbox{\small$\left(\Stab_{(\Aut(G) \times \K^{\times}) \ltimes \G(\KX)}\left(\widehat{\Delta}^{\W, \DR}_{G}\right)(\K), \Stab_{\Emb(G) \times \K^{\times} \times \G(\KX)}\left(\widehat{\Delta}^{\W, \B}_{N}, \widehat{\Delta}^{\W, \DR}_{G}\right)(\K)\right)$}
        \]
        is a subtorsor of $\Big((\Aut(G) \times \K^{\times}) \ltimes \G(\KX), \Emb(G) \times \K^{\times} \times \G(\KX)\Big)$.
        \item The pair 
        \[
            \mbox{\small$\left(\Stab_{(\Aut(G) \times \K^{\times}) \ltimes \G(\KX)}\left(\widehat{\Delta}^{\M, \DR}_{G}\right)(\K), \Stab_{\Emb(G) \times \K^{\times} \times \G(\KX)}\left(\widehat{\Delta}^{\M, \B}_{N}, \widehat{\Delta}^{\M, \DR}_{G}\right)(\K)\right)$}
        \]
        is a subtorsor of $\Big((\Aut(G) \times \K^{\times}) \ltimes \G(\KX), \Emb(G) \times \K^{\times} \times \G(\KX)\Big)$.
    \end{enumerate}
    \label{Stabs_subtor}
\end{proposition}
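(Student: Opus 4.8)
The plan is to verify, for each of the two pairs, the defining properties of a subtorsor of the ambient torsor $\big((\Aut(G) \times \K^{\times}) \ltimes \G(\KX), \Emb(G) \times \K^{\times} \times \G(\KX)\big)$ of Corollary \ref{EmbGxkxxGkX_torsor}. The groups $\Stab_{(\Aut(G) \times \K^{\times}) \ltimes \G(\KX)}(\widehat{\Delta}^{\W, \DR}_{G})(\K)$ and $\Stab_{(\Aut(G) \times \K^{\times}) \ltimes \G(\KX)}(\widehat{\Delta}^{\M, \DR}_{G})(\K)$ are subgroups of the ambient group by Definition \ref{Stab_DeltaWM}, the setwise stabilizers are nonempty by Remark \ref{nonempty_stabs}, and freeness of the restricted action is inherited from the ambient free action. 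Hence it remains only to establish that each stabilizer group preserves the corresponding setwise stabilizer and acts transitively on it. I will treat the $\M$-case; the $\W$-case is identical after substituting $\W, (1)$ for $\M, (10)$ and $\W, \DR$ for $\M, \DR$ throughout, using Proposition \ref{Gamma_comp_aut1}.\ref{compWxautW} in place of Proposition \ref{Gamma_comp_aut1}.\ref{compMxautM}.

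The crux of both facts is the equivariance identity $^{\Gamma}\comp^{\M, (10)}_{(\phi, \lambda, \Psi) \cdot (\iota, \nu, \Phi)} = \,^{\Gamma}\aut^{\M, (10)}_{(\phi, \lambda, \Psi)} \circ \,^{\Gamma}\comp^{\M, (10)}_{(\iota, \nu, \Phi)}$ of Proposition \ref{Gamma_comp_aut1}.\ref{compMxautM}, which ties the comparison isomorphisms to the automorphisms appearing in the stabilizer group. For \emph{stability}, let $(\phi, \lambda, \Psi)$ lie in the stabilizer group and $(\iota, \nu, \Phi)$ in the setwise stabilizer. Substituting the equivariance identity into the setwise-stabilizer condition for $(\phi, \lambda, \Psi) \cdot (\iota, \nu, \Phi)$, distributing the operation $(-)^{\otimes 2}$ over composition, then applying successively the setwise-stabilizer condition for $(\iota, \nu, \Phi)$ and the stabilizer-group condition for $(\phi, \lambda, \Psi)$, and finally re-assembling via the equivariance identity, one recovers precisely the setwise-stabilizer condition for $(\phi, \lambda, \Psi) \cdot (\iota, \nu, \Phi)$. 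Thus the action restricts to the setwise stabilizer.

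For \emph{transitivity}, given two elements $(\iota, \nu, \Phi)$ and $(\iota', \nu', \Phi')$ of the setwise stabilizer, the ambient transitivity of Corollary \ref{EmbGxkxxGkX_torsor} furnishes a unique $(\phi, \lambda, \Psi)$ in the ambient group sending the first to the second; it suffices to show that this element lies in the stabilizer group. Writing $^{\Gamma}\comp^{\M, (10)}_{(\iota', \nu', \Phi')} = \,^{\Gamma}\aut^{\M, (10)}_{(\phi, \lambda, \Psi)} \circ \,^{\Gamma}\comp^{\M, (10)}_{(\iota, \nu, \Phi)}$ via the equivariance identity, inserting this into the setwise-stabilizer condition for $(\iota', \nu', \Phi')$, and using the setwise-stabilizer condition for $(\iota, \nu, \Phi)$ to rewrite the left-hand side, one arrives at an equality of endomorphisms each post-composed with $^{\Gamma}\comp^{\M, (10)}_{(\iota, \nu, \Phi)}$. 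Since the latter is an isomorphism by Proposition-Definition \ref{comp_W_Gamma_comp_W}, it may be cancelled on the right, leaving exactly $\big( ^{\Gamma}\aut^{\M, (10)}_{(\phi, \lambda, \Psi)} \big)^{\otimes 2} \circ \widehat{\Delta}^{\M, \DR}_{G} = \widehat{\Delta}^{\M, \DR}_{G} \circ \,^{\Gamma}\aut^{\M, (10)}_{(\phi, \lambda, \Psi)}$, i.e. the stabilizer-group condition for $(\phi, \lambda, \Psi)$.

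The argument is essentially formal once the equivariance identity is in hand. The only point requiring care is the right-cancellation of $^{\Gamma}\comp^{\M, (10)}_{(\iota, \nu, \Phi)}$, which is legitimate precisely because the comparison maps are \emph{iso}morphisms rather than mere morphisms; apart from this and the routine compatibility of $(-)^{\otimes 2}$ with composition, no genuine obstacle is expected.
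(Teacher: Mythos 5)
Your proof is correct. It rests on exactly the same key ingredient as the paper's, namely the equivariance identity of Proposition \ref{Gamma_comp_aut1} relating the comparison isomorphisms ${}^{\Gamma}\comp$ to the automorphisms ${}^{\Gamma}\aut$, but the packaging differs: the paper proves the proposition by instantiating the general stabilizer-subtorsor lemma of Enriquez--Furusho (Lemma \ref{Genera_stab_subtor}) with the data $(H,T)$, $V$, $V'$, $v$, $v'$, $\rho$, $\rho'$, whereas you re-derive that lemma's content by hand — stability of the setwise stabilizer under the stabilizer group, and transitivity via right-cancellation of the isomorphism ${}^{\Gamma}\comp^{\M,(10)}_{(\iota,\nu,\Phi)}$. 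The two arguments are therefore the same computation presented at different levels of abstraction. Your unfolded version is self-contained and makes explicit where each hypothesis enters: the isomorphism property of the comparison maps (Proposition-Definition \ref{comp_W_Gamma_comp_W}) is what legitimizes the cancellation, and the nonemptiness of the setwise stabilizers (Remark \ref{nonempty_stabs}) is what rules out the degenerate case — a point the paper's proof leaves implicit, since Lemma \ref{Genera_stab_subtor} only concludes ``empty or a subtorsor'' and the proof of the proposition never explicitly invokes the remark. The paper's route buys brevity and reusability, treating the $\W$- and $\M$-cases uniformly by a single substitution of data; yours buys transparency at the cost of repeating a formal argument that the literature already provides.
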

\noindent In order the prove this, we will need the following Lemma:
\begin{lemma}[{\cite[Lemma 2.6]{EF2}}]
    \label{Genera_stab_subtor}
    Let $(H, T)$ be a torsor, and let $V, V^{\prime}$ be $\K$-modules. Let $\rho : H \to \Aut_{\K{\text -}\Mod}(V)$ be a group morphism and let $\rho^{\prime} : T \to \mathrm{Iso}_{\K{\text -}\Mod}(V^{\prime}, V)$ be a map such that for any $h \in H$, $x \in T$, one has $\rho^{\prime}(h \cdot x) = \rho(h) \circ \rho^{\prime}(x)$. Let $v \in V$ and $v^{\prime} \in V^{\prime}$. Then $\mathrm{Stab}_H(v) := \{h \in H \, | \, \rho(h)(v) = v \}$ is a subgroup of $H$, and either $\mathrm{Stab}_T(v, v') := \{ x \in T \, | \, \rho^{\prime}(v^{\prime}) = v \}$ is empty, or $(\mathrm{Stab}_H(v), \mathrm{Stab}_T(v, v'))$ is a subtorsor of $(H, T)$.
\end{lemma}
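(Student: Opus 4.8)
The plan is to apply Lemma \ref{Genera_stab_subtor} directly, taking for $(H,T)$ the torsor of Corollary \ref{EmbGxkxxGkX_torsor}, namely $H = (\Aut(G) \times \K^{\times}) \ltimes \G(\KX)$ and $T = \Emb(G) \times \K^{\times} \times \G(\KX)$. I treat the two items in parallel: for item \ref{StabW_subtor} I set $V = \mathrm{Cop}_{\K{\text -}\alg_{\mathrm{top}}}(\widehat{\W}^{\DR}_G)$ and $V' = \mathrm{Cop}_{\K{\text -}\alg_{\mathrm{top}}}(\widehat{\W}^{\B}_N)$, while for the second item I set $V = \mathrm{Cop}_{\K{\text -}\Mod_{\mathrm{top}}}(\widehat{\M}^{\DR}_G)$ and $V' = \mathrm{Cop}_{\K{\text -}\Mod_{\mathrm{top}}}(\widehat{\M}^{\B}_N)$.

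First I would specify the group morphism $\rho : H \to \Aut_{\K{\text -}\Mod}(V)$. For item \ref{StabW_subtor} this is the conjugation action of Proposition \ref{act_on_Delta}.\ref{act_on_DeltaW}, i.e. $\rho(\phi,\lambda,\Psi)(D^{\W}) := \big({^{\Gamma}\aut^{\W, (1)}_{(\phi, \lambda, \Psi)}}\big)^{\otimes 2} \circ D^{\W} \circ \big({^{\Gamma}\aut^{\W, (1)}_{(\phi, \lambda, \Psi)}}\big)^{-1}$, the fact that it is a group morphism being exactly the content of that proposition. Next I define $\rho' : T \to \mathrm{Iso}_{\K{\text -}\Mod}(V', V)$ by conjugation by the comparison isomorphisms, $\rho'(\iota,\nu,\Phi)(D^{\W,\B}) := \big({^{\Gamma}\comp^{\W, (1)}_{(\iota, \nu, \Phi)}}\big)^{\otimes 2} \circ D^{\W,\B} \circ \big({^{\Gamma}\comp^{\W, (1)}_{(\iota, \nu, \Phi)}}\big)^{-1}$; since $^{\Gamma}\comp^{\W, (1)}_{(\iota, \nu, \Phi)}$ is a topological $\K$-algebra isomorphism by Proposition-Definition \ref{comp_W_Gamma_comp_W}, each $\rho'(\iota,\nu,\Phi)$ is indeed an isomorphism from $V'$ to $V$.

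The key compatibility $\rho'(h \cdot x) = \rho(h) \circ \rho'(x)$ required by Lemma \ref{Genera_stab_subtor} is then a direct consequence of Proposition \ref{Gamma_comp_aut1}.\ref{compWxautW}: the factorisation ${^{\Gamma}\comp^{\W, (1)}_{(\phi, \lambda, \Psi) \cdot (\iota, \nu, \Phi)}} = {^{\Gamma}\aut^{\W, (1)}_{(\phi, \lambda, \Psi)}} \circ {^{\Gamma}\comp^{\W, (1)}_{(\iota, \nu, \Phi)}}$ passes to the conjugation actions, the $(\,)^{-1}$ and $(\,)^{\otimes 2}$ factors distributing as needed. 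With this in place I apply the lemma to the distinguished elements $v := \widehat{\Delta}^{\W, \DR}_G \in V$ and $v' := \widehat{\Delta}^{\W, \B}_N \in V'$. Unwinding definitions, $\mathrm{Stab}_H(v)$ is exactly $\Stab_{(\Aut(G) \times \K^{\times}) \ltimes \G(\KX)}(\widehat{\Delta}^{\W, \DR}_G)(\K)$ and $\mathrm{Stab}_T(v,v')$ is exactly $\Stab_{\Emb(G) \times \K^{\times} \times \G(\KX)}(\widehat{\Delta}^{\W, \B}_N, \widehat{\Delta}^{\W, \DR}_G)(\K)$.

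Finally, to select the subtorsor alternative of Lemma \ref{Genera_stab_subtor} rather than the empty one, I invoke Remark \ref{nonempty_stabs}: the setwise stabilizer contains $\DMR_{\times}(\K)$, which is nonempty, so the pair is a subtorsor of $(H,T)$. The second item proceeds identically, replacing the superscripts $\W,(1)$ and $\W,\DR$, $\W,\B$ by $\M,(10)$ and $\M,\DR$, $\M,\B$, using the $\K{\text -}\Mod_{\mathrm{top}}$ version of the conjugation action from Proposition \ref{act_on_Delta}.\ref{act_on_DeltaM} and the compatibility of Proposition \ref{Gamma_comp_aut1}.\ref{compMxautM}. The only genuine point needing care—and hence the main obstacle—is matching the abstract stabilizer conditions of Lemma \ref{Genera_stab_subtor} verbatim with the equational definitions of the two stabilizers, i.e. checking that the conjugation formulas for $\rho$ and $\rho'$ reproduce precisely the stated equalities (in particular after transposing the factor $\big({^{\Gamma}\comp^{\W, (1)}_{(\iota, \nu, \Phi)}}\big)^{-1}$ across the equation); the remainder is a formal transport of structure.
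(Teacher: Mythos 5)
Your proposal does not prove the statement it was asked to prove. The statement is Lemma~\ref{Genera_stab_subtor} itself --- the abstract torsor--stabilizer lemma quoted from [EF2, Lemma~2.6] --- whereas your write-up \emph{applies} that lemma, instantiating $(H,T) = \big((\Aut(G)\times\K^{\times})\ltimes\G(\KX),\ \Emb(G)\times\K^{\times}\times\G(\KX)\big)$ together with the conjugation actions $\rho$, $\rho'$ on the coproduct spaces and the distinguished elements $\widehat{\Delta}^{\W,\DR}_G$, $\widehat{\Delta}^{\W,\B}_N$ (resp.\ their $\M$-versions). What you have written is, nearly verbatim, the paper's proof of Proposition~\ref{Stabs_subtor}, which is a downstream \emph{application} of the lemma. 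As an argument for the lemma itself it is circular: the first sentence of your plan invokes the very statement to be established.

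A genuine proof is short but entirely absent from your proposal. One must show: (i) $\mathrm{Stab}_H(v)$ is a subgroup of $H$, which follows from $\rho$ being a group morphism (so $\rho(1_H)=\mathrm{id}_V$, $\rho(hh')=\rho(h)\circ\rho(h')$, $\rho(h^{-1})=\rho(h)^{-1}$); (ii) assuming $\mathrm{Stab}_T(v,v')\neq\varnothing$, the intertwining identity restricts the action: for $h\in\mathrm{Stab}_H(v)$ and $x\in\mathrm{Stab}_T(v,v')$ one has $\rho'(h\cdot x)(v')=\rho(h)\big(\rho'(x)(v')\big)=\rho(h)(v)=v$, so $h\cdot x\in\mathrm{Stab}_T(v,v')$; (iii) freeness of the restricted action is inherited from freeness of $H$ on $T$; (iv) transitivity: given $x,y\in\mathrm{Stab}_T(v,v')$, the unique $h\in H$ with $h\cdot x=y$ satisfies $\rho(h)(v)=\rho(h)\circ\rho'(x)(v')=\rho'(h\cdot x)(v')=\rho'(y)(v')=v$, hence $h\in\mathrm{Stab}_H(v)$, and the torsor property of $(\mathrm{Stab}_H(v),\mathrm{Stab}_T(v,v'))$ follows. (In passing, the displayed definition of $\mathrm{Stab}_T(v,v')$ in the statement contains a typo --- the condition should read $\rho'(x)(v')=v$, not $\rho'(v')=v$ --- which any proof must silently correct, as indeed steps (ii) and (iv) do.) None of these four steps, which constitute the entire content of the lemma, appear in your proposal.
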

\begin{proof}[Proof of Proposition \ref{Stabs_subtor}]
    It follows from Lemma \ref{Genera_stab_subtor} by setting :
    \begin{itemize}[leftmargin=*]
        \setlength\itemsep{1em}
        \item $(H, T) = \left((\Aut(G) \times \K^{\times}) \ltimes \G(\KX), \Emb(G) \times \K^{\times} \times \G(\KX) \right)$;
        \item $V = \mathrm{Cop}_{\K{\text -}\Mod}(\widehat{\W}^{\DR}_G)$ (resp. $V = \mathrm{Cop}_{\K{\text -}\Mod}(\widehat{\M}^{\DR}_G)$);
        \item $V^{\prime} = \mathrm{Cop}_{\K{\text -}\Mod}(\widehat{\W}^{\B}_N)$ (resp. $V^{\prime} = \mathrm{Cop}_{\K{\text -}\Mod}(\widehat{\M}^{\B}_N)$);
        \item $v = \widehat{\Delta}^{\W, \DR}_G$ (resp. $v = \widehat{\Delta}^{\M, \DR}_G$);
        \item $v^{\prime} = \widehat{\Delta}^{\W, \B}_N$ (resp. $v^{\prime} = \widehat{\Delta}^{\M, \B}_N$);
        \item $\rho : (\phi, \lambda, \Psi) \mapsto \left( V \ni D^{\W}_{\DR} \mapsto \left(^{\Gamma}\aut^{\W, (1)}_{(\phi, \lambda, \Psi)}\right)^{\otimes 2} \circ D^{\W}_{\DR} \circ \left(\,^{\Gamma}\aut^{\W, (1)}_{(\phi, \lambda, \Psi)}\right)^{-1} \in V \right)$ (resp. $\rho : (\phi, \lambda, \Psi) \mapsto \left( V \ni D^{\M}_{\DR} \mapsto \left(^{\Gamma}\aut^{\M, (10)}_{(\phi, \lambda, \Psi)}\right)^{\otimes 2} \circ D^{\M}_{\DR} \circ \left(\,^{\Gamma}\aut^{\M, (10)}_{(\phi, \lambda, \Psi)}\right)^{-1} \in V \right)$);
        \item $\rho^{\prime} : (\phi, \lambda, \Psi) \mapsto \left( V^{\prime} \ni D^{\W}_{\B} \mapsto \left(^{\Gamma}\comp^{\W, (1)}_{(\phi, \lambda, \Psi)}\right)^{\otimes 2} \circ D^{\W}_{\B} \circ \left(\,^{\Gamma}\comp^{\W, (1)}_{(\phi, \lambda, \Psi)}\right)^{-1} \in V \right)$ (resp. $\rho^{\prime} : (\phi, \lambda, \Psi) \mapsto \left( V^{\prime} \ni D^{\M}_{\B} \mapsto \left(^{\Gamma}\comp^{\M, (10)}_{(\phi, \lambda, \Psi)}\right)^{\otimes 2} \circ D^{\M}_{\B} \circ \left(\,^{\Gamma}\comp^{\M, (10)}_{(\phi, \lambda, \Psi)}\right)^{-1} \in V \right)$).
    \end{itemize}
    Finally, for $(\phi, \lambda, \Psi) \in H$ and $(\iota, \nu, \Phi) \in T$, the identity
    \[
        \rho((\phi, \lambda, \Psi) \cdot (\iota, \nu, \Phi)) = \rho(\phi, \lambda, \Psi) \circ \rho^{\prime}(\iota, \nu, \Phi)
    \]
    follows from Proposition \ref{Gamma_comp_aut1}.
\end{proof}

\subsection{Inclusion of stabilizer torsors}
\begin{theorem}
    \label{inclusion_torsor_stabs}
    We have the following inclusions of torsors
    \begin{eqnarray*}
        & \left((\Aut(G) \times \K^{\times}) \ltimes \DMR_0^G(\K), \DMR_{\times}(\K)\right) & \\
        & \cap & \\
        & \left(\Stab_{(\Aut(G) \times \K^{\times}) \ltimes \G(\KX)}\left(\widehat{\Delta}^{\M, \DR}_{G}\right)(\K), \Stab_{\Emb(G) \times \K^{\times} \times \G(\KX)}\left(\widehat{\Delta}^{\M, \B}_{N}, \widehat{\Delta}^{\M, \DR}_{G}\right)(\K)\right) & \\
        & \cap & \\
        & \left(\Stab_{(\Aut(G) \times \K^{\times}) \ltimes \G(\KX)}\left(\widehat{\Delta}^{\W, \DR}_{G}\right)(\K), \Stab_{\Emb(G) \times \K^{\times} \times \G(\KX)}\left(\widehat{\Delta}^{\W, \B}_{N}, \widehat{\Delta}^{\W, \DR}_{G}\right)(\K)\right) & \\
        & \cap & \\
        & \Big((\Aut(G) \times \K^{\times}) \ltimes \G(\KX), \Emb(G) \times \K^{\times} \times \G(\KX)\Big) &
    \end{eqnarray*}
\end{theorem}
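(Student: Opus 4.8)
The plan is to exhibit the four pairs appearing in the statement as (sub)torsors of the ambient torsor
\[
    \Big((\Aut(G) \times \K^{\times}) \ltimes \G(\KX), \Emb(G) \times \K^{\times} \times \G(\KX)\Big)
\]
of Corollary \ref{EmbGxkxxGkX_torsor}, and then to reduce all the required \emph{set}-component inclusions to the already-known \emph{group}-component inclusions by evaluating at a single common base point. Write $(A,B)$ for this ambient torsor, $(A_0,B_0) = \left((\Aut(G) \times \K^{\times}) \ltimes \DMR_0^G(\K), \DMR_{\times}(\K)\right)$, and $(A_\M,B_\M)$, $(A_\W,B_\W)$ for the two stabilizer pairs (in the $\M$- and $\W$-cases respectively). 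By Proposition \ref{DMR_x_torsor} the pair $(A_0,B_0)$ is a torsor, and by Proposition \ref{Stabs_subtor} both $(A_\M,B_\M)$ and $(A_\W,B_\W)$ are subtorsors of $(A,B)$. In every case the relevant action is the restriction of the ambient action of Corollary \ref{EmbGxkxxGkX_torsor}, so that once the componentwise inclusions are established the compatibility of the torsor structures is automatic; it therefore suffices to prove the four group-component inclusions and the four set-component inclusions.

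First I would record the group-component chain $A_0 \subset A_\M \subset A_\W \subset A$. The inclusion $A_0 \subset A_\M$ is part of the statement of Corollary \ref{DMR_sub_StabM_sub_StabW}, and the inclusion $A_\M \subset A_\W$ is the one established in its proof, via Proposition \ref{semidirect_Stabs} together with inclusion \eqref{Stab_inclusion}. Finally $A_\W \subset A$ holds by definition, since $A_\W$ is by construction a stabilizer subgroup of $A$ for the action of Proposition \ref{act_on_Delta}.\ref{act_on_DeltaW}.

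Next I would fix a base point. The set $\DMR_{\times}(\K)$ is nonempty: taking $\lambda = 1 \in \K^{\times}$ and using that $\DMR_1^{\iota}(\K)$ is nonempty (the Remark following Definition \ref{DMR_lambda}) produces an element $(\iota_0, 1, \Phi_0) \in \DMR_{\times}(\K) = B_0$. By Remark \ref{nonempty_stabs}, this single element lies in $B_\M$ and $B_\W$ as well, and of course in $B$. Since each of the four pairs is a torsor for the respective group acting by the restricted ambient action, transitivity gives $B_0 = A_0 \cdot (\iota_0, 1, \Phi_0)$, $B_\M = A_\M \cdot (\iota_0, 1, \Phi_0)$, $B_\W = A_\W \cdot (\iota_0, 1, \Phi_0)$ and $B = A \cdot (\iota_0, 1, \Phi_0)$. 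Applying the group-component chain of the previous paragraph to this common point then yields $B_0 \subset B_\M \subset B_\W \subset B$, which together with the group inclusions is exactly the asserted chain of torsor inclusions.

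The argument is essentially formal once the earlier structural results are in place, so I do not expect a substantive obstacle here: the only genuine verifications are that the three intermediate pairs really are (sub)torsors and that they share a common point, both supplied by Propositions \ref{DMR_x_torsor} and \ref{Stabs_subtor} and by Remark \ref{nonempty_stabs}. The true content has already been extracted — the independence statement of Theorem \ref{Delta_B_N}, the subtorsor property of the stabilizers obtained through Lemma \ref{Genera_stab_subtor}, and the chain of group inclusions of Corollary \ref{DMR_sub_StabM_sub_StabW} — and the present theorem merely packages these into a single chain of torsors through the base-point reduction.
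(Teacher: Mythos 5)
Your proof is correct, but it reaches the one non-trivial set-part inclusion by a genuinely different route than the paper. The paper treats the first and last set-part inclusions as immediate and isolates the middle one,
\[
    \mbox{\small$\Stab_{\Emb(G) \times \K^{\times} \times \G(\KX)}\left(\widehat{\Delta}^{\M, \B}_{N}, \widehat{\Delta}^{\M, \DR}_{G}\right)(\K) \subset \Stab_{\Emb(G) \times \K^{\times} \times \G(\KX)}\left(\widehat{\Delta}^{\W, \B}_{N}, \widehat{\Delta}^{\W, \DR}_{G}\right)(\K)$},
\]
which it derives from two general torsor lemmas imported from \cite{EF2}: Lemma \ref{inter_subtor} (a pair of subtorsors with nonempty set intersection intersects to a subtorsor), applied to the two stabilizer pairs, followed by Lemma \ref{incl_subtor} (if $T_0 \subset T_1$ and $H_0 = H_1$ then the subtorsors are equal), the hypothesis $H_0 = H_1$ being exactly the group inclusion of Corollary \ref{DMR_sub_StabM_sub_StabW}; this forces $T^{\prime} \cap T^{\prime\prime} = T^{\prime}$, i.e. $T^{\prime} \subset T^{\prime\prime}$. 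You instead prove all four set-part inclusions uniformly by an orbit argument: choose a single point $(\iota_0, 1, \Phi_0) \in \DMR_{\times}(\K)$, note via Remark \ref{nonempty_stabs} that it lies in all four sets, write each set as the orbit of this common point under its group (transitivity being supplied by Corollary \ref{EmbGxkxxGkX_torsor} and Propositions \ref{DMR_x_torsor} and \ref{Stabs_subtor}, with all actions restrictions of the ambient one), and then transport your group chain $A_0 \subset A_{\M} \subset A_{\W} \subset A$ to the orbits. This is in effect an inlining of the proof of Lemma \ref{incl_subtor}: your version is more self-contained and makes the mechanism explicit (a common base point plus transitivity converts group inclusions into set inclusions), at the cost of choosing a base point and of re-deriving the first inclusion $\DMR_{\times}(\K) \subset \Stab_{\Emb(G) \times \K^{\times} \times \G(\KX)}\left(\widehat{\Delta}^{\M, \B}_{N}, \widehat{\Delta}^{\M, \DR}_{G}\right)(\K)$, which Remark \ref{nonempty_stabs} already gives for free; the paper's version is more modular, reusing the \cite{EF2} lemmas and never naming a base point. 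Both rest on the same three ingredients: the torsor structures, the group inclusions of Corollary \ref{DMR_sub_StabM_sub_StabW}, and nonemptiness coming from Theorem \ref{Delta_B_N}.
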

\noindent In order the prove this, we will need the following Lemmas:
\begin{lemma}[{\cite[Lemma 2.3]{EF2}}]
    \label{inter_subtor}
    Let $(H, T)$ be a torsor and let $(H^{\prime}, T^{\prime})$ and $(H^{\prime\prime}, T^{\prime\prime})$ be subtorsors of $(H, T)$ such that $T^{\prime} \cap T^{\prime\prime} \neq \varnothing$. Then $(H^{\prime} \cap H^{\prime\prime}, T^{\prime} \cap T^{\prime\prime})$ is a subtorsor of both $(H^{\prime}, T^{\prime})$ and $(H^{\prime\prime}, T^{\prime\prime})$, therefore of $(H, T)$. 
\end{lemma}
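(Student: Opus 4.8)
The plan is to verify directly that $(H' \cap H'', T' \cap T'')$ satisfies the torsor axioms, the only step with any content being transitivity, which I would extract from the freeness of the ambient action. Recall first that a subtorsor $(H', T')$ of $(H, T)$ consists of a subgroup $H' \leq H$ together with a nonempty $H'$-stable subset $T' \subseteq T$ on which $H'$ acts freely and transitively; equivalently, $T'$ is a single $H'$-orbit. Since the restriction to a subgroup of $H$ acting on a subset of $T$ of a free action is automatically free, the substantive part of ``being a subtorsor'' is that the chosen subset is one full orbit.

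First I would dispose of the formal points. The set $H' \cap H''$ is a subgroup of $H$, being an intersection of subgroups, and it is contained in both $H'$ and $H''$. By hypothesis $T' \cap T'' \neq \varnothing$. For stability, given $h \in H' \cap H''$ and $x \in T' \cap T''$, one has $h \cdot x \in T'$ because $h \in H'$ and $T'$ is $H'$-stable, and likewise $h \cdot x \in T''$; hence $h \cdot x \in T' \cap T''$. Finally, freeness of the $(H' \cap H'')$-action on $T' \cap T''$ is inherited from freeness of the $H$-action on $T$.

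The key step is transitivity. Let $x, y \in T' \cap T''$. Since $(H, T)$ is a torsor, there is a \emph{unique} $h \in H$ with $h \cdot x = y$; it remains only to check that $h \in H' \cap H''$. Because $x, y \in T'$ and $(H', T')$ is itself a torsor, there is some $h' \in H'$ with $h' \cdot x = y$; regarding $h'$ as an element of $H$ and invoking the uniqueness of $h$, we conclude $h = h' \in H'$. The identical argument applied to the torsor $(H'', T'')$ yields an $h'' \in H''$ with $h'' \cdot x = y$, whence $h = h'' \in H''$. Therefore $h \in H' \cap H''$ and $h \cdot x = y$, so the action is transitive. This shows $(H' \cap H'', T' \cap T'')$ is a torsor; combined with $H' \cap H'' \leq H'$, $T' \cap T'' \subseteq T'$, and the stability established above, it is a subtorsor of $(H', T')$, and symmetrically of $(H'', T'')$, and hence of $(H, T)$.

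I do not expect a genuine obstacle here: the whole argument rests on the elementary observation that the element of $H$ carrying one point of the intersection to another—which exists and is unique by the ambient torsor structure—must coincide with the transporting elements furnished separately by each of the two subtorsors, and so lies in both subgroups. The only point demanding care is to invoke the uniqueness in $(H, T)$ at the right moment, so as to identify these a priori distinct group elements as one and the same.
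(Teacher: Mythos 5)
Your proof is correct: the decisive step—using freeness of the ambient $H$-action on $T$ to identify the transporters $h' \in H'$ and $h'' \in H''$ furnished by transitivity of each subtorsor, so that the unique $h \in H$ with $h \cdot x = y$ lies in $H' \cap H''$—is exactly the standard argument, and the remaining points (subgroup, nonemptiness, stability, inherited freeness) are handled properly. The paper itself gives no proof, citing \cite[Lemma 2.3]{EF2} instead, and your argument coincides with the one given there, so there is nothing further to compare.
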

\begin{lemma}[{\cite[Lemma 2.7]{EF2}}]
    \label{incl_subtor}
    Let $(H, T)$ be a torsor and let $(H_0, T_0)$ and $(H_1, T_1)$ be subtorsors of $(H, T)$ such that $T_0 \subset T_1$. Then $(H_0, T_0)$ is a subtorsor of $(H_1, T_1)$. If, moreover, $H_0 = H_1$ then the subtorsors $(H_0, T_0)$ and $(H_1, T_1)$ are equal.
\end{lemma}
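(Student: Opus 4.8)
The plan is to realise all four pairs as subtorsors of the ambient torsor $(H,T):=\big((\Aut(G) \times \K^{\times}) \ltimes \G(\KX),\, \Emb(G) \times \K^{\times} \times \G(\KX)\big)$ and then to extract the chain from the two abstract torsor lemmas. Write $(A_3,B_3)$ for the double shuffle pair $\big((\Aut(G) \times \K^{\times}) \ltimes \DMR_0^G(\K),\, \DMR_{\times}(\K)\big)$, write $(A_2,B_2)$ for the $\M$-stabilizer pair $\big(\Stab_{(\Aut(G) \times \K^{\times}) \ltimes \G(\KX)}(\widehat{\Delta}^{\M, \DR}_{G})(\K),\, \Stab_{\Emb(G) \times \K^{\times} \times \G(\KX)}(\widehat{\Delta}^{\M, \B}_{N}, \widehat{\Delta}^{\M, \DR}_{G})(\K)\big)$, write $(A_1,B_1)$ for the $\W$-stabilizer pair defined analogously with $\M$ replaced by $\W$, and set $(A_0,B_0):=(H,T)$. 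The assertion to be proved is exactly the chain $(A_3,B_3)\subset (A_2,B_2)\subset (A_1,B_1)\subset (A_0,B_0)$ of subtorsor inclusions.

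First I would record that each pair is a subtorsor of $(H,T)$: the pair $(A_0,B_0)$ trivially, the pairs $(A_1,B_1)$ and $(A_2,B_2)$ by Proposition \ref{Stabs_subtor}, and the pair $(A_3,B_3)$ by the corollary deduced from Propositions \ref{EmbGxkxxGkX_torsor} and \ref{DMR_x_torsor}. I would then isolate the one group-level inclusion that the argument really needs, namely $A_2 \subset A_1$; this is precisely the inclusion obtained inside the proof of Corollary \ref{DMR_sub_StabM_sub_StabW} from \eqref{Stab_inclusion} together with Proposition \ref{semidirect_Stabs}. Finally I would observe, via Remark \ref{nonempty_stabs}, that $B_3=\DMR_{\times}(\K)$ is nonempty and is contained in both $B_2$ and $B_1$, so that the base sets of any two consecutive pairs meet.

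The chain will then follow by applying Lemma \ref{incl_subtor} to each consecutive pair, once the base-set inclusions $B_3 \subset B_2 \subset B_1 \subset B_0$ are secured. The outer two are immediate: $B_3 \subset B_2$ is Remark \ref{nonempty_stabs}, and $B_1 \subset B_0 = T$ holds by definition. The crux, and the step I expect to be the main obstacle, is the middle inclusion $B_2 \subset B_1$, i.e. that every triple stabilizing the pair of $\M$-coproducts also stabilizes the pair of $\W$-coproducts; this is the Betti-side analogue of \eqref{Stab_inclusion}. Rather than checking it by hand on the comparison isomorphisms ${}^{\Gamma}\comp^{\M,(10)}_{(\iota,\nu,\Phi)}$ and ${}^{\Gamma}\comp^{\W,(1)}_{(\iota,\nu,\Phi)}$, I would argue abstractly: since $B_2 \cap B_1 \supseteq \DMR_{\times}(\K) \neq \varnothing$, Lemma \ref{inter_subtor} makes $(A_2 \cap A_1,\, B_2 \cap B_1)$ a subtorsor of $(H,T)$; the inclusion $A_2 \subset A_1$ gives $A_2 \cap A_1 = A_2$, so $(A_2,\, B_2 \cap B_1)$ and $(A_2, B_2)$ are two subtorsors of $(H,T)$ with $B_2 \cap B_1 \subset B_2$ and the same structure group $A_2$. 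The equality clause of Lemma \ref{incl_subtor} then forces $B_2 \cap B_1 = B_2$, that is $B_2 \subset B_1$. Having all three base-set inclusions, a final application of Lemma \ref{incl_subtor} to each consecutive pair yields the asserted chain of subtorsor inclusions.
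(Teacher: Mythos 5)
You have proved the wrong statement. The statement at hand is the abstract torsor lemma itself (Lemma \ref{incl_subtor}, quoted from \cite[Lemma 2.7]{EF2}): for subtorsors $(H_0,T_0)$ and $(H_1,T_1)$ of a torsor $(H,T)$, the inclusion $T_0 \subset T_1$ alone forces $(H_0,T_0)$ to be a subtorsor of $(H_1,T_1)$, with equality of subtorsors when $H_0 = H_1$. What you wrote instead is a proof of Theorem \ref{inclusion_torsor_stabs}, the downstream chain of stabilizer-torsor inclusions, and your argument invokes Lemma \ref{incl_subtor} twice (``the equality clause of Lemma \ref{incl_subtor} then forces $B_2 \cap B_1 = B_2$'' and ``a final application of Lemma \ref{incl_subtor} to each consecutive pair''). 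As a proof of the lemma this is circular, and none of the torsor-theoretic content of the lemma --- in particular the non-obvious fact that $T_0 \subset T_1$ implies the \emph{group} inclusion $H_0 \subset H_1$ --- is established anywhere in your text. (For what it is worth, as an argument for Theorem \ref{inclusion_torsor_stabs} your plan is essentially the paper's own: the paper likewise deduces the middle set inclusion abstractly from Lemmas \ref{inter_subtor} and \ref{incl_subtor}, the nonemptiness coming from $\DMR_{\times}(\K)$, and the group inclusion from Corollary \ref{DMR_sub_StabM_sub_StabW}. But that theorem is not the statement you were asked to prove; note also that the paper does not reprove Lemma \ref{incl_subtor} but cites \cite{EF2} for it.)

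The missing proof is short and uses both defining properties of the ambient torsor. Pick $t_0 \in T_0$ (a torsor is nonempty) and let $h_0 \in H_0$. Then $h_0 \cdot t_0 \in T_0 \subset T_1$, and since $H_1$ acts transitively on $T_1$ there exists $h_1 \in H_1$ with $h_1 \cdot t_0 = h_0 \cdot t_0$; freeness of the $H$-action on $T$ then gives $h_0 = h_1$, whence $H_0 \subset H_1$. Since $T_0$ is an $H_0$-stable subset of $T_1$ on which $H_0$ acts freely and transitively, $(H_0,T_0)$ is a subtorsor of $(H_1,T_1)$, proving the first claim. If moreover $H_0 = H_1$, take any $t_1 \in T_1$: transitivity of $H_1$ on $T_1$ yields $h \in H_1 = H_0$ with $t_1 = h \cdot t_0 \in H_0 \cdot T_0 = T_0$, so $T_1 \subset T_0$ and hence $T_0 = T_1$, i.e.\ the two subtorsors are equal.
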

\begin{proof}[Proof of Theorem \ref{inclusion_torsor_stabs}]
    The group-part inclusion is shown in Corollary \ref{DMR_sub_StabM_sub_StabW}. The first and last set-part inclusions are immediate. It remains to show that
    \[
        \mbox{\small$\Stab_{\Emb(G) \times \K^{\times} \times \G(\KX)}\left(\widehat{\Delta}^{\M, \B}_{N}, \widehat{\Delta}^{\M, \DR}_{G}\right)(\K) \subset \Stab_{\Emb(G) \times \K^{\times} \times \G(\KX)}\left(\widehat{\Delta}^{\W, \B}_{N}, \widehat{\Delta}^{\W, \DR}_{G}\right)(\K)$}.
    \]
    In Lemmas \ref{inter_subtor} and \ref{incl_subtor}, set
    \[
        (H, T) = \left((\Aut(G) \times \K^{\times}) \ltimes \G(\KX), \Emb(G) \times \K^{\times} \times \G(\KX) \right).
    \]
    First, let us apply Lemma \ref{inter_subtor} for
    \begin{itemize}[leftmargin=*]
        \setlength\itemsep{1em}
        \item $(H^{\prime}, T^{\prime}) = \mbox{\scriptsize$\left(\Stab_{(\Aut(G) \times \K^{\times}) \ltimes \G(\KX)}\left(\widehat{\Delta}^{\M, \DR}_{G}\right)(\K), \Stab_{\Emb(G) \times \K^{\times} \times \G(\KX)}\left(\widehat{\Delta}^{\M, \B}_{N}, \widehat{\Delta}^{\M, \DR}_{G}\right)(\K)\right)$}$;
        \item $(H^{\prime\prime}, T^{\prime\prime}) = \mbox{\scriptsize$\left(\Stab_{(\Aut(G) \times \K^{\times}) \ltimes \G(\KX)}\left(\widehat{\Delta}^{\W, \DR}_{G}\right)(\K), \Stab_{\Emb(G) \times \K^{\times} \times \G(\KX)}\left(\widehat{\Delta}^{\W, \B}_{N}, \widehat{\Delta}^{\W, \DR}_{G}\right)(\K)\right)$}$.
    \end{itemize}
    From Remark \ref{nonempty_stabs}, we have that $T^{\prime} \cap T^{\prime\prime} \neq \varnothing$. Therefore, $(H^{\prime} \cap H^{\prime\prime}, T^{\prime} \cap T^{\prime\prime})$ is a subtorsor of $(H^{\prime\prime}, T^{\prime\prime})$. Second, let us apply Lemma \ref{incl_subtor} for
    \begin{itemize}[leftmargin=*]
        \setlength\itemsep{0,5em}
        \item $(H_0, T_0) = (H^{\prime} \cap H^{\prime\prime}, T^{\prime} \cap T^{\prime\prime})$;
        \item $(H_1, T_1) = (H^{\prime}, T^{\prime})$.
    \end{itemize}
    We have that $T_0 = T^{\prime} \cap T^{\prime\prime} \subset T^{\prime} = T_1$. In addition,
    \[
        H_0 = H^{\prime} \cap H^{\prime\prime} = H^{\prime} = H_1,
    \]
    where the second equality follows from the stabilizer group inclusion in Corollary \ref{DMR_sub_StabM_sub_StabW}.
    Finally, it follows that $T^{\prime} \cap T^{\prime\prime} = T_0 = T_1 = T^{\prime}$. Thus $T^{\prime} \subset T^{\prime\prime}$, which is the wanted inclusion of setwise stabilizers.
\end{proof}
    \nocite{Enr20} \nocite{Fu12} \nocite{Kas} 
    \bibliographystyle{abstract}
    \bibliography{main}
\end{document}